\numberwithin{equation}{section}
\def\thm@space@setup{\thm@preskip=5pt
	\thm@postskip=5pt}
\newtheoremstyle{mystyle}      
{} 
{} 
{\itshape} 
{} 
{\bfseries} 
{.} 
{ } 
{} 
\renewcommand{\thefootnote}{\fnsymbol{footnote}}
\renewcommand\@makefntext[1]{%
	\parindent 1em%
	\noindent
	\llap{\thefootnote\enspace}%
	#1
}
\theoremstyle{mystyle}
\newcounter{mainthm}
\newtheorem{thm}{Theorem}[section]
\newtheorem{lem}[thm]{Lemma}
\newtheorem{prop}[thm]{Proposition}
\newtheorem{cor}[thm]{Corollary}
\newtheorem{defn-thm}[thm]{Definition-Theorem}
\newtheorem{defn-lem}[thm]{Definition-Lemma}
\newtheorem{defn-prop}[thm]{Definition-Proposition}
\newtheorem{axiom}[thm]{Axiom}
\newtheorem{situation}[thm]{Situation}
\def\thm@space@setup{\thm@preskip=5pt
	\thm@postskip=5pt}
\newtheoremstyle{mydefinition}      
{} 
{} 
{} 
{} 
{\itshape\bfseries} 
{.} 
{ } 
{} 
\theoremstyle{mydefinition}
\newtheorem{defn}[thm]{Definition}
\newtheorem{defnn}[thm]{"Definition"}
\newtheorem{convention}[thm]{Convention}
\newtheoremstyle{rmk}
{5pt}
{5pt}
{}
{}
{\itshape}
{.}
{.5em}
{}
\theoremstyle{rmk}
\newtheorem{axiom*}[thm]{Axiom}
\newtheoremstyle{def}
{5pt}
{5pt}
{\itshape}
{}
{\itshape\sffamily}
{.}
{.5em}
{}
\theoremstyle{def}
\newtheorem{rmk}[thm]{Remark}
\newtheoremstyle{note}
{8pt}
{5pt}
{\itshape}
{10pt}
{\bfseries}
{}
{.5em}
{}
\theoremstyle{note}
\renewenvironment{proof}[1][\proofname]{\par
	\vspace{-\topsep}
	\pushQED{\qed}%
	\normalfont
	\topsep0pt \partopsep0pt 
	\trivlist
	\item[\hskip\labelsep
	\itshape
	#1\@addpunct{.}]\ignorespaces
}{%
	\popQED\endtrivlist\@endpefalse
	\addvspace{6pt plus 6pt} 
}
\newcommand{\id}{\mathrm{id}}
\newcommand{\mathds}[1]{\text{\usefont{U}{dsrom}{m}{n}#1}}
\newcommand{\one}{\mathds {1}}
\newcommand{\UD}{\mathscr{UD}}
\newcommand{\simud}{\stackrel{\mathrm{ud}}{\sim}}
\newcommand{\ev}{\mathrm{ev}}
\newcommand{\ia}{{\mathfrak a}}
\newcommand{\e}{\mathbf e}
\newcommand{\B}{\mathcal B}
\newcommand{\mi}{\mathfrak i}
\newcommand{\mI}{\mathfrak I}
\newcommand{\Tr}{\mathscr{T}}
\newcommand{\Cint}{C^{\mathrm{int}}}
\newcommand{\Cext}{C^{\mathrm{ext}}}
\newcommand{\CC}{{CC}}
\newcommand{\tri}{{\mathrm{tri}}}
\newcommand{\m}{\mathfrak m}
\newcommand{\M}{\mathfrak M}
\newcommand{\mc}{\mathfrak c}
\newcommand{\mC}{\mathfrak C}
\newcommand{\oi}{{[0,1]}}
\newcommand{\ab}{{[a,b]}}
\newcommand{\f}{\mathfrak f}
\newcommand{\F}{\mathfrak F}
\newcommand{\mH}{ H}
\newcommand{\h}{\mathfrak h}
\newcommand{\g}{\mathfrak g}
\newcommand{\G}{\mathfrak G}
\newcommand{\HL}{H^*(L)}
\newcommand{\OL}{\Omega^*(L)}
\DeclareMathOperator{\vol}{vol}
\DeclareMathOperator{\Sp}{Sp}
\DeclareMathOperator{\incl}{Incl}
\DeclareMathOperator{\eval}{Eval}
\DeclareMathOperator{\trop}{\mathfrak {trop}}
\DeclareMathOperator{\Hom}{Hom}
\DeclareMathOperator{\Corr}{Corr}
\newcommand*{\Scale}[2][4]{\scalebox{#1}{$#2$}}%
\footnotesize \vspace{0pt}}%
\footnotesize \vspace{1pt}}%
\bfseries\vspace{3pt}}%
\scriptsize \vspace{0pt}}%
\titleformat{\subsection}[runin]{
	\bfseries
	\normalsize}{\thesubsection \ }{0em}{}[\mbox{ . } ]
\titleformat{\subsubsection}{
	\sffamily
	\itshape\normalsize}{\thesubsubsection \ }{0em}{}[\vspace{0em}]
\begin{document}
	\setlength{\parindent}{15pt}	\setlength{\parskip}{0em}
	
	\title[{\small Non-archimedean analytic continuation of unobstructedness}]{ {
			Non-archimedean analytic continuation of unobstructedness}
	}
	\author[Hang Yuan]{Hang Yuan}
	\begin{abstract} {\sc Abstract:}  
		The Floer cohomology and the Fukaya category are not defined in general.
		Indeed, while the issue of obstructions can be theoretically addressed by introducing bounding cochains, the actual existence of even one such bounding cochain is usually unknown.
		This paper aims to deal with the problem.
		We study certain non-archimedean analytic structure that enriches the Maurer-Cartan theory of bounding cochains.
		Using this rigid structure and family Floer techniques, we prove that within a connected family of graded Lagrangian submanifolds, if any one Lagrangian is unobstructed (in a slightly stronger sense), then all remaining Lagrangians are automatically unobstructed.
	\end{abstract}
	\maketitle
	%
	%


	\tableofcontents
	
	\hypersetup{
		colorlinks=true,
		linktoc=all,
		citecolor=gray
	}

	%
	%

	\section{Introduction}

	\subsection{Main result}
	Let $(X,\omega)$ be a symplectic manifold that is closed or convex at infinity. Assume $\mathcal S$ is a connected manifold.
	Suppose $\{L_s : s\in \mathcal S\}$ is a smooth family of spin closed Lagrangian submanifolds with vanishing Maslov classes or gradings.
	Fix an arbitrary base point $s_0\in \mathcal S$.
	
	\begin{thm}
		\label{Main_thm_this_paper}
		If $L_{s_0}$ is properly unobstructed, then all $L_s$ are properly unobstructed.
	\end{thm}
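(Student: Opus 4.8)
The plan is to prove that the subset $\mathcal S_{\mathrm{ub}}=\{s\in\mathcal S: L_s \text{ is properly unobstructed}\}$ is a union of path-components of $\mathcal S$; since $\mathcal S$ is a connected manifold this is all of $\mathcal S$, and $s_0\in\mathcal S_{\mathrm{ub}}$ by hypothesis. Fix a smooth path $\gamma\colon[0,1]\to\mathcal S$. It produces a Lagrangian isotopy $\{L_{\gamma(t)}\}$, and, after choosing compatible Floer/perturbation data along $\gamma$ (using the vanishing Maslov class to get compatible $\mathbb Z$-gradings and the spin structures to get coherent orientations), a pseudo-isotopy of the gapped, unital, filtered curved $A_\infty$-algebras $\A_{\gamma(t)}$ modeled on $H^*(L_{\gamma(t)})$. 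This pseudo-isotopy gives $A_\infty$-homotopy equivalences between the fibers, coherent under concatenation up to homotopy, and such an equivalence induces a bijection on gauge-equivalence classes of Maurer--Cartan solutions. Consequently ordinary unobstructedness is transported along $\gamma$; the remaining point is to transport the extra rigidity encoded in the word ``properly.''

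This is where the non-archimedean analytic structure studied earlier enters. Over $\gamma$ one assembles the Maurer--Cartan solutions of the $\A_{\gamma(t)}$ into a single non-archimedean analytic object --- a rigid analytic space $\mathcal Z_t$ varying analytically in $t$ (a piece of the Maurer--Cartan moduli / mirror over the Novikov base) --- inside which proper unobstructedness cuts out a distinguished nonempty admissible subset. I would then verify that the homotopy equivalences above are not merely $\Lambda_0$-linear but genuine non-archimedean analytic isomorphisms carrying these distinguished subsets onto one another. Granting this, the hypothesis supplies an analytic point at $t=0$ which, by analytic continuation along $[0,1]$ --- cover $[0,1]$ by finitely many subintervals on which the data are explicit, continue locally, and glue using the coherence of the equivalences under concatenation --- produces an analytic point at $t=1$, i.e. a proper bounding cochain for $L_{\gamma(1)}$. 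Since $\mathcal S$ is path-connected this gives $\mathcal S_{\mathrm{ub}}=\mathcal S$.

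The main obstacle is the behavior ``at infinity'' when one propagates the bounding cochain along a long path and passes to the limit: a priori the energy filtration could degenerate, so that the valuation of the transported cochain drops to $0$ and it escapes $\Lambda_+$, or the composite of $A_\infty$-equivalences along a compact path fails to converge. Controlling this is exactly the role of the rigid-analytic structure: Gromov-type compactness in the family yields uniform energy (hence uniform gapping) bounds over each compact path, and the maximum-principle / properness features of non-archimedean analytic maps prevent the distinguished locus from running off to the boundary --- this, rather than the formal open--closed argument, is where the real work lies. A secondary but nontrivial technical burden is fixing the auxiliary choices (perturbation data, gradings, orientations) coherently enough along the path that the induced analytic structures are canonical up to the homotopies actually used; I expect this bookkeeping, together with the convergence estimate just mentioned, to be the crux of the argument.
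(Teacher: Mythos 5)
Your global skeleton (show the properly unobstructed locus is open and closed, transport structure along small Lagrangian isotopies via pseudo-isotopies of $A_\infty$ algebras, and use non-archimedean rigidity) is in the same spirit as the paper, but two of your steps contain genuine gaps. First, the heart of the matter --- that the \emph{vanishing of the obstruction ideal} is preserved under change of all auxiliary choices and under the $A_\infty$ homotopy equivalences coming from pseudo-isotopies --- is exactly what you defer with ``I would then verify that the homotopy equivalences \dots are genuine non-archimedean analytic isomorphisms.'' This is not a verification but the main theorem of the paper's Section 3: one needs the divisor axiom to even define the induced analytic map $\phi_\f$ (via $Y^\alpha\mapsto Y^\alpha\exp\langle\alpha,P_\f\rangle$), the wall-crossing formula $\phi(Q'_j)=\sum_k Q_k R^k_j$ to see that $\phi$ descends to $A/\ia'\to A/\ia$, a canceling trick to show this descent is invariant under ud-homotopy, and a divisor-axiom-preserving Whitehead theorem (the category $\UD$) to produce a homotopy inverse, so that $A/\ia'\cong A/\ia$ and hence $\ia=0\iff\ia'=0$. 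None of this follows from the Maurer--Cartan bijection you start from; the paper stresses that set-level MC invariance is too weak.

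Second, your continuation step ends by producing ``an analytic point at $t=1$, i.e.\ a proper bounding cochain for $L_{\gamma(1)}$.'' This conflates proper unobstructedness with the existence of a single (weak) bounding cochain: in the paper proper unobstructedness is the \emph{identical} vanishing of the obstruction series $Q_1,\dots,Q_\ell$ on a whole convergence domain, i.e.\ the vanishing of an ideal in $\Lambda\langle H_1(L);\Delta\rangle$, and transporting one Maurer--Cartan point only yields ordinary unobstructedness (which Proposition \ref{sufficient_intro_prop} shows is strictly weaker as a hypothesis). What must be continued along the path is the ideal (equivalently the isomorphism class of $A/\ia$), not a point. Relatedly, the quantitative crux is not a maximum-principle or properness statement for analytic maps, and plain Gromov compactness in the family is not enough: Gromov compactness gives gappedness in $T$ but says nothing about convergence in the $Y$-variables. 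The paper's openness argument needs the Groman--Solomon reverse isoperimetric inequality ($E(\beta)\geqslant c\,\ell(\partial\beta)$) to guarantee that the $Q_i$ converge on a domain $\Delta$ of definite size, so that the shift $\Delta\mapsto\Delta-[\xi_s]$ produced by Fukaya's trick stays inside the convergence domain for $s$ near $s_0$; closedness needs the uniform version of this inequality over compact subsets of $\mathcal S$ to bound the size of these neighborhoods from below. Without these ingredients your outline cannot be completed as stated.
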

	
	The unobstructedness problem is critical for general Lagrangian Floer cohomology \cite{FOOOBookOne,FOOOSpectral}, general Fukaya category \cite{FuUnobstructed}, general splitting generation criterion \cite{AFOOO}, Thomas-Yau uniqueness \cite{thomas2002special,li2022quantitative,li2022thomas,imagi2016uniqueness}, open Gromov-Witten invariants \cite{FuCounting,solomon2016point}, and so on. However, its criterion is quite limited.

	The \textit{\textbf{proper unobstructedness}} is a new concept we propose in this paper (\textit{Definition} \ref{proper_unobstructed_introduction_defn}).
	It is a sufficient condition (\textit{Proposition} \ref{sufficient_intro_prop}) of the unobstructedness in the traditional sense, i.e., the existence of (weak) bounding cochains, which are solutions to the Maurer-Cartan equations.
	Instead of asking whether these equations have a single solution, we aim to explore the "geometry" of these equations or the adic-convergent formal power series they represent.
	The primary aim of this paper is to introduce a new non-archimedean perspective on unobstructedness in Floer theory with simpler proofs than \cite{Yuan_I_FamilyFloer}.

	\subsection{Applications}
	We will soon clarify the definition and reveal that proper unobstructedness pertains solely to Maslov-0 disks, so it holds when (i) $L_{s_0}$ is monotone, or (ii) $L_{s_0}$ is \textit{tautologically unobstructed} in the sense that it does not bound any holomorphic disks of non-positive Maslov indices. 
	
	
	The work of Rizell-Goodman-Ivrii \cite[Theorem A]{dimitroglou2016lagrangian} implies that any two Lagrangian tori inside the uniruled symplectic 4-manifold $X=\mathbb R^4$, $\mathbb {CP}^2$, or $S^2\times S^2$ are Lagrangian isotopic to each other.
	It is known that there exist monotone Clifford or Chekanov tori inside $X$.
	Therefore, if we choose one of them as $L_{s_0}$, then as a simple application of Theorem \ref{Main_thm_this_paper}, we immediately obtain
	\begin{prop}
		\label{Application_4mfd_prop}
		If $X$ is a symplectic 4-manifold as above, then any Lagrangian tori $L$ in $X$ is (properly) unobstructed. In particular, the filtered $A_\infty$ algebra associated to $L$ must admit a bounding cochain.
	\end{prop}

	In the context of the Strominger-Yau-Zaslow conjecture \cite{SYZ}, a folklore speculation is that all fibers in the SYZ Lagrangian fibration are unobstructed. Due to Theorem \ref{Main_thm_this_paper}, finding a single monotone (or tautologically unobstructed) Lagrangian fiber within the fibration is sufficient to establish this.
	There is also a concrete application regarding the work \cite{AAK_blowup_toric} of Abouzaid, Auroux, Katzarkov, where they construct a singular Lagrangian fibration $\pi:X \to B$ in a blow-up of certain toric variety. Many fibers are tautologically unobstructed in the above sense, and we may pick one of them as $L_{s_0}$. The singular locus of $\pi$ is a codimensional-one amoeba-type subset in its base. The \textit{wall region}, where fibers bound Maslov-0 disks, are codimension-zero in the base. Thus, the perturbative arguments are ineffective for the unobstructedness in the \textit{interior} of the wall region. The Floer-theoretic analysis in \cite{AAK_blowup_toric} takes place outside the wall region, suggesting that the SYZ construction may not be entirely complete.
	Now, as an initial step towards a deeper comprehension, Theorem \ref{Main_thm_this_paper} enables us to draw the following conclusion:

	\begin{prop}
		\label{Application_AAK_prop_intro}
		Any Lagrangian $\pi$-fiber is (properly) unobstructed. Thus, the filtered $A_\infty$ algebra associated to it must admit a bounding cochain.
	\end{prop}
	
	
	The above situations where $L_{s_0}$ is either monotone or tautologically unobstructed should be enough for a few applications.
	In a more general setting, Solomon's anti-symplectic involution criterion \cite{Solomon_Involutions} also implies the proper unobstructedness of $L_{s_0}$, with the Maslov-0 disk obstructions canceled pairwise, if $L_{s_0}$ is preserved by an anti-symplectic involution. Such an involution is often obtained by complex conjugation; see also \cite{castano2010fixed,Solomon_Symmetry_Lag}.
	Roughly, the result of Solomon is derived from studying the orientations of moduli spaces.
	It is also inspired by the earlier works of Welschinger \cite{welschinger2005invariants,welschinger2005spinor} and Georgieva \cite{georgieva2016open} for open Gromov-Witten invariants, as explained by Solomon and Tukachinsky in \cite[p1247]{solomon2016point}.
	In light of this, the notion of proper unobstructedness introduced in this paper seems reasonable.


	\subsection{Unobstructedness beyond the Maurer-Cartan picture}
	We briefly explain our new definition of the \textit{proper unobstructedness}. A key aspect is simply to prove that this definition is indeed independent of choices and therefore valid.
	Recall that a filtered $A_\infty$ algebra associated to a Lagrangian submanifold $L$ is a collection of multilinear maps of degree $2-k-\mu(\beta)$ on the de Rham complex $\Omega^*(L)$: (see e.g. \cite[21.2]{FOOO_Kuranishi})
	\[
	\check \m_{k,\beta}: \Omega^*(L)^{\otimes k} \to \Omega^*(L)
	\]
	for each $\beta\in\pi_2(X,L)$ and $k\geqslant 0$ with the following properties:
	$\check \m_{k,0}=0$ for $k\neq 1,2$; $\check \m_{2,0}(h_1,h_2)= \pm h_1\wedge h_2$; $\check \m_{1,0}(h)=dh$; and the following $A_\infty$ associativity relation
	\[
	\textstyle \sum_{k_1+k_2=k+1} \sum_{\beta_1+\beta_2=\beta} \sum_{i=1}^{k_1+1} (-1)^\ast \check \m_{k_1,\beta_1}(h_1,\dots, \check \m_{k_2,\beta_2} (h_i, \dots, h_{i+k_2-1}) ,\dots, h_k)=0
	\]
	This collection of operators relies on a tame almost complex structure $J$ and a choice $\Xi$ of `virtual fundamental chain' on the moduli space of $J$-holomorphic disks bounded by $L$ (see \cite{FuCyclic,FOOO_Kuranishi} or Section \ref{s_Axioms}).
	Choosing the Hodge decomposition of $\Omega^*(L)$ with respect to some metric $g$ on $L$ and applying the homological perturbation to the above operators $\check \m_{k,\beta}$'s, one can further produce a new collection of multilinear maps of degree $2-k-\mu(\beta)$ on the de Rham cohomology $H^*(L)$:
	\[
	\m_{k,\beta}: H^*(L)^{\otimes k} \to H^*(L)
	\]
	so that $\m_{1,0}=0$, $\m_{2,0}(h_1,h_2)= \pm h_1\wedge h_2$, and a similar $A_\infty$ associativity holds; see \cite{Yuan_I_FamilyFloer,FuCyclic}.
	Intuitively, one should consider that these new operators count not just holomorphic disks but `holomorphic pearly trees' \cite{Sheridan15,FOOO_2009canonical_Morse}, which is also related to the `clusters' of \cite{cornea2006cluster}.

	Denote the \textit{Novikov field} as $\Lambda = \mathbb{C}((T^{\mathbb{R}}))$ with its valuation ring, the Novikov ring, as $\Lambda_0$.
	The natural non-archimedean valuation $\mathrm{val}:\Lambda\to\mathbb R\cup\{\infty\}$ gives a norm $|x|=\exp(-\mathrm{val}(x))$ and allows us to talk about the convergence of infinite series. Define $U_\Lambda$ as the unitary subgroup in $\Lambda$ consisting of elements with norm 1 (or equivalently valuation 0). 
	Let $\one$ be the generator of $H^0(L)$.

	\subsubsection{Traditional unobstructedness}

	The (weak) Maurer-Cartan equation \cite{FOOOBookOne} refers to
	\begin{equation}
		\label{MC_eq}
		\textstyle \sum_{k=0}^\infty\sum_\beta  T^{\omega(\beta)} \m_{k,\beta} (b, \dots, b) =  w \cdot \one  \qquad \text{for} \ \  b\in H^{odd}(L)\hat\otimes \Lambda_0 \ \text{and} \  w  \in \Lambda_0
	\end{equation}
	We say $L$ is (weakly) \textit{unobstructed} if there \textit{exists} a single solution $b$ to (\ref{MC_eq}), called a (weak) \textit{bounding cochain} (see e.g. \cite[Definition 3.6.14, Definition 3.6.40]{FOOOBookOne}). From now on, we will often omit saying `weak'.
	One can define a $b$-deformed noncurved $A_\infty$ algebra $\{\m_{k,\beta}^b\}$ by setting $\m_{k,\beta}^b(h_1,\dots, h_k) =\sum \m_{\bullet, \beta}(b,\dots, b, h_1, b , \dots, b, h_2,\dots, h_k, b,\dots, b)$ \cite[3.6.9]{FOOOBookOne}.
	The set $\mathcal {MC}(\m)$ of the gauge equivalence classes of bounding cochains is called the \textit{Maurer-Cartan set};
	it depends on $\m$ and is invariant only up to bijection of sets \cite[4.3.14]{FOOOBookOne}.
	Note that the concept of a bounding cochain is also attributed to Kontsevich, as indicated in \cite{FOOOBookOne}.
	
	The existence of a bounding cochain $b$ paves the way for an established theory of Lagrangian Floer cohomology in \cite{FOOOBookOne}. However, analyzing the equation (\ref{MC_eq}), especially in terms of confirming the existence of such a $b$, poses significant challenges in general. Furthermore, ensuring the convergence of equation (\ref{MC_eq}) often requires that the coefficients of $b$ lie within the maximal ideal $\Lambda_+$ of $\Lambda_0$, which adds another layer of complexity to the problem.
	Notably, the Maurer-Cartan set or the solution space of (\ref{MC_eq}) is merely a \textit{set} without any interesting structure on it.
	

	\subsubsection{Proper unobstructedness}
	We propose a new notion of the unobstructedness that involves certain intrinsic non-archimedean analytic structure regarding the Lagrangian Floer theory.
	The Maurer-Cartan theory sketched above fits well with the homological algebra results but seems less natural with geometric viewpoints. For example, viewed as an open-string version of Gromov-Witten invariants, the $A_\infty$ algebra in our context further satisfies the so-called \textit{divisor axiom}: For a degree-one class $b\in H^1(L)$, we have
	\[
	\textstyle \sum_{i=1}^{k+1} \m_{k+1,\beta} (x_1,\dots, x_{i-1}, b, x_i,\dots, x_k ) = \langle \partial\beta, b\rangle \ \m_{k,\beta}(x_1,\dots, x_k)
	\]
	It is verified in \cite{FuCyclic} and can further transfer the equation (\ref{MC_eq}) into
	$
	\textstyle  \sum_\beta T^{\omega(\beta)} \mathbf y^{\partial\beta} \m_{0,\beta} =  w \cdot \one 
	$
	where we take $\mathbf y \in H^1(L; U_\Lambda)$ so that $ \mathbf y^{\partial\beta} = e^{\partial\beta\cap b}= \sum_k \frac{1}{k!} \langle \partial\beta, b\rangle^k$.
	While the equation (\ref{MC_eq}) generically refers to all $H^{odd}(L)$, the known solutions to (\ref{MC_eq}) usually only live in $H^1(L)$; see e.g. \cite[(1.1)]{FOOOToricOne}. This perfectly fits with the divisor axiom situation.

	Instead of using a series of numbers in the Novikov field $\Lambda$ like (\ref{MC_eq}) to measure the obstruction in Floer theory, we turn to directly use the formal power series 
	$
	\textstyle 
	\sum T^{\omega(\beta)} Y^{\partial\beta} \m_{0,\beta} $
	with $Y$ being a formal variable and with $b$ being forgotten.
	We expect that similar ideas might also work for $L_\infty$ structures.
	
	The shift in perspective becomes particularly coherent when all involved Maslov indices $\mu(\beta)\geqslant 0$, regarding the assumption of Theorem \ref{Main_thm_this_paper}. Note that by \cite[Lemma 3.1]{AuTDual}, this happens if $L$ is a special or graded Lagrangian.
	(For related studies that consider negative-Maslov disks, we direct readers to the recent work by Auroux \cite{auroux2023holomorphic}.)
	
	For $\mu(\beta)\geqslant 0$, the degree condition implies $\m_{0,\beta} \in H^{2-\mu(\beta)}(L)$, so it lies in either $H^0(L)$ or $H^2(L)$.
	Therefore, given a basis $\{\Theta_1,\dots, \Theta_\ell\}$ of $H^2(L)$ and the generator $\one$ in $H^0(L)$, we can find formal power series $W, Q_1,\dots, Q_\ell$ in $\Lambda[[H_1(L)]]$ such that
	\begin{equation}
		\label{W_Q_intro_eq}
		\sum T^{\omega(\beta)} Y^{\partial\beta} \m_{0,\beta} = W \cdot \one + \sum_{i=1}^\ell Q_i \cdot \Theta_i
	\end{equation}
	Here $\Lambda[[H_1(L)]]$ is the formal power series version of the group algebra, consisting of series in the form $\sum_{i=0}^\infty c_i Y^{\alpha_i}$ with $c_i\in\Lambda$ and $\alpha_i\in H_1(L)$. Using the minimal model $A_\infty$ algebra $\m$ rather than $\check\m$ is crucial because $\OL$ is infinite-dimensional and $\HL$ is finite-dimensional.
	
	\begin{defnn}
		\label{proper_unobstructed_introduction_defn}
		We call $L$ is \textit{properly unobstructed} if all the formal power series $Q_i$'s are vanishing.
	\end{defnn}

	We highlight that the vanishing of the $Q_i$'s does \textit{not} indicate the non-existence of Maslov-0 disks but rather that the contributions from the counts of Maslov-0 disks have canceled out in the coefficient affinoid algebra.
	
	Here we include the quotation mark to emphasize that the above definition is \textit{not} valid unless the following result can be proved.
	Beware that $W$ and $Q_i$'s all depend on the choice of $\m$.
	For a different choice $\m'$, suppose $W'$ and $Q_i'$ are defined in the same way as (\ref{W_Q_intro_eq}).
	It should be noted that the (adic) convergence domain $\mathcal U'$ for $Q_i'$ may differ from the one $\mathcal U$ for $Q_i$. 
	Now, we must prove:
	
	\begin{thm}[Theorem \ref{proper_unobstructedness_invariance_thm}]
		$Q_i\equiv 0$ for all $i$ if and only if $Q_i'\equiv 0$ for all $i$.
	\end{thm}

	In symplectic geometry, the independence of an objective on various choices is always crucial, but the degree of choice-independence can range widely, depending on which structure we are seeking, such as a set, ring, module, vector space, homotopy class of $A_\infty$ homotopy equivalences, or even isomorphism classes of Berkovich analytic spaces, ... , each with its distinct level of complexity; see e.g. \cite{solomon2016point,varolgunes2021mayer,FOOOBookOne,FOOOBookTwo,groman2015floer,MS,Yuan_I_FamilyFloer,auroux2014beginner,Ritter_Smith,FOOO_Kuranishi,FOOOSpectral,Ganatra_thesis}.
	Accordingly, the set-theoretic invariance of Maurer-Cartan set of weak bounding cochains is obviously not what we want.
	Having proved the aforementioned theorem, the main Theorem \ref{Main_thm_this_paper} is essentially a combination of this result and Fukaya's trick.
	
	Moreover, our new definition is reasonable since it gives a sufficient condition of the traditional one, as explained below.
	
	\begin{prop}
		\label{sufficient_intro_prop}
		The proper unobstructedness implies the usual unobstructedness.
	\end{prop}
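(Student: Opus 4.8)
The plan is to exhibit an explicit weak bounding cochain for $\m$ by specializing the formal variable $Y$ in the obstruction power series; in fact the zero cochain $b=0$ will already work, and more generally one may feed in any $\mathbf y=e^b$ with $b\in H^1(L)\hat\otimes\Lambda_+$. We work, as in Definition \ref{proper_unobstructed_introduction_defn}, under the standing Maslov‑nonnegativity/gradedness assumption that forces $\m_{0,\beta}\in H^0(L)\oplus H^2(L)$, so that the expansion into $W$ and the $Q_i$ is exhaustive. By the definition of proper unobstructedness all the $Q_i$ vanish, so that
\[
\textstyle\sum_\beta T^{\omega(\beta)}Y^{\partial\beta}\,\m_{0,\beta}=W\cdot\one
\]
is an identity of formal power series in $\Lambda[[H_1(L)]]\otimes H^*(L)$.

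The first step is to make sense of the substitution $Y^\alpha\mapsto 1$ ($\alpha\in H_1(L)$) on the right‑hand power series $W$. Grouping the defining sum of $W$ by the boundary class $\alpha=\partial\beta$, the coefficient of $Y^\alpha$ is $\sum_{\partial\beta=\alpha}T^{\omega(\beta)}(\,\cdot\,)$, a sum over disk classes with prescribed boundary. Gromov compactness bounds the number of disk classes of area at most any given $E$, and every such area is strictly positive; hence the coefficients of $W$ have $\mathsf{val}\to+\infty$ along distinct classes $\alpha$, the series $W(1)$ converges in $\Lambda$, and in fact $W(1)\in\Lambda_+\subseteq\Lambda_0$. (The same estimate shows $W(\mathbf y)$ converges in $\Lambda_0$ for any $\mathbf y\in H^1(L;U_\Lambda)$, since $\mathsf{val}(\mathbf y^{\partial\beta})=0$.)

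The second step is to evaluate the identity at $Y=1$, giving $\sum_\beta T^{\omega(\beta)}\,\m_{0,\beta}=W(1)\cdot\one$. Now observe that the left‑hand side is exactly the value at $b=0$ of the weak Maurer–Cartan map $b\mapsto\sum_{k\geqslant 0}\sum_\beta T^{\omega(\beta)}\m_{k,\beta}(b,\dots,b)$: for $k\geqslant 1$ the term $\m_{k,\beta}(0,\dots,0)$ vanishes by multilinearity, leaving only the $k=0$ summand. Therefore $b=0\in H^{odd}(L)\hat\otimes\Lambda_0$ solves the weak Maurer–Cartan equation (\ref{MC_eq}) with potential $w=W(1)\in\Lambda_0$, and $L$ is (weakly) unobstructed. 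If one prefers a nonzero cochain, one may instead evaluate at $\mathbf y=e^b$ for $b\in H^1(L)\hat\otimes\Lambda_+$ and use the divisor axiom to identify $\sum_\beta T^{\omega(\beta)}\mathbf y^{\partial\beta}\m_{0,\beta}$ with the full Maurer–Cartan map at $b$, producing a whole family of bounding cochains with potentials $W(\mathbf y)$.

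The only genuinely non‑formal ingredient — hence the main obstacle, though a mild one — is the legitimacy of the specialization $Y\mapsto 1$: one must know that the obstruction series, regrouped by boundary class, has coefficients whose valuations tend to infinity and that the resulting scalar $W(1)$ lands in $\Lambda_0$. This is precisely the finiteness input (Gromov compactness together with positivity of symplectic areas of holomorphic disks) that already underlies the definition of the operators $\m_{k,\beta}$ themselves, so no new analytic machinery is needed for this proposition; the non‑archimedean analytic refinement will enter only later, in the proof of Theorem \ref{Main_thm_this_paper}.
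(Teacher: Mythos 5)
Your proof is correct, and it is a cleaner specialization of what the paper does. The paper's own argument picks an \emph{arbitrary} $\mathbf y\in H^1(L;U_\Lambda)$ (any point is a zero of the $Q_i$'s once they vanish identically), writes $\mathbf y=e^{b}$ via $H^1(L;\Lambda_0)/H^1(L;2\pi\sqrt{-1}\mathbb Z)\cong H^1(L;U_\Lambda)$, and then ``reverses'' the divisor-axiom resummation to see that this $b\in H^1(L;\Lambda_0)$ solves the weak Maurer--Cartan equation. You instead evaluate at the single point $Y=1$, i.e.\ $\mathbf y=\mathbf 1$, $b=0$: then only the $k=0$ terms survive by multilinearity, so you never need the divisor axiom, the exponential $U_\Lambda\cong\Lambda_0/2\pi\sqrt{-1}\mathbb Z$, or any discussion of convergence of the $k\geqslant 1$ tail for a cochain with $\Lambda_0$-coefficients; the only analytic input is gappedness/Gromov compactness, which already gives $W(1)\in\Lambda_+$. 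What the paper's more general route buys is a whole $H^1(L;U_\Lambda)$-family of bounding cochains (the picture relevant later for the superpotential and its critical points), whereas your minimal choice suffices for the proposition as stated; your closing remark about feeding in $\mathbf y=e^{b}$ with $b\in H^1(L)\hat\otimes\Lambda_+$ recovers the paper's version. One small point of care if you do run the general case: for $b$ with coefficients merely in $\Lambda_0$ (valuation $0$) the naive sum over $k$ in \eqref{MC_eq} does not converge termwise, and it is exactly the divisor-axiom regrouping into $\mathbf y^{\partial\beta}=e^{\langle\partial\beta,b\rangle}$ that makes it meaningful --- a subtlety your $b=0$ argument sidesteps entirely.
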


	\begin{proof}
		Assume all $Q_i$'s are identically zero. We claim that simply picking a common zero $\mathbf y$ of $Q_i$'s within the subset $H^1(L; U_\Lambda)\cong U_\Lambda^m$ of their common convergence domain is sufficient to produce a bounding cochain. Indeed, there exists $x_i\in \Lambda_0$ such that $e^{x_i}=y_i$, whenever $y_i\in U_\Lambda$, i.e. $|y_i|=1$.
		This is a basic property of the Novikov field; cf. \cite{Yuan_I_FamilyFloer,FOOOToricOne}. Note that set-theoretically, we have
		\begin{equation}
			\label{Lambda_0_H_1_eq}
			H^1(L;\Lambda_0)/H^1(L;2\pi \sqrt{-1}\mathbb Z)\cong H^1(L;U_\Lambda)
		\end{equation}
		Then, reversing the arguments preceding Definition \ref{proper_unobstructed_introduction_defn} implies that $b:=(x_1,\dots, x_m)\in H^1(L; \Lambda_0)$ is a bounding cochain in the Maurer-Cartan set. Thus, $L$ is unobstructed in the usual sense.
	\end{proof}

	From a geometric point of view, let $Z$ denote the zero locus of all $Q_i$'s in their common convergence domain $\mathcal U$ in $H^1(L; \Lambda^*)\cong (\Lambda^*)^m$.
	Note that $\mathcal U_0:= H^1(L; U_\Lambda) \cong (U_\Lambda)^m$ is contained in $\mathcal U$ by Gromov's compactness, and $Z\subseteq \mathcal U$ is an analytic subvariety.
	In this context, the proper unobstructedness means $Z=\mathcal U$, while the usual unobstructedness just corresponds to the condition that $Z\cap \mathcal U_0 \neq \varnothing$.
	To sum up, proper unobstructedness is much stronger, requiring the \textit{identical vanishing} of $Q_i$'s, but usual unobstructedness (see \cite[Definition 3.6.14]{FOOOBookOne}) only needs the \textit{nonemptiness} of the Maurer-Cartan set, or sufficiently, the existence of a single common root for $Q_i$'s (up to a \textit{non-analytic} transformation $e^{x_i}=y_i$). 
	Roughly speaking, the enhanced strength of proper unobstructedness leads to even more natural and robust behavior.
	But, we also note that the theory of bounding cochains has its own merits, being more general as it applies to classes of any degree in $H^*(L)$ and is particularly relevant for bulk deformations of $A_\infty$ structures, while we hope to adapt our new viewpoint to bulk deformed cases in future studies. In contrast, proper unobstructedness focuses specifically on degree-one classes in $H^1(L)$, a perspective inspired by the SYZ conjecture and mirror symmetry.

	While we can use a bounding cochain $b$ to develop Lagrangian Floer cohomology as in the literature \cite{FOOOBookOne}, the proper unobstructedness invites a possible formulation of a novel version of Lagrangian Floer cohomology, working with the formal power series $Q_i$'s directly instead of the bounding cochain $b$. 
	This viewpoint has been proven essential in developing a quantum-corrected version of a folklore conjecture \cite{Yuan_c_1} (see Sec \ref{ss_related_work}).
	This should also aid in generalizing the family Floer functor \cite{AboFamilyICM,FuFamily,AboFamilyFaithful} into an obstructed version as well. We defer this to future studies \cite{Yuan_affinoid_coeff}.

	\subsubsection{Non-archimedean analytic continuation}

	Apart from the subtle issue of choice-independence, the basic idea is natural.
	Due to Gromov's compactness and Groman-Solomon's reverse isoperimetric inequalities \cite{ReverseI,ReverseII}, the $W$ and $Q_i$'s are actually \textit{strictly convergent} formal power series in the sense that they live in the following (affinoid) algebra with additional adic-convergence conditions:
	\[
	\Lambda\langle H_1(L) \rangle 
	:=
	\left\{
	\sum_{i=0}^\infty c_i Y^{\alpha_i}  \  :  \  c_i\in\Lambda, \  \alpha_i\in H_1(L), \  \mathrm{val}(c_i) + \langle \alpha_i,\gamma\rangle  \to \infty \ \text{for $\gamma$ in a nbhd of $0$ in $H^1(L)$}
	\right\} 
	\]
	(Here we slightly abuse the terms and notations.)
	We can view $\Lambda\langle \pi_1(L)\rangle$ as the \textit{algebra of analytic functions} on an analytic open subdomain $\mathcal U$ in $H^1(L;\Lambda^*)\cong (\Lambda^*)^m$ that contains $H^1(L;U_\Lambda)\cong U_\Lambda^m$, where $\Lambda^*=\Lambda\setminus\{0\}$ and $U_\Lambda$ is the unit circle in $\Lambda$ given by $\mathrm{val}=0$. 
	The vanishings of these formal power series $Q_i$'s on sufficiently many points in $\mathcal U$ can ensure the identical vanishing of $Q_i$'s on the whole $\mathcal U$.
	To offer some intuitive insights into how non-archimedean analytic continuation works, we recall an observation in \cite{Yuan_I_FamilyFloer} below, while it may be not explicitly used in this paper.
	\begin{prop}
		If $f=\sum_{\nu\in\mathbb Z^m} c_\nu Y^\nu
		\in \Lambda [[Y_1^\pm, \dots, Y_m^\pm]]$ is convergent and vanishing on $U_\Lambda^n$, then $f$ is identically zero.
	\end{prop}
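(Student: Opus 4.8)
The plan is to transfer the statement, via the residue field $\Lambda_0/\Lambda_+\cong\mathbb C$, to the elementary fact that a nonzero Laurent polynomial over $\mathbb C$ has only finitely many zeros in $\mathbb C^*$; the number $m$ of variables is then handled by induction. First I would unwind what ``convergent'' means here: $f=\sum_\nu c_\nu Y^\nu$ converges on $U_\Lambda^m$ precisely when $\mathsf{val}(c_\nu)\to\infty$ as $|\nu|\to\infty$, so only finitely many coefficients sit below any prescribed valuation bound, and $N:=\inf_\nu\mathsf{val}(c_\nu)$ is finite and attained as soon as $f\not\equiv 0$.

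For the base case $m=1$, assume $f\not\equiv 0$. After multiplying $f$ by a suitable power of $T$ --- which changes neither convergence nor the zero locus on $U_\Lambda$ --- I may assume $N=0$, so every $c_\nu\in\Lambda_0$ and only finitely many of them lie outside $\Lambda_+$. Reducing coefficientwise modulo $\Lambda_+$ then produces a nonzero Laurent polynomial $\bar f\in\mathbb C[Y^{\pm 1}]$. Since $\mathbb C^*$ is infinite, I can choose $y_0\in\mathbb C^*\subset U_\Lambda$ with $\bar f(y_0)\neq 0$; evaluating, $f(y_0)=\sum_\nu c_\nu y_0^\nu$ converges in $\Lambda_0$ and its reduction modulo $\Lambda_+$ equals $\bar f(y_0)\neq 0$, so $f(y_0)\neq 0$, contradicting the hypothesis that $f$ vanishes on $U_\Lambda$.

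For the inductive step I would write $f=\sum_{\nu_m\in\mathbb Z}f_{\nu_m}(Y_1,\dots,Y_{m-1})\,Y_m^{\nu_m}$, where each $f_{\nu_m}$ is again a convergent Laurent series (its coefficients form a sub-family of $\{c_\nu\}$, so $\mathsf{val}\to\infty$ persists). Fixing an arbitrary $\mathbf y'\in U_\Lambda^{m-1}$, the specialization $f(\mathbf y',Y_m)=\sum_{\nu_m}f_{\nu_m}(\mathbf y')\,Y_m^{\nu_m}$ is a convergent Laurent series in the single variable $Y_m$ that vanishes on $U_\Lambda$, so the base case forces $f_{\nu_m}(\mathbf y')=0$ for all $\nu_m$. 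Since $\mathbf y'$ was arbitrary, each $f_{\nu_m}$ vanishes on $U_\Lambda^{m-1}$, and the induction hypothesis yields $f_{\nu_m}\equiv 0$, hence $f\equiv 0$.

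The points that genuinely need care --- and, such as it is, the main obstacle --- are the two interchanges of limit and sum: that the coefficientwise reduction $\Lambda_0\to\mathbb C$ commutes with the infinite evaluation $\sum_\nu c_\nu y_0^\nu$ (which works because all but finitely many summands land in $\Lambda_+$), and that the regrouping $f(\mathbf y',Y_m)=\sum_{\nu_m}f_{\nu_m}(\mathbf y')Y_m^{\nu_m}$ is legitimate (which follows from the unconditional convergence of summable families in a complete non-archimedean field). Both are standard, but they are exactly the places where a loosely stated argument could fail.
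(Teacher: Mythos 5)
Your proof is correct and rests on the same key idea as the paper's: normalize so that all coefficients lie in $\Lambda_0$ with at least one of norm $1$, reduce modulo $\Lambda_+$ to a nonzero Laurent polynomial over the residue field $\mathbb C$, and use that such a polynomial cannot vanish on the complex torus. The only difference is organizational: you treat the several-variable case by induction and specialization at points of $U_\Lambda^{m-1}$, whereas the paper runs the reduction argument in all variables at once and invokes directly that a nonzero Laurent polynomial over $\mathbb C$ cannot vanish on all of $(\mathbb C^*)^n$.
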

	
	\begin{proof}
		Note that $\mathrm{val}(c_\nu)\to\infty$ i.e. $|c_\nu|\to 0$. 
		Arguing by contraction, suppose the sequence $|c_\nu|$ was nonzero, say, it had a maximal value $|c_{\nu_0}|=1$ for some $\nu_0\in\mathbb Z^n$. May further assume $c_{\nu_0}=1$. 
		Then, $|c_\nu|\leqslant 1$ for all $\nu$, so $f\in \Lambda_0[[Y^\pm]]$.
		Modulo the ideal of elements with norm $<1$, we get a power series $\bar f=\sum_{\nu} \bar c_\nu Y^\nu$ over the residue field $\mathbb C$.
		As $c_\nu \to 0$, we have $|c_\nu|<1$ and $\bar c_\nu=0$ for $\nu\gg 1$.
		Hence, this $\bar f$ is just a Laurent polynomial over $\mathbb C$ with $\bar c_{\nu_0}=1$. Meanwhile, the condition also tells that $\bar f(\mathbf {\bar y})$ vanishes for all $\mathbf {\bar y}\in (\mathbb C^*)^n$; thus, $\bar f$ must be identically zero. This is a contradiction.
	\end{proof}
	
	To sum up, Theorem \ref{Main_thm_this_paper} gives a quite broad criterion so that the (proper) unobstructedness condition somehow presents ‘connectedness’ within the moduli space of Lagrangian branes.
	The critical step involves identifying the right effective definition of unobstructedness in Floer theory and demonstrating its invariance.
	The basic idea is intuitively based on an analogy with analytic continuation in complex analysis, which suggests parallels in non-archimedean analysis. Specifically, in proving Theorem \ref{Main_thm_this_paper}, we aim to interpret the (proper) unobstructedness as the vanishing of certain non-archimedean analytic functions. The vanishing of these functions at sufficiently many points implies their uniform vanishing on their convergence domains; meanwhile, Groman-Solomon’s reverse isoperimetric inequalities \cite{ReverseI} ensure that these domains are large enough to deduce the unobstructedness of adjacent Lagrangians up to ``choice-changes'' or ``wall-crossing''. 
	Everything depends on choices in an essential way, and that is where the family Floer technology in \cite{Yuan_I_FamilyFloer} enters the story.

	\subsubsection{Inspiration from family Floer mirror symmetry}
	\label{sss_inspiration_family_floer}

	The proper unobstructedness draws inspiration from the family Floer SYZ mirror construction \cite{Yuan_I_FamilyFloer}. But, notice that we do not work with Lagrangian tori. We can also largely simplify its technical aspects, as this paper does not require establishing \textit{cocycle conditions} for the structure sheaf of a mirror non-archimedean analytic space, but rather is sufficient to seek certain \textit{analytic isomorphisms}.
	Exploring these cocycle conditions is the \textit{core} of \cite{Yuan_I_FamilyFloer} and requires substantial studies for the various 2-homotopy or 2-pseudo-isotopy among two-dimensional families of fibered $A_\infty$ algebras. Notably, it is often false to directly apply the homological perturbation theory to a 2-pseudo-isotopy when there are specified 1-pseudo-isotopy boundary conditions.
	While we greatly benefit from and are deeply grateful for the ideas of Fukaya and Tu \cite{FuBerkeley,Tu,FuCyclic}, we have to indicate that the Maurer-Cartan framework may be impossible to establish the cocycle condition in the category of non-archimedean analytic spaces; cf. \cite[4.9]{Tu}.
	Even if we only seek analytic isomorphisms, the proof of Theorem \ref{Main_thm_this_paper} and the context of Definition \ref{proper_unobstructed_introduction_defn} will also present evidence for the necessity of extending beyond the Maurer-Cartan framework. For instance, the automorphism group of the algebra of analytic functions $\Lambda\langle H_1(L) \rangle$ is a vast rigid structure. An analytic automorphism on the analytic domain $\mathcal U$ can induce a bijection on the set (\ref{Lambda_0_H_1_eq}), but conversely a bijection on this set does not necessarily arise from such an analytic automorphism.
	

	This paper is essentially self-contained, with only the exception of the virtual technique and some homological algebra results derived from our previous work \cite{Yuan_I_FamilyFloer}. The rest of the discussions are basically independent of \cite{Yuan_I_FamilyFloer} and have been significantly streamlined to ensure the text is as concise as possible.
	We sincerely hope that this paper can convey the essential ideas and methodologies in \cite{Yuan_I_FamilyFloer} in a relatively accessible way.
	In fact, the main point of this paper is not complicated techniques but rather a radical new perspective that somewhat challenges traditional viewpoints.

	\subsection{Related work}
	\label{ss_related_work}

	
	The celebrated folklore conjecture by Auroux, Kontsevich, and Seidel, which relates the critical values of a B-side mirror Landau-Ginzburg superpotential to the eigenvalues of quantum multiplication by the first Chern class of the A-side space, has been extensively studied especially in contexts without Maslov-0 holomorphic disks; see e.g. \cite{castronovo2020fukaya, AuTDual,Sheridan16, Ritter_Smith}. Recent development includes the author's proof \cite{Yuan_c_1} of this conjecture with nonconstant Maslov-0 disk quantum corrections, under the assumption of \textit{proper unobstructedness}.
	
	It is interesting to note that the traditional unobstructedness of the existence of a weak bounding cochain is \textit{insufficient} for proving this quantum-corrected version of folklore conjecture in \cite{Yuan_c_1}.	
	Indeed, the limitation arises as the Maurer-Cartan set's invariance under $A_\infty$ algebra homotopy equivalences cannot detect the difference between the Hochschild cohomology rings of the $\check \m$ and $\m$. This is because the ring structure for the Hochschild cohomologies for (curved) $A_\infty$ algebras is not functorial.
	Despite the homotopy equivalence of the curved $A_\infty$ algebras $\check \m$ and $\m$, leading to identical Maurer-Cartan \textit{sets} $\mathcal M(\check \m)=\mathcal M(\m)$, their corresponding Hochschild cohomology \textit{rings} can significantly differ.

	In general, the mirror superpotential can be (non-archimedean) \textit{transcendental} and is well-defined only up to analytic automorphisms, since it counts holomorphic pearly trees and the presence of even a single Maslov-0 disk can generate an infinite number of these pearly trees. 
	Therefore, unlike monotone Lagrangians, the (local) superpotential $W$ is in general no longer a polynomial, e.g. that of Lagrangian fibers over points in the interior of the above-mentioned wall region regarding Proposition \ref{Application_AAK_prop_intro}. This explains the necessity of analytic and transcendental perspectives like Definition \ref{proper_unobstructed_introduction_defn}.

	Finally, we note that the unobstructedness is also known to play a critical role for the Thomas-Yau conjecture. In \cite{thomas2001moment}, Thomas describes a stability condition for Lagrangians, conjectured to be equivalent to the existence of a special Lagrangian in the hamiltonian deformation class of a fixed Lagrangian $L$.
	In \cite[Theorem 4.3]{thomas2002special}, Thomas and Yau relate the conjectural stability to Lagrangian mean curvature flow and argue that there can be at most one special Lagrangian in the hamiltonian deformation class of a graded spin Lagrangian $L$ \textit{provided that $L$ is unobstructed}.
	Now, our Theorem \ref{Main_thm_this_paper} also renders unobstructedness a quite mild assumption for their outcome.
	Therefore, we propose that the proper unobstructedness could be a necessary condition for the hypothetical stability condition of Lagrangians.

	\subsection*{Acknowledgment}
	The author thanks Mohammed Abouzaid, Denis Auroux, Mohammad Farajzadeh-Tehrani, Mark Gross, Kaoru Ono, Daniel Pomerleano, Paul Seidel, and Umut Varolgunes for valuable conversations.
	A key idea of this work was conceived during the 2023 Simons Math Summer Workshop, for which the author expresses gratitude to the SCGP for their hospitality. 
	The author thanks Kenji Fukaya for constant supports and lots of patient explanations for the monograph \cite{FOOO_Kuranishi}, and also thanks Mark McLean for his insightful question during a Symplectic Geometry Seminar in 2019 at Stony Brook, which inspired the investigation of this paper's topic since then.

	\section{$A_\infty$ algebras with topological labels}

	Let $C=\bigoplus_{d\in\mathbb Z} C^d$ be a $\mathbb Z$-graded vector space over $\mathbb R$ with a differential $d_C:C^\bullet \to C^{\bullet+1}$.
	We impose the condition $C^d=0$ for $d<0$ and $d\gg 1$.
	A basic example to consider is the de Rham complex or cohomology of a manifold $N$, denoted as $\Omega^*(N)$ or $H^*(N)$.
	For any element $x\in C^d$, we define its degree as $\deg x=d$. We further define the \textit{shifted degree} by
	$
	\deg'x:=\deg x-1
	$, and we write 
	\begin{equation}
		\label{sign_eq}
		x^\# =\id_\#(x)=(-1)^{\deg'x} x
	\end{equation}
	Besides, we also use
	$\id_{\# s}=\id_\#^s$ to represent the $s$-fold iteration $\id_\#\circ \cdots \circ \id_\#$ of $\id_\#$.
	For $s\in\mathbb N$ and a multi-linear map $\phi$, we write
	$\phi^{\# s}=\phi\circ \id^{s}_\#
	$.
	For $s=1$, we simply have $\phi^\#=\phi^{\# 1}$.
	The degree of $\phi$, denoted as $\deg \phi$, is its degree when considered as a homogeneously-graded $k$-multilinear operator in graded vector spaces. Its shifted degree is given by $\deg' \phi=\deg \phi+k-1$.

	\subsection{Gappedness condition}
	
	We aim to define an $A_\infty$ algebra with labels in an abelian group $\G$ accompanied by two group homomorphisms $E:\G\to \mathbb R$ and $\mu:\G \to 2\mathbb Z$.
	For the purpose of this paper, we always consider the situation $\G=H_2(X,L)$ equipped with the symplectic area $E:H_2(X,L)\to \mathbb R$ and the Maslov index $\mu:H_2(X,L)\to  2\mathbb Z$.
	The unit of $H_2(X,L)$ is denoted as $\beta=0$.
	Let's call such a triple $(H_2(X,L),E,\mu)$ a \textit{label group}.

	Define 
	\begin{equation}
		\label{CC_eq}
		\CC_{k,\beta}(C, C')=
		\Hom( C^{\otimes k}, C')
	\end{equation}
	representing the space of $k$-multilinear operators, with $\beta\in H_2(X,L)$ just serving as an additional label.
	We then define a subspace
	\[
	\CC_{H_2(X,L)}( C, C')  \, \subseteq \prod_{k\in\mathbb N} \prod_{\beta\in  H_2(X,L)} \CC_{k,\beta}( C, C')
	\]
	within the direct product, consisting of the systems $\mathfrak t=(\mathfrak t_{k,\beta})_{k\in\mathbb N,\beta\in H_2(X,L)}$ of multilinear operators $\mathfrak t_{k,\beta}$ with the following \textit{gappedness conditions}:
	\begin{itemize}
		\setlength{\itemsep}{1pt}
		\item [(a)] $\mathfrak t_{0,0}=0$; 
		\item [(b)] if $E(\beta)<0$ or $E(\beta)=0$, $\beta\neq 0$, then $\mathfrak t_\beta:=(\mathfrak t_{k,\beta})_{k\in\mathbb N}$ vanishes identically; 
		\item [(c)] for any $E_0>0$, there are only finitely many $\beta$ such that $\mathfrak t_{\beta}\neq 0$ and $E(\beta)\le E_0$.
	\end{itemize}
	
	\vspace{0.5em}
	
	For clarity, we will simply call such $\mathfrak t$ an \textit{operator system}. We often abbreviate $\CC_{H_2(X,L)}(C,C')$ to $\CC(C,C')$, $\CC_{H_2(X,L)}$, or simply $\CC$ if the context is clear.
	The gappedness condition corresponds to the Gromov's compactness in symplectic topology.

	Next, we define
	\begin{equation}
		\label{composition_Gerstenhaber_eq}
		\begin{aligned}
			(\g\diamond\f)_{k,\beta}=
			\sum_{\ell \ge 1}
			\sum_{k_1+\dots+k_\ell=k}
			\sum_{\beta_0+ \beta_1+\cdots +\beta_\ell=\beta}
			\g_{\ell,\beta_0} 
			\circ ( \f_{k_1,\beta_1}\otimes \cdots \otimes \f_{k_\ell,\beta_\ell} ) \\
			(\g\{ \h\})_{k,\beta} = \sum_{\lambda+\mu+\nu=k}\sum_{\beta'+\beta''=\beta} \g_{\lambda+\mu+1,\beta'} \circ (\id_{\#\deg'\h}^\lambda \otimes \h_{\nu,\beta''}\otimes \id^\mu)
		\end{aligned}
	\end{equation}
	Given $\f,\g, \h\in \CC$, one can check that $\g\diamond \f, \g\{\h\}\in \CC$ still satisfy the gappedness condition.
	Here we adopt Getzler's brace notation \cite{Getzler_1993cartan} for the Gerstenhaber product \cite{gerstenhaber1963cohomology}.
	
	For our purpose, we may assume that the vector space $C$ is one of the following cases:
	\begin{itemize}
		\setlength{\itemsep}{1pt}
		\item $C=H^*(N)$ or $C=\Omega^*(N)$ for a closed submanifold $N$; (we usually take $N=L$)
		\item $C=\bigoplus_{i} \Omega^*(N_i)$ is the direct sum of differential form spaces on multiple closed manifolds $N_i$'s;
		\item $C=\Omega^*(P\times N)$ where $N$ is a closed submanifold $N$ and $P$ is the convex hull of a finite subset in a Euclidean space.
	\end{itemize}

	\begin{defn}
		\label{A_infty_algebra_defn}
		An \textit{$A_\infty$ algebra with labels} in $H_2(X,L)$ is a tuple $(C,\m)$ which consists of an operator system $\m=(\m_{k,\beta})$ contained in $\CC(C,C)$, satisfying that $\deg \m_{k,\beta}=2-\mu(\beta)-k$ as a multi-linear map and the $A_\infty$ associativity relation
		\[\m\{\m\}=0\]
		Such an $A_\infty$ algebra with labels is called \textit{minimal} when $\m_{1,0}=0$.
	\end{defn}

	\begin{defn}
		An \textit{$A_\infty$ homomorphism with labels in $H_2(X,L)$} from $(C',\m')$ to $(C,\m)$ is defined as an operator system $\f=(\f_{k,\beta})$ within $\CC(C',C)$. This satisfies $\deg \f_{k,\beta}=1-\mu(\beta)-k$ and
		\[
		\m\diamond \f=\f\{\m'\}
		\]
	\end{defn}
	
	\begin{defn}
		In the above definitions, note that $\deg \m_{1,0}=1$ and the condition $\m\{\m\}=0$ implies that $\m_{1,0}\circ \m_{1,0}=0$. Hence, we naturally derive a cochain complex $(C, \m_{1,0})$.
		Furthermore, $\f$ naturally induces a cochain map $\f_{1,0}$ from $(C',\m'_{1,0})$ and $(C,\m_{1,0})$.
		When $\f_{1,0}$ is a quasi-isomorphism, $\f$ is called \textit{an $A_\infty$ homotopy equivalence}.
	\end{defn}

	\begin{rmk}
		\label{sign_A_algebra_rmk}
		Some remarks on the signs are as follows.
		Let's write $\mathfrak t_\#=\mathfrak t \diamond \id_\#$ for $\mathfrak t\in\CC_{H_2(X,L)}$. For an $A_\infty$ algebra $\m=(\m_{k,\beta})$ with labels in $H_2(X,L)$, we have $\deg'\m_{k,\beta}=\deg \m_{k,\beta}+k-1=1-\mu(\beta)=1$ (mod 2), as $\mu(\beta)$ is even. Similarly, if $\f$ is an $A_\infty$ homomorphism and $\mu(\beta)$ is even, then $\deg'\f_{k,\beta}= \deg \f_{k,\beta}+k-1=0$ (mod 2); in particular, we have $\id_\#\diamond \f_\#=\f $.
	\end{rmk}

	\begin{defn}
		An $A_\infty$ algebra with labels in $H_2(X,L)$, denoted as $(C,\m)$, is called \textit{unital} when there is a \textit{unit} $\one\in C^0$. For this unit, we require:
		(i) $\m_{1,0}(\one)=0$;
		(ii) $\m_{2,0}(\one, x)=(-1)^{\deg x}\m_{2,0}(x,\one)=x$;
		(iii) $\m_{k,\beta} (\dots, \one,\dots)=0$ for all $(k,\beta)$ except $(1,0)$ and $(2,0)$. Similarly, an $A_\infty$ homomorphism $\f:\m_1\to \m_2$ is called \textit{unital} if $\f_{1,0}(\one_1)=\one_2$, where $\one_1$ and $\one_2$ are units of $\m_1$ and $\m_2$ respectively, and $\f_{k,\beta}(\dots, \one_1,\dots)=0$ when $(k,\beta)\neq (1,0)$.
	\end{defn}

	Abusing the notations, we denote by $\one$ the (equivalence class of) constant-one function in any one of $\Omega^*(L)$, $H^*(L)$, $\Omega^*(L)_P$ and $\HL_P$. It commonly serves as the units of $A_\infty$ structures we encounter.

	\subsection{Retrieval of filtered $A_\infty$ algebras}
	The basic reason for introducing the extra label group $H_2(X,L)$ in \cite{Yuan_I_FamilyFloer} and in this paper is to simultaneously capture the topological data of pseudo-holomorphic disks when studying the $A_\infty$ algebra structures arising from their moduli spaces. 
	For example, this extra topological data is useful for disk invariants \cite{Yuan_e.g._FamilyFloer} and for explicit presentations of integral affine structures in the context of SYZ conjecture \cite{Yuan_local_SYZ,Yuan_conifold,Yuan_A_n}. It is also needed in order to describe the divisor axiom \cite{FuCyclic} and develop a divisor-axiom-preserving homotopy theory of $A_\infty$ algebras \cite{Yuan_I_FamilyFloer}.
	However, the difference is mild. Indeed, by forgetting the classes $\beta$'s and considering only the numbers $E(\beta)$'s, we can retrieve the notion of \textit{filtered $A_\infty$ algebras} in the literature \cite[Definition 3.2.20]{FOOOBookOne}.

	\subsection{Pseudo-isotopies of $A_\infty$ algebras}
	
	The pullback maps for the inclusion maps $L\to \{s\}\times L\subset P \times L$ and the projection map $P\times L\to L$ give rise to
	\begin{equation}
		\label{eval_eq}
		\eval^s: \Omega^*(P\times L)\to \Omega^*(L)
	\end{equation}
	for every $s\in P$ and 
	\[
	\incl: \Omega^*(L) \to \Omega^*(P\times L)
	\]
	Notice that an element in $\Omega^*(P\times L)$ can be written in the form $\sum_{i=1}^n \alpha_i \otimes x_i(s)$, where $\alpha_i\in \Omega^*(P)$ and $x_i(s)\in \Omega^*(L)$ smoothly varies with $s\in P$.
	Namely, there is a natural identification 
	\[
	\Omega^*(L)_P :=\Omega^*(P)\otimes_{C^\infty(P)} C^\infty(P, \Omega^*(L)) \cong \Omega^*(P\times L)
	\]
	One can similarly introduce 
	\[
	H^*(L)_P:= \Omega^*(P) \otimes_{C^\infty(P)} C^\infty(P, H^*(L))
	\]
	By specifying a basis of $H^*(L)$, this is just a direct sum of copies of $\Omega^*(P)$.

	\begin{rmk}
		\label{natural_d_rmk}
		Given the zero differential $d=0$ on $\HL$ or the exterior differential $d$ on $\OL$, there is a natural differential $d_P$ on $\HL_P$ or $\OL_P$ defined by sending $\alpha\otimes x(s)$ to $\alpha\otimes d(x(s))+ \sum (-1)^{|\alpha|} ds_k\wedge \alpha \otimes \partial_{s_k} x(s)$ where $(s_k)$ are local coordinates for $P$.
		Concisely, we may write $d_{P}=1\otimes d+\sum ds_k\otimes \partial_{s_k}$.
	\end{rmk}

	\begin{defn}
		\label{pointwise-defn}
		A \textit{$P$-pseudo-isotopy} on $\HL$ (resp. $\Omega^*(L)$) with labels is defined as a special sort of an $A_\infty$ algebra with labels $\M=(\M_{k,\beta})$ within $\CC(\HL_P,\HL_P)$ (resp. $\CC(\OL_P,\OL_P)$) such that $\M_{1,0}=d_{P}$ and every $\M_{k,\beta}$ is signed $\Omega^*(P)$-linear (see below).
		When $P=\oi$, we will simply call it a \textit{pseudo-isotopy}.
		A multi-linear operator $\M: C_P^{\otimes k} \to C'_P$ is called \textit{$P$-pointwise} or \textit{signed $\Omega^*(P)$-linear} if we have the following \textit{signed linearity condition}: for any $\sigma\in \Omega^*(P)$ we have
		\[
		\M(\eta_1 \otimes x_1,\dots, \sigma\wedge \eta_i\otimes x_i,\dots, \eta_k \otimes x_k ) = (-1)^{\deg \sigma \cdot \left( (\deg \M-1+k)
			+\sum_{a=1}^{i-1} (\deg \eta_a+\deg x_a -1) \right)} \sigma \wedge \M
		(\eta_1\otimes x_1,\dots, \eta_k\otimes x_k)
		\]
	\end{defn}

	A $P$-pseudo-isotopy $(C_P,\M)$ can be approximately viewed as a $P$-family of $A_\infty$ algebras together with certain extra data of derivatives of orders $\leqslant \dim P=:m$. More precisely, the $P$-pointwise condition allows us to express $\M$ as 
	\begin{equation}
		\label{1otimes_s_eq}
		\M=1\otimes \m^s+ \sum_{\varnothing \neq I \subseteq \{1,2,\dots, m\}} ds_I\otimes \mc^{I,s}
	\end{equation}
	where each $\m^s$ defines an $A_\infty$ algebra on $C$ and $ds_I=ds_{i_1}\wedge \cdots \wedge ds_{i_k}$ for $I=\{i_1,\dots, i_k\}$.
	The additional collections of multilinear maps \(\mc^{I,s}\) can be intuitively viewed as non-commutative derivatives of \(\m^s\), capturing its variation as $s$ moves in the parameter space $P$. We also remark that an $A_\infty$ algebra $(C,\m)$ gives rise to a {trivial $P$-pseudo-isotopy} $\M^\tri_P$ on $C_P$ by setting all $\m^s=\m$ and $\mc^{I,s}=0$.

	\begin{defn}
		\label{restriction_pseudo_isotopy_defn}
		Every $(C, \m^s)$ is an $A_\infty$ algebra with labels for each $s\in P$. Then, we call $(C,\m^s)$ the \textit{restriction} of $(C_P, \M)$ at $s\in P$, and we say that the $\M^P$ \textit{restricts} to the $\m^s$ at $s\in P$.
	\end{defn}
	
	The following result is due to Fukaya \cite{FuCyclic}. See also \cite{Yuan_I_FamilyFloer} and \cite[21.25 \& 21.29]{FOOO_Kuranishi}.
	
	\begin{lem}
		\label{pseudo_isotopy_pointwise_lem}
		When $P=\oi$, we can write $\M=1\otimes \m^s+ ds\otimes \mc^s$. Then, the $A_\infty$ associativity relation of $\M$ is equivalent to the following conditions:
		
		\begin{itemize}
			\item The $(C,\m^s)$ is an $A_\infty$ algebra for any $s$, namely, $\m^s\{\m^s\}=0$.
			\item  $\m^s_{1,0}$ is independent of $s$ and $\mc^s_{1,0}=\frac{d}{ds}$.
			\item The following relation holds
			\[
			\hspace{-2em}
			\frac{d}{ds}\m^s_{k,\beta} 
			+ 
			\sum_{\substack{i+j+\ell=k }} 
			\sum_{\substack{ \beta_1+\beta_2=\beta\\
					(i+j+1,\beta_1)\neq (1,0)}}
			\mc^s_{i+j+1,\beta_1}  (\id_\#^i \otimes \m^s_{\ell,\beta_2} \otimes \id^j) 
			\ \ -
			\sum_{\substack{i+j+\ell=k }} 
			\sum_{\substack{ \beta_1+\beta_2=\beta \\
					(\ell,\beta_2)\neq(1,0)}}
			\m^s_{i+j+1, \beta_1} 
			(\id^i \otimes 
			\mc^s_{\ell,\beta_2} \otimes \id^j) =0
			\]
			More concisely, this means 
			\[
			\mc^s\{\m^s\}-\m^s\{\mc^s\}=0
			\]
			Actually, for the reduced form $\tilde \mc^s= \mc^s-\mc_{1,0}^s$ removing the $\CC_{1,0}$-component, we have\footnote{This is because $\frac{d}{ds}\m^s=\frac{d}{ds}\circ \m^s- \sum \m^s (\id^\bullet\otimes \frac{d}{ds}\otimes \id^\bullet)=\frac{d}{ds} \{\m^s\} -\m^s\{\frac{d}{ds}\} $.}
			\[
			\tfrac{d}{ds} \m^s + \tilde \mc^s \{\m^s\} -\m^s\{\tilde \mc^s\} =0
			\]
		\end{itemize}
	\end{lem}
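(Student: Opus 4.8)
The plan is to expand the associativity relation $\M\{\M\}=0$ through the decomposition (\ref{1otimes_s_eq}), which for $P=\oi$ reads $\M=1\otimes\m^s+ds\otimes\mc^s$, and then to sort the resulting identity by the number of $ds$-factors, using $ds\wedge ds=0$ so that only the $\Omega^0(\oi)$- and $\Omega^1(\oi)$-components survive. First I would record what is immediate from the definition of a pseudo-isotopy: by Definition \ref{pointwise-defn} and Remark \ref{natural_d_rmk} one has $\M_{1,0}=d_{\oi}=1\otimes d+ds\otimes\tfrac{d}{ds}$, so matching coefficients gives $\m^s_{1,0}=d$, the fixed differential on $C$ --- in particular independent of $s$ --- and $\mc^s_{1,0}=\tfrac{d}{ds}$; this is the second bullet. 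All the other components $\m^s_{k,\beta}$ and $\mc^s_{k,\beta}$ with $(k,\beta)\ne(1,0)$ are $\oi$-pointwise, i.e.\ signed $\Omega^*(\oi)$-linear.

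Next I would substitute $\M=1\otimes\m^s+ds\otimes\mc^s$ into a generic summand $\M_{\lambda+\mu+1,\beta'}(\id_{\#\deg'\M}^\lambda\otimes\M_{\nu,\beta''}\otimes\id^\mu)$ of $(\M\{\M\})_{k,\beta}$. Each of the two $\M$'s contributes either $1\otimes(\text{pointwise})$, $ds\otimes(\text{pointwise})$, or --- when its label is $(1,0)$ --- the derivation $ds\otimes\tfrac{d}{ds}$, and any summand with two or more $ds$'s vanishes. The $\Omega^0(\oi)$-component thus arises only when both factors are $1\otimes\m^s$; since these are pointwise it equals $1\otimes(\m^s\{\m^s\})$ at each $s$, so its vanishing is exactly ``$\m^s\{\m^s\}=0$ for all $s$'', the first bullet. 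The $ds$-component collects the summands with exactly one $ds$: either the outer $\M$ is $ds\otimes\mc^s$ and the inner is $1\otimes\m^s$, yielding $ds\otimes(\mc^s\{\m^s\})$, or the outer is $1\otimes\m^s$ and the inner is $ds\otimes\mc^s$, yielding $\pm\, ds\otimes(\m^s\{\mc^s\})$, the sign being forced by the signed $\Omega^*(\oi)$-linearity of $\m^s_{\lambda+\mu+1,\beta'}$ when the $ds$ produced by the inner operator is commuted past the $\lambda$ leading slots, together with the $\id_\#$-convention on $C_{\oi}$ (whose shifted degree absorbs the form-degree on $\oi$). Checking that the relative sign is $-1$ identifies the $ds$-component with $\mc^s\{\m^s\}-\m^s\{\mc^s\}$, hence with the displayed componentwise relation upon expanding the braces --- modulo one caveat.

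The caveat, handled last, is that $d_{\oi}$ is a derivation rather than $\Omega^*(\oi)$-linear, so the subterms carrying $\tfrac{d}{ds}$ need care. When the outer operator is the $ds\otimes\tfrac{d}{ds}$ part of $\M_{1,0}$ acting on the output of $1\otimes\m^s_{\nu,\beta''}$, Leibniz splits $\tfrac{d}{ds}\circ\m^s_{\nu,\beta''}$ into the ``operator derivative'' $\tfrac{d}{ds}\m^s_{\nu,\beta''}$ plus $\sum_i\pm\,\m^s_{\nu,\beta''}(\dots\otimes\tfrac{d}{ds}\otimes\dots)$, and this last sum cancels precisely the $ds$-component subterms whose inner operator is the $ds\otimes\tfrac{d}{ds}$ piece of $\M_{1,0}$ --- the $(\ell,\beta_2)=(1,0)$ ones --- which is why those $(1,0)$-cases are excluded in the displayed equation. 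This cancellation is summarized by the footnoted identity $\tfrac{d}{ds}\m^s=\tfrac{d}{ds}\{\m^s\}-\m^s\{\tfrac{d}{ds}\}$; writing $\mc^s=\tfrac{d}{ds}+\tilde\mc^s$ and substituting it turns $\mc^s\{\m^s\}-\m^s\{\mc^s\}=0$ into $\tfrac{d}{ds}\m^s+\tilde\mc^s\{\m^s\}-\m^s\{\tilde\mc^s\}=0$, the reduced form. The structural steps are clean; I expect the one genuine obstacle to be the sign computation in the $ds$-component --- confirming that the $\id_\#$-conventions on $C_{\oi}$ and the signed-linearity rule make the two cross-terms enter with opposite signs, and that the Leibniz cancellation of the $\tfrac{d}{ds}$-on-inputs subterms is sign-consistent.
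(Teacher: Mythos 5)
Your proposal is correct and takes essentially the same route as the paper: expand $\M\{\M\}=0$ using $\M=1\otimes \m^s+ds\otimes \mc^s$, separate the identity by form-degree on $\oi$ (so $ds\wedge ds=0$ kills everything beyond degree one), obtaining $\m^s\{\m^s\}=0$ in degree zero and $\mc^s\{\m^s\}-\m^s\{\mc^s\}=0$ in degree one, with the relative sign dictated by the signed $\Omega^*(\oi)$-linearity of Definition \ref{pointwise-defn}. Your treatment of the $\tfrac{d}{ds}$-terms via Leibniz and the passage to the reduced form $\tilde\mc^s$ is exactly the content of the paper's footnote, so there is nothing further to add.
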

	
	\begin{proof}
		By Definition \ref{A_infty_algebra_defn}, the $A_\infty$ relation is read $\M\{\M\}=0$, or equivalently,
		\[
		\big(1\otimes \m^s +ds\otimes \mc^s \big) \{1\otimes \m^s +ds\otimes \mc^s\}=0
		\]
		For the degree on $\Omega^{\bullet}(\oi)$, either $\bullet=$ 0 or 1, the above equation in $\CC(C_\oi)$ is equivalent to the following two equations in $\CC(C)$:
		$
		\m^s\{\m^s\}=0 $ and
		$		\mc^s \{\m^s\}-\m^s\{\mc^s\}$.
		The negative sign on the second equation comes from Definition \ref{pointwise-defn}, as $\mc^s$ is labeled with the degree-one term $ds$ in $\Omega^*(\oi)$.
	\end{proof}
	
	\begin{rmk}
		\label{sign_pseudo_rmk}
		Be cautious that $\mc^s\{\m^s\}=\mc^s(\id_\#^\bullet \otimes \m^s\otimes \id^\bullet)$ while $\m^s\{\mc^s\}=\m^s(\id^\bullet \otimes \mc^s\otimes \id^\bullet)$ has different signs, since the degrees of $\m^s$ and $\mc^s$ differ by $1$.
	\end{rmk}
	
	\begin{rmk}
		Integrating the data of the ``derivatives'' $\mc^s$ in a Feymann diagram type connects $\m^0$ with $\m^1$ via an $A_\infty$ homotopy equivalence.
		We will study this later in Theorem \ref{from_pseudo_to_A_homo_thm}.
		Accordingly, we may often intuitively regard the $\mc^{I,s}$ in (\ref{1otimes_s_eq}) as the data of derivatives or as the ``$\partial_{I}$'' of $\m^s$.
	\end{rmk}
	
	\subsection{Category $\UD$}
	\label{sss_UD_category}
	
	Let $L$ be a Lagrangian submanifold in a symplectic manifold $(X,\omega)$ as before.
	We have a natural group homomorphism
	\begin{equation}
		\label{partial_pi_2_X_L_eq}
		\partial: H_2(X,L) \to    H_1(L)
	\end{equation}

	For our purpose, we may always assume that the base vector space $C$ is in the form of either $\HL_P$ or $\OL_P$ for the fixed $L$, while the label group is fixed to be $H_2(X,L)$.
	In these situations, there is a natural differential $d=d_C$ as discussed in Remark \ref{natural_d_rmk}.
	Let $Z^1(C)$ denote the kernel of $d_C$ restricted on the total degree-one part $C^1$ of $C$.

	For any $b\in Z^1(C)$ and $\sigma\in H_1 (L)$, we define $\sigma\cap b$ as follows.
	First, we may write 
	\[
	b=1\otimes \underline b(s)+\sum_i ds_i\otimes  b_i(s)
	\]
	where $s=(s_i)$ are local coordinates of $P$ and $\underline b(s), b_i(s)\in \HL$ or $\OL$.
	If $C=\HL_P$, then the condition $b\in Z^1(C)$ implies that $\partial_{s_i} \underline b(s)=0$, namely, $\underline b(s)\in H^*(L)$ is independent of $s$. Thus, we may define $\sigma\cap b:=\sigma \cap \underline b(s)$.
	If $C=\OL_P$, then $d(\underline b(s))=0$ and $\partial_{s_k} \underline b(s) -d_L(b_k(s))=0$. It follows that the de Rham cohomology class $\mathfrak b$ of $\underline b(s)$ is independent of $s$, so we may define 
	$
	\langle \sigma,  b\rangle :=\sigma \cap \mathfrak b
	$.
	
	\begin{defn}
		An operator system $\mathfrak t = (\mathfrak t_{k,\beta})$ in $\CC(C, C')$ is said to satisfy the \textit{divisor axiom} if for any $b\in Z^1(C)$ and $(k,\beta)\neq (0,0)$, we have
		\[
		\sum_{i=1}^{k+1}  \mathfrak t_{k+1,\beta}(x_1,\dots, x_{i-1}, b, x_i,\dots ,x_k) =
		\langle \partial \beta , b\rangle \cdot \mathfrak t_{k,\beta}(x_1,\dots,x_k)
		\]
	\end{defn}
	
	\begin{rmk}
		In general, the divisor axiom imposes a nontrivial constraint on the structure of an $A_\infty$ algebra. Geometrically, the Gromov-Witten divisor axiom (see \cite{kontsevich1994gromov}) is expected to have an open-string analog. This is established by Fukaya in \cite{FuCyclic} based on the specific features of the Kuranishi structure and virtual fundamental chain theory developed by Fukaya-Oh-Ohta-Ono \cite{FOOO_Kuranishi}. The construction uses moduli spaces of virtual dimensions beyond just 0 or 1. It requires considering not a single moduli space but multiple moduli spaces, ensuring that the virtual techniques applied to them are compatible with respect to the forgetful maps. At the time of writing, it seems that other virtual techniques may not yet be applicable to handle the necessary complexity required to establish the desired constraint.
	\end{rmk}

	\begin{defn}
		An operator system $\mathfrak t=(\mathfrak t_{k,\beta})$ is said to be \textit{cyclically unital} if, for any degree-zero $\e\in C^0$ and $(k,\beta)\neq (0,0)$, we have
		\[
		\sum_{i=1}^{k+1}\mathfrak t_{k+1,\beta} (x_1^\#,\dots, x_{i-1}^\#, \e, x_i, \dots, x_k) =0
		\]
	\end{defn}

	\begin{prop}
		\label{UD_prop}
		\emph{There is a category $\UD\equiv \UD(L)$ with the following:}
		\begin{enumerate}
			\setlength{\itemsep}{0em}
			\item[(I)] \emph{An object $(C,\m)$ in $\UD$ is an $A_\infty$ algebra with labels such that (I-1) it is a $P$-pseudo-isotopy for $C=\HL_P$ or $\OL_P$, and $\m_{1,0}$ is equal to the natural differential; (I-2) it is $P$-unital; (I-3) it is cyclically unital; (I-4) it satisfies the divisor axiom; (I-5) if $\m_{k,\beta}\neq 0$, then $\mu(\beta)\ge 0$.}
			
			\item[(II)] \emph{A morphism $\f$ in ${\UD}$ is an $A_\infty$ homomorphism with labels such that
				(II-1) it is unital with respect to the various $\one$; (II-2) it is cyclically unital; (II-3) it satisfies the divisor axiom; (II-4) for any $b\in Z^1(C)$, we have $\langle \partial \beta , \f_{1,0} (b) \rangle  = \langle \partial \beta , b\rangle $; (II-5) if $\f_{k,\beta}\neq 0$, then $\mu(\beta)\ge 0$.}
		\end{enumerate}
	\end{prop}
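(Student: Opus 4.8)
The statement to prove is Proposition~\ref{UD_prop}: the claim that the objects (I) and morphisms (II) described actually assemble into a category $\UD(L)$. Since the objects and arrows are already specified, proving this amounts to checking that $\UD$ is closed under the relevant operations and contains the structure needed for a category.

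\textbf{Plan of proof.} The plan is to verify the three defining features of a category: (1) there is a well-defined composition law on the morphisms described in (II); (2) this composition is associative; and (3) each object $(C,\m)$ carries an identity morphism. For (1) I would take $\f : (C',\m') \to (C,\m)$ and $\g : (C'',\m'') \to (C',\m')$ two morphisms in $\UD$ and define their composite by the Gerstenhaber-type formula $\g\diamond\f$ --- more precisely, following the usual convention for composing $A_\infty$ homomorphisms, $(\f\circ\g) := \f\diamond\g$ using the $\diamond$ operation from~(\ref{composition_Gerstenhaber_eq}). The work is then to check that $\f\diamond\g$ lies in $\CC(C'',C)$ (gappedness --- already noted after~(\ref{composition_Gerstenhaber_eq})), has the correct degrees $\deg(\f\diamond\g)_{k,\beta}=1-\mu(\beta)-k$ (a bookkeeping computation with the two group homomorphisms $E,\mu$), satisfies $\m\diamond(\f\diamond\g)=(\f\diamond\g)\{\m''\}$ (the $A_\infty$ homomorphism relation --- this follows from $\m\diamond\f=\f\{\m'\}$ and $\m'\diamond\g=\g\{\m''\}$ by the standard associativity identities for the $\diamond$-product and brace operations, which is a purely formal manipulation), and finally that each of the extra conditions (II-1)--(II-5) is preserved under $\diamond$. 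For (II-5), since $\mu$ is additive on $H_2(X,L)$ and the sum defining $(\f\diamond\g)_{k,\beta}$ ranges over $\beta_0+\cdots+\beta_\ell=\beta$ with each piece having $\mu\ge 0$, nonvanishing forces $\mu(\beta)\ge 0$. For (II-1) and (II-2), unitality and cyclic unitality pass through because inserting a unit (resp.\ degree-zero element) into one slot of a composite is governed by the corresponding property of the outer and inner maps. For (II-3) the divisor axiom is preserved because $\partial:H_2(X,L)\to H_1(L)$ is additive, so $\langle\partial\beta,b\rangle=\sum_j\langle\partial\beta_j,b\rangle$; combined with (II-4) for $\g$ (which guarantees $\langle\partial\beta_j,\g_{1,0}(b)\rangle=\langle\partial\beta_j,b\rangle$ in the relevant telescoping), the Leibniz-type sum over insertion points collapses correctly. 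Condition (II-4) itself is preserved since $(\f\diamond\g)_{1,0}=\f_{1,0}\circ\g_{1,0}$ and we can compose the two pairing identities.

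\textbf{Identities and associativity.} For the identity morphism on $(C,\m)$ I would take $\mathrm{id}_{k,\beta}$ to be the identity map of $C$ when $(k,\beta)=(1,0)$ and zero otherwise; one checks directly that $\m\diamond\mathrm{id}=\mathrm{id}\{\m\}=\m$, that it satisfies (II-1)--(II-5) trivially (it only has a $(1,0)$-component, so (II-5) and the divisor axiom are vacuous, unitality is immediate, and (II-4) reads $\langle\partial\beta,b\rangle=\langle\partial\beta,b\rangle$), and that $\f\diamond\mathrm{id}=\f=\mathrm{id}\diamond\f$. Associativity of $\diamond$, i.e.\ $(\h\diamond\g)\diamond\f=\h\diamond(\g\diamond\f)$, is the standard fact that the $A_\infty$-homomorphism composition is strictly associative; it follows by expanding both sides as sums over ordered decompositions of the inputs and matching terms, using only the combinatorics of the brace/Gerstenhaber formulas and additivity of $E$ and $\mu$ over the $\beta$-labels.

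\textbf{Main obstacle.} I expect the routine parts --- gappedness, degree counts, the bare $A_\infty$ relation, and strict associativity --- to be entirely formal once one sets up the sign conventions carefully (Remark~\ref{sign_A_algebra_rmk}). The genuine content, and the step most likely to require care, is verifying that the \emph{divisor axiom} (II-3) together with (II-4) is stable under composition: the divisor axiom is a relation that mixes the $A_\infty$ operations with the topological pairing $\langle\partial\beta,-\rangle$, and in a composite $\f\diamond\g$ an element $b\in Z^1(C'')$ gets spread across many slots of $\g$ (and then $\g_{1,0}(b)$ and various $\g_{k,\beta}(\dots,b,\dots)$ terms get fed into $\f$), so the bookkeeping of which $\beta_j$ contributes which factor $\langle\partial\beta_j,b\rangle$ must be done with precision, and one must invoke (II-4) for $\g$ at exactly the right point to see that only the ``expected'' term $\langle\partial\beta,b\rangle\cdot(\f\diamond\g)_{k,\beta}$ survives while the cross-terms cancel against the divisor axioms of $\f$ and $\g$ separately. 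This is the kind of identity that is true for structural reasons --- it is designed into the category --- but whose verification is the crux; everything else is formal.
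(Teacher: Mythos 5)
Your proposal is correct and follows essentially the same route as the paper: the paper's (very terse) proof likewise reduces the statement to checking that conditions (II-1)--(II-5) are closed under the composition $\diamond$ of $A_\infty$ homomorphisms, deferring details to \cite{Yuan_I_FamilyFloer}, and you correctly identify the one non-trivial point, namely that preservation of the divisor axiom requires invoking (II-4) for the inner morphism (so that $\langle\partial\beta_0,\g_{1,0}(b)\rangle=\langle\partial\beta_0,b\rangle$) together with additivity of $\partial$ and $E,\mu$. Your additional checks of identities and strict associativity of $\diamond$ are standard and consistent with what the paper implicitly assumes.
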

	
	\begin{proof}
		The above data forming a category just means that the properties listed above is closed under the composition of $A_\infty$ homorphisms.
		They are all straightforward to verify; see \cite{Yuan_I_FamilyFloer} for the full details. For example, if $\f$ and $\g$ are two $A_\infty$ homomorphisms, then it is well-known that the compoistion $\g\diamond \f$ is also an $A_\infty$ homomorphism.
		For another, if both $\f$ and $\g$ satisfy the divisor axiom, then one can easily check that $\g\diamond \f$ also satisfies the divisor axiom.
	\end{proof}

	
	We will refer to the conditions (I-5) and (II-5) as the semipositive conditions. Geometrically, the $\mu$ stands for the Maslov index, and the condition is valid for $A_\infty$ algebras associated to special or graded Lagrangian submanifolds \cite[Lemma 3.1]{AuTDual}.
	
	\begin{convention}
		\textit{Henceforth, all $A_\infty$ algebras and $A_\infty$ homomorphisms discussed in the following sections should belong to the category $\UD$, unless explicitly stated otherwise.}
	\end{convention}

	There is the notion of {ud-homotopy} among morphisms in $\UD$ \cite{Yuan_I_FamilyFloer} in the following sense. We say two morphisms $\f_0,\f_1$ from $(C',\m')$ to $(C,\m)$ in $\UD$ are \textit{ud-homotopic}, writing 
	\begin{equation}
		\label{ud_sim_eq}
		\f_0\simud\f_1
	\end{equation}
	if there is a morphism $\F$ from $(C',\m')$ to $(C_\oi, \M^{\tri})$ in $\UD$ such that $\eval^0 \F=\f_0$ and $\eval^1\F=\f_1$, where $\M^\tri$ is the trivial pseudo-isotopy induced by $\m$.
	Equivalently, $\f_0\simud\f_1$ if and only if there exists $\f_s,\h_s$ in $\CC(C',C)$, varying smoothly with $s\in\oi$, such that
	
	\begin{itemize}
		\setlength{\itemsep}{0.3em}
		\item[(a)] Every $\f_s$ is a morphism in $\UD$ from $(C',\m')$ to $(C,\m)$;
		\item[(b)] $ \frac{d}{ds} \circ \f_s = \sum \h_s\circ (\id_\#^\bullet\otimes \m \otimes \id^\bullet) + \sum \m'\circ (\f_s^\#\otimes \cdots \otimes \f_s^\#\otimes  \h_s\otimes \f_s\otimes \cdots\otimes \f_s)$;
		\item[(c)] The $\h_s$ satisfies the divisor axiom, the cyclical unitality, and $(\h_s)_{k,\beta}(\cdots \one \cdots )=0$ for all $(k,\beta)$;
		\item[(d)] $\deg (\h_s)_{k,\beta}= -k-\mu(\beta)$. For every $\beta$ with $\h_\beta\neq 0$, we have $\mu(\beta)\ge 0$.
	\end{itemize}

	\begin{prop}[Whitehead theorem]
		\label{whitehead_prop}
		Fix $\f\in \Hom_\UD ((C',\m'), (C,\m))$ such that $\f_{1,0}$ is a quasi-isomorphism of cochain complexes. Then, there exists $\g\in \Hom_\UD ( (C,\m) , (C',\m'))$, unique up to ud-homotopy, such that $\g\diamond \f\simud \id_{C'}$ and $\f\diamond \g\simud \id_C$. We call $\g$ a \emph{ud-homotopy inverse} of $\f$.
	\end{prop}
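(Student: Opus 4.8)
### Proof Proposal for the Whitehead Theorem (Proposition \ref{whitehead_prop})

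The plan is to follow the classical obstruction-theoretic construction of a homotopy inverse, but carried out entirely inside the category $\UD$ so that all auxiliary data (the maps $\g_{k,\beta}$ and the homotopies) automatically satisfy the divisor axiom, cyclical unitality, unitality, and the semipositivity condition $\mu(\beta)\geqslant 0$. First I would construct $\g$ by induction on the \emph{energy filtration} given by $E(\beta)$, refined by the integer $k$, exploiting the gappedness conditions (a)--(c) which guarantee that at each energy level only finitely many $\beta$ contribute and that the induction is well-posed. The base case is the chain level: since $\f_{1,0}\colon (C',\m'_{1,0})\to(C,\m_{1,0})$ is a quasi-isomorphism of cochain complexes, and these complexes are finite-dimensional in each degree (being $\HL_P$ or $\OL_P$ with a fixed basis of $H^*(L)$ over $\Omega^*(P)$, or honestly using a Hodge-type decomposition), I can choose a cochain map $\g_{1,0}$ inverting it up to chain homotopy; one then checks $\g_{1,0}$ can be taken unital (sending $\one$ to $\one$), which is automatic on the $H^0$ generator. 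For the inductive step, assuming $\g_{k',\beta'}$ defined for all $(k',\beta')$ with $E(\beta')<E(\beta)$, or $E(\beta')=E(\beta)$ and $k'<k$, the $A_\infty$ homomorphism equation $\m\diamond\g=\g\{\m'\}$ isolates an equation of the form $\m_{1,0}\circ\g_{k,\beta} \pm \g_{k,\beta}\circ(\text{lower } \m') = (\text{known lower-order terms})$, and the left-hand side is a differential in the Hochschild-type complex computing $\Hom$ between the $A_\infty$ algebras; the right-hand side is a cocycle by the inductive hypothesis and the Jacobi/associativity identities. Because $\f_{1,0}$ is a quasi-isomorphism, this Hochschild-type complex is acyclic in the relevant degree, so the cocycle is a coboundary and $\g_{k,\beta}$ exists.

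The second step is to impose the structural constraints of $\UD$ during this induction. The key point is that the \emph{obstruction classes live in a subcomplex cut out by the divisor axiom and cyclical unitality}: if all lower $\g_{k',\beta'}$ and all of $\m,\m'$ satisfy the divisor axiom (conditions (I-4), (II-3)) and the unitality/cyclical-unitality conditions, then the cocycle appearing on the right-hand side of the obstruction equation already satisfies these relations, and one can choose the primitive $\g_{k,\beta}$ to satisfy them too — this requires a small homological lemma that the divisor-axiom-and-unitality subcomplex of the relevant Hochschild complex is still acyclic, which is precisely the kind of statement developed in \cite{Yuan_I_FamilyFloer} and which I would cite rather than reprove. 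Condition (II-4), $\langle\partial\beta,\g_{1,0}(b)\rangle=\langle\partial\beta,b\rangle$, follows because $\g_{1,0}$ is a chain-homotopy inverse to $\f_{1,0}$ and $\f_{1,0}$ preserves the pairing with $\partial\beta$ by (II-4) for $\f$, so on cohomology $\g_{1,0}$ does too. The semipositivity (II-5) is inherited automatically since we only ever solve for $\g_{k,\beta}$ with $\beta$ a sum of classes of $\m,\m',\f$, all of which have $\mu\geqslant 0$, and $E$ and $\mu$ are group homomorphisms.

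The third step establishes $\g\diamond\f\simud\id_{C'}$ and $\f\diamond\g\simud\id_C$ and the uniqueness of $\g$ up to ud-homotopy. Here I would use the explicit characterization of ud-homotopy given just before the statement — the existence of $\f_s,\h_s$ in $\CC(C',C)$ satisfying conditions (a)--(d) — combined with the same energy-and-$k$ induction: $\g\diamond\f$ and $\id_{C'}$ agree at the $(1,0)$ level up to a chosen chain homotopy (this is where the quasi-isomorphism is used again), and then one builds the ud-homotopy $\h_s$ order by order by solving the same type of obstruction equation, now in the complex of homotopies, with the $\UD$-constraints (c)--(d) imposed at each stage exactly as before. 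Uniqueness up to ud-homotopy is the standard argument: given two inverses $\g,\g'$, one has $\g\simud\g\diamond(\f\diamond\g')\simud(\g\diamond\f)\diamond\g'\simud\g'$, using associativity of composition, the homotopy relations just proved, and the fact (to be checked, but routine from the brace formalism) that ud-homotopy is a congruence with respect to $\diamond$ — i.e., compatible with composition on both sides — which again follows by the same inductive integration of the pseudo-isotopy data, cf.\ Theorem \ref{from_pseudo_to_A_homo_thm}.

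The main obstacle I anticipate is \emph{not} the existence of the primitives per se — that is classical acyclic-carrier obstruction theory — but verifying that the subcomplex carved out by the divisor axiom together with cyclical unitality and unitality remains acyclic, so that the inductive choices can be made \emph{within} $\UD$ rather than merely in the ambient $A_\infty$ category. This is the technical heart of the matter and is where the detailed combinatorics of the brace operations $\g\{\h\}$, $\g\diamond\f$ in (\ref{composition_Gerstenhaber_eq}) interact delicately with the pairing $\langle\partial\beta,-\rangle$; I would handle it by reducing, via the divisor axiom, to the $k=0$ (curved) part of each operator — so that effectively one is doing obstruction theory for the much smaller data $\m_{0,\beta}$, $\f_{0,\beta}$, $\g_{0,\beta}$ — and then invoking the corresponding acyclicity statement from \cite{Yuan_I_FamilyFloer}.
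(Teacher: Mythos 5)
Your proposal follows essentially the same route as the paper's (sketched) proof: an inductive construction of $\g_{k,\beta}$ in the style of the classical $A_\infty$ Whitehead theorem \cite[Theorem 4.2.45]{FOOOBookOne}, starting from a chain-homotopy inverse of $\f_{1,0}$, with the genuine difficulty correctly identified as preserving the divisor axiom along the induction --- which is exactly the point the paper emphasizes, resolved by the extra conditions packaged into $\UD$ (cyclical unitality etc.), with full details deferred to \cite{Yuan_I_FamilyFloer}. Since the paper itself only sketches this and cites the same reference for the acyclicity-within-$\UD$ step you flag as the technical heart, your account matches it in both structure and level of detail.
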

	
	\begin{proof}[Sketch of proof]
		This is entirely a result of homological algebra, and the complete details can be found in \cite[Theorem 2.34]{Yuan_I_FamilyFloer}. Besides, without requiring the divisor axiom, it is also proved in \cite[Theorem 4.2.45]{FOOOBookOne} as a natural generalization of the classic Whitehead theorem in algebraic topology. First, the assumption ensures $\g_{1,0}$'s existence, and we can construct other $\g_{k,\beta}$'s through induction. The key new challenge, unlike in \cite[Theorem 4.2.45]{FOOOBookOne}, is maintaining the divisor axiom during this induction. 
		The divisor axiom alone is insufficient, as it leads to a breakdown of the induction process.
		To address this issue, the category $\UD$ is introduced to encode the extra conditions necessary for a successful induction.
		Specifically, when constructing the homotopy inverse $\g$ inductively, we follow a process similar to that in \cite[Theorem 4.2.45]{FOOOBookOne}, but we strengthen the induction hypothesis by incorporating the conditions outlined in Proposition \ref{UD_prop} (II).
		With this strengthened hypothesis in place, the remaining steps of the induction process become relatively straightforward.
	\end{proof}

	\subsection{Homological perturbation and harmonic contractions}
	
	Every $A_\infty$ algebra $\check \m$ is also accompanied by a minimal $A_\infty$ algebra $\m$, called the \textit{minimal model} (also called the canonical model \cite{FOOOBookOne}). It is determined by the method of \textit{homological perturbation}. As mentioned in the introduction, the intuition is that if we view $\check \m$ as counts of pseudo-holomorphic disks bounded by a Lagrangian, then $\m$ counts `holomorphic pearly trees', i.e. a cluster of holomorphic disks arranged in a tree. (See the figure below)
	
	\begin{figure}[h]
		\centering
		\vspace{-0.5em}
		\includegraphics[scale=0.32]{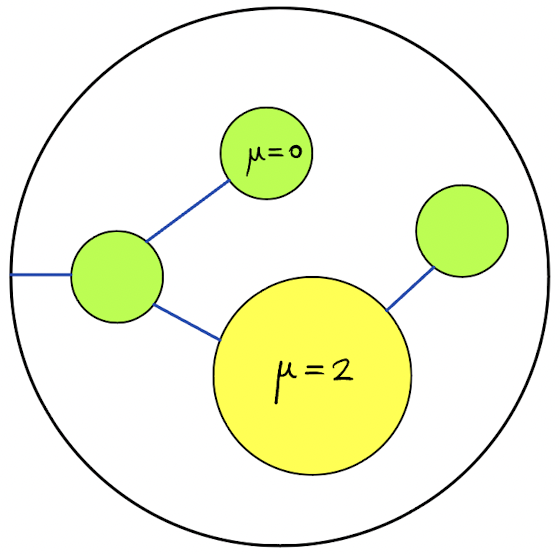}
		\vspace{-0.5em}
	\end{figure}
	
	We begin with the following concepts; see also \cite{FuCyclic,FOOO_Kuranishi,Yuan_I_FamilyFloer}.
	
	A \textit{ribbon tree} is a tree $T$ with an embedding $T \xhookrightarrow{} \mathbb D\subset \mathbb C$ such that a vertex $v$ has only one edge if and only if $v$ lies in the unit circle $\partial \mathbb D$.
	Such a vertex is called an exterior vertex, and any other vertex is called an interior vertex. The set of all exterior (resp. interior) vertices is denoted by $\Cext_0(T)$ (resp. $\Cint_0(T)$). Then, $C_0(T)=\Cext_0(T)\cup \Cint_0(T)$ is the set of all vertices.
	Besides, an edge of $T$ is called exterior if it contains an exterior vertex and is called interior otherwise. The set of all exterior edges is denoted by $\Cext_1(T)$ and that of all interior edges is denoted by $\Cint_1(T)$.
	
	A \textit{rooted ribbon tree} is a pair $(T,v_{ou})$ of a ribbon tree $T$ and an exterior vertex $v_{ou}$ therein. We call $v_{ou}$ the \textit{root}.
	By a \textit{decoration} on $(T,v_{ou})$, we mean a map $\beta(\bullet): \Cint_0(T)\to H_2(X,L)$. It is called \textit{stable} if any $v\in \Cint_0(T)$ satisfies $E(\beta(v))\ge 0$ and has at least three edges whenever $\beta(v)=0$.
	Given $k\in\mathbb N$ and $\beta$, we denote by $\Tr(k,\beta)$ the set of such decorated \textit{stable rooted ribbon trees} $(T,v_{ou},\beta(\bullet))$ so that $\#\Cext_0(T)=k+1$ and $\sum_{v\in \Cint_0(T)} \beta(v)=\beta$.
	For simplicity, we often abbreviate $T=(T,v_{ou},\beta(\bullet) )$.

	There is a natural partial order on the set of interior vertices by requiring $v<v'$ if $v\neq v'$ and there is a path of edges in $T$ from $v$ to $v_{ou}$ that passes through $v'$.
	A \textit{time allocation} of $T=(T,v_{ou},\beta(\bullet) )$ in $\Tr(k, \beta)$ is a map $\tau:\Cint_0(T)\to \mathbb R$ such that $\tau(v)\leqslant \tau(v')$ whenever $v<v'$.
	The set of all time allocations $\tau$ for $T$ such that all $\tau (v)$ are contained in $[a,b]$ is denoted by $\mathscr S_a^b(T)$.
	Similarly, a \textit{time allocation} $\tau$ of an element $(T, \mathscr B)$ in $\mathscr T(k_0,k_1,\B)$ is a rule that assigns to each interior vertex $v$ a real number $\tau(v)$ such that $v<v'$ implies that $\tau(v)\leqslant \tau(v')$.
	The set of all time allocations $\tau$ such that all $\tau(v)$ are contained in $[a,b]$ is still denoted by $\mathscr S_a^b(T)$. We also define $\mathscr S(T)=\mathscr S_0^1(T)$.

	Remark that any time allocation $\tau$ can be viewed as a point $(x_v=\tau(v))_{v\in \Cint_0(T)}$ in $[a,b]^{\#\Cint_0(T)}$. Thus, the set $\mathscr S_a^b(T)$ can be identified with a bounded polyhedron cut out by the inequalities $x_v\le x_{v'}$ for $v<v'$ and $a\le x_v\le b$. Hence, it has a natural measure induced from the Lebesgue measure.

	\begin{defn}
		\label{contraction-defn}
		Let $(C, \check \m_{1,0})$ and $(\mH,\delta)$ be two graded cochain complexes.
		A triple $(i,\pi,G)$, consisting of two maps $i: H \to C$, $\pi: C \to  H$ of degree $0$ and a map $G: C \to C$ of degree $-1$, is called a \textit{contraction} (for $H$ and $C$) if the following equations hold
		\begin{equation}\label{cochain-maps-i-pi}
			\check \m_{1,0}\circ i = i \circ \delta  \qquad
			\pi \circ\check  \m_{1,0} = \delta \circ \pi 
		\end{equation}
		\begin{equation}\label{green-operator}
			i\circ \pi-\id_{C} = \check \m_{1,0} \circ G + G\circ \check \m_{1,0}
		\end{equation}
		Further, the $(i,\pi,G)$ is called a \textit{strong contraction}, or say it is \textit{strong}, if we have the extra conditions
		$\pi\circ i - \id_{\mH}=0$, $G\circ G=0$, $G\circ i =0$, and $\pi\circ G=0$.
	\end{defn}

	We can extend the result of \cite[Theorem 5.4.2]{FOOOBookOne} incorporating the topological labels:

	\begin{thm} [Homological perturbation]
		\label{canonical_model_general_tree-thm}
		Fix an $A_\infty$ algebra $(C,\check \m)$ with labels and a graded cochain complex $(H,\delta)$.
		From a contraction $g=(i,\pi, G)$,
		there is a canonical way to construct an $A_\infty$ algebra $(H, \m)$ and an $A_\infty$ homotopy equivalence
		\[
		\mi: (H,\m)\to (C,\check  \m)
		\]
		so that 
		$
		\mi_{1,0}=i$ and $\m_{1,0}=\delta$. We call $(H,\m)$ the \emph{minimal model $A_\infty$ algebra} of $(C,\check \m)$ and $g$.
	\end{thm}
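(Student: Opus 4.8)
The plan is to construct the minimal model $A_\infty$ algebra $(H,\m)$ and the $A_\infty$ homotopy equivalence $\mi$ by a sum over decorated stable rooted ribbon trees, following the tree-summation formula of \cite[Theorem 5.4.2]{FOOOBookOne} but with each interior vertex carrying an additional label $\beta(v)\in H_2(X,L)$. Concretely, for $\T=(\T,v_{ou},\beta(\bullet))\in\Tr(k,\beta)$ one assigns: to the root edge the operator $\pi$; to every other exterior (leaf) edge the operator $i$; to every interior edge the operator $G$; and to every interior vertex $v$ with $\mathrm{val}(v)=k_v+1$ (one outgoing, $k_v$ incoming edges) the operator $\check\m_{k_v,\beta(v)}$. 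Composing these along the tree according to its planar structure and summing over all $\T\in\Tr(k,\beta)$ produces a candidate $\m_{k,\beta}:H^{\otimes k}\to H$; omitting the $\pi$ at the root (replacing it by $\id$, or rather using $i$ along the root) produces the candidate $\mi_{k,\beta}$. The first thing to check is that this is well-defined as an operator system, i.e.\ that the gappedness conditions (a),(b),(c) hold: conditions (b),(c) for $\m$ follow from those for $\check\m$ together with stability of the decoration (each tree contributing to $\m_{k,\beta}$ has $\sum_v\beta(v)=\beta$, so $E(\beta)=\sum_v E(\beta(v))$ bounds the number of interior vertices with $\beta(v)\neq 0$, while stability bounds the tree size for the $\beta(v)=0$ vertices), and (a) $\m_{0,0}=0$ holds since there is no stable tree in $\Tr(0,0)$. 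One also checks $\deg\m_{k,\beta}=2-\mu(\beta)-k$ by an edge-counting argument: each $\check\m_{k_v,\beta(v)}$ contributes $2-\mu(\beta(v))-k_v$, each $G$ contributes $-1$, each $i$ contributes $0$, and summing with $\sum_v\mu(\beta(v))=\mu(\beta)$ and the standard tree identity $\sum_v(k_v-1)=k-1-\#\Cint_1(\T)$ gives the claim.

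\textbf{Next I would} verify the two defining relations $\m\{\m\}=0$ and $\m\diamond\mi=\mi\{\check\m\}$, together with $\mi_{1,0}=i$, $\m_{1,0}=\delta$. The normalization $\mi_{1,0}=i$ and $\m_{1,0}=\delta$ comes from the unique one-vertex trees, using \eqref{cochain-maps-i-pi}. For the $A_\infty$ relations the argument is the standard ``broken tree'' bijection: expanding $\m\{\m\}$ (or $\m\diamond\mi-\mi\{\check\m\}$) produces a sum over pairs of trees glued along an edge, and one shows this telescopes using \eqref{green-operator}, i.e.\ $i\circ\pi-\id_C=\check\m_{1,0}G+G\check\m_{1,0}$, so that the ``$\id_C$'' terms cancel the gluing terms while the ``$\check\m_{1,0}G$'' and ``$G\check\m_{1,0}$'' terms reorganize into the remaining tree sum, ultimately reducing everything to the $A_\infty$ relation $\check\m\{\check\m\}=0$ already satisfied by $\check\m$. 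Because the labels $\beta(v)$ are merely additive bookkeeping ($\beta_1+\beta_2=\beta$ in every splitting) and the differential $\check\m_{1,0}$ carries label $0$, none of the sign computations or the combinatorial bijection in \cite[Theorem 5.4.2]{FOOOBookOne} is affected; the label-decorated proof runs in parallel. Finally $\mi_{1,0}=i$ is a quasi-isomorphism by \eqref{green-operator}, so $\mi$ is an $A_\infty$ homotopy equivalence in the sense of the earlier definition, and $(H,\m)$ is minimal since $\m_{1,0}=\delta=0$ when $H$ is taken with zero differential (or more precisely minimal in the sense $\m_{1,0}=\delta$).

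\textbf{The main obstacle} I anticipate is not the algebra per se but the sign bookkeeping: one must fix a consistent sign convention for the tree-composition (the $\id_\#$ shifts, the ordering of leaves, the placement of $G$) so that the telescoping in the previous paragraph actually produces the signs in $\m\{\m\}=0$ and $\m\diamond\mi=\mi\{\check\m\}$ exactly, rather than up to an uncontrolled sign. The cleanest route is to phrase the tree sum using Getzler's brace notation \eqref{composition_Gerstenhaber_eq} — writing $\m$ and $\mi$ as fixed points of recursions like $\mi = i + G\circ(\check\m\diamond\mi)$ and $\m = \pi\circ(\check\m\diamond\mi)$ (with the appropriate $\id_\#$ insertions) — and then deriving the $A_\infty$ relations formally from these recursions together with \eqref{cochain-maps-i-pi}--\eqref{green-operator}, exactly as in \cite[\S 5.4]{FOOOBookOne} and \cite{Yuan_I_FamilyFloer}; I would cite those references for the detailed sign verification rather than reproduce it. A secondary minor point is that $\m$ need not be unital or cyclically unital or satisfy the divisor axiom at this level of generality — those refinements require $g$ to be a \emph{strong} contraction compatible with the unit (e.g.\ a harmonic contraction with $G\one$-type conditions) and are treated separately; here Theorem \ref{canonical_model_general_tree-thm} only asserts the bare $A_\infty$ structure, so I would not address them in this proof.
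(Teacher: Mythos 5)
Your proposal follows essentially the same route as the paper: the paper constructs $\mi_\T$ and $\m_\T$ inductively over decorated stable rooted ribbon trees in $\Tr(k,\beta)$, placing $i$ at the leaves, $\check\m_{\ell,\beta(v)}$ at interior vertices, $G$ on interior edges, and $\pi$ (for $\m$) at the root, then sums over trees and defers the sign and $A_\infty$-relation verification to \cite{Yuan_I_FamilyFloer,FuCyclic}, exactly as you do. One slip in your prose: for $\mi_{k,\beta}$ the root operator is $G$ (the paper sets $\mi_\T=G\circ\check\m_{\ell,\beta(v)}(\mi_{\T_1}\otimes\cdots\otimes\mi_{\T_\ell})$), not $\id$ or $i$ --- taking $\id$ or $i$ there would break $\m\diamond\mi=\mi\{\check\m\}$ --- but your later recursion $\mi=i+G\circ(\check\m\diamond\mi)$ (suitably reduced so the $(1,0)$-component is excluded, matching stability) is the correct one and supersedes that description.
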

	
	\begin{proof}[Proof]
		Let $T=(T,v_{ou},\beta(\bullet) )$ be a rooted ribbon tree equipped with a $H_2(X,L)$-decoration.
		By induction on $\#\Cint_0(T)$, we construct operators $\mathfrak i_T: H^{\otimes \bullet}\to C$ and $\m_T: H^{\otimes \bullet}\to H$ as follows:
		When $\#\Cint_0(T)=0$, we set $\mi_T=i$ and $\m_T=\delta$. When $\#\Cint_0(T)=1$, we set $\mi_T=G\circ \check \m_{k,\beta} (i\otimes \cdots \otimes i)$ and $\m_T=\pi\circ \check \m_{k,\beta} (i \otimes \cdots \otimes i)$.
		Let $e$ denote the edge of the root $v_{ou}$ in $T$, and let $v$ be the other vertex of $e$. Cut all edges of $v$ except $e$, and add a pair of vertices to all resulting half lines. This produces a sequence of subtrees $T_1,\dots, T_\ell$, ordered counterclockwise. Then, we can define $\mi_T=G\circ \check \m_{\ell,\beta(v)} (\mi_{T_1} \otimes \cdots \otimes \mi_{T_\ell})$ and $\m_T=\pi \circ \check \m_{\ell,\beta(v)} (\mi_{T_1} \otimes \cdots \otimes \mi_{T_\ell})$ by induction.
		Finally, we define $\mi_{k,\beta}=\sum_{T\in\mathscr T(k,\beta)} \mi_T$ and $\m_{k,\beta}=\sum_{T\in\mathscr T(k,\beta)} \m_T$. In particular, we remark that the following induction formula holds:
		\begin{equation}
			\label{induction-tree-formula-eq}
			\begin{aligned}
				\mi_{k,\beta} &= 
				\sum_{ (\ell,\beta_0)\neq (1,0)}
				G \circ \check \m_{\ell,\beta_0} \circ (\mi_{k_1,\beta_1}\otimes \cdots \otimes \mi_{k_\ell,\beta_\ell}) \\
				\m_{k,\beta} &= 
				\sum_{ (\ell,\beta_0)\neq (1,0)}
				\pi \circ \check  \m_{\ell,\beta_0} \circ (\mi_{k_1,\beta_1}\otimes \cdots \otimes \mi_{k_\ell,\beta_\ell})
			\end{aligned}
		\end{equation}
		
		We can check that they satisfy the desired $A_\infty$ relations.
		More details can be found in \cite[Theorem 2.52]{Yuan_I_FamilyFloer}; when $\delta=0$, the proof is also established in \cite{FuCyclic}.
	\end{proof}

	In practice, we mainly study the following two specific contractions:
	
	\begin{itemize}
		\setlength{\itemsep}{1pt}
		\item 
		Consider two graded cochain complexes $(\Omega^*(L), d)$ and $(H^*(L), d=0)$. Assume $L$ is a closed or complete Lagrangian submanifold.
		Given a metric $g$ on $L$, the Hodge decomposition expresses $\Omega^*(L)$ as the direct sum of the space $\mathcal H_g$ of harmonic differential forms, and the images of the exterior derivative $d$ and its adjoint $d^*$. Identifying $\mathcal H_g$ with the cohomology $H^*(L)$, we define $i = i(g)$ as the inclusion map from $\mathcal H_g$ to $\Omega^*(L)$, $\pi = \pi(g)$ as the projection map from $\Omega^*(L)$ to $\mathcal H_g$, and $G = G(g)$ as the Green's operator associated with the metric $g$. One can verify that they form a strong contraction \cite{Yuan_I_FamilyFloer}.
		Abusing the notations, we write $g=(i(g),\pi(g),G(g))$.
		
		\item Consider two graded cochain complexes $(\Omega^*(L)_\oi, d_{L\times \oi})$ and $(\HL_\oi, d=d_{[0,1]})$.
		Given a family of metrics $\pmb g=(g_s)_{0\leqslant s \leqslant 1}$ on $L$, let $(i_s,\pi_s,G_s)=(i(g_s),\pi(g_s), G(g_s))$. One can find operators $h_s$, $k_s$, and $\sigma_s$ such that $i(\pmb g)= 1\otimes i_s +ds\otimes h_s$, $\pi(\pmb g)= 1\otimes \pi_s +ds\otimes k_s$, and $G(\pmb g)=1\otimes G_s+ds\otimes \sigma_s$ form a strong contraction as well. See \cite[Lemma 3.6]{Yuan_I_FamilyFloer}.
	\end{itemize}
	
	The following results are straightforward from the definitions; see also \cite{Yuan_I_FamilyFloer}.
	
	\begin{prop}
		\label{minimal_model_prop}
		Given an object $(\OL, \check \m)$ in the category $\UD$, utilizing Theorem \ref{canonical_model_general_tree-thm} with the strong contraction $g$ transforms $\check \m$ into another object $(\HL, \m)$ in $\UD$, and simultaneously yields $\mi:\m\to\check \m$, an $A_\infty$ homotopy equivalence in $\UD$.
	\end{prop}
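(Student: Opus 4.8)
The plan is to verify that applying the homological perturbation of Theorem \ref{canonical_model_general_tree-thm} to an object $(\OL,\check\m)$ of $\UD$, using the strong Hodge contraction $g=(i(g),\pi(g),G(g))$, produces an output $(\HL,\m)$ together with $\mi:\m\to\check\m$ that still satisfies all the defining properties (I-1)--(I-5) and (II-1)--(II-5) of $\UD$. By Theorem \ref{canonical_model_general_tree-thm} we already know $\m$ is an $A_\infty$ algebra with labels in $H_2(X,L)$ with $\m_{1,0}=\delta=0$ and $\mi$ is an $A_\infty$ homomorphism with $\mi_{1,0}=i(g)$ a quasi-isomorphism; since the contraction lands in $\HL$ with zero differential, (I-1) is immediate (the base complex is $\HL_P$ for $P$ a point, or more precisely $\HL$ itself, and $\m_{1,0}$ is the natural zero differential). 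So the real content is checking unitality, cyclical unitality, the divisor axiom, property (II-4), and the semipositivity condition (I-5)/(II-5) are all inherited through the tree summation $\m_{k,\beta}=\sum_{\T\in\Tr(k,\beta)}\m_\T$ and $\mi_{k,\beta}=\sum_{\T\in\Tr(k,\beta)}\mi_\T$.

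The steps, in order: \textbf{(1)} Semipositivity. This is the easiest: each $\m_\T$ is built by composing the $\check\m_{\ell,\beta(v)}$ over interior vertices $v$ with $\sum_v\beta(v)=\beta$, the maps $i(g),\pi(g),G(g)$ being Maslov-$0$; since $\mu$ is additive and each $\check\m_{\ell,\beta(v)}\neq0$ forces $\mu(\beta(v))\ge0$ (as $\check\m\in\UD$), stability of the decoration gives $\mu(\beta)=\sum_v\mu(\beta(v))\ge0$. Same for $\mi$. \textbf{(2)} Unitality. Here I would invoke the standard fact (from \cite{FuCyclic,FOOOBookOne,Yuan_I_FamilyFloer}) that the harmonic contraction is chosen so that $\one$ is harmonic, $i(g)\one=\one$, $\pi(g)\one=\one$, $G(g)\one=0$; then any tree $\T$ with an input slot filled by $\one$ either corresponds to $(k,\beta)\in\{(1,0),(2,0)\}$ or contains an interior vertex receiving $\one$ in a position where the unitality of $\check\m$ makes that vertex's contribution vanish, killing $\m_\T$ and $\mi_\T$. \textbf{(3)} Cyclical unitality and the divisor axiom: these are exactly the two extra structures the category $\UD$ was designed to carry, and the key point is that they are \emph{preserved by homological perturbation with a strong contraction} — this is where I would cite \cite{Yuan_I_FamilyFloer} (and, for the divisor axiom with $\delta=0$, \cite{FuCyclic}), giving the combinatorial argument that sums the divisor-axiom or cyclic-unitality identity of $\check\m$ over all trees, using that $i(g)$ restricts correctly on $Z^1$ so that the pairing $\langle\partial\beta,i(g)(b)\rangle=\langle\partial\beta,b\rangle$ is respected, which simultaneously yields (II-4). \textbf{(4)} Finally, $\mi_{1,0}=i(g)$ is a quasi-isomorphism by the Hodge theorem, so $\mi$ is an $A_\infty$ homotopy equivalence in the sense defined in the excerpt.

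I expect the main obstacle to be step \textbf{(3)}: showing the divisor axiom survives the perturbation. The subtlety, flagged already in the proof sketch of Proposition \ref{whitehead_prop}, is that the naive inductive/tree-summation argument for the divisor axiom "breaks down" unless one carries along the precise package of auxiliary conditions bundled into $\UD$ (cyclical unitality, the unit conditions, and (II-4)); the divisor axiom is not self-propagating in isolation. So the honest argument is a simultaneous induction on $\#\Cint_0(\T)$ proving all of (I-2)--(I-4) and (II-1)--(II-4) at once, tracking how inserting a $b\in Z^1$ into a leaf of $\T$ redistributes, via the divisor axiom for $\check\m$ at each interior vertex, into the factor $\langle\partial\beta,b\rangle$, while the harmonic projection $\pi(g)$ and Green's operator $G(g)$ commute appropriately with this insertion because $i(g)(b)$ is again $d$-closed. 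Since the excerpt explicitly permits citing \cite{Yuan_I_FamilyFloer} for the full details, my proof will state the inherited properties, indicate the inductive scheme, and defer the lengthy sign-and-tree bookkeeping to that reference — which is consistent with the paper's stated aim of presenting the ideas in an accessible way.
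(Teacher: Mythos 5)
Your proposal is correct and follows essentially the same route as the paper, which gives no written argument beyond declaring the statement straightforward from the definitions and deferring the tree-by-tree verification of (I-1)--(I-5) and (II-1)--(II-5) to \cite{Yuan_I_FamilyFloer}, exactly the scheme (semipositivity by additivity of $\mu$ over decorated vertices, unitality via the harmonic unit, divisor axiom and cyclic unitality propagated through the tree summation, $\mi_{1,0}=i(g)$ a quasi-isomorphism by Hodge theory) that you lay out. The only point worth tightening is your step (2): for a tree whose $\one$-receiving interior vertex is a $(2,0)$-vertex the contribution is killed not by the unitality of $\check\m$ alone but by the side conditions $G\circ i=0$ and $G\circ G=0$ of the \emph{strong} contraction, which is precisely why strongness is assumed in the hypothesis.
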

	
	\begin{proof}
		Our goal is to show that the $\m$ and the $\mi$ obtained through the homological perturbation actually belong to the category $\UD$. This can be verified inductively using the induction formula (\ref{induction-tree-formula-eq}) of the homological perturbation process.
		See also \cite[Proposition 2.55 and Theorem 3.3]{Yuan_I_FamilyFloer}. 
	\end{proof}
	
	\begin{prop} 
		\label{pseudo_isotopy_minimal_model_prop}
		Likewise, for $(\OL_\oi, \check \M)$ in $\UD$, applying Theorem \ref{canonical_model_general_tree-thm} to $\check \M$ with the strong contraction $\pmb g$ results in a pseudo-isotopy $(\HL_\oi, \M)$ in $\UD$, and simultaneously yields an $A_\infty$ homotopy equivalence $\mI: \M \to \check \M$ in $\UD$.
		Moreover, by the pointwise properties, we can write $\check \M=1\otimes \check \m_s+ds\otimes \check \mc_s$, $\M=1\otimes \m_s+ds\otimes \mc_s$, and $\mI=1\otimes \mi_s+ds\otimes \mathfrak e_s$; then for each $s$, the situation of Proposition \ref{minimal_model_prop} exactly applies to $\mi_s$, $\check \m_s$, and $\m_s$.
		In particular, we have $\mi_s\diamond \eval^s =\eval^s \diamond \mI$ and $\m_s\diamond \eval^s =\eval^s \diamond \M$.
	\end{prop}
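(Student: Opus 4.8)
The plan is to read the pair $(\M,\mI)$ straight off Theorem \ref{canonical_model_general_tree-thm}, applied to the object $(\OL_\oi,\check\M)$ with target complex $(\HL_\oi,\delta=d_\oi)$ and the strong contraction $\pmb g$ from the second bullet above. That theorem already produces an $A_\infty$ algebra $(\HL_\oi,\M)$ with $\M_{1,0}=d_\oi$ and an $A_\infty$ homotopy equivalence $\mI$ with $\mI_{1,0}=i(\pmb g)$, so the only remaining work is (a) to check that $\M$ and $\mI$ lie in $\UD$, and (b) to extract the pointwise decompositions and the compatibility with $\eval^s$.

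For (a): condition (I-1) is exactly signed $\Omega^*(\oi)$-linearity of $\M$ together with $\M_{1,0}=d_\oi$. The tree operators $\m_\T,\mi_\T$ of Theorem \ref{canonical_model_general_tree-thm} are built by composition and tensoring from the $\check\m_{k,\beta}$ (which are signed $\Omega^*(\oi)$-linear since $\check\M$ is a pseudo-isotopy) and the three components of $\pmb g$ (signed $\Omega^*(\oi)$-linear by construction); signed linearity is preserved by these operations, so $\M$ and $\mI$ inherit it, and $\M$ is a $\oi$-pseudo-isotopy in the sense of Definition \ref{pointwise-defn}. The semipositivity conditions (I-5), (II-5) are immediate: a decorated tree $\T\in\Tr(k,\beta)$ contributes only when $\check\m_{k_v,\beta(v)}\neq 0$ at every interior vertex $v$, which forces $\mu(\beta(v))\geqslant 0$ and hence $\mu(\beta)=\sum_v\mu(\beta(v))\geqslant 0$. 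Unitality, cyclic unitality, and the divisor axiom — conditions (I-2)--(I-4) and (II-1)--(II-4) — must then be shown to propagate through the homological perturbation along the Hodge contraction; I would cite \cite{Yuan_I_FamilyFloer} for the detailed bookkeeping of tree sums.

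For (b): write $\check\M=1\otimes\check\m_s+ds\otimes\check\mc_s$ and $i(\pmb g)=1\otimes i_s+ds\otimes h_s$, $\pi(\pmb g)=1\otimes\pi_s+ds\otimes k_s$, $G(\pmb g)=1\otimes G_s+ds\otimes\sigma_s$, substitute into the tree formulas of Theorem \ref{canonical_model_general_tree-thm}, and use $ds\wedge ds=0$ on $\oi$: in any tree at most one ingredient may contribute its $ds$-component, so the sums split into a $1\otimes$-part and a $ds\otimes$-part, giving $\M=1\otimes\m_s+ds\otimes\mc_s$ and $\mI=1\otimes\mi_s+ds\otimes\mathfrak e_s$. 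By inspection the $1\otimes$-parts $\m_s$ and $\mi_s$ are assembled solely from $\check\m_s$ and from $(i_s,\pi_s,G_s)=(i(g_s),\pi(g_s),G(g_s))$ via the very same recursion, so $\m_s$ is the Hodge-theoretic minimal model of $\check\m_s$ determined by $g_s$ and $\mi_s\colon(\HL,\m_s)\to(\OL,\check\m_s)$ the associated $A_\infty$ homotopy equivalence — precisely the situation of Proposition \ref{minimal_model_prop}. Finally, applying $\eval^s$ from \eqref{eval_eq}, which drops the $ds$-component and fixes the parameter, to these two decompositions yields $\mi_s\diamond\eval^s=\eval^s\diamond\mI$ and $\m_s\diamond\eval^s=\eval^s\diamond\M$.

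The main obstacle is the divisor-axiom and cyclic-unitality half of step (a): unlike unitality and semipositivity, these are not preserved by an arbitrary contraction, and one genuinely needs that $\pmb g$ is the harmonic/Hodge contraction (so that each $\pi(g_s)$ projects onto harmonic forms and the divisor-axiom identities close under the tree recursion, now in the fibered $\oi$-family setting). This is the delicate point, carried out in full in \cite{Yuan_I_FamilyFloer}; everything else is formal manipulation of the tree sums produced by Theorem \ref{canonical_model_general_tree-thm}.
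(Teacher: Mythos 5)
Your proposal is correct and follows the same route the paper takes: the paper simply declares Propositions \ref{minimal_model_prop} and \ref{pseudo_isotopy_minimal_model_prop} ``straightforward from the definitions'' (deferring the divisor-axiom/cyclic-unitality bookkeeping to \cite{Yuan_I_FamilyFloer}), and your write-up is exactly that argument spelled out — apply Theorem \ref{canonical_model_general_tree-thm} to $(\OL_\oi,\check\M)$ with the strong contraction $\pmb g$, split the tree sums along $1\otimes(\cdot)$ and $ds\otimes(\cdot)$ using $ds\wedge ds=0$, identify the $1\otimes$-part with the $g_s$-minimal model of $\check\m_s$, and read off the compatibility with $\eval^s$. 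Your flagging of the delicate point (preservation of the divisor axiom and cyclic unitality under the harmonic contraction in the $\oi$-family setting) matches where the paper itself leans on the cited reference.
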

	
	\begin{proof}
		This is basically a pseudo-isotopy version of Proposition \ref{minimal_model_prop} above. The process is similar since the contraction $\pmb g$ is pointwise. See \cite[Lemma 3.8 and Theorem 3.9]{Yuan_I_FamilyFloer}.
	\end{proof}
	
	\begin{rmk}
		Analogous results do not extend to 2-dimensional pseudo-isotopy. Indeed, a 2-family of metrics $g_{s,t}$ unavoidably encounters "curvature terms" like $\partial_s \partial_t G_{s,t} - \partial_t \partial_s G_{s,t}$, which prevent us from deriving strong contractions (Definition \ref{contraction-defn}). As mentioned in the introduction, this issue is irrelevant in this paper since we do not need to prove the cocycle conditions for the structure sheaf of a mirror non-archimedean analytic space. Meanwhile, this issue explains why the cocycle condition cannot be obtained by naively applying the homological perturbation to a 2-pseudo-isotopy.
	\end{rmk}

	\subsection{Feymann-diagram-type operator integral}

	The following result is due to \cite{FuCyclic}; see also \cite{Yuan_I_FamilyFloer}.

	\begin{thm}
		\label{from_pseudo_to_A_homo_thm}
		There is a canonical way to assign to a pseudo-isotopy $( C_\ab, \M)$ an $A_\infty$ homomorphism 
		\[
		\mC=\mC^\ab: (C, \m^a ) \to (C, \m^b)
		\]
		where $\mC_{1,0}=\id_C$.
	\end{thm}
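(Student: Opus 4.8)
The plan is to realize $\mC^\ab$ as a ``time-ordered exponential'' of the derivative operators $\mc^s$, which is exactly the Feynman-diagram operator integral this subsection is named for. Since $P=\ab$ is one-dimensional, Lemma \ref{pseudo_isotopy_pointwise_lem} (applied to $\ab$ in place of $\oi$) lets me write $\M=1\otimes\m^s+ds\otimes\mc^s$ and extract the two facts I will use: each $(C,\m^s)$ is an $A_\infty$ algebra, and the ``flow equation'' $\tfrac{d}{ds}\m^s+\tilde\mc^s\{\m^s\}-\m^s\{\tilde\mc^s\}=0$ holds for the reduced part $\tilde\mc^s=\mc^s-\mc^s_{1,0}$. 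I would then define $\mC^{[a,s]}\in\CC(C,C)$ as the solution of an ODE of the schematic shape $\tfrac{d}{ds}\mC^{[a,s]}=\tilde\mc^s\diamond\mC^{[a,s]}$, with the $\#$-sign twists dictated by Definition \ref{pointwise-defn} and Remark \ref{sign_A_algebra_rmk}, and with initial condition $\mC^{[a,a]}=\id_C$; then set $\mC^\ab=\mC^{[a,b]}$. Picard iteration converts this into an explicit formula: for each decorated stable rooted ribbon tree $\T\in\Tr(k,\beta)$, place a copy of $\tilde\mc^{\tau(v)}_{\bullet,\beta(v)}$ at each interior vertex $v$, compose along $\T$ toward the root, and integrate the time allocation $\tau$ over the polytope $\mathscr S_a^b(\T)$; then $\mC^\ab_{k,\beta}$ is the signed sum over $\T\in\Tr(k,\beta)$ of these integrals. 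Well-definedness and the gappedness conditions are handled exactly as in Theorem \ref{canonical_model_general_tree-thm}: for $\beta\neq 0$ only finitely many trees contribute, because every nonzero $\beta(v)$ has $E(\beta(v))>0$ while $E(\beta)$ is fixed, stability bounds the number of energy-zero vertices, and each $\mathscr S_a^b(\T)$ is a compact polytope of finite Lebesgue measure. The degree count, with $\deg ds=1$, gives $\deg\mC^\ab_{k,\beta}=1-\mu(\beta)-k$, as an $A_\infty$ homomorphism should have.

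It is worth noting that the same $\mC^\ab$ can be produced by homological perturbation, which is the more conceptual explanation for its appearance in this subsection: one equips the cochain complex $(C_\ab,\M_{1,0})$ with the strong contraction onto $(C,\m^a_{1,0})$ given by the inclusion $\incl$, the restriction $\eval^a$, and the ``integrate-from-$a$'' homotopy, feeds $(C_\ab,\M)$ and this contraction into Theorem \ref{canonical_model_general_tree-thm}, and post-composes the resulting $A_\infty$ homomorphism out of $C$ with the evaluation $\eval^b$, which is itself an $A_\infty$ homomorphism landing in $(C,\m^b)$ because $\M$ restricts to $\m^b$ at $s=b$ and $\eval^b$ annihilates $ds$. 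Expanding the tree formula of Theorem \ref{canonical_model_general_tree-thm}, the integrations hidden inside the ``integrate-from-$a$'' homotopy reproduce exactly the iterated integrals over the $\mathscr S_a^b(\T)$ described above.

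The substantive step is to check that $\mC^\ab$ genuinely satisfies $\m^b\diamond\mC^\ab=\mC^\ab\{\m^a\}$, and I would do this by an ODE argument: set $\mathcal E^s:=\m^s\diamond\mC^{[a,s]}-\mC^{[a,s]}\{\m^a\}\in\CC(C,C)$, observe $\mathcal E^a=0$ since $\mC^{[a,a]}=\id_C$, differentiate in $s$ using the flow equation $\tfrac{d}{ds}\m^s=\m^s\{\tilde\mc^s\}-\tilde\mc^s\{\m^s\}$, the defining ODE for $\mC^{[a,s]}$, the relation $\m^a\{\m^a\}=0$, and the standard brace/Gerstenhaber identities, and verify that what remains is a homogeneous linear expression in $\mathcal E^s$; uniqueness of solutions for such linear ODEs in $\CC$ — which holds by the same energy-and-arity induction used throughout — then forces $\mathcal E^s\equiv 0$. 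Equivalently, one expands $\mathcal E^b$ into tree integrals and kills it by Stokes' theorem on the polytopes $\mathscr S_a^b(\T)$: interior codimension-one faces, where two time values collide, contribute terms that cancel in pairs by the pseudo-isotopy relation $\tilde\mc^s\{\m^s\}=\m^s\{\tilde\mc^s\}$, while the faces where a time reaches $a$ or $b$ assemble into $\mC^\ab\{\m^a\}$ and $\m^b\diamond\mC^\ab$. Either way, \emph{the main obstacle is the sign bookkeeping} — pinning down the exact $\#$-twists in the defining ODE so that the Gerstenhaber identities close up, and, in the Stokes picture, matching signs face by face. Finally I would check that $\mC^\ab$ lies in $\UD$: unitality, cyclical unitality, the divisor axiom conditions (II-1)--(II-4), and semipositivity (II-5). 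This is routine, since each property is inherited through the induction from the corresponding infinitesimal property of $\tilde\mc^s$ guaranteed by $(C_\ab,\M)\in\UD$, just as these properties are preserved under homological perturbation and composition in Propositions \ref{UD_prop} and \ref{minimal_model_prop}.
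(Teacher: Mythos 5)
Your construction is the same as the paper's: the paper defines $\mC^\ab_{k,\beta}$ by exactly these tree-indexed compositions of $\mc^{\tau(v)}$'s integrated over the polytopes $\mathscr S_a^b(\T)$, and records your defining ODE as the inductive formula $\tfrac{d}{du}\mC^{[a,u]}=-\tilde\mc^u\diamond\mC^{[a,u]}$ (note the sign you left schematic), while the verification of $\m^b\diamond\mC^\ab=\mC^\ab\{\m^a\}$ is likewise left as a check via the Stokes/Fubini mechanism you describe. Your homological-perturbation reformulation with the integrate-from-$a$ contraction is a correct extra observation, but not how the paper proceeds; otherwise the proposal matches the paper's proof.
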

	
	Indeed, if we write $\M=1\otimes \m^s+ds\otimes \mc^s$, then we have the following inductive formula:
	\begin{equation}
		\label{inductive_formu-Fubini-yield-eq}
		\mC_{k,\beta}^\ab =
		\sum_{\ell\ge 1}
		\sum_{ \substack{ 
				\beta_0+\beta_1+\cdots+ \beta_\ell = \beta\\
				k_1+\cdots+k_\ell = k \\
				(\ell,\beta_0)\neq(1,0)}
		}
		-\int_a^b du \cdot 
		\mc^u_{\ell,\beta_0}
		\circ
		\big(
		\mC_{k_1,\beta_1}^{[a,u]} \otimes \cdots \otimes \mC_{k_\ell,\beta_\ell}^{[a,u]}
		\big)
	\end{equation}
	whenever $(k,\beta)\neq(0,0),(1,0)$. See \cite[Theorem 2.56]{Yuan_I_FamilyFloer}.

	\begin{prop}
		\label{integral_UD_prop}
		Assume a pseudo-isotopy $(C_\oi, \M)$ is an object in $\UD$. Then, its restriction $A_\infty$ algebra $\m^s$ at each $s\in \oi$ is also an object in $\UD$ (Definition \ref{restriction_pseudo_isotopy_defn}).
		Moreover, if we let $\mC: \m^0 \to \m^1$ denote the induced $A_\infty$ homotopy equivalence (Theorem \ref{from_pseudo_to_A_homo_thm}), then $\mC$ is a morphism in $\UD$.
	\end{prop}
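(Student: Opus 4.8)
The plan is to verify, one by one, the defining conditions (I-1)--(I-5) for the restriction $\m^s$ and (II-1)--(II-5) for the induced homomorphism $\mC=\mC^{[0,1]}$, using the explicit inductive formulas from Theorem \ref{from_pseudo_to_A_homo_thm}. For the restriction $\m^s$: condition (I-1) holds because $\M$ being a pseudo-isotopy forces $\m^s_{1,0}$ to equal the natural differential (Lemma \ref{pseudo_isotopy_pointwise_lem}); (I-2) $P$-unitality of $\M$ specializes at each $s$ to unitality of $\m^s$ by plugging the unit $\one$ into the pointwise decomposition $\M = 1\otimes\m^s + ds\otimes\mc^s$; (I-3) cyclical unitality and (I-4) the divisor axiom pass to $\m^s$ likewise, since the defining identities for $\M$ are $\Omega^*(\oi)$-linear and hence can be read off degree-by-degree in $\Omega^*(\oi)$, the degree-zero part being exactly the statement for $\m^s$; (I-5) the semipositivity $\mu(\beta)\geqslant 0$ for all nonzero components of $\M$ descends to $\m^s$ trivially, since $\m^s$ only reshuffles the same $\beta$'s. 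So the first half is routine unpacking of Definition \ref{pointwise-defn}.

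For the second half I would work from the inductive formula (\ref{inductive_formu-Fubini-yield-eq}) for $\mC^{[0,1]}_{k,\beta}$, proving each property by induction on $\#\Cint_0(\T)$ (equivalently on the complexity of the terms appearing). First, (II-5): since $\M\in\UD$, each $\mc^u_{\ell,\beta_0}$ vanishes unless $\mu(\beta_0)\geqslant 0$, and $\mu$ is additive under $\beta=\beta_0+\beta_1+\cdots+\beta_\ell$, so by induction every nonzero $\mC_\T$ carries $\mu(\beta)\geqslant 0$. Next, (II-1) unitality: one shows by the same induction that inserting $\one$ into $\mC_{\T,\tau}$ produces zero unless the tree is the trivial one, using that every $\mc^u_{k,\beta}$ kills the unit for $(k,\beta)\neq(1,0)$ and that $\mc^u_{1,0}=\frac{d}{du}$ kills the (constant) unit; hence $\mC_{1,0}(\one)=\one$ and $\mC_{k,\beta}(\dots,\one,\dots)=0$ otherwise. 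Condition (II-4) -- that $\mC_{1,0}$ preserves the pairing $\langle\partial\beta,\,\cdot\,\rangle$ with $Z^1(C)$ -- follows because $\mC_{1,0}=\id$, so it holds trivially.

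The two conditions requiring genuine work are (II-2) cyclical unitality and (II-3) the divisor axiom for $\mC^{[0,1]}$. The idea is to feed the cyclical-unitality identity (resp. the divisor identity) for the data $\mc^u$ (which holds because $\M\in\UD$) through the inductive construction (\ref{C_T tau inductive-eq})--(\ref{inductive_formu-Fubini-yield-eq}), tracking signs carefully through the $\id^\#$-shifts and the sign rule (\ref{sign_eq}). For the divisor axiom one inserts a class $b\in Z^1(C)$ into $\mC_{k+1,\beta}^{[0,1]}$, expands via (\ref{inductive_formu-Fubini-yield-eq}), and splits the sum according to whether $b$ lands in the outermost $\mc^u_{\ell,\beta_0}$-slot or in one of the subtrees $\mC^{[0,u]}_{k_i,\beta_i}$; applying the divisor axiom for $\mc^u$ in the first case and the inductive hypothesis in the second, the various $\langle\partial\beta_j,b\rangle$ factors recombine (again by additivity of $\partial$) into the single factor $\langle\partial\beta,b\rangle$ times $\mC_{k,\beta}^{[0,1]}$; a parallel bookkeeping gives cyclical unitality. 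The main obstacle is precisely this sign- and combinatorics-tracking: making sure the telescoping of the $\langle\partial\beta_j,b\rangle$ terms across the tree decomposition is exact and that the $\id^\#$-insertions introduced by the brace operations in (\ref{composition_Gerstenhaber_eq}) do not spoil the cyclical-unitality sign pattern. As in Propositions \ref{UD_prop} and \ref{whitehead_prop}, once one knows the right closed-ness statement, the induction itself is mechanical, and the full details can be deferred to \cite{Yuan_I_FamilyFloer}.
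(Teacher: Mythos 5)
Your plan matches the paper's proof, which simply says the result is straightforward from the definitions by means of the inductive formula (\ref{inductive_formu-Fubini-yield-eq}) (with the detailed divisor-axiom and cyclic-unitality bookkeeping deferred to \cite{Yuan_I_FamilyFloer}), exactly as you outline. Your verification of (I-1)--(I-5) via the pointwise decomposition and of (II-1)--(II-5) by induction on the tree construction is the intended argument, so no further comment is needed.
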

	
	\begin{proof}
		Using the inductive formula (\ref{inductive_formu-Fubini-yield-eq}), this is straightforward from the definition.
	\end{proof}

	Remark that the relation (\ref{inductive_formu-Fubini-yield-eq}) can be also concisely interpreted as
	\begin{equation}
		\label{du_C_eq}
		\tfrac{d}{du} \mC^{[a,u]} = -\tilde\mc^u \diamond \mC^{[a,u]} \qquad (i=0,1)
	\end{equation}
	Here $\tilde \mc^u$ is the reduced form of $\mc^u$, removing the $\CC_{1,0}$-component of $\mc^u$, as introduced in Lemma \ref{pseudo_isotopy_pointwise_lem}.

	\subsection{$A_\infty$ algebras associated to Lagrangians}
	\label{s_Axioms}
	
	We review the $A_\infty$ algebras associated to Lagrangians.
	Our approach is based on the latest published monograph \cite{FOOO_Kuranishi} of Fukaya-Oh-Ohta-Ono, regarding the de Rham model as opposed to their earlier singular chain model in \cite{FOOOBookOne,FOOOBookTwo}. Note that the de Rham model is also extensively studied by Solomon and Tukachinsky \cite{solomon2016differential} \cite[Section 2]{solomon2016point}.

	\begin{defn}
		Let $\mathfrak J(X)$ be the space of smooth almost complex structures tamed by $\omega$.
		Fix $J\in\mathfrak J(X)$. Given $\beta\in H_2(X,L)$ and $k\in\mathbb N$ with $(k,\beta)\neq (0,0), (1,0)$, we denote by 
		\begin{equation}
			\label{moduli_single}
			\mathcal M_{k+1,\beta}(J,L)
		\end{equation}
		the moduli space of all equivalence classes $[\Sigma, \mathbf u, \mathbf z]$ of \textit{$(k+1)$-boundary-marked $J$-holomorphic stable maps of genus zero with one disk component bounded by $L$ in the class $\beta$}. 
		Here the $\Sigma$ is a nodal Riemann surface with one boundary component $\partial \Sigma$, the $\mathbf u: (\Sigma, \partial\Sigma)\to (X,L)$ is continuous and $J$-holomorphic restricted on each irreducible component, the $\mathbf z=(z_0,z_1,\dots,z_k)$ represents the boundary marked points ordered counter-clockwisely. We also require $\mathbf u$ is stable; see e.g. \cite{MS,Frau08,FOOOBookOne,FOOODiskOne}.
		The equivalence relation refers to a biholomorphism on the domains of two stable maps which identifies the nodal points, the marked points and the boundaries. The moduli space $\mathcal M_{k+1,\beta}(J,L)$ is first a well-defined \textit{set} and is also a compact Hausdorff \textit{topoloigcal space}, equipped with the stable map topology \cite[Theorem 7.1.43]{FOOOBookTwo}.
		Slightly abusing the notations, let $ J=\{J_t\mid t\in P\}$ denote a smooth family of $\omega$-tame almost complex structures parameterized by $P$, we consider the following union of moduli spaces:
		\begin{equation}
			\label{moduli_space_P}
			\textstyle
			\mathcal M_{k+1,\beta}(  J,L):=\bigsqcup_{t\in P} \ \{t\} \times \mathcal M_{k+1,\beta}(J_t,L)
		\end{equation}
		which is called a parameterized moduli space. Again, it is well-defined as a set or a topological space.
		Finally, we denote by 
		\begin{equation}
			\label{ev_eq}
			\ev_i:\mathcal M_{k+1,\beta}(J,L) \to L \qquad i=0,1,2,\dots, k
		\end{equation}
		the \textit{evaluation map} at the $i$-th marked point $z_i$
	\end{defn}
	
	\begin{defn}
		\label{moduli_beta_0_defn}
		For the exceptional case $\beta=0$ in $H_2(X,L)$, we define $\mathcal M_{k+1,0}(J,L)=L\times \mathbb D^{k-2}$ or $P\times L\times \mathbb D^{k-2}$ for $k\geqslant 2$. We do \textit{not} define $\mathcal M_{k+1,0}(J,L)$ for $k=0,1$. See \cite[21.6(V)]{FOOO_Kuranishi}.
	\end{defn}
	
	\begin{defn}
		We call the collection 
		\begin{equation}
			\label{moduli_space_system_eq}
			\mathbb M(J,L) =\Big\{ \mathcal M_{k+1,\beta}(J,  L) \mid (k,\beta)\in \mathbb N\times  H_2(X,L)	\Big\}
		\end{equation}
		the \textit{moduli space system} associated with $(J,L)$. Here $J$ can be a single element or a family in $\mathfrak J(X)$ according to the context.
	\end{defn}

	\begin{thm}
		\label{A_infinity_algebra_up_to_pseudo_isotopy_thm}
		We can associate to the moduli space system $\mathbb M(J,L)$ an $A_\infty$ algebra on $\Omega^*(L)$ independent of the choices up to pseudo-isotopy.
	\end{thm}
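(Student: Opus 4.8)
The plan is to construct the $A_\infty$ algebra by the now-standard virtual perturbation scheme of Fukaya--Oh--Ohta--Ono, adapted to the de Rham model as in \cite{FOOO_Kuranishi}, and to organize the output so that it lands in the category $\UD$ of the previous subsection. First I would recall that each moduli space $\mathcal M_{k+1,\beta}(J,L)$ carries a Kuranishi structure with boundary and corners, whose boundary strata are identified with fiber products $\mathcal M_{k_1+1,\beta_1}(J,L) \times_L \mathcal M_{k_2+1,\beta_2}(J,L)$ along evaluation maps; this is the geometric incarnation of the $A_\infty$ associativity relation. One then chooses a system of CF-perturbations (or multisections) compatible with these boundary identifications and with the forgetful maps that encode the unit and the divisor axiom, so that pushforward of the constant form along $\ev_0$ — integration along the fiber of the perturbed evaluation — defines $\check\m_{k,\beta}\colon \Omega^*(L)^{\otimes k}\to\Omega^*(L)$ by
\[
\check\m_{k,\beta}(h_1,\dots,h_k) = (\ev_0)_!\big(\ev_1^*h_1\wedge\cdots\wedge\ev_k^*h_k\big),
\]
together with the exceptional terms $\check\m_{1,0}=d$, $\check\m_{2,0}=\pm\wedge$ coming from Definition \ref{moduli_beta_0_defn}. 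Stokes' theorem for integration along the fiber, applied to the boundary decomposition, yields $\check\m\{\check\m\}=0$; the gappedness conditions (a)--(c) of $\CC$ follow from Gromov compactness, and the degree formula $\deg\check\m_{k,\beta}=2-\mu(\beta)-k$ from the index theorem for the linearized Cauchy--Riemann operator. Unitality, cyclic unitality, the divisor axiom, and the Maslov-semipositivity (I-5) are imposed by working with graded/spin Lagrangians and perturbations compatible with the relevant forgetful maps, placing $(\Omega^*(L),\check\m)$ in $\UD$.

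Next I would address the independence statement. Two choices enter: the almost complex structure $J$ and the virtual perturbation data $\Xi$. Given two such choices $(J_0,\Xi_0)$ and $(J_1,\Xi_1)$, pick a path $\{J_s\}_{s\in\oi}$ of tame almost complex structures connecting them and form the parameterized moduli spaces $\mathcal M_{k+1,\beta}(\{J_s\},L)$ of \eqref{moduli_space_P}, which carry Kuranishi structures with corners whose boundary now includes, besides the fiber-product strata, the two endpoint fibers over $s=0,1$. Choosing a CF-perturbation on this $\oi$-parameterized system that restricts to $\Xi_0,\Xi_1$ at the ends, integration along the fiber of the perturbed evaluation maps — now over $\oi\times L$ — produces an operator system $\check\M$ on $\Omega^*(L)_\oi$. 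The same Stokes argument, together with the $\oi$-pointwise (signed $\Omega^*(\oi)$-linear) structure coming from the product $\{s\}\times$(fiber-direction) decomposition of the parameter, shows $\check\M\{\check\M\}=0$ with $\check\M_{1,0}=d_\oi$, so by Lemma \ref{pseudo_isotopy_pointwise_lem} and Definition \ref{pointwise-defn} this is a pseudo-isotopy; moreover it lies in $\UD$ by the same checks as above, and by construction $\eval^0\check\M=\check\m^{(J_0,\Xi_0)}$, $\eval^1\check\M=\check\m^{(J_1,\Xi_1)}$. This is exactly the assertion ``independent of choices up to pseudo-isotopy.''

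The main obstacle is not the algebra but the virtual geometry: arranging the CF-perturbations so that \emph{all} of the compatibility conditions hold \emph{simultaneously} — compatibility with the codimension-one boundary fiber-product decomposition (needed for $\check\m\{\check\m\}=0$), with the various forgetful maps that kill marked points (needed for unitality, cyclic unitality, and the divisor axiom), and, in the parameterized case, with the endpoint restrictions — while keeping everything coherent across all $(k,\beta)$ under the gappedness/energy filtration. This is carried out by an induction on $E(\beta)$ and $k$, using the collar structure of Kuranishi boundaries and the fact that the forgetful maps are compatible with the fiber-product stratification; the divisor axiom in particular requires the perturbations to be pulled back along the forgetful map $\mathcal M_{k+2,\beta}\to\mathcal M_{k+1,\beta}$ that forgets the added interior-style boundary input, which is the delicate point already handled in \cite{FuCyclic} and \cite[\S 21]{FOOO_Kuranishi}. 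I would invoke those references for the construction of the compatible perturbation system and concentrate the write-up on the bookkeeping that deduces $\check\m\{\check\m\}=0$, the pseudo-isotopy relation of Lemma \ref{pseudo_isotopy_pointwise_lem}, and membership in $\UD$ from Stokes' theorem plus the index/compactness inputs.
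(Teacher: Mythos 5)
Your overall architecture (Kuranishi structures whose normalized boundaries are the fiber products, compatible CF-perturbations, pushforward along $\ev_0$, Stokes plus the composition formula for $\check\m\{\check\m\}=0$, and parameterized moduli over a path of $J$'s for the pseudo-isotopy) is the same as the paper's, and the compatibility issues you flag are indeed delegated to \cite{FOOODiskOne,FOOODiskTwo,FOOO_Kuranishi,FuCyclic} there as well. But there is a genuine gap: you treat a single perturbed system with one (implicit) perturbation parameter as sufficient to define \emph{all} the operators $\check\m_{k,\beta}$ at once. In the CF-perturbation framework the pushout $\Corr^\epsilon_{\tilde{\mathfrak X}_{k,\beta}}$ in (\ref{m_k_beta_epsilon_eq}) is only defined for $0<\epsilon\leqslant\epsilon_{k,\beta}$, with $\epsilon_{k,\beta}$ depending on the chart $(k,\beta)$, and the infimum of the $\epsilon_{k,\beta}$ over the infinitely many nonempty $(k,\beta)$ can be zero. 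So Stokes plus the composition formula only give you, for each energy cut, a \emph{partial} $A_\infty$ structure, and there are genuine algebraic obstructions to completing a partial structure to a full one; this cannot be fixed merely by choosing the perturbations more carefully, which is the step your proposal silently skips when it asserts $\check\m\{\check\m\}=0$ for all $(k,\beta)$ simultaneously.

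The paper's proof is organized precisely around this point: one exhausts the countable set of nonempty $(k,\beta)$ by subsets $\mathcal G_n$, defines partial structures $\check\m^{(n)}$ using $\epsilon=\epsilon_n$, then constructs partial pseudo-isotopies between $\check\m^{(n)}$ and $\check\m^{(n+1)}$ by putting the two rescaled CF-perturbations at the ends of the trivial products $\oi\times\mathcal M_{k+1,\beta}$ and extending them via the collar-extension lemma used in Theorem \ref{perturbed_A_infinity_correspondence_thm}; the algebraic promotion lemma \cite[22.8]{FOOO_Kuranishi} then uses these (and the comparisons with $\check\m^{(n+k)}$ for $k\gg1$) to kill the obstructions and promote each $\check\m^{(n)}$ to a complete $A_\infty$ algebra, all of which are pseudo-isotopic. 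Your proposal identifies the ``main obstacle'' as the simultaneous compatibility of perturbations with boundary strata and forgetful maps, which is real but is exactly what the cited references already supply; the energy-cut/promotion argument is the content of the paper's own proof and must appear in yours (or be explicitly cited as \cite[Theorem 21.35, \S22]{FOOO_Kuranishi}). A minor remark: the unitality, cyclic unitality, divisor axiom, and membership in $\UD$ that you build in are not part of this theorem's statement and are imported from \cite{FuCyclic}; including them is harmless but not needed here.
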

	
	It is actually an immediate consequence of the main results of \cite{FOOODiskOne,FOOODiskTwo} and \cite[Theorem 21.35 (1)]{FOOO_Kuranishi}.
	We will offer a careful exposition in Appendix \ref{s_Kuranishi}. 
	Meanwhile, it should be possible to use other virtual techniques such as \cite{Polyfold,Pardon-vir,abouzaid2021complex} for deriving Theorem \ref{A_infinity_algebra_up_to_pseudo_isotopy_thm}. Here we use Kuranishi structure theory as it is currently available at the time of writing.
	
	Notably, the specific virtual technique employed to derive the $A_\infty$ structures from the moduli space of pseudo-holomorphic curves is really \textit{irrelevant} to the development of non-archimedean techniques in this paper, as we only need the existence of $A_\infty$ structures with the desired properties. 
	Therefore, in the initial reading, it might be advisable for a reader to tentatively accept Theorem \ref{A_infinity_algebra_up_to_pseudo_isotopy_thm} or Axiom \ref{axiom_J} below to maintain a focus.
	
	A reader uncomfortable with virtual techniques may take Theorem \ref{A_infinity_algebra_up_to_pseudo_isotopy_thm} as an axiom, which can be specifically stated as follows:
	
	\begin{axiom}\label{axiom_J}~
		\begin{itemize}[itemsep=4pt]
			\item \emph{Fix $J\in \mathfrak J(X)$ and $\mathbb M(J,L)$. There exists a set $\mathbb V(J; L)$ of virtual fundamental chains in which any element $\Xi$ is associated with an $A_\infty$ algebra $\check \m^{\Xi}$ in $\CC_{H_2(X,L)}(\Omega^*(L), \Omega^*(L))$.}
			
			\item \emph{Fix $J, J'\in\mathfrak J(X)$ and $\Xi\in \mathbb V(J; L)$, $\Xi'\in\mathbb V(J' ; L)$ defining the $A_\infty$ algebras $\check \m^{\Xi}$, $\check \m^{\Xi'}$ as above.
				Take a path $\pmb J=(J_t)_{0\leqslant t \leqslant 1}$ connecting $J$ and $J'$ and the moduli system $\mathbb M(\pmb J,L)$, there exists a set $\mathbb V(\pmb J, \Xi, \Xi'; L)$ of virtual fundamental chains in which any element $\pmb \Xi$ is associated with a pseudo-isotopy $\check \M^{\pmb \Xi}$ on $\Omega^*(\oi\times L)$ such that it restricts to $\check \m^{\Xi}$ and $\check \m^{\Xi'}$ at the two ends.}
		\end{itemize}
	\end{axiom}

	Roughly speaking, the choice $\Xi$ for an $A_\infty$ algebra refers to the following packages of data: (i) Kuranishi structures $\widehat{\mathcal U}_{k,\beta}$ on all moduli spaces $\mathcal M_{k+1,\beta}:=\mathcal M_{k+1,\beta}(J,L)$ such that they are compatible with respect to the restrictions to the normalized boundaries and corners and the evaluation maps at the marked points are strongly smooth. (ii) CF-perturbation data $\widehat{\mathfrak S}_{k,\beta}$ on all $\widehat{\mathcal U}_{k,\beta}$, with induced boundary and corner compatibility conditions, such that the output evaluation map is strongly transversal.

	We remark that any virtual technique achieves must allow dependence on various choices at the chain level, as explained in \cite[Remark 7.87]{FOOO_Kuranishi}. The disk bubbling phenomenon in moduli spaces of pseudo-holomorphic curves with Lagrangian boundaries is a codimension-one event. Consequently, we have to generate virtual fundamental "chains" rather than "cycles". Thus, the construction of associated $A_\infty$ algebras is inherently choice-sensitive.
	Nevertheless, certain homological algebra can be employed to minimize the dependence on choices, where maintaining coherence among various choices and treating them as an integrated system is often essential.
	Furthermore, the non-archimedean perspective could contribute in the following sense. Our ultimate goal is to derive invariants from these $A_\infty$ structures. In this regard, the automorphism group of an analytic domain or an affinoid algebra may possess sufficient breadth to absorb or neutralize the inherent flexibility and ambiguity in the various necessary choices.

	\section{Non-archimedean analytic and tropical geometry}
	\label{S_NA}
	
	This section provides an accessible overview of the fundamentals of non-archimedean geometry for those unfamiliar with the subject. It can be omitted by readers who are already familiar with the topic.
	Let $\Bbbk$ be a normed field. Namely, there is a norm on $\Bbbk$ as a function $x\mapsto |x|$ from $\Bbbk$ to $\mathbb R_{\geqslant 0}$ such that
	\begin{itemize}
		\itemsep 0pt
		\item $|x|=0$ if and only if $x=0$;
		\item $|xy|=|x||y|$;
		\item $|x+y|\leqslant |x|+|y|$.
	\end{itemize}
	Unless otherwise specified, we will always assume that $\Bbbk$ is algebraically closed and that the norm is complete.
	A norm is called \textit{non-archimedean} if it further satisfies the strong triangle inequality (also known as ultrametric inequality) $|x+y|\leqslant \max\{|x|, |y|\}$ (see e.g. \cite{BoschBook}).
	Then, a non-archimedean norm is equivalent to a (non-archimedean) valuation $\mathrm{val}: \Bbbk\to \mathbb R\cup \{\infty\}$ such that
	\begin{itemize}
		\itemsep 0pt
		\item $\mathrm{val}(x)=\infty $ if and only if $x=0$;
		\item $\mathrm{val}(xy)=\mathrm{val}(x)+\mathrm{val}(y)$;
		\item $\mathrm{val}(x+y)\geqslant \min\{\mathrm{val}(x),\mathrm{val}(y)\}$.
	\end{itemize}
	The relation is given by $\mathrm{val}(x)=-\log|x|$.
	A non-archimedean field is a field equipped with a non-archimedean norm.
	Several examples of non-archimedean fields include the field $\mathbb C((T))$ of rational functions, the field of Puiseux series, the field of $p$-adic numbers, and so on.
	However, in this paper, the non-archimedean field we want to exclusively consider is the \textit{Novikov field} defined as follows:
	\[
	\Bbbk=\Lambda= \mathbb C(( T^{\mathbb R})) = \left\{
	\sum_{i=0}^\infty a_i T^{\lambda_i} \mid a_i\in\mathbb C, \lambda_i\in\mathbb R,\lambda_i \nearrow +\infty
	\right\}
	\]
	Its non-archimedean valuation map
	\[
	\mathrm{val}: \Lambda\to \mathbb R \cup \{\infty\}
	\]
	is defined by taking the smallest power. Namely, given $x=\sum_{i=0}^\infty a_i T^{\lambda_i}$ with $a_0\neq 0$ and $\{\lambda_i\}$ strictly increasing, we define $\mathrm{val}(x) =\lambda_0$.
	Note that the image of the norm is the whole $R_{\geqslant 0}$ and the image of the valuation is the whole $\mathbb R$.
	
	\begin{rmk}
		The Novikov field is commonly used in symplectic geometry, especially concerning the Gromov's compactness of pseudo-holomorphic curves. The powers $\lambda_i$ can be intuitively regarded as energies of pseudo-holomorphic curves.
	\end{rmk}

	The valuation ring is given by $\Lambda_0:= \mathrm{val}^{-1}[0,\infty]=\{x\mid |x| \leqslant 1 \}$, often called the \textit{Novikov ring}.
	It has the maximal ideal $\Lambda_+:=\mathrm{val}^{-1}(0,\infty]=\{x\mid |x|<1\}$.
	The multiplicative group of units in the field $\Lambda$ is denoted as 
	\[
	U_\Lambda:= \mathrm{val}^{-1}(0)=\{x\mid |x|=1\}
	\]
	We may call $U_\Lambda$ the \textit{non-archimedean unit circle}.
	Notice that $U_\Lambda= \mathbb C^*\oplus \Lambda_+$ and $\Lambda_0 =\mathbb C\oplus \Lambda_+$. The standard isomorphism $\mathbb C^*\cong \mathbb C/ 2\pi i \mathbb Z$ naturally extends to $U_\Lambda\cong \Lambda_0/ 2\pi i \mathbb Z$. In particular, for any $y\in U_\Lambda$, there exists some $x\in \Lambda_0$ with $y=\exp(x)$.
	This is a special property of Novikov field.

	\subsection{Non-archimedean analytic space}
	Let's first give an easy-to-use perspective for readers who are not familiar with non-archimedean geometry.
	Take a smooth algebraic variety $X$ over $\Bbbk$. If $\Bbbk=\mathbb C$, then $X$ can be considered a complex manifold or a complex analytic space by refining the Zariski topology with the complex norm $|\hspace{-0.2em}\cdot \hspace{-0.2em}|_{\mathbb C}$.
	For example, sequence convergence for (closed) points in $X$ can be discussed by measuring the complex norm. Likewise, if $\Bbbk$ is equipped with a non-archimedean norm $|\hspace{-0.2em}\cdot \hspace{-0.2em}|_{NA}$, sequence convergence can similarly be addressed by measuring this non-archimedean norm. For this, we often write $X^{\mathrm{an}}$ instead of $X$.
	This process is called the \textit{analytification}. We refer to \cite[Section 5.4]{BoschBook} for further reading.
	Indeed, we have the so-called GAGA-functor from the category of $\Bbbk$ schemes of locally finite type to the category of non-archimedean analytic $\Bbbk$-space (see \cite[Sec. 5.4]{BoschBook} or \cite[Sec.5.1]{TemkinIntro}).
	In a word, many concepts from complex geometry can be similarly applied to non-archimedean analytic geometry, with some necessary technical adjustments.
	For the objectives of this paper, this somewhat simplified perspective suffices for our main result.

	To delve deeper into non-archimedean geometry, let's start with some definitions.
	Due to the strong triangle inequality, a series $\sum_{\nu=0}^\infty a_\nu$ of elements $a_\nu\in \Bbbk$ is convergent with respect to the non-archimedean norm if and only if $\lim_{\nu\to\infty} |a_\nu|=0$ \cite[p10]{BoschBook}.
	
	Let 
	\[
	T_d= \Bbbk \langle z_1,\dots,z_d\rangle \subseteq  \Bbbk[[z_1,\dots,z_d]]
	\]
	be the subset of all formal power series $\sum_{\nu\in\mathbb Z^d_{\ge 0}}a_{\nu} \mathbf{z}^\nu$ so that $|a_{\nu}| \to 0$ as $|\nu| = \sum |\nu_i|\to \infty$. It form a Banach $\Bbbk$-algebra, called the $d$-th \textit{Tate algebra}. 
	Alternatively, the Tate algebra $T_d$ is the set of all formal power series that converges on the closed unit polydisk:
	\[
	\mathbb B^d = \{ x=(x_1,\dots, x_d)\in \Bbbk^d \mid |x_i|\leqslant 1 \}
	\]
	Moreover, there is a bijection between the points in $\mathbb B^n$ and the set of maximal ideals in $T_d$ given by sending a point $x$ to the ideal $\m_x=\{f\in T_n \mid f(x)=0\}$ \cite[2.2]{BoschBook}.
	In other words, the closed unit polydisk $\mathbb B^d$ is the spectrum of maximal ideals in $T_d$. 
	This observation morally leads to the definition of non-archimedean analytic space.
	
	An \textit{affinoid algebra} is defined to be a $\Bbbk$-Banach algebra $A$ with a continuous epimorphism $T_d\to A$ for some $d$. 
	An \textit{affinoid domain}, denoted by $\mathrm{Sp}(A)$, is the spectrum of maximal ideals in an affinoid algebra $A$.
	For example, the Tate algebra itself is clearly an affinoid algebra, and the closed unit polydisk $\mathbb B^d$ above is an affinoid domain correspoinding to $T^d$.
	Clearly, $\Sp(A) \neq \varnothing$ unless $A=0$. If $A= T_d / \langle f_1,\dots, f_r\rangle$ for $f_1,\dots, f_r\in A$, then $\Sp(A)$ can be identified with the zero locus of $f_1,\dots, f_r$ within the unit polydisk $\mathbb B^d$; see e.g. \cite[Proposition 3.1.8]{EKL}.
	In other wrods, we have $\Sp(A)=\{ x=(x_1,\dots, x_d)\in \mathbb B^d \mid f_1(x)=\cdots=f_r(x)=0\}$.

	A \textit{non-archimedean analytic space} is a locally ringed space whose structure sheaf is locally affinoid algebras.
	Similar to the scheme theory, the \textit{dimension} can be defined by examining the Krull dimension of local rings \cite[p300]{BGR}.
	Just as other geometric objects, non-archimedean spaces also adhere to the cocycle condition for local-to-global gluing; see e.g. \cite[5.3/5]{BoschBook} \cite[Sec 4.1.4]{TemkinIntro}.
	

	\begin{rmk}
		To be exceedingly precise, the term "non-archimedean analytic space" can refer to either Tate's rigid analytic space \cite{Tate_origin} or Berkovich's analytic space \cite{Berkovich1993etale,Berkovich_2012spectral}. Nonetheless, the differences are very mild, and delving into this distinction may exceed the intended scope of this paper.
		Roughly, given an affinoid algebra, taking the spectrum of maximal ideals of them goes to the Tate's rigid geometry; meanwhile, taking the spectrum of multiplicative semi-norms produces extra points and goes to the Berkovich geometry.
		A fully faithful functor exists between the categories of Berkovich spaces and rigid spaces; see \cite{Berkovich_2012spectral, Berkovich1993etale} and \cite[Sec C.1]{baker2010potential}).
		Intuitively, a Berkovich space $V$ can be obtained by adding `generic points' back to a rigid space $V_0$ \cite[Sec 7.1]{Fresnel_2012rigid}; in turn, the $V_0$ is everywhere dense in $V$ \cite[2.1.15]{Berkovich_2012spectral}. For instance, if $V$ is the Berkovich affine line, then $V_0$ is the points of the field $\Lambda$, while a closed \textit{disk} in $\Lambda$ is actually a sort of `generic point' in $V\setminus V_0$ \cite[1.3.1]{baker2008introduction}.  This resembles the relationship between schemes and varieties by including generic points of prime ideals, but in a different flavor.
	\end{rmk}

	\subsection{Tropicalization map}
	From now on, let's fix $\Bbbk=\Lambda$.
	Let $\Lambda^* =\Lambda\setminus\{0\}$. For our purpose, we aim to consider the \textit{tropicalization map}:
	\begin{equation}
		\label{trop_eq}
		\trop \equiv \mathrm{val}^{\times n} : (\Lambda^*)^n \to \mathbb R^n
	\end{equation}
	This is an analog of the complex logarithm map $\mathrm{Log}=\log|\hspace{-0.2em}\cdot \hspace{-0.2em}|^{\times n}: (\mathbb C^*)^n\to\mathbb R^n$.
	The total space $(\Lambda^*)^n$ is a non-archimedean analytic space.
	Let $\Delta$ be a rational convex polyhedron in $\mathbb R^n$, defined by a finite system of affine-linear inequalities
	$
	\sum_{j=1}^n b_{ij} u_j \geqslant c_i
	$
	for $i=1,\dots, N$, $c_i\in\mathbb R$, $b_{ij}\in\mathbb Z$, and $(u_1,\dots, u_n)\in\mathbb R^n$.
	We do not require $\Delta$ to have full dimension.
	The following result is due to Einsiedler, Kapranov, and Lind \cite[Proposition 3.1.5]{EKL}.

	\begin{prop}
		The preimage $\trop^{-1}(\Delta)$ is an affinoid domain; the corresponding affinoid algebra is
		\[
		\Lambda\langle \Delta \rangle  =\left\{
		\sum_{i=0}^\infty s_i Y^{\alpha_i} \in \Lambda[[\mathbb Z^n]] \mid s_i\in\Lambda, \  \alpha_i\in\mathbb Z^n , \ \mathrm{val}(s_i)+ \alpha_i \cdot \gamma \to \infty  \  \ \text{for all} \  \gamma \in\Delta 
		\right\}
		\]
		in the Laurent formal power series ring
		$\Lambda[[\mathbb Z^n]]\equiv \Lambda[[Y_1^\pm,\dots, Y_n^\pm]]$.
	\end{prop}
	
	Let's call $\Lambda\langle \Delta\rangle$ the ring of \textit{(strictly) convergent formal power series} \cite[6.1.4]{BGR} \cite{EKL}.
	In other words, the above proposition means that the spectrum $\Sp(\Lambda\langle \Delta\rangle)$ of maximal ideals in $\Lambda\langle \Delta\rangle$ is identified with the points in $\trop^{-1}(\Delta)$.
	Moreover, given $f_1,\dots, f_r\in \Lambda\langle \Delta\rangle$, we have $f_i(x)$ forms a convergent series in $\Lambda$ for any point $x\in \trop^{-1}(\Delta)$; for 
	\[
	A=\Lambda\langle \Delta\rangle / (f_1,\dots, f_r)
	\]
	the spectrum $\Sp(A)$ of the maximal ideals in $A$ is then identified with the zero locus of $f_1,\dots, f_r$ in $\trop^{-1}(\Delta)$ \cite[Proposition 3.1.8]{EKL}.
	Specifically, 
	\[
	\Sp(A)=\{ x\in\trop^{-1}(\Delta)\mid f_1(x)=\cdots =f_r(x)=0\}
	\]
	One can verify that its dimension is $n-r$.

	There is also a helpful perspective from group algebra. Recall that the Laurent polynomial ring of $n$ variables can be viewed as the (semi)group algebra associated to the abelian group $\mathbb Z^n$, namely, the set of functions $\mathbb Z^n\to \Lambda$ with finite support.
	Removing the condition of finite support yields the Laurent formal power series ring.
	Therefore, we can interpret an element in $\Lambda\langle \Delta\rangle$ as a function $f: \mathbb Z^n \to \Lambda$ with the extra convergence condition: $\mathrm{val}(f(  \nu))+   \nu \cdot \gamma\to \infty$ as $|  \nu|\to\infty$, where $\nu\in\mathbb Z^n$.

	We remark that if we replace $\Delta$ with an open subset $  U$, the preimage $\trop^{-1}(  U)$ is still an open domain in $(\Lambda^*)^n$ with respect to the non-archimedean analytic topology. But, it is no longer an affinoid domain as the cooridnate ring $\Lambda\langle   U\rangle$, defined in the same way, of analytic functions on $\trop^{-1}(U)$ is no longer an affinoid algebra. One can cover an arbitrary analytic open domain by a collection of affinoid domains.
	To illustrate this point, let's elaborate on the GAGA-principle mentioned earlier for a specific example. Fix $d\in\mathbb Z_{>0}$ and $s>1$. We consider the (scaled) Tate algebra $T_n^{(i)}:= \Bbbk\langle s^{-i} z_1, \dots,  s^{-i} z_n \rangle$ for some $i\in \mathbb N$ which consists of formal power series $\sum a^\nu \mathbf z^\nu$ so that $|a_\nu| s^{i|\nu|} \to 0$.
	Then, $\Sp T_n^{(i)}$ agrees with the closed ball with radius $s^i$.
	The analytification of the affine $n$-space $\mathbb A_{\mathbb K}^n$ is given by $\mathbb A_{\mathbb K}^{n,\mathrm{an}}=\bigcup_{i=0}^\infty \Sp T_n^{(i)}$.
	Likewise, one may describe an open polydisk $\{|x|<r\}$ as an exhausting union of closed polydisks $\{|x|\leqslant r_i\}$ similarly, where closed polydisks are affinoid domains but an open polydisk is not.
	To sum up, affinoid domains are just slightly more preliminary building blocks, just as we often take a polydisk or a ball as a local chart in complex manifolds.

	\section{Proper unobstructedness in Lagrangian Floer theory}
	\label{s_proper_unobstructed}
	
	Let's go back to the setting of sympletic geometry. Recall that $L$ is a Lagrangian submanifold in a symplectic manifold $(X,\omega)$ as before.
	Consider the formal power series version of the group algebra:
	\begin{equation}
		\label{Lambda_[[_mathcal H_1L}
		\Lambda[[H_1(L)]] =\left\{ \sum_{i=0}^\infty  c_i Y^{\alpha_i}  \mid c_i\in\Lambda,  \alpha_i\in H_1(L) \right\}
	\end{equation}
	where $Y$ is a formal symbol and $H_1(L)=H_1(L;\mathbb Z)$ is the singular homology group of $L$.

	For $\Delta\subseteq H^1(L)$, we define the following $\Lambda$-algebra
	\begin{equation*}
		\Lambda\langle H_1(L); \Delta \rangle :=
		\left\{ 
		\sum_{i=0}^\infty c_i Y^{\alpha_i} \in \Lambda[[H_1(L)]] \mid
		c_i \in\Lambda, \alpha_i\in H_1(L), \, \,  \, \,
		\mathrm{val}(c_i) + \langle \alpha_i , \gamma \rangle \to \infty \ \text {for all $\gamma\in \Delta$}
		\right\}
	\end{equation*}

	We claim that any strictly convergent formal power series $f=\sum c_i Y^{\alpha_i}$ in $\Lambda\langle H_1(L) ; \Delta\rangle$ can be naturally interpreted as a well-defined function $H^1(L;U_\Lambda)\times \Delta \to \Lambda$.
	Indeed, we denote by $\mathbf y^\alpha \in U_\Lambda$ the image of a cycle $\alpha$ in $H_1(L)$ under the map associated to $\mathbf y$ via $H^1(L;U_\Lambda) \cong \Hom (H_1(L); U_\Lambda)$.
	Given $(\mathbf y,\gamma)\in H^1(L; U_\Lambda)\times \Delta$, we want to specify a number
	$
	f(\mathbf y, \gamma)= \sum c_i \cdot T^{\langle \gamma, \alpha_i\rangle} \mathbf y^{\alpha_i}
	$ in the Novikov field $\Lambda$, where we need to check that it forms a convergent series in $\Lambda$ with respect to the adic topology induced by the non-archimedean valuation. In other words, we need to check that $\mathrm{val}(c_i \cdot T^{\langle \gamma,\alpha_i\rangle} \mathbf y^{\alpha_i})= \mathrm{val}(c_i) +\langle \gamma,\alpha_i \rangle \to\infty$. This is exactly the definition.

	\begin{rmk}
		\label{torsion_rmk}
		In general, $H_1(L)$ may have torsion parts. For instance, if $H_1(L)\cong \mathbb Z^m \oplus \mathbb Z/p\mathbb Z$, then $\Lambda\langle H_1(L)\rangle \cong \Lambda\langle Y_1^\pm,\dots, Y_m^\pm\rangle [Z]/(Z^p-1)$.
		Indeed, let $e_1,\dots, e_m$ and $\xi$ be the generators of $\mathbb Z^m$ and $\mathbb Z/p\mathbb Z$ respectively.
		Set $Y_k=Y^{e_k}$ and $Z=Y^{\xi}$, and then $Z^p=Y^{p\xi}=Y^0=1$.
		Now, an arbitrary formal power series $F$ can be expressed as $F=F_0 +F_1 Z+\cdots +F_{p-1} Z^{p-1}$ where each $F_i\in \Lambda[[Y_1^\pm,\dots, Y_m^\pm]]$.
		Since the first cohomology $H^1(L;\mathbb Z)$ is always torsion-free by the universal coefficient theorem, the cap product of a cohomology class with a torsion homology class is always zero, that is, $\langle \alpha , \xi\rangle=0$ for any $\alpha\in H_1(L)$.
		Therefore, the claim follows.
	\end{rmk}
	
	We set 
	\[\Lambda\langle H_1(L)\rangle :=\Lambda\langle H_1(L);0 \rangle
	\] 
	for $\Delta=\{0\}$ in $H^1(L)$.
	It is easy to observe that 
	$
	\Lambda\langle H_1(L);\Delta_1\cup \Delta_2\rangle =\Lambda\langle H_1(L); \Delta_1) \cap \Lambda\langle H_1(L); \Delta_2\rangle
	$.
	Thus, if $\Delta_1\supseteq \Delta_2$, then $\Lambda\langle H_1(L); \Delta_1\rangle \subseteq \Lambda\langle H_1(L); \Delta_2\rangle$.
	In particular, for any $\Delta$ that contains $0$, we have $\Lambda\langle H_1(L); \Delta\rangle \subseteq \Lambda\langle H_1(L) \rangle $ which is only `$\subsetneq$' in general.

	\subsection{Obstruction ideal}
	\label{ss_obstruction_ideal}
	
	\begin{situation}
		\label{situation_A_inf_alg_UD}
		Let $\check \m=\check \m^{\Xi}$ be an $A_\infty$ algebra on $\OL$ as in Axiom \ref{axiom_J} where $\Xi$ denotes the choice.
		By Theorem \ref{canonical_model_general_tree-thm}, applying the homological perturbation with respect to a contraction $g = (i, \pi, G)$ transforms the $A_\infty$ algebra $\check \m$ on $\Omega^*(L)$ into a \textit{minimal} $A_\infty$ algebra $\m$ on $\HL$.
		By Proposition \ref{minimal_model_prop}, we know that $\m$ also lives in the category $\UD$.
	\end{situation}
	
	From a more geometric perspective, one can interpret the homological perturbation process for the minimal $A_\infty$ algebra $(\HL,\m)$ as counting holomorphic pearly trees \cite{Sheridan15,FOOO_2009canonical_Morse}.
	For example, if a nonzero class $\beta_1$ in $H_2(X,L)$ can be represented by a nonconstant Maslov-0 holomorphic disk, then the above series actually involves infinitely many potentially nonvanishing terms $\m_{0, a\beta_1}$ for all $a\in\mathbb N$.

	After the homological perturbation, it is important to note that $\m_{0,\beta}$ lives in the finite-dimensional vector space $H^*(L)$. We consider a vector-valued Laurent formal power series
	\[
	\sum_{\beta\in H_2(X,L)} T^{E(\beta)} Y^{\partial\beta} \m_{0,\beta}
	\]
	which can be viewed as an element in $\Lambda[[H_1(L)]]\otimes H^*(L)$.
	
	Thanks to the Gromov compactness or the gappedness condition, the $\m_{0,\beta}$ is nonzero for at most countably many $\beta$'s, and we can order these $\beta$'s by making the values of $E(\beta)$ an increasing sequence.
	Moreover, if $\m_{0,\beta}\in H^{2-\mu(\beta)}(L)$ is nonzero, then recall that our assumption yields $\mu(\beta)\geqslant 0$. Therefore, $\m_{0,\beta}$ is contained either in $H^0(L)$ or $H^2(L)$, and only the terms with $\mu(\beta)=2$ or $\mu(\beta)=0$ can contribute.
	Slightly abusing the notation,  we often view those $\m_{0,\beta}$ in $H^0(L)$ as real numbers since $H^0(L)\cong \mathbb R$.

	\begin{defn}
		\label{W_Q_defn}
		We call
		\[
		W:=\sum_{\mu(\beta)=2} T^{E(\beta)} Y^{\partial\beta} \m_{0,\beta}
		\]
		the \textit{superpotential} associated to $\m$. We call
		\[
		Q:= \sum_{\mu(\beta)=0} T^{E(\beta)} Y^{\partial\beta} \m_{0,\beta}
		\]
		the \textit{obstruction series} associated to $\m$.
		Taking a basis $\{\Theta_1,\dots, \Theta_\ell\}$ of $H^2(L)$, one may write $Q= Q_1\Theta_1+\cdots + Q_\ell\Theta_\ell$ where each $Q_i\in\Lambda \langle H_1(L) \rangle$.
		Abusing the terminology, we also call $Q_1,\dots, Q_\ell$ the \textit{obstruction series} associated to $\m$.
	\end{defn}
	
	In particular, we have the following equation in $\Lambda[[H_1(L)]]\otimes H^*(L)$:
	\begin{equation}
		\label{W_Q_expand_eq}
		\sum_\beta T^{E(\beta)} Y^{\partial\beta} \m_{0,\beta} = W\cdot \one + \sum_{i=1}^\ell Q_i \cdot \Theta_i
	\end{equation}
	
	\begin{prop}
		\label{strictly_convergent_prop}
		The formal power series $W$ and $Q_i$'s are contained in $\Lambda\langle H_1(L) \rangle$.
		Moreover, for a sufficiently small $\Delta \subset H^1(L)$ that contains the origin $0$, 
		$W$ and $Q_i$'s are also contained in $\Lambda\langle H_1(L) ;  \Delta\rangle$.
	\end{prop}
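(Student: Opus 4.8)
The plan is to deduce the strict convergence of $W$ and the $Q_i$'s from the two inputs highlighted in the introduction: Gromov compactness (equivalently the gappedness condition built into the category $\UD$) and the Groman--Solomon reverse isoperimetric inequality. The key quantitative point is that for a class $\beta$ with $\m_{0,\beta}\neq 0$, the boundary $\partial\beta \in H_1(L)$ cannot be "too large" relative to the symplectic area $E(\beta)$: after fixing a metric on $L$, one obtains a linear bound of the form $|\partial\beta| \leqslant c_1\, E(\beta) + c_2$ for the representing holomorphic disks (with one interior vertex), and then the homological perturbation only builds $\m_{0,\beta}$ out of pearly trees whose total area is $E(\beta)$ and whose constituent disk classes satisfy the same bound, so the bound propagates to all of $H_2(X,L)$ with a (possibly worse) pair of constants. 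This is exactly the content one needs; I would isolate it as the main geometric lemma.

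First I would recall that after homological perturbation each $\m_{0,\beta}$ lies in the finite-dimensional space $H^*(L)$, so $W\cdot\one + \sum_i Q_i\Theta_i = \sum_\beta T^{E(\beta)} Y^{\partial\beta}\m_{0,\beta}$ is a formal series $\sum_j c_j Y^{\alpha_j}$ with $c_j \in \Lambda$ of valuation $\geqslant E(\beta_j)$ and $\alpha_j = \partial\beta_j$. To show membership in $\Lambda\langle H_1(L)\rangle = \Lambda\langle H_1(L);0\rangle$ I must check $\val(c_j) \to \infty$ as $|\alpha_j|\to\infty$: by the gappedness condition (c) in the definition of $\CC$, only finitely many $\beta$ have $E(\beta)\leqslant E_0$ with $\m_\beta\neq 0$, so $E(\beta_j)\to\infty$ along any subsequence with $\m_{0,\beta_j}\neq 0$, hence $\val(c_j)\geqslant E(\beta_j)\to\infty$; this already gives the first assertion, since $\val(c_i)+\langle\alpha_i,0\rangle = \val(c_i)$. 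For the refined statement with a small neighborhood $\Delta$ of $0$, I need $\val(c_j) + \langle \alpha_j,\gamma\rangle \to \infty$ for all $\gamma\in\Delta$. Here I invoke the geometric lemma: $\langle\alpha_j,\gamma\rangle \geqslant -|\alpha_j|\,\|\gamma\| \geqslant -(c_1 E(\beta_j)+c_2)\|\gamma\|$, so $\val(c_j)+\langle\alpha_j,\gamma\rangle \geqslant E(\beta_j)\big(1 - c_1\|\gamma\|\big) - c_2\|\gamma\|$. Choosing $\Delta = \{\gamma : \|\gamma\| < 1/(2c_1)\}$ makes the coefficient of $E(\beta_j)$ at least $1/2 > 0$, and since $E(\beta_j)\to\infty$ the whole expression tends to $\infty$, uniformly enough on $\Delta$. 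Then the individual components $W$ and $Q_i$ lie in the same algebra because $\Lambda\langle H_1(L);\Delta\rangle$ is a $\Lambda$-module and $\{\one,\Theta_1,\dots,\Theta_\ell\}$ are part of a basis of $H^*(L)$, so extracting a coordinate is a continuous $\Lambda$-linear projection preserving the convergence condition.

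The main obstacle is establishing the reverse isoperimetric bound $|\partial\beta|\leqslant c_1 E(\beta)+c_2$ on all of $H_2(X,L)$ (not just on classes carrying an actual holomorphic disk), i.e. propagating it through the homological perturbation / pearly tree construction. For a single disk component this is precisely Groman--Solomon \cite{ReverseI}: the boundary length of a $J$-holomorphic disk is bounded by a constant times its area plus a constant, and the word-length (or any fixed norm) of $\partial\beta$ in $H_1(L)$ is comparable to the boundary length up to additive/multiplicative constants depending only on $(L,g)$ and a finite generating set. For $\m_{0,\beta}$, the perturbative contributions are indexed by stable rooted ribbon trees with interior vertices decorated by classes $\beta(v)$ summing to $\beta$, each $\beta(v)$ being either $0$ (contributing nothing to $\partial$) or carried by a holomorphic disk; since $\partial\beta = \sum_v \partial\beta(v)$ and $E(\beta)=\sum_v E(\beta(v))$, summing the per-vertex bounds gives $|\partial\beta| \leqslant \sum_v (c_1 E(\beta(v)) + c_2) \leqslant c_1 E(\beta) + c_2\cdot\#\Cint_0(\T)$; one then needs a uniform bound on the number of interior vertices contributing at a given energy level, which again follows from gappedness (finitely many nonzero $\beta(v)$ below any energy, and each has $E(\beta(v))$ bounded below by the minimal energy of a nonconstant disk, so $\#\Cint_0(\T) \leqslant c_3 E(\beta) + c_4$). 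Absorbing these constants yields the claimed linear bound on all of $H_2(X,L)$, completing the argument. I would present the geometric lemma carefully and treat the two convergence checks above as routine consequences.
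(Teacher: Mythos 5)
Your proposal is correct and follows essentially the same route as the paper: gappedness/Gromov compactness gives $E(\beta)\to\infty$ for membership in $\Lambda\langle H_1(L)\rangle$, and the Groman--Solomon reverse isoperimetric inequality, with $\Delta$ chosen sufficiently small, gives membership in $\Lambda\langle H_1(L);\Delta\rangle$. The only difference is cosmetic: the paper uses the homogeneous form $E(\beta)\geqslant c\,\ell(\partial\beta)$, which combined with the additivity of $E$ and $\partial$ over the vertex classes of a pearly tree passes through the decomposition directly, so your additive constant $c_2$ and the ensuing bound on the number of interior vertices are not needed.
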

	
	\begin{proof}
		Consider the countable subset of $H_2(X,L)$ consisting of $\beta$'s such that $\m_{0,\beta}\neq 0$, and then we aim to show $\mathrm{val}( T^{E(\beta)})\equiv E(\beta)\to \infty$ for the first half of the proposition. 
		This is straightforward from the Gromov compactness.
		The second half is also easy to obtain from Groman-Solomon's reverse isoperimetric inequality \cite{ReverseI}. It states that there exists a constant $c>0$ such that whenever $\beta\in H_2(X,L)$ is represented by a pseudo-holomorphic curve, we have $E(\beta)\geqslant c \ell(\partial\beta)$ where $\ell$ denotes the length.
		Specifically, our goal is to show that $E(\beta)+\langle \partial\beta ,\gamma\rangle $ is still divergent to $\infty$ for any given $\gamma \in \Delta$.
		Then, we just need to choose $\Delta$ sufficiently small so that $|\langle \partial\beta,\gamma\rangle|\le \frac{c}{2} \ell(\partial\beta)$.
		%
	\end{proof}

	\begin{defn}
		\label{obstruction_ideal_defn}
		We call the ideal
		$
		\mathfrak a = ( Q_1,\dots, Q_\ell) $ in $\Lambda \langle H_1(L) \rangle
		$
		generated by $Q_i$'s the \textit{obstruction ideal} associated to $\m$.
		We say the $A_\infty$ algebra $\m$ is \textit{properly unobstructed} if $\ia$ is vanishing.
	\end{defn}

	A pivotal question arises when considering alternative choices of $g'$, $J'$, and $\Xi'$. It is reasonable to anticipate that taking these choices differently should not affect the vanishing of the obstruction ideal $\ia$. If this holds true, it then becomes valid to introduce Definition \ref{proper_unobstructed_introduction_defn}. The exploration of this question will be undertaken in Sec \ref{ss_inv_proper_unobstructedness}. Let us begin with some necessary preliminaries.

	\subsection{Transitioning phase series}
	
	Let $(\HL,\m)$ and $(\HL,\m')$ be two objects within the category $\UD$, with $\f: \m \to \m'$ serving as a morphism between them.
	
	\begin{defn}
		\label{transitioning_phase_defn}
		We call
		\begin{equation}
			\label{P_f_transition_phase}
			P_{\f} := \sum_\beta T^{E(\beta)} Y^{\partial\beta} \f_{0,\beta} 
		\end{equation}
		the \textit{transitioning phase series} associated to $\f$. For any $\alpha\in H_1(L)$, we set 
		\[
		\langle \alpha, P_\f\rangle =\sum_\beta T^{E(\beta)} Y^{\partial\beta} \langle \alpha,\f_{0,\beta}\rangle
		\]
		For degree reasons, every nonzero $\f_{0,\beta}$ lies in $H^{1-\mu(\beta)}( L)$, further implying $\mu(\beta)=0$ and $\f_{0,\beta}\in H^1(L)$.
		Hence, the sum in (\ref{P_f_transition_phase}) is automatically taken over $\mu(\beta)=0$.
		Arguing as Proposition \ref{strictly_convergent_prop}, one can similarly show that the series $\langle \alpha, P_\f\rangle $ for any $\alpha\in H_1(L)$ is contained in the strictly convergent formal power series ring in $\Lambda\langle H_1(L) ;  \Delta\rangle$ for sufficiently small $\Delta$.
	\end{defn}

	\begin{defn}
		The assignment
		\begin{equation}
			\label{phi_defn_eq}
			Y^{\alpha} \mapsto Y^{\alpha} \exp \langle \alpha,  P_\f \rangle
		\end{equation}
		for $\alpha\in  H_1(L)$ naturally gives rise to an algebra endomorphism
		\[
		\phi=\phi_\f :  \Lambda\langle H_1(L);\Delta\rangle \to\Lambda\langle H_1(L);\Delta\rangle 
		\]
		for a sufficiently small $\Delta$.
		We call $\phi=\phi_\f$ the \emph{pre-transitioning morphism} associated to $\f$.
	\end{defn}
	
	\begin{rmk}
		As mentioned in the introduction, the motivation behind introducing $P_\f$ and $\phi=\phi_\f$ as above is that it ultimately leads to a transition map for the structure sheaf of a non-archimedean analytic space in the context of SYZ mirror construction. Nevertheless, this aspect is not explored in the current paper. In particular, $L$ is not necessarily a Lagrangian torus.
	\end{rmk}
	
	The following lemma demonstrates the relationship between the transitioning phase series of the composition of two $A_\infty$ homomorphisms and the transitioning phase series of each individual $A_\infty$ homomorphism.
	
	\begin{lem}
		\label{P_fg=_lem}
		Suppose $(\HL,\m'')$ is another object and $\g:\m''\to\m$ is a morphism in $\UD$. Then,
		\[
		P_{\f\diamond \g}= \f_{1,0}(P_\g)  + \phi_\g(P_\f)
		\]
		where $\f\diamond \g$ is the composition defined as (\ref{composition_Gerstenhaber_eq}).
	\end{lem}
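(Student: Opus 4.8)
The plan is to expand the composition $\f\diamond\g$ directly in its definition \eqref{composition_Gerstenhaber_eq}, isolate the $(0,\beta)$-components, and recognize the two resulting sums as $\f_{1,0}(P_\g)$ and $\phi_\g(P_\f)$ respectively. Concretely, writing $(\f\diamond\g)_{0,\beta}=\sum_{\ell\ge 1}\sum_{\beta_0+\cdots+\beta_\ell=\beta}\f_{\ell,\beta_0}\circ(\g_{0,\beta_1}\otimes\cdots\otimes\g_{0,\beta_\ell})$ — note that with $k=0$ every $k_i=0$, so the inner inputs of $\f_{\ell,\beta_0}$ are forced to be the curvature terms $\g_{0,\beta_i}$ — I would split off the $\ell=1$ term, which gives exactly $\sum_\beta T^{E(\beta)}Y^{\partial\beta}\f_{1,0}(\g_{0,\beta})=\f_{1,0}(P_\g)$ after using that $\f$ has only semipositive $\beta$ and degree reasons force $\f_{1,\beta_0}$ with $\beta_0\ne 0$ not to enter through the $\ell=1$ term in the relevant total degree (more precisely, $\g_{0,\beta_1}\in H^1(L)$ has degree $1$, and one tracks how $\f_{1,\beta_0}$ acts; here one should be a little careful, since in principle $\f_{1,\beta_0}$ with $\mu(\beta_0)=0$ can act on a degree-one class — I will want to double-check whether the claimed formula silently uses $\f_{1,0}$ rather than all of $\f_{1,\bullet}$, and if so, isolate precisely why the higher $\beta_0$ contributions reorganize into the $\ell\ge 2$ sum via the divisor axiom).

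For the $\ell\ge 2$ part, I would use the divisor axiom for $\f$ (condition (II-3) of Proposition~\ref{UD_prop}), which controls how $\f_{\ell,\beta_0}$ behaves on tuples of degree-one classes. Since each $\g_{0,\beta_i}\in H^1(L)=Z^1(H^*(L))$, repeatedly applying the divisor axiom collapses $\f_{\ell,\beta_0}(\g_{0,\beta_1},\dots,\g_{0,\beta_\ell})$ into a product of pairings $\prod_{i}\langle\partial\beta_0,\g_{0,\beta_i}\rangle$ against $\f_{0,\beta_0}$, up to the combinatorial factor $1/(\ell-1)!$ (or $1/\ell!$ depending on bookkeeping) that converts the ordered sum over compositions into the exponential. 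Assembling, $\sum_{\ell\ge 2}\frac{1}{(\ell-1)!}\sum_{\beta_0,\beta_1,\dots,\beta_\ell}T^{E(\beta)}Y^{\partial\beta}\langle\partial\beta_0,\g_{0,\beta_1}\rangle\cdots\langle\partial\beta_0,\g_{0,\beta_\ell}\rangle\f_{0,\beta_0}$ is exactly $\sum_{\beta_0}T^{E(\beta_0)}Y^{\partial\beta_0}\exp\langle\partial\beta_0,P_\g\rangle\,\f_{0,\beta_0}$, which by the defining assignment \eqref{phi_defn_eq} of $\phi_\g$ on monomials $Y^{\partial\beta_0}\mapsto Y^{\partial\beta_0}\exp\langle\partial\beta_0,P_\g\rangle$ equals $\phi_\g(P_\f)$. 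Here one must also check the $\ell=1$ with $\beta_0\ne 0$ terms: the exponential series $\exp\langle\partial\beta_0,P_\g\rangle=1+\langle\partial\beta_0,P_\g\rangle+\cdots$ has constant term $1$, and that ``$1$'' is precisely the $\ell=1$, $\beta_1=\cdots=\beta_\ell$ absent contribution $\f_{0,\beta_0}$ — so in fact the clean way to organize the proof is to keep the $\ell=1$ term with arbitrary $\beta_0$ together with the $\ell\ge 2$ terms, producing $\phi_\g(P_\f)$ in one stroke, and then the remaining genuinely-new piece is the $\ell=1$, $\beta_0=0$ case... which vanishes since $\f_{1,0}$ applied through $\diamond$ with $\ell=1,\beta_0=0$ gives the $\f_{1,0}(\g_{0,\beta})$ term — that is $\f_{1,0}(P_\g)$. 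I would re-split carefully: $\ell=1,\beta_0=0$ yields $\f_{1,0}(P_\g)$; everything else (all $\ell\ge 2$, plus $\ell=1$ with $\beta_0\ne 0$) reassembles into $\phi_\g(P_\f)$.

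The main obstacle I expect is the bookkeeping of the divisor-axiom reduction together with the combinatorial factor and the interplay with degree/sign conventions: one needs the precise form of the divisor axiom iterated $\ell$ times on $\f_{\ell,\beta_0}$, the fact that $\langle\partial\beta_0,\g_{0,\beta_i}\rangle$ are scalars so reorder freely, and that the sum over ordered $(\beta_1,\dots,\beta_\ell)$ with $\sum\beta_i$ fixed produces the $1/(\ell!)$ (resp. $1/(\ell-1)!$) Taylor coefficient of $\exp$; convergence (so that the rearrangements are legitimate and the result lies in $\Lambda\langle H_1(L);\Delta\rangle$) follows from Proposition~\ref{strictly_convergent_prop} and the gappedness condition, exactly as in the proof of that proposition. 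Signs should not genuinely intervene because all $\f_{0,\beta}$ and $\g_{0,\beta}$ sit in $H^1(L)$ and the divisor-axiom pairings are even, but I would verify this against Remark~\ref{sign_A_algebra_rmk} before declaring victory.
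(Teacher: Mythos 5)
Your route is the same as the paper's---expand the $(0,\beta)$-component of the composition, peel off the $(1,0)$-term to get $\f_{1,0}(P_\g)$, and collapse the rest by the iterated divisor axiom into the exponential defining $\phi_\g$---but the bookkeeping of the exponential's constant term, exactly the point you flagged, is where the argument breaks. Starting from the expansion you wrote, $(\f\diamond\g)_{0,\beta}=\sum_{\ell\ge 1}\sum_{\beta_0+\cdots+\beta_\ell=\beta}\f_{\ell,\beta_0}(\g_{0,\beta_1},\dots,\g_{0,\beta_\ell})$, the iterated divisor axiom gives $\f_{\ell,\beta_0}(P_\g,\dots,P_\g)=\tfrac{1}{\ell!}\langle\partial\beta_0,P_\g\rangle^{\ell}\,\f_{0,\beta_0}$ (the factor is $1/\ell!$, not $1/(\ell-1)!$), so the terms you group together (all $\ell\ge 2$ plus $\ell=1$ with $\beta_0\ne 0$) sum to $\sum_{\beta_0}T^{E(\beta_0)}Y^{\partial\beta_0}\bigl(\exp\langle\partial\beta_0,P_\g\rangle-1\bigr)\f_{0,\beta_0}=\phi_\g(P_\f)-P_\f$, not $\phi_\g(P_\f)$. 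Your attempted repair---attributing the missing constant ``$1$'' to an ``$\ell=1$, inputs absent'' contribution---names a term that does not occur in the sum you wrote down, so as it stands your computation proves $P_{\f\diamond\g}=\f_{1,0}(P_\g)+\phi_\g(P_\f)-P_\f$, which is not the statement.

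The missing ingredient is the bare curvature term of $\f$: the $(0,\beta)$-component of the composition of these morphisms (which have nontrivial $(0,\beta)$-components) also contains the summand $\f_{0,\beta}$ itself, the ``$\ell=0$'' term, which must be present already so that composing with the identity returns $\f$ on its $(0,\beta)$-components; reading (\ref{composition_Gerstenhaber_eq}) with only $\ell\ge 1$ the lemma would literally fail by $P_\f$. This summand contributes $P_\f$ and is precisely the constant term of $\exp\langle\partial\beta,P_\g\rangle$. The paper's proof incorporates it by letting its second sum run over all $(k,\beta)\ne(1,0)$ including $k=0$, i.e.\ $\sum_{(k,\beta)\ne(1,0)}T^{E(\beta)}Y^{\partial\beta}\f_{k,\beta}(P_\g,\dots,P_\g)$, whose $k=0$ part is $P_\f$; the divisor axiom then produces the full exponential and hence $\phi_\g(P_\f)$ on the nose. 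Once you add this term and fix the factorial, the remaining points you raise (degree reasons placing every $\g_{0,\beta}$ in $H^1(L)$, absence of signs because the inputs have shifted degree zero, and strict convergence as in Proposition \ref{strictly_convergent_prop}) go through as you describe, and your $\ell=1$, $\beta_0\ne 0$ terms are correctly absorbed as the linear term of the exponential.
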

	
	\begin{proof}
		We first use the formula (\ref{composition_Gerstenhaber_eq}) to expand the left side as follows:
		\begin{align*}
			P_{\f\diamond \g}
			&
			=
			\sum_\gamma T^{E(\gamma)} Y^{\partial\gamma} (\f \diamond \g)_{0,\gamma}  \\
			&
			=
			\sum_\gamma T^{E(\gamma)} Y^{\partial\gamma} \f_{1,0} (\g_{0,\gamma}) 
			+
			\sum_{(k,\beta)\neq (1,0)}  T^{E(\beta)}Y^{\partial\beta} \f_{k,\beta} \Big( \quad \sum_{\beta_1} T^{E(\beta_1)}Y^{\partial\beta_1} \g_{0,\beta_1} \quad , \quad \dots \quad , \quad \sum_{\beta_k} T^{E(\beta_k)} Y^{\partial\beta_k} \g_{0,\beta_k} \quad \Big) 
		\end{align*}
		In the above, the first sum is then given by
		\[
		\f_{1,0}\Big( \sum T^{E(\gamma)}Y^{\partial\gamma} \g_{0,\gamma} \Big) = \f_{1,0} (P_\g)
		\]
		Meanwhile, the second sum is equal to
		\begin{align*}
			&
			\
			\sum_{(k,\beta)\neq (1,0)}  T^{E(\beta)}Y^{\partial\beta} \f_{k,\beta} \Big( \quad \sum_{\beta_1} T^{E(\beta_1)}Y^{\partial\beta_1} \g_{0,\beta_1} \quad , \quad \cdots \quad , \quad \sum_{\beta_k} T^{E(\beta_k)} Y^{\partial\beta_k} \g_{0,\beta_k} \quad \Big) \\
			=
			&
			\
			\sum_{(k,\beta)\neq (1,0)} T^{E(\beta)}Y^{\partial\beta} \f_{k,\beta} \big ( \ \ P_\g , \  \cdots  \  ,  \  P_\g \  \  \big)
		\end{align*}
		Recall that by Definition \ref{transitioning_phase_defn}, it has been established that $P_\g=\sum T^{E(\beta)} Y^{\partial\beta} \g_{0,\beta}$ receives contributions solely from $\g_{0,\beta}\in H^1(L)$ where $\mu(\beta)=0$, due to considerations of degree.
		In other words, the terms in $P_\g$ are all of degree one.
		This is exactly what happens when we apply the divisor axiom.
		Further considering the definition of $\phi_\g$ in (\ref{phi_defn_eq}), the second sum becomes
		\[
		\sum_\beta T^{E(\beta)} Y^{\partial\beta} \exp\langle \partial\beta, P_\g\rangle \
		\f_{0,\beta} 
		=
		\sum_\beta T^{E(\beta)} \phi_\g( Y^{\partial\beta} )  \ \f_{0,\beta}
		=
		\phi_\g\big(\sum_\beta T^{E(\beta)}Y^{\partial\beta} \f_{0,\beta} \big)
		=
		\phi_\g(P_\f)
		\]
		Putting things together, we have proved that $P_{\f\diamond \g}=\f_{1,0}(P_\g) +\phi_\g(P_\f)$ as desired.
	\end{proof}
	
	\begin{cor}
		\label{phi_fg_cor}
		If $\f_{1,0}=\id$, then $\phi_{\f\diamond \g}= \phi_\g\circ \phi_\f$.
	\end{cor}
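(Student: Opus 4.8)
The plan is to read this off from Lemma~\ref{P_fg=_lem} once we record that a pre-transitioning morphism is a continuous $\Lambda$-algebra endomorphism that only touches the group-algebra variables. First I would set $\f_{1,0}=\id$ in the identity $P_{\f\diamond\g}=\f_{1,0}(P_\g)+\phi_\g(P_\f)$ of Lemma~\ref{P_fg=_lem}, obtaining $P_{\f\diamond\g}=P_\g+\phi_\g(P_\f)$ in $\Lambda\langle H_1(L);\Delta\rangle\otimes H^1(L)$; pairing with an arbitrary class $\alpha\in H_1(L)$ and using that $\langle\alpha,\cdot\rangle$ is $\Lambda\langle H_1(L);\Delta\rangle$-linear in the $H^1(L)$-factor of a transitioning phase series, this gives
\[
\langle\alpha, P_{\f\diamond\g}\rangle=\langle\alpha, P_\g\rangle+\langle\alpha,\phi_\g(P_\f)\rangle .
\]

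Next I would evaluate both endomorphisms on the topological generators $Y^{\alpha}$. By the defining assignment (\ref{phi_defn_eq}), one side is $\phi_{\f\diamond\g}(Y^{\alpha})=Y^{\alpha}\exp\langle\alpha, P_{\f\diamond\g}\rangle$. For the other side, using that $\phi_\g$ is multiplicative, that it fixes $\Lambda$ and acts only on the variables $Y^{\bullet}$ (so it commutes with applying the linear functional $\langle\alpha,\cdot\rangle$ to the coefficients), and that a continuous algebra map commutes with the exponential series,
\[
(\phi_\g\circ\phi_\f)(Y^{\alpha})=\phi_\g\!\big(Y^{\alpha}\exp\langle\alpha,P_\f\rangle\big)=\phi_\g(Y^{\alpha})\cdot\exp\!\big(\phi_\g\langle\alpha,P_\f\rangle\big)=Y^{\alpha}\exp\langle\alpha,P_\g\rangle\cdot\exp\langle\alpha,\phi_\g(P_\f)\rangle .
\]
Combining the two exponentials and invoking the displayed identity above yields $(\phi_\g\circ\phi_\f)(Y^{\alpha})=Y^{\alpha}\exp\langle\alpha, P_{\f\diamond\g}\rangle=\phi_{\f\diamond\g}(Y^{\alpha})$.

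Finally, since $\phi_{\f\diamond\g}$ and $\phi_\g\circ\phi_\f$ are both continuous $\Lambda$-algebra endomorphisms of $\Lambda\langle H_1(L);\Delta\rangle$, and every element of this ring is an adically convergent $\Lambda$-linear combination of the monomials $Y^{\alpha}$, agreement on these generators forces $\phi_{\f\diamond\g}=\phi_\g\circ\phi_\f$. The only point requiring a little care is the two interchanges used in the middle step — $\phi_\g$ with $\exp(\cdot)$ and $\phi_\g$ with $\langle\alpha,\cdot\rangle$ — but both are immediate: the first is continuity of $\phi_\g$ (note $\langle\alpha,P_\f\rangle$ has strictly positive valuation in every monomial since $\f_{0,0}=0$ by gappedness, so the exponential converges), and the second holds because $\phi_\g$ and $\langle\alpha,\cdot\rangle$ act on disjoint tensor factors. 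I expect the argument to be entirely routine given Lemma~\ref{P_fg=_lem}.
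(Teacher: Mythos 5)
Your argument is correct and is essentially the paper's own proof: both reduce to evaluating on the monomials $Y^{\alpha}$, substitute $P_{\f\diamond\g}=P_\g+\phi_\g(P_\f)$ from Lemma \ref{P_fg=_lem}, and use that $\phi_\g$ is multiplicative, fixes $\Lambda$, and commutes with $\exp$ and with $\langle\alpha,\cdot\rangle$ to recombine the two exponentials. The extra remarks you add (convergence of the exponential via gappedness, extending from generators by continuity) are implicit in the paper but do not change the route.
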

	
	\begin{proof}
		It is straightforward from Lemma \ref{P_fg=_lem}. We can compute as follows:
		\begin{align*}
			\phi_{\f\diamond \g} (Y^\alpha) 
			=
			Y^{\alpha} \exp\langle \alpha, P_{\f\diamond \g}\rangle
			&
			=
			Y^\alpha \exp\langle \alpha,  P_\g+\phi_\g(P_\f) \rangle  \\
			&
			=
			Y^{\alpha} \cdot \exp\langle \alpha, P_\g\rangle \cdot  \exp \langle \alpha, \phi_\g(P_\f)\rangle \\
			&
			=
			\phi_\g\big(Y^\alpha) \cdot \phi_\g \big( \exp\langle \alpha, P_\f\rangle \big)
			\\
			&
			=
			\phi_\g\big(Y^\alpha \exp\langle \alpha, P_\f\rangle \big)
			=
			\phi_\g(\phi_\f(Y^\alpha))
		\end{align*}
	\end{proof}
	
	\begin{rmk}
		In practical applications, the $A_\infty$ homomorphism $\f$ (and similarly for $\g$) is usually derived from a pseudo-isotopy of $A_\infty$ algebras, in view of Theorem \ref{A_infinity_algebra_up_to_pseudo_isotopy_thm} or Axiom \ref{axiom_J}. Furthermore, such $A_\infty$ homomorphism $\f$ always satisfies the condition $\f_{1,0}=\id$, as established in Theorem \ref{from_pseudo_to_A_homo_thm}.
	\end{rmk}

	\subsection{Wall-crossing formula}
	
	Given $\f: \m\to \m'$ as before, we have defined $P_\f$ as (\ref{P_f_transition_phase}) and $\phi=\phi_\f$ as (\ref{phi_defn_eq}).
	Recall that $\{\Theta_i: i=1,2,\dots,\ell \}$ denotes a basis of the vector space $H^2(L)$ and $\one$ is the generator of $H^0(L)$. 
	We introduce the following vector-valued series in $\Lambda[[H_1(L)]]\otimes H^*(L)$:
	\begin{equation}
		\label{R_f_notation}
		R^k := R_{\f}^k := \sum_\beta T^{E(\beta)} Y^{\partial\beta} \f_{1,\beta}(\Theta_k)
	\end{equation}
	Notice that $\deg (\f_{1,\beta}(\Theta_k) ) = \deg \Theta_k -\mu(\beta) = 2-\mu(\beta)$, namely, $\f_{1,\beta}(\Theta_k) \in H^{2-\mu(\beta)}(L) $. Similarly, we may assume it is always contained in $H^0(L)$ or $H^2(L)$ by the semipositive condition.
	Thus, we can further write
	\begin{equation}
		\label{R_f_notation_II}
		R^k =  \underline R^k \cdot \one + \sum_{i=1}^\ell R_i^k  \cdot \Theta_i 
	\end{equation}
	for some formal power series $\underline R^k$ and $R_i^k$'s in $\Lambda[[H_1(L)]]$.
	We remark that $\underline R^k$ is contributed by terms with $\mu(\beta)=2$ and $R^k_i$'s are contributed by terms with $\mu(\beta)=0$.
	Again, arguing as Proposition \ref{strictly_convergent_prop}, one can deduce that $\underline R^k$ and $R_i^k$'s are contained in the strictly convergent formal power series ring $\Lambda\langle H_1(L) ;  \Delta\rangle$ for some $\Delta$.
	
	\begin{thm}[Wall-crossing formula]
		\label{wall_cross_formula_thm}
		Let $W$ and $W'$ be the superpotentials associated to $\m$ and $\m'$ respectively; let $Q_j$'s and $Q'_j$'s be the obstruction series associated to $\m$ and $\m'$ respectively. Then,
		\begin{equation}
			\label{wall_cross_eq}
			\begin{aligned}
				\phi(W')  &=W  +  \sum_{k=1}^\ell Q_k\cdot  \underline R^k  \\
				\phi(Q'_j) & = \qquad \sum_{k=1}^\ell Q_k \cdot R^k_j
			\end{aligned}
		\end{equation}
	\end{thm}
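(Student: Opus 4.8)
The strategy is to expand the $A_\infty$ homomorphism relation $\m'\diamond\f=\f\{\m\}$ in the component indexed by $(k,\beta)=(0,\beta)$, then collect the resulting identity in $\Lambda[[H_1(L)]]\otimes H^*(L)$ by multiplying each $\beta$-piece by $T^{E(\beta)}Y^{\partial\beta}$ and summing. Concretely, the $(0,\beta)$-component of $\m'\diamond\f$ is $\sum_{\ell\geqslant 1}\sum_{\beta_0+\beta_1+\cdots+\beta_\ell=\beta}\m'_{\ell,\beta_0}(\f_{0,\beta_1},\dots,\f_{0,\beta_\ell})$, while the $(0,\beta)$-component of $\f\{\m\}$ is $\sum_{\beta'+\beta''=\beta}\f_{1,\beta'}(\m_{0,\beta''})$ (the only surviving brace term since all inputs have arity zero). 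So the $A_\infty$ relation reads
\[
\sum_{\ell\geqslant 1}\sum_{\beta_0+\cdots+\beta_\ell=\beta}\m'_{\ell,\beta_0}(\f_{0,\beta_1},\dots,\f_{0,\beta_\ell})=\sum_{\beta'+\beta''=\beta}\f_{1,\beta'}(\m_{0,\beta''}).
\]
First I would multiply by $T^{E(\beta)}Y^{\partial\beta}$ and sum over $\beta$. On the right-hand side, using $E$ and $\partial$ additive and writing $\m_{0,\beta''}=W_{\beta''}\one+\sum_j Q_{j,\beta''}\Theta_j$ (the degree decomposition from (\ref{W_Q_expand_eq})), the sum factors into $\sum_{\beta'}T^{E(\beta')}Y^{\partial\beta'}\f_{1,\beta'}\bigl(\sum_{\beta''}T^{E(\beta'')}Y^{\partial\beta''}\m_{0,\beta''}\bigr)=\f_{1,0}$-type extension applied to $W\cdot\one+\sum_k Q_k\Theta_k$. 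Since $\f$ is unital, $\f_{1,\beta'}(\one)=0$ unless $\beta'=0$, so the $W\cdot\one$ part contributes only $W\cdot\one$ itself; the $\Theta_k$ part contributes $\sum_k Q_k\cdot R^k=\sum_k Q_k(\underline R^k\one+\sum_j R^k_j\Theta_j)$ by the very definition (\ref{R_f_notation})--(\ref{R_f_notation_II}) of $R^k$. So the right-hand side equals $\bigl(W+\sum_k Q_k\underline R^k\bigr)\one+\sum_j\bigl(\sum_k Q_k R^k_j\bigr)\Theta_j$.

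Next I would treat the left-hand side. After multiplying by $T^{E(\beta)}Y^{\partial\beta}$ and summing, the left side becomes $\sum_{\ell\geqslant 1}\sum_{\beta_0}T^{E(\beta_0)}Y^{\partial\beta_0}\m'_{\ell,\beta_0}(P_\f,\dots,P_\f)$ where $P_\f=\sum_\beta T^{E(\beta)}Y^{\partial\beta}\f_{0,\beta}$ is the transitioning phase series — here one uses exactly the same bookkeeping as in the proof of Lemma~\ref{P_fg=_lem}. Crucially, $P_\f$ is concentrated in degree one (each $\f_{0,\beta}\in H^1(L)$ by the degree count in Definition~\ref{transitioning_phase_defn}), so the divisor axiom for $\m'$ applies: inserting $\ell$ copies of the degree-one element $P_\f$ into $\m'_{\ell,\beta_0}$ and summing over the $\ell$ insertion slots (together with the $\ell=0$ term $\m'_{0,\beta_0}$) reconstitutes $\exp\langle\partial\beta_0,P_\f\rangle\,\m'_{0,\beta_0}$ — but one must be slightly careful: the bare sum $\sum_\ell\m'_{\ell,\beta_0}(P_\f,\dots,P_\f)$ gives $\sum_\ell\frac{1}{\ell!}\langle\partial\beta_0,P_\f\rangle^\ell\m'_{0,\beta_0}$ only after accounting that the $\ell$ slots and the symmetry of the degree-one insertions produce the factorials; this is the same manipulation used to pass from (\ref{MC_eq}) to the divisor-axiom form in the introduction and in the last display of the proof of Lemma~\ref{P_fg=_lem}. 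Hence the left side equals $\sum_{\beta_0}T^{E(\beta_0)}Y^{\partial\beta_0}\exp\langle\partial\beta_0,P_\f\rangle\,\m'_{0,\beta_0}=\sum_{\beta_0}T^{E(\beta_0)}\phi(Y^{\partial\beta_0})\,\m'_{0,\beta_0}=\phi\bigl(\sum_{\beta_0}T^{E(\beta_0)}Y^{\partial\beta_0}\m'_{0,\beta_0}\bigr)=\phi(W'\cdot\one+\sum_j Q'_j\Theta_j)=\phi(W')\cdot\one+\sum_j\phi(Q'_j)\Theta_j$, using that $\phi$ is a $\Lambda$-algebra homomorphism fixing $H^*(L)$.

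Finally I would equate the two sides in $\Lambda\langle H_1(L)\rangle\otimes H^*(L)$ and read off components against the basis $\{\one,\Theta_1,\dots,\Theta_\ell\}$ of $H^*(L)$: the $\one$-component gives $\phi(W')=W+\sum_k Q_k\underline R^k$ and the $\Theta_j$-component gives $\phi(Q'_j)=\sum_k Q_k R^k_j$, which is (\ref{wall_cross_eq}). The main obstacle is the middle step: carefully justifying the divisor-axiom resummation with the correct combinatorial factors so that $\sum_\ell\m'_{\ell,\beta_0}(P_\f,\dots,P_\f)=\exp\langle\partial\beta_0,P_\f\rangle\m'_{0,\beta_0}$, and tracking signs — but since $P_\f$ has even shifted degree (degree one) the signs in the $A_\infty$ relation and in the brace/Gerstenhaber formulas are benign, exactly as in Remark~\ref{sign_A_algebra_rmk}. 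Everything else (gappedness guaranteeing convergence in $\Lambda\langle H_1(L);\Delta\rangle$, additivity of $E$ and $\partial$, unitality killing $\f_{1,\beta}(\one)$ for $\beta\neq0$) is routine bookkeeping of the kind already carried out in Lemma~\ref{P_fg=_lem} and Proposition~\ref{strictly_convergent_prop}.
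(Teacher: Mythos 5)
Your proposal is correct and is essentially the paper's own proof read in the opposite direction: the paper starts from $\phi\big(W'\one+\sum_j Q_j'\Theta_j\big)$, un-exponentiates via the divisor axiom into $\m'_{k,\gamma}(P_\f,\dots,P_\f)$, then invokes the arity-zero component of the $A_\infty$ homomorphism relation, unitality of $\f$, and the definitions of $R^k,\underline R^k,R^k_j$ --- exactly the ingredients and bookkeeping you use. The one point to make explicit is the curvature term: the arity-zero relation must carry the $\ell=0$ term $\m'_{0,\beta}$ on the $\m'\diamond\f$ side (the paper does this implicitly by summing over $(k,\gamma)\neq(1,0)$, which includes $k=0$), and this is precisely the ``$1$'' of your exponential, which you currently add back only informally.
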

	
	\begin{proof}
		Recall that we have chosen the basis $\{\Theta_1,\dots, \Theta_\ell\}$ of $H^2(L)$ and the basis $\{\one\}$ of $H^0(L)$.
		Let $\{\eta_i\}_{i=1}^N$ be some basis of $H^*(L)$ that includes the above $\{\Theta_i\}$ and $\{\one\}$.
		Then, an element $F$ in $\Lambda[[ H_1(L)]] \otimes H^{*}(L)$ can be viewed as a tuple $(F_1,\dots, F_N)$ where each $F_i\in \Lambda[[H_1(L)]]$ and $F=\sum_i F_i   \eta_i$. 
		Given an element $\eta$ in this basis, let $\left[ \eta, F \right]$ denote the corresponding coefficient of $F$ in $\Lambda[[H_1(L)]]$. 
		We aim to study the expression 
		\[
		\phi^\eta:= \phi \Big(\Big[\eta, W' \ \one+ \sum_j Q_j' \ \Theta_j \Big]\Big) = \phi \Big(\sum_\gamma T^{E(\gamma)} Y^{\partial\gamma} \left[\eta, \m'_{0,\gamma} \right]\Big)
		\]
		in view of (\ref{W_Q_expand_eq}).
		For instance, when $\eta$ is $\one$ or $\Theta_j$, the expression
		$\phi^\eta$ gives $\phi(W')$ or $\phi(Q_j')$ respectively.
		By definition, we first have
		\begin{align*}
			\phi^\eta
			&
			=
			\sum_\gamma T^{E(\gamma)}  \phi(Y^{\partial\gamma}) \  \left[  \eta,  \m'_{0,\gamma} \right]
			=
			\quad  \sum_\gamma T^{E(\gamma)} Y^{\partial\gamma} \exp\langle \partial\gamma, P_\f\rangle 	 \left[  \eta, \m'_{0,\gamma}  \right] 
		\end{align*}
		Then, using the divisor axiom implies that
		\begin{align*}
			\phi^\eta
			&
			=
			\Big[ \eta,  \quad  \sum_\gamma T^{E(\gamma)} Y^{\partial\gamma} \exp\langle \partial\gamma, P_\f\rangle \  \m'_{0,\gamma} \Big]  =
			\Big[ \eta , \quad  \sum_{(k,\gamma)\neq (1,0)} T^{E(\gamma)} Y^{\partial\gamma} \m'_{k,\gamma} (P_\f,\dots, P_\f) \Big]
		\end{align*}
		where we recall that $\m'_{1,0}=0$ as $\m'$ is a minimal $A_\infty$ algebra.
		Expanding the coefficients of all $P_\f$'s, employing the $A_\infty$ relation of $\f$, and using the equation (\ref{W_Q_expand_eq}), we obtain that
		\begin{align*}
			\phi^\eta
			&
			=
			\Big[ \eta, \quad \sum_{\beta} T^{E(\beta)} Y^{\partial\beta}  \sum_{\beta=\gamma+\sum\beta_i}
			\sum_k
			\m'_{k,\gamma}( \f_{0,\beta_1} , \dots, \f_{0,\beta_k} ) 
			\Big] \\
			&
			=
			\Big[
			\eta, \quad \sum_{\beta} T^{E(\beta)} Y^{\partial\beta} \sum_{\beta=\gamma_1+\gamma_2} \quad  \f_{1,\gamma_1} (\m_{0,\gamma_2})  \quad 
			\Big] \\
			&
			=
			\Big[ \eta, \quad  \sum_{\gamma_1} T^{E(\gamma_1)} Y^{\partial\gamma_1} \f_{1,\gamma_1} ( W\one + \sum_k Q_k \Theta_k )  \Big]
		\end{align*}
		Since $\f$ is unital, we have $\f_{1,\gamma_1}(\one)=0$ whenever $\gamma_1\neq 0$ and $\f_{1,0}(\one)=\one$.
		Further considering (\ref{R_f_notation}) and (\ref{R_f_notation_II}), we conclude that
		\begin{align*}
			\phi^\eta
			&
			=
			\Big[ \eta, \quad  
			W \ \one +\sum_{k=1}^\ell Q_k  \ R^k \Big] 
			=
			\Big[
			\eta, \quad  W \ \one +\sum_{k=1}^\ell Q_k \ \Big( \underline R^k \ \one + \sum_{j=1}^\ell R_j^k \ \Theta_j \Big)
			\Big] \\
			&
			=
			\Big[
			\eta, \quad \Big(W + \sum_{k=1}^\ell Q_k \ \underline{R}^k \Big) \cdot \one + 
			\sum_{j=1}^\ell
			\Big(
			\sum_{k=1}^\ell Q_k \ R_j^k \Big) \cdot \Theta_j 
			\Big]
		\end{align*}
		Choosing $\eta=\one$ or $\Theta_j$ respectively concludes the desired equations in (\ref{wall_cross_eq}).
	\end{proof}

	\begin{cor}
		\label{wall_cross_formula_cor}
		$\phi(\ia')\subseteq \ia$, and $\phi(W')$ is contained in the coset $W+\ia$
	\end{cor}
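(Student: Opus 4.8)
The plan is to read off both statements of the corollary directly from the Wall-crossing formula (Theorem~\ref{wall_cross_formula_thm}), treating them as purely algebraic consequences of the two displayed identities in~(\ref{wall_cross_eq}). No new geometric input is needed: everything follows from the fact that $\phi = \phi_\f$ is an algebra homomorphism on $\Lambda\langle H_1(L);\Delta\rangle$ and that the obstruction ideal is by definition generated by the $Q_k$'s.

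First I would handle $\phi(\ia')\subseteq\ia$. Recall $\ia' = (Q'_1,\dots,Q'_\ell)$ in $\Lambda\langle H_1(L)\rangle$, so a general element of $\ia'$ has the form $\sum_{j=1}^\ell f_j Q'_j$ with $f_j\in\Lambda\langle H_1(L)\rangle$. Applying $\phi$ and using that it is an algebra homomorphism gives $\phi\bigl(\sum_j f_j Q'_j\bigr) = \sum_j \phi(f_j)\,\phi(Q'_j)$. By the second line of~(\ref{wall_cross_eq}), $\phi(Q'_j) = \sum_{k=1}^\ell Q_k R^k_j$, which visibly lies in $\ia = (Q_1,\dots,Q_\ell)$; hence each $\phi(f_j)\phi(Q'_j)\in\ia$ and the sum does too. (One should note in passing that $\phi$ does indeed restrict to an endomorphism of $\Lambda\langle H_1(L)\rangle = \Lambda\langle H_1(L);0\rangle$, or at least that the identities of Theorem~\ref{wall_cross_formula_thm} take place in $\Lambda\langle H_1(L);\Delta\rangle$ for a small $\Delta$ and restrict back to $\Lambda\langle H_1(L)\rangle$; this is already implicit in the definition of $\phi_\f$ and Proposition~\ref{strictly_convergent_prop}, so I would just remark on it rather than belabor it.)

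Next, the statement $\phi(W')\in W+\ia$ is immediate from the first line of~(\ref{wall_cross_eq}): it reads $\phi(W') = W + \sum_{k=1}^\ell Q_k\,\underline R^k$, and the sum $\sum_k Q_k\underline R^k$ lies in $\ia$ by definition of $\ia$. So $\phi(W') - W\in\ia$, i.e. $\phi(W')$ lies in the coset $W+\ia$.

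I do not anticipate a genuine obstacle here — the corollary is a formal bookkeeping consequence of the theorem. The only point requiring a little care, and the one I would flag explicitly, is the coefficient-ring subtlety: $\phi_\f$ is a priori only defined on $\Lambda\langle H_1(L);\Delta\rangle$ for $\Delta$ small, and the $R^k_j$, $\underline R^k$ live there too by the argument of Proposition~\ref{strictly_convergent_prop}; so strictly speaking the containments $\phi(\ia')\subseteq\ia$ and $\phi(W')\in W+\ia$ should either be read inside $\Lambda\langle H_1(L);\Delta\rangle$ (with $\ia$, $\ia'$ the ideals generated there) or one checks that restricting to the $\Delta=0$ algebra is harmless because all the series involved already lie in $\Lambda\langle H_1(L)\rangle\subseteq\Lambda\langle H_1(L);\Delta\rangle$. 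Either way the proof is two lines of ideal arithmetic.
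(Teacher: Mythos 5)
Your proposal is correct and follows exactly the route the paper takes: the corollary is read off directly from the two identities in Theorem \ref{wall_cross_formula_thm} (the paper's own proof simply says it is immediate from that theorem), with your ideal-arithmetic expansion and the remark about working in $\Lambda\langle H_1(L);\Delta\rangle$ being harmless elaborations of the same argument.
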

	
	\begin{proof}
		This is immediate from Theorem \ref{wall_cross_formula_thm}.
	\end{proof}
	
	The outcomes of Theorem \ref{wall_cross_formula_thm} and Corollary \ref{wall_cross_formula_cor} ensures the following definition:
	
	\begin{defn}
		\label{phi_var_defn}
		The pre-transitioning morphism $\phi=\phi_\f$ in (\ref{phi_defn_eq}) induces an algebra endomorphism
		\begin{equation}
			\label{phi_var_defn_eq}
			\varphi=\varphi_\f: 
			\Lambda\langle H_1(L);\Delta\rangle/ \ia'\to\Lambda\langle H_1(L);\Delta\rangle /\ia
		\end{equation}
		Moreover, we have $\varphi(W')=W$.
		We call $\varphi=\varphi_\f$ the \textit{transitioning morphism} associated to $\f$.
	\end{defn}

	\subsection{A canceling trick for ud-homotopy theory}
	We assume $\f_0  \simud \f_1$ for two $A_\infty$ homomorphisms $\f_0: \m'\to \m$ and $\f_1:\m'\to \m$. Namely, they are ud-homotopic to each other in the sense of (\ref{ud_sim_eq}).

	Our goal is to compare $\phi_{\f_0}$ and $\phi_{\f_1}$, so we aim to examine the difference between the two transitioning phase series $P_{\f_0}$ and $P_{\f_1}$.
	Recall that by degree reason, any nonvanishing term $(\f_i)_{0,\beta}$ lies in $H^1(L)$.
	
	\begin{thm}[Canceling Trick]
		\label{canceling_trick_thm}
		Given $\f_0\simud\f_1$ as above, 
		\[
		P_{\f_1} - P_{\f_0}= \sum_\beta  T^{E(\beta)} Y^{\partial\beta} ( (\f_1)_{0,\beta}- (\f_0)_{0,\beta})
		\]
		is contained in the obstruction ideal $\ia$ associated to the target $A_\infty$ algebra $\m$.
	\end{thm}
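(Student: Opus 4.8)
The plan is to unwind the definition of $\simud$ into the one-parameter family of data $(\f_s, \h_s)_{s\in[0,1]}$ satisfying conditions (a)--(d) of the ud-homotopy relation, and then show that $\frac{d}{ds} P_{\f_s}$ already lies in the obstruction ideal $\ia$ of the target $\m$ for every $s$; integrating from $s=0$ to $s=1$ will then give the claim since $\ia$ is a (closed) $\Lambda$-submodule of $\Lambda\langle H_1(L)\rangle$. Concretely, I would set $P_s := P_{\f_s} = \sum_\beta T^{E(\beta)} Y^{\partial\beta} (\f_s)_{0,\beta}$ and compute $\frac{d}{ds} P_s = \sum_\beta T^{E(\beta)} Y^{\partial\beta} \frac{d}{ds}(\f_s)_{0,\beta}$; the key is then to substitute the ud-homotopy equation (b), namely $\frac{d}{ds}\circ \f_s = \sum \h_s\circ(\id_\#^\bullet\otimes \m\otimes\id^\bullet) + \sum \m'\circ(\f_s^\#\otimes\cdots\otimes\h_s\otimes\cdots\otimes\f_s)$, evaluated on zero inputs (the $(0,\beta)$-component), and see what falls out.

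Evaluating at arity $0$: the first term contributes $\sum_\beta T^{E(\beta)} Y^{\partial\beta}\, (\h_s)_{\bullet}(\dots, \m_{0,\bullet},\dots,\m_{0,\bullet})$, i.e.\ $\h_s$ applied to a string of $\m_{0,\gamma}$'s; by \eqref{W_Q_expand_eq} each such input is $W\cdot\one + \sum_i Q_i\Theta_i$, and since $(\h_s)_{k,\beta}(\dots\one\dots)=0$ by condition (c), the $\one$-part drops and every surviving term carries at least one factor $Q_i$ — hence lies in $\ia$. The second term is $\sum_\beta T^{E(\beta)}Y^{\partial\beta}\,\m'_{k,\gamma}(\f_s$-images and one $\h_s$-image, all of degree one$)$; here I would mimic the bookkeeping in the proof of Theorem~\ref{wall_cross_formula_thm}: collect the $(\f_s)_{0,\beta}$'s into $P_{\f_s}$ and the $(\h_s)_{0,\beta}$ into an auxiliary degree-one series, apply the divisor axiom for $\m'$ to resum the $P_{\f_s}$-insertions into an exponential $\exp\langle\cdot,P_{\f_s}\rangle$, and then recognize the resulting sum $\sum_\gamma T^{E(\gamma)}Y^{\partial\gamma}\exp\langle\partial\gamma,P_{\f_s}\rangle\,\m'_{k,\gamma}(\dots)$ as $\phi_{\f_s}$ applied to $\sum_\gamma T^{E(\gamma)}Y^{\partial\gamma}\m'_{\bullet,\gamma}(\text{one }\h_s\text{-input})$. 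Reading $\m'$ via its $A_\infty$ relation with $\f_s$ converts this into $\f_s$ applied to $\m$-data, and since the $\m_{0,\gamma_2}$-factor again expands by \eqref{W_Q_expand_eq} with the $\one$-part killed by unitality/cyclical unitality of $\h_s$, each term lands in $\phi_{\f_s}$ of something in $\ia'$ pushed back — but more cleanly, since we only need membership in $\ia$ (the \emph{target} ideal), I would just track that every term of $\frac{d}{ds}P_s$ contains a factor $Q_i$ (a component of $\m_{0,\beta}$ with $\mu(\beta)=2$), using that $\h_s$ and $\f_s$ kill units and that the only degree-$(2-\mu)$ outputs of $\m$ with $\mu\geq 0$ feeding into a surviving term are either units (suppressed) or the $\Theta_i$-part, i.e.\ the $Q_i$'s.

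**Main obstacle.** The delicate point is the resummation in the second term: one must verify that the divisor-axiom manipulation genuinely applies, i.e.\ that the inputs being resummed ($P_{\f_s}$) are uniformly of degree one (guaranteed by the degree computation $\f_{0,\beta}\in H^{1-\mu(\beta)}(L)$ forcing $\mu(\beta)=0$), and that the single $\h_s$-insertion — which has its own degree shift — does not interfere with the exponential bookkeeping. A secondary subtlety is ensuring that membership in $\ia$, rather than merely in $\phi_{\f_s}(\ia')$, is obtained directly; this should follow because the $\h_s$-term of the first summand manifestly produces $\ia$-elements, and for the second summand the $A_\infty$ relation of $\f_s$ lets one re-express everything so that a factor $\m_{0,\gamma_2}$ (with $\gamma_2\neq 0$ forced by unitality, hence a component of $W$ or $Q_i$, and $W$'s $\one$-contribution annihilated by cyclical unitality of $\h_s$ à la the cyclically-unital cancellation) survives inside $\f_s(-)$, which preserves $\ia$ since $\f_{1,0}$ acts on $H^*(L)$ and the $Q_i\Theta_i$ structure is stable under it up to relabeling the basis of $H^2(L)$ — exactly as in Corollary~\ref{wall_cross_formula_cor}. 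Once $\frac{d}{ds}P_s\in\ia$ is established for all $s$, the conclusion $P_{\f_1}-P_{\f_0}=\int_0^1\frac{d}{ds}P_s\,ds\in\ia$ is immediate from the completeness of $\Lambda\langle H_1(L)\rangle$ with respect to the adic topology.
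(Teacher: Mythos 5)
Your overall strategy (differentiate $P_{\f_s}$ in $s$, substitute condition (b), expand $\sum T^{E(\beta_2)}Y^{\partial\beta_2}\m_{0,\beta_2}=W\one+\sum_jQ_j\Theta_j$ via \eqref{W_Q_expand_eq}, and kill the unit part with condition (c)) is the paper's strategy, and your treatment of the first summand is essentially correct (modulo a small misreading: $\h_s\circ(\id_\#^\bullet\otimes\m\otimes\id^\bullet)$ has a \emph{single} $\m$-insertion, so at arity zero it is just $(\h_s)_{1,\beta_1}(\m_{0,\beta_2})$, not a string of $\m_0$'s). The genuine gap is your handling of the second summand, which you flag as the ``main obstacle'' and never resolve. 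The point of the canceling trick is that this summand vanishes identically: every term of $\sum\m'\circ(\f_s^\#\otimes\cdots\otimes\h_s\otimes\cdots\otimes\f_s)$ at arity zero contains a factor $(\h_s)_{0,\alpha}$, and condition (d) ($\deg(\h_s)_{k,\beta}=-k-\mu(\beta)$ together with the semipositivity $\mu(\alpha)\geqslant 0$) places $(\h_s)_{0,\alpha}$ in negative degree, hence it is zero. There is nothing to resum. Your proposed substitute --- divisor-axiom/exponential bookkeeping around the single $\h_s$-insertion and then ``reading $\m'$ via its $A_\infty$ relation with $\f_s$'' --- is not a legitimate step on that isolated expression (the terms with one $\h_s$-insertion do not assemble into the $A_\infty$ morphism relation for $\f_s$), and the claim that $\f_s$ ``preserves $\ia$ up to relabeling the basis of $H^2(L)$'' is not what Theorem \ref{wall_cross_formula_thm} or Corollary \ref{wall_cross_formula_cor} gives you (they relate $\phi(\ia')$ to $\ia$, not $\f_s$-images of $\ia$ to $\ia$). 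So as written the proof of the key membership statement is incomplete precisely where you said it was delicate.

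A secondary, fixable issue: your final step ``$\frac{d}{ds}P_s\in\ia$ for all $s$, hence $\int_0^1\frac{d}{ds}P_s\,ds\in\ia$'' needs a closedness-of-ideal argument that you only assert. The paper avoids this entirely by keeping the generators factored out: after the two observations above, the whole difference is
\begin{equation*}
P_{\f_1}-P_{\f_0}=\sum_j Q_j\cdot\int_0^1 R^j_{\h_s}\,ds ,
\end{equation*}
with $R^j_{\h_s}$ as in \eqref{R_f_notation} applied to $\h_s$, so membership in the finitely generated ideal $\ia=(Q_1,\dots,Q_\ell)$ is manifest. You should restructure your argument to produce this explicit factorization rather than integrating ideal membership pointwise.
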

	
	\begin{proof}
		Since $\f_0\simud\f_1$, it follows from the definition around (\ref{ud_sim_eq}) that there are families of operator systems $\f_s$ and $\h_s$, for $s\in\oi$, in $\CC$ with the conditions (a), (b), (c), (d) therein.
		First, applying the condition (b) yields that for any $\beta$,
		\begin{align*}
			& 
			(\f_1)_{0,\beta}- (\f_0)_{0,\beta}
			=
			\int_0^1 \tfrac{d}{ds} (\f_s)_{0,\beta} ds \\
			&
			=
			\int_0^1ds \sum_{\beta_1+\beta_2=\beta}  (\h_s)_{1,\beta_1} (\m_{0,\beta_2})  + \sum_{\beta=\gamma+\alpha+\sum_i \beta_i+ \sum_j\beta'_j}
			\m'_{r+t+1,\gamma} \big(  (\f_s)_{0,\beta_1},\dots, (\f_s)_{0,\beta_r},  (\h_s)_{0,\alpha}, (\f_s)_{0,\beta_1'}, \dots, (\f_s)_{0,\beta'_t} \big)
		\end{align*}
		Using (d) implies that $(\h_s)_{0,\alpha}\in H^{-1-\mu(\alpha)}(L)$, so the second summation does not have any contribution since $\mu(\alpha)\geqslant 0$.
		The condition (c) also implies $(\h_s)_{1,\beta_1}(\one)=0$.
		By Definition \ref{W_Q_defn} and (\ref{W_Q_expand_eq}), if we let $W$ and $Q_j$'s are the superpotential and obstruction series associated to $\m$, then $\sum T^{E(\beta_2)} Y^{\partial\beta_2} \m_{0,\beta_2}= W \ \one +\ \sum Q_j \ \Theta_j$.
		Recalling the notation (\ref{R_f_notation}), we finally obtain
		\begin{align*}
			P_{\f_1}-P_{\f_0}
			&
			=
			\int_0^1 ds \sum_{\beta_1} T^{E(\beta_1)} Y^{\partial\beta_1} (\h_s)_{1,\beta_1} \big( W \ \one+ \sum_j Q_j \  \Theta_j \big)   
			=
			\sum_j Q_j \cdot  {\textstyle \int_0^1 R_{\h_s}^j ds }
		\end{align*}
		Since $Q_j$'s are the generator of the obstruction ideal, we complete the proof.
	\end{proof}

	\begin{cor}
		\label{phi_f_ud_inv_cor}
		$\phi_{\f_0}\equiv \phi_{\f_1}$ mod $\ia$. In particular, $\varphi_{\f_0}=\varphi_{\f_1}$.
	\end{cor}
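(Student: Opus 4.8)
The plan is to derive this entirely from the Canceling Trick (Theorem~\ref{canceling_trick_thm}), which supplies the only substantive input --- namely that $P_{\f_1}-P_{\f_0}$ has all its $H^1(L)$-components in $\ia$ --- and then to finish by a formal manipulation of exponentials modulo the obstruction ideal $\ia$ of the common target $\m$. First I would record that, since $\f_0,\f_1\colon\m'\to\m$, the maps $\varphi_{\f_0}$ and $\varphi_{\f_1}$ have the same source $\Lambda\langle H_1(L);\Delta\rangle/\ia'$ and the same target $\Lambda\langle H_1(L);\Delta\rangle/\ia$, and by Corollary~\ref{wall_cross_formula_cor} and Definition~\ref{phi_var_defn} they are exactly the maps these quotients inherit from the pre-transitioning endomorphisms $\phi_{\f_0}$ and $\phi_{\f_1}$. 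Hence it is enough to prove the congruence $\phi_{\f_0}\equiv\phi_{\f_1}$ mod $\ia$, i.e.\ $\phi_{\f_0}(x)-\phi_{\f_1}(x)\in\ia$ for every $x\in\Lambda\langle H_1(L);\Delta\rangle$.

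Next I would reduce this to monomials: the $Y^\alpha$ ($\alpha\in H_1(L)$) topologically generate $\Lambda\langle H_1(L);\Delta\rangle$ over $\Lambda$, the $\phi_{\f_i}$ are continuous $\Lambda$-algebra endomorphisms, and $\ia$ is a closed ideal of this affinoid-type algebra, so it suffices to check the congruence on each $Y^\alpha$. There, definition (\ref{phi_defn_eq}) together with multiplicativity of $\exp$ gives
\[
\phi_{\f_1}(Y^\alpha)=Y^\alpha\exp\langle\alpha,P_{\f_1}\rangle=\phi_{\f_0}(Y^\alpha)\cdot\exp\langle\alpha,P_{\f_1}-P_{\f_0}\rangle ,
\]
all the exponentials converging for the same reason the series defining $\phi_\f$ in (\ref{phi_defn_eq}) converges (every nonvanishing term of $P_{\f_0}$ and $P_{\f_1}$ carries a positive symplectic area, as $\f_{0,0}=\m_{0,0}=0$). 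By the Canceling Trick $\langle\alpha,P_{\f_1}-P_{\f_0}\rangle\in\ia$, hence $\exp\langle\alpha,P_{\f_1}-P_{\f_0}\rangle-1=\sum_{n\ge1}\tfrac1{n!}\langle\alpha,P_{\f_1}-P_{\f_0}\rangle^n$ lies in $\ia$ (again using that $\ia$ is closed), and therefore $\phi_{\f_1}(Y^\alpha)-\phi_{\f_0}(Y^\alpha)=\phi_{\f_0}(Y^\alpha)\bigl(\exp\langle\alpha,P_{\f_1}-P_{\f_0}\rangle-1\bigr)\in\ia$ since $\ia$ is an ideal. Combining with the first step yields $\varphi_{\f_0}=\varphi_{\f_1}$.

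Honestly there is no real obstacle here: the corollary is a soft consequence of the Canceling Trick, and the ``hard part'' is little more than careful bookkeeping. The only points deserving a sentence of justification are the topological ones --- that $\ia$ is a closed ideal in this (polyhedral) affinoid-type algebra, so that the infinite sums in play (the exponential series, and the monomial expansion of a general element of $\Lambda\langle H_1(L);\Delta\rangle$) remain inside $\ia$, and that the exponentials involved converge, which follows from the positivity of symplectic areas --- and both are standard.
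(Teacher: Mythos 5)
Your proposal is correct and follows essentially the same route as the paper: reduce to the monomials $Y^\alpha$, invoke the Canceling Trick to place $\langle\alpha,P_{\f_1}-P_{\f_0}\rangle$ in $\ia$, expand the exponential to see $\exp\langle\alpha,P_{\f_1}-P_{\f_0}\rangle-1\in\ia$, and conclude via the ideal property and Definition~\ref{phi_var_defn}. The only cosmetic difference is that you justify the exponential step by closedness of $\ia$, whereas the paper factors the finitely many generators $Q_j$ out of each convergent factor $\exp(Q_jA_j)$, which amounts to the same thing.
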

	
	\begin{proof}
		In view of (\ref{phi_defn_eq}), we first consider $\langle \alpha, P_{\f_1} - P_{\f_0} \rangle$ for an arbitrary $\alpha\in H_1(L)$.
		By Theorem \ref{canceling_trick_thm}, we can write $\langle \alpha, P_{\f_1} - P_{\f_0} \rangle = \sum_j Q_j \cdot A_j$ for some strictly convergent formal power series $A_j=A^\alpha_j$ that may depend on $\alpha$. Note that the obstruction ideal $\ia$ is finitely generated by $Q_j$'s.
		Then,
		\begin{align*}
			\textstyle
			\exp\langle \alpha, P_{\f_1} -P_{\f_0}\rangle = \exp \big( \sum_{j=1}^\ell  Q_j \ A_j \big)
			=
			\prod_{j=1}^\ell \exp (Q_j A_j)
			=
			\prod_{j=1}^\ell \left(
			1+ Q_j A_j + \frac{1}{2!} Q_j^2 A_j^2 +\cdots 
			\right)
		\end{align*}
		It follows that $1 -\exp\langle \alpha, P_{\f_1} -P_{\f_0} \rangle \in \ia$.
		Thus, by the defining formula of $\phi_\f$ in (\ref{phi_defn_eq}), 
		\begin{align*}
			\phi_{\f_0}(Y^\alpha)-\phi_{\f_1}(Y^\alpha) = Y^\alpha \exp\langle\alpha, P_{\f_0}\rangle \Big( 1- \exp\langle \alpha , P_{\f_1}-P_{\f_0}\rangle \Big) \in \ia
		\end{align*}
		Finally, by (\ref{phi_var_defn_eq}), we also conclude $\varphi_{\f_0}=\varphi_{\f_1}$.
	\end{proof}

	\subsection{Invariance of proper unobstructedness}
	\label{ss_inv_proper_unobstructedness}
	
	Now, let's go back to the context of Situation \ref{situation_A_inf_alg_UD}. Assume we opt for an alternative almost complex structure $J'$, a different metric $g'$, and a distinct virtual fundamental chain $\Xi'$ on the moduli system $\mathbb M(J', L)$. This similarly leads to the derivation of $\check \m' = \check \m^{\Xi'}$ and $\m' = \m^{g',  \Xi'}$, which may differ from the original $\check \m = \check \m^{\Xi}$ and $\m = \m^{g,  \Xi}$.

	Choosing a path $\pmb J$ from $J$ to $J'$ and applying the second bullet of Axiom \ref{axiom_J}, there exists a pseudo-isotopy $\check \M$ on $\OL_\oi$ that restricts to $\check \m $ and $\check\m' $ at the two ends.
	Further choosing a path $\pmb g$ from $g$ to $g'$ and employing Proposition \ref{pseudo_isotopy_minimal_model_prop} yields a pseudo-isotopy $\M$ on $\HL_\oi$, connecting $\m $ to $\m' $.
	By Theorem \ref{from_pseudo_to_A_homo_thm}, the canonical construction associates the pseudo-isotopy $\M$ with an $A_\infty$ homotopy equivalence 
	\[
	\mC: \m\to \m'
	\]
	Recall that $\mC_{0,0}=0$ and $\mC_{1,0}=\id$ by construction.
	It lies in category $\UD$ by Proposition \ref{integral_UD_prop}. 
	By Proposition \ref{whitehead_prop}, there exists a ud-homotopy inverse within $\UD$, denoted by $\mC^{-1}: \m' \to \m$.

	Let $\mathfrak a$ and $\mathfrak a'$ be the obstruction ideals associated to $\m$ and $\m'$ respectively.

	\begin{thm}
		\label{proper_unobstructedness_invariance_thm}
		$\mathfrak a = 0$ if and only if $\mathfrak a' = 0$ within the affinoid algebra $\Lambda\langle H_1(L); \Delta \rangle$.
	\end{thm}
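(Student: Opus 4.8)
The plan is to deduce the equivalence from two ingredients. The first is soft: there are morphisms of the category $\UD$ in both directions between $\m$ and $\m'$, namely the $A_\infty$ homotopy equivalence $\mC\colon\m\to\m'$ constructed above and any ud-homotopy inverse $\mC^{-1}\colon\m'\to\m$ furnished by the Whitehead theorem (Proposition \ref{whitehead_prop}), which again lies in $\UD$. The second, and the real point, is that for \emph{every} morphism $\f$ of $\UD$ the pre-transitioning morphism $\phi_\f$ is not merely an algebra endomorphism but an \emph{automorphism} of $R:=\Lambda\langle H_1(L);\Delta\rangle$; in particular it is injective. Granting this, the theorem follows at once: if $\ia=0$, the wall-crossing formula (Corollary \ref{wall_cross_formula_cor}), which holds inside $R$, gives $\phi_{\mC}(\ia')\subseteq\ia=0$; hence $\phi_{\mC}(Q'_j)=0$ for every $j$, and injectivity of $\phi_{\mC}$ forces $Q'_j=0$, i.e. $\ia'=0$. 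The reverse implication is the same argument run with $\mC^{-1}\colon\m'\to\m$ in place of $\mC$, using $\phi_{\mC^{-1}}(\ia)\subseteq\ia'$. I would emphasize that the ideal containment coming from wall-crossing is on its own insufficient: it is precisely the injectivity of $\phi_\f$ that upgrades ``$\phi_\f(\ia')=0$'' to ``$\ia'=0$''.

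The main step I would carry out is therefore the automorphism claim. If $P_\f=0$ there is nothing to do, so assume otherwise. By the gappedness condition $\f_{0,0}=0$, so the transitioning phase series $P_\f=\sum_\beta T^{E(\beta)}Y^{\partial\beta}\f_{0,\beta}$ involves only classes $\beta$ with $E(\beta)>0$, and, again by gappedness (equivalently Gromov's compactness), the set $\{E(\beta):\f_{0,\beta}\neq 0\}$ has a strictly positive lower bound $E_0>0$. Shrinking $\Delta$ and invoking the Groman--Solomon reverse isoperimetric inequality exactly as in the proof of Proposition \ref{strictly_convergent_prop}, one obtains, uniformly in $\alpha\in H_1(L)$ and $\gamma\in\Delta$, the estimate $\val_\gamma(\langle\alpha,P_\f\rangle)\geq\tfrac12 E_0$, hence $\val_\gamma(\exp\langle\alpha,P_\f\rangle-1)\geq\tfrac12 E_0$. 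Consequently $N:=\phi_\f-\id$ is a continuous $\Lambda$-linear operator on $R$ with $\val(N(g))\geq\val(g)+\tfrac12 E_0$ for all $g$, so $\|N^n\|\leq e^{-nE_0/2}\to 0$. Therefore $\phi_\f=\id+N$ is invertible in the algebra of continuous $\Lambda$-linear endomorphisms of $R$ via the Neumann series $\sum_{n\geq 0}(-N)^n$; being simultaneously a ring homomorphism, $\phi_\f$ is an algebra automorphism of $R$.

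Two routine points deserve a check along the way: that $\phi_\f$ really maps $R$ into $R$ — which is part of the definition of the pre-transitioning morphism once $\Delta$ has been chosen small — and that the Neumann-series inverse lands back in $R$ rather than in the ambient formal series ring $\Lambda[[H_1(L)]]$; the latter follows from the uniform valuation estimate together with completeness of $R$ (a Banach affinoid algebra when $\Delta$ is a rational polyhedron, and a Fréchet inverse limit of such algebras when $\Delta$ is merely a small open neighbourhood of $0$). I expect this functional-analytic bookkeeping — carrying out the geometric-series inversion inside $R$ with its affinoid (or inverse-limit) topology, with all estimates uniform over $\Delta$ — to be the only genuinely delicate part of the argument; everything else is formal once the strict positivity and uniform boundedness-below of the energies entering $P_\f$ are in hand. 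Equivalently, one may phrase the conclusion as follows: from $\phi_\f(\ia')\subseteq\ia$ and the injectivity of $\phi_\f$ one gets $\ia'\subseteq\phi_\f^{-1}(\ia)$, which is $0$ exactly when $\ia=0$, and the converse is the symmetric statement applied to $\mC^{-1}$.
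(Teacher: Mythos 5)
Your proof is correct, but it follows a genuinely different route from the paper's. The paper never invokes injectivity of $\phi_\f$ on $\Lambda\langle H_1(L);\Delta\rangle$: it passes to the quotients $A/\ia$ and $A/\ia'$, uses the canceling trick (Corollary \ref{phi_f_ud_inv_cor}) together with the composition rule (Corollary \ref{phi_fg_cor}) and the ud-homotopies $\mC^{-1}\diamond\mC\simud\id$, $\mC\diamond\mC^{-1}\simud\id$ to show that $\varphi_{\mC}\colon A/\ia'\to A/\ia$ and $\varphi_{\mC^{-1}}$ are mutually inverse algebra isomorphisms, and then deduces $\ia=0\Leftrightarrow\ia'=0$ from the dimension theory of the affinoid algebras (equivalently, of the zero loci of $\ia$ and $\ia'$). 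You instead stay in $A$ itself and observe that $\phi_\f$ is injective (indeed an automorphism via the Neumann series): gappedness gives a strictly positive lower bound $E_0$ on the energies entering $P_\f$, and after shrinking $\Delta$ exactly as in Proposition \ref{strictly_convergent_prop} the reverse isoperimetric inequality yields $\val_\gamma\big(\exp\langle\alpha,P_\f\rangle-1\big)\geq E_0/2$ uniformly in $\gamma\in\Delta$ and in $\alpha$ (uniformity in $\alpha$ is automatic because the valuation of a nonzero complex number is $0$), so $\phi_\f=\id+N$ with $\val(N(g))\geq\val(g)+E_0/2$ and injectivity is immediate; combined with the wall-crossing inclusion $\phi_{\mC}(\ia')\subseteq\ia$ and its counterpart for $\mC^{-1}$, the theorem follows. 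Your route is leaner for the yes/no question at hand: it dispenses with the canceling trick, with the fact that $\mC^{-1}$ is a two-sided ud-homotopy inverse (any $\UD$-morphism $\m'\to\m$ would do), and with the affinoid dimension argument, which is slightly delicate when $H_1(L)$ has torsion and $A$ is not a domain. What the paper's route buys in exchange is the stronger structural output used later in the program — the explicit isomorphism $\varphi_{\mC}$ between the quotient algebras together with $\varphi(W')=W$ — whereas your argument only certifies the vanishing statement; note also that both routes equally rely on the convergence of $P_{\mC^{-1}}$ on a common small $\Delta$, as asserted in Definition \ref{transitioning_phase_defn}, so that reliance is not an extra cost of your approach.
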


	\begin{proof}
		Let $A$ denote this affinoid algebra $\Lambda\langle H_1(L);\Delta\rangle$.
		Invoking Definition \ref{phi_var_defn}---applicable thanks to Theorem \ref{wall_cross_formula_thm}---we can find algebra morphisms $\varphi_{\mC}: A/\ia' \to A/\ia$ and $\varphi_{\mC^{-1}}: A/\ia \to A/\ia'$ associated to $\mC$ and $\mC^{-1}$ respectively.
		Since $\mC^{-1}\diamond \mC\simud \id$ and $\mC\diamond \mC^{-1}\simud \id$, using Corollary \ref{phi_f_ud_inv_cor} implies that $\varphi_{\mC\diamond\mC^{-1}}=\varphi_\id =\id$ and $\varphi_{\mC^{-1}\diamond \mC}=\varphi_{\id}=\id$.
		Moreover, using Corollary \ref{phi_fg_cor} implies that $\varphi_{\mC} \circ \varphi_{\mC^{-1}}=\varphi_{\mC\diamond \mC^{-1}} =\id $ and $\varphi_{\mC^{-1}} \circ \varphi_{\mC}=\varphi_{\mC^{-1}\diamond \mC}=\id$.
		Therefore, $\varphi_\mC$ is an algebra isomorphism from $A/\ia'$ to $A/\ia$ with the inverse morphism $\varphi_{\mC^{-1}}$.
		Finally, based on the standard knowledge (e.g. \cite[p300]{BGR} and \cite{EKL}) about the dimensions of the affinoid algebras $A/\ia$ and $A/\ia'$ or about that of the corresponding affinoid spaces, namely, the zero loci of $\ia$ and $\ia'$, one can deduce that $\ia=0$ if and only if $\ia'=0$.
	\end{proof}
	
	\begin{rmk}
		A basic fact is that affinoid algebras are Noetherian. Accordingly, the dimension of an affinoid algebra, defined as its Krull dimension or via Noether Normalization (similar to classical algebraic geometry), is always finite. Two affinoid algebras are isomorphic if there exists an algebra isomorphism between them.
		It’s not hard to verify that dimension is preserved under such isomorphisms. We aim to apply this to our case $A/\ia\cong A/\ia'$. However, it is crucial that this isomorphism preserves multiplications and holds within the category of affinoid algebras. This is precisely why we’ve taken care to lay the groundwork beforehand, extending beyond the \emph{set-theoretic} Maurer-Cartan framework for (weak) bounding cochains; see \cite[Corollary 4.3.14]{FOOOBookOne}.
	\end{rmk}

	Accordingly, the following definition makes sense.

	\begin{defn}[Definition \ref{proper_unobstructed_introduction_defn}]
		\label{proper_unobstructed_L_defn}
		We say a Lagrangian submanifold $L$ is \textit{properly unobstructed} if some (or any) $A_\infty$ algebra $\m=\m^{g, \Xi}$ on $H^*(L)$ as in Situation \ref{situation_A_inf_alg_UD} is properly unobstructed in the sense of Definition \ref{obstruction_ideal_defn}.
		It is well-defined due to Theorem \ref{proper_unobstructedness_invariance_thm}.
	\end{defn}

	\begin{rmk}
		\label{cocycle_condition_rmk}
		Let's briefly explain the claim about the cocycle condition and 2-pseudo-isotopies, as mentioned in Section \ref{sss_inspiration_family_floer}. Roughly, the cocycle condition needs a simultaneous investigation of three distinct $A_\infty$ algebras, say $\m_i$, $\m_j$, and $\m_k$. Similar to the above approach, we can employ (1-)pseudo isotopies to derive $A_\infty$ homotopy equivalences, say $\mC_{ij}:\m_i\to \m_j$, $\mC_{jk}:\m_j\to \m_k$, and $\mC_{ik}:\m_i\to\m_k$.
		Denote the corresponding algebra morphisms by $\varphi_{ij}$, $\varphi_{jk}$, and $\varphi_{ik}$. It is initially unclear whether the equation $\varphi_{jk}\circ \varphi_{ij}=\varphi_{ik}$ is valid. This uncertainty arises from not knowing if the composition $\mC_{jk}\circ \mC_{ij}$ is ud-homotopic to $\mC_{ik}$, which is a prerequisite for applying Corollary \ref{phi_f_ud_inv_cor}. To address this issue, it becomes necessary to delve into the study of 2-pseudo-isotopies with boundary conditions specified by the 1-pseudo-isotopies for those $\mC_{ij}$'s. This introduces a substantial degree of complexity, as elaborated in \cite{Yuan_I_FamilyFloer}. Fortunately, for the scope of this paper, it is not required to engage in this intricate problem.
	\end{rmk}

	\subsection{Analytic continuation of proper unobstructedness}
	Suppose $\{L_s\}_{s\in\mathcal S}$ is a smooth family of embedded Lagrangian submanifolds in $(X,\omega)$ parameterized by a connected manifold $\mathcal S$. By Definition \ref{proper_unobstructed_L_defn}, the purpose of this section is the proof of our main result:
	
	\begin{thm}[Theorem \ref{Main_thm_this_paper}]
		\label{analytic_continuation_unobstr_thm}
		If $L_{s_0}$ is properly unobstructed for a fixed $s_0$, then $L_s$ is properly unobstructed for every $s\in\mathcal S$.
	\end{thm}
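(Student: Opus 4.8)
The plan is to run a non-archimedean analytic continuation argument along the parameter space $\mathcal S$. Fix a path $\gamma:[0,1]\to\mathcal S$ from $s_0$ to an arbitrary $s_1$; since $\mathcal S$ is connected it suffices to propagate proper unobstructedness along such a path, and then a compactness/chaining argument finishes. For each $s$ along the path, choose a Palais-Smale-type auxiliary package (almost complex structure $J_s$, metric $g_s$, virtual fundamental chain $\Xi_s$) depending smoothly on $s$, yielding a minimal $A_\infty$ algebra $\m^s$ on $H^*(L_s)$ as in Situation \ref{situation_A_inf_alg_UD}, with superpotential $W^s$ and obstruction series $Q_1^s,\dots,Q_{\ell_s}^s$ and obstruction ideal $\ia_s$. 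Because the $L_s$ are all diffeomorphic (being a smooth family over a connected base), we may fix a single model manifold $L$ and transport everything to a fixed affinoid algebra $A=\Lambda\langle H_1(L);\Delta\rangle$ for a common small $\Delta$; here it is essential that the reverse isoperimetric inequality of Groman-Solomon (used in Proposition \ref{strictly_convergent_prop}) gives a uniform constant $c>0$ in a neighborhood of any given $s$, so the convergence domain $\Delta$ can be chosen uniform on a small interval. This is the point where the geometry (reverse isoperimetric inequality plus Gromov compactness) enters.

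First I would handle a short subinterval $[s,s']$: the continuation-map machinery of Axiom \ref{axiom_J} produces a pseudo-isotopy $\check\M$ on $\Omega^*(L)_{[0,1]}$ interpolating $\check\m^{s}$ and $\check\m^{s'}$; applying Proposition \ref{pseudo_isotopy_minimal_model_prop} with a path of metrics gives a pseudo-isotopy $\M$ on $H^*(L)_{[0,1]}$ in $\UD$ interpolating $\m^s$ and $\m^{s'}$, and Theorem \ref{from_pseudo_to_A_homo_thm} together with Proposition \ref{integral_UD_prop} yields an $A_\infty$ homotopy equivalence $\mC:\m^s\to\m^{s'}$ in $\UD$ with $\mC_{1,0}=\id$. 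By the Whitehead theorem (Proposition \ref{whitehead_prop}) there is a ud-homotopy inverse $\mC^{-1}$ in $\UD$. Now Theorem \ref{wall_cross_formula_thm} and Corollary \ref{wall_cross_formula_cor} give $\phi_{\mC}(\ia_{s'})\subseteq\ia_s$ and $\phi_{\mC^{-1}}(\ia_s)\subseteq\ia_{s'}$, hence transitioning morphisms $\varphi_{\mC}:A/\ia_{s'}\to A/\ia_s$ and $\varphi_{\mC^{-1}}:A/\ia_s\to A/\ia_{s'}$; Corollaries \ref{phi_fg_cor} and \ref{phi_f_ud_inv_cor} show these are mutually inverse algebra isomorphisms, exactly as in the proof of Theorem \ref{proper_unobstructedness_invariance_thm}. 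Therefore, by the standard dimension theory of affinoid algebras, $\ia_s=0$ if and only if $\ia_{s'}=0$: proper unobstructedness is an \emph{open and closed} condition along the path.

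Next I would globalize: the set $T=\{s\in\mathcal S: L_s \text{ is properly unobstructed}\}$ is nonempty (it contains $s_0$) and, by the local argument above applied around each point, both open and closed in $\mathcal S$; connectedness forces $T=\mathcal S$. One subtlety to address carefully is that the rank $\ell_s=\dim H^2(L_s)$ and the ambient affinoid algebra are constant along the family (again because the $L_s$ are diffeomorphic), so that the isomorphism $\varphi_{\mC}$ genuinely identifies $A/\ia_s$ with $A/\ia_{s'}$ over the \emph{same} ring; and that the choices of $(J_s,g_s,\Xi_s)$ can be made to vary smoothly in $s$ so the interpolating pseudo-isotopies exist — but this is exactly the content of Axiom \ref{axiom_J} applied to a path of data. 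The main obstacle is the uniformity of the convergence polyhedron $\Delta$: one must verify that along a compact subinterval the reverse isoperimetric constants do not degenerate, so that a single affinoid algebra serves all the $\m^s$ simultaneously and the wall-crossing identities of Theorem \ref{wall_cross_formula_thm} live in that fixed algebra. Granting that (which follows from continuity of the relevant geometric constants under a smooth family, together with Gromov compactness controlling the energies $E(\beta)$ uniformly on compact sets), the argument is a direct iteration of Theorem \ref{proper_unobstructedness_invariance_thm} along the path.
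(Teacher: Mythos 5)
Your global structure (the nonempty, open-and-closed set $\mathcal A=\{s: L_s \text{ properly unobstructed}\}$ in the connected base, with reverse isoperimetric inequalities providing the uniform control for closedness) matches the paper. But the step where you actually compare two nearby Lagrangians has a genuine gap. You invoke ``the continuation-map machinery of Axiom \ref{axiom_J}'' to produce a pseudo-isotopy interpolating $\check\m^{s}$ and $\check\m^{s'}$. Axiom \ref{axiom_J} only provides pseudo-isotopies for a \emph{fixed} Lagrangian $L$ and a path of almost complex structures; nothing in the paper (and nothing standard) gives a pseudo-isotopy in $\UD$, or a continuation map respecting the energy filtration, between the curved $A_\infty$ algebras of two \emph{distinct} Lagrangians $L_s$ and $L_{s'}$. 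Likewise, ``transporting everything to a fixed affinoid algebra because the $L_s$ are diffeomorphic'' is not innocuous: a diffeomorphism identification does not respect the area homomorphism $E$, so the obstruction series of $L_{s'}$ do not tautologically live in the same algebra $\Lambda\langle H_1(L);\Delta\rangle$ as those of $L_s$.

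The paper's mechanism, which your proposal is missing, is Fukaya's trick: realize $L_{s'}$ as the graph of a small closed one-form $\xi$ in a Weinstein neighborhood of $L_s$, so a small isotopy $F$ identifies moduli spaces $\mathcal M_{k+1,\beta}(J,L_s)\cong\mathcal M_{k+1,\tilde\beta}(F_*J,L_{s'})$, pushing the virtual chain forward and producing $\m^F$ on $H^*(L_{s'})$ with $F^*\m^F_{0,\tilde\beta}=\m_{0,\beta}$, while the energies shift by $E(\tilde\beta)=E(\beta)+\langle\partial\beta,\xi\rangle$. The obstruction series are then related by the explicit parallel-translation isomorphism $\mathsf{pt}:\Lambda\langle H_1(L_s);\Delta\rangle\cong\Lambda\langle H_1(L_{s'});\Delta-[\xi]\rangle$ (Proposition \ref{proper_unobstructedness_Fuk_trick_prop}), and the role of the Groman--Solomon inequality is precisely to guarantee that $[\xi]$ lies in the convergence polyhedron $\Delta$ (i.e.\ $\|\xi\|$ is small compared with the isoperimetric constant $c_{s}$), so that this shift is legitimate; the uniform version of the inequality over compact subsets is what makes the neighborhood size non-degenerate and yields closedness. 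Only after this reduction does one quote the invariance theorem \ref{proper_unobstructedness_invariance_thm}, now for the single Lagrangian $L_{s'}$ with the two choices $(F_*J,F_*\Xi)$ versus any other. Your identification of ``the main obstacle'' as uniformity of $\Delta$ is the right instinct, but without the Fukaya-trick comparison and the $\mathsf{pt}$-bookkeeping of the energy shift, the wall-crossing/isomorphism argument you run between $A/\ia_s$ and $A/\ia_{s'}$ has no construction behind it.
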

	
	In principle, the key is the invariance of proper unobstructedness established in Theorem \ref{proper_unobstructedness_invariance_thm}.
	Then, the proof follows immediately by combining it with the Fukaya's trick.
	Let's begin with a quick review of the latter; see also \cite{FuCyclic,Yuan_I_FamilyFloer}.

	Take two sufficiently adjacent Lagrangians $L$ and $\tilde L$, and suppose there is a small non-Hamiltonian isotopy $F$ within $X$ such that $F(L)=\tilde L$.
	We may assume that $F_*J$ is also $\omega$-tame since the tameness is an open condition.
	Taking a Weinstein neighborhood $U$ of $L$ that is symplectomorphic to a neighborhood of the zero section in $T^*L$, we may assume $\tilde L$ is the graph of a small closed one-form $\xi\in Z^1(L)$.
	For any $\beta\in H_2(X,L)$, we denote $\tilde\beta=F_*\beta\in H_2(X,\tilde L)$.
	By \cite[Lemma 13.5]{FuCyclic}, we have
	\[
	E(\tilde\beta)=E(\beta) + \langle \partial\beta , \xi \rangle 
	\]
	By taking $F$-related bases, we can identify $H^1(L)\cong\mathbb R^{m}$ and $H^1(\tilde L) \cong  \mathbb R^{m}$ such that the induced map $F^*:H^1(\tilde L)\to H^1(L)$ corresponds to the identity map on $\mathbb R^m$.

	Given $\Delta \subseteq \mathbb R^m$ that contains $[\xi]$, we set $\tilde \Delta= \Delta-[\xi]$.
	Then, there is a natural isomorphism
	\begin{equation}
		\label{isom_affinoid_with_Delta_eq}
		pt:   \Lambda\langle H_1(L) ;\Delta \rangle  \cong  \Lambda\langle H_1(\tilde L) ; \tilde \Delta \rangle
	\end{equation}
	defined by
	$
	Y^\alpha \xleftrightarrow{}   T^{\langle \alpha, \xi \rangle} Y^{\tilde \alpha}$ for $\alpha\in H_1(L)$.
	Here ``$pt$'' stands for ``parallel translation''.

	\begin{defn}
		Given $(\HL, \m)$ as above, there is a natural $F$-induced $A_\infty$ algebra $(H^*(\tilde L), \m^F)$ such that
		\[
		F^* \m^F_{k,\tilde\beta} (x_1,\dots, x_k)=   \m_{k,\beta}(F^* x_1,\dots, F^* x_k)
		\]
		for $x_1,\dots, x_k\in H^*(\tilde L)$.
		Clearly, $\m^F$ is an object in $\UD$ if so is $\m$. In particular, when $k=0$, we have
		\[
		F^*\m_{0,\beta}^F= \m_{0,\beta}
		\]
	\end{defn}
	
	\begin{rmk}
		The definition of $\m^F$ seems almost algebraic, but it also has specific meanings from moduli space geometry. This is the content of Fukaya's trick \cite{FuCyclic}. We also refer to \cite{Yuan_I_FamilyFloer} for a detailed discussion.
		In brief, it transfers a small isotopy of Lagrangian submanifolds to a small perturbation of $J$, as there is a canonical identification
		\[ \mathcal M_{k+1,\beta}(J,L) \cong \mathcal M_{k+1,\tilde \beta} ( F_*J, \tilde L)
		\]
		Given any $J$-holomorphic stable map $u$ bounded by $L$, the map $F\circ u$ is $F_*J$-holomorphic and bounded by $\tilde L$.
		In view of Axiom \ref{axiom_J}, this means that a virtual fundamental chain $\Xi$ for the moduli space system $\mathbb M(J,L)$ induces a virtual fundamental chain $F_*\Xi$ for the moduli space system $\mathbb M(F_*J,\tilde L)$.
	\end{rmk}

	Let $(\Theta_i)$ be a basis of $H^2(L)$ as before.
	Let $W$ and $Q_i$'s be the superpotential and obstruction series associated with $\m$ as in Definition \ref{W_Q_defn} and \ref{obstruction_ideal_defn}.
	The map $F$ clearly relates the constant-one generators of $H^0(L)$ and $H^0(\tilde L)$ and also induces a basis $(\tilde \Theta_i)$ of $H^2(\tilde L)$.
	Let $W^F$ and $Q^F_i$'s be the superpotential and obstruction series associated with $\m^F$.
	The obstruction ideals $\ia$ and $\ia^F$ associated with $\m$ and $\m^F$ are generated by $Q_i$'s and $Q_i^F$'s respectively.
	
	\begin{prop}
		\label{proper_unobstructedness_Fuk_trick_prop}
		$W^F=  pt (W) $ and $Q^F_i=  pt  (Q_i)$. In particular, $\ia=0$ if and only if $\ia^F=0$.
	\end{prop}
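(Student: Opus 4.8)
The statement is entirely a matter of unwinding definitions; the only genuine content is to see that the energy shift built into the isomorphism $\mathsf{pt}$ exactly cancels the energy difference $E(\tilde\beta)-E(\beta)$ recorded above, and to keep the $F$-induced basis identifications straight. So the plan is, first, to assemble the bookkeeping facts, then to compute $\mathsf{pt}(W)$ and $\mathsf{pt}(Q_i)$ termwise, and finally to transport the ideal. For the preliminaries: since $F$ is a diffeomorphism (isotopic to the identity), $F^*\colon H^*(\tilde L)\to H^*(L)$ is a degree-preserving algebra isomorphism; it sends the constant-one generator $\one$ of $H^0(\tilde L)$ to that of $H^0(L)$, and by construction it sends the $F$-induced basis $\tilde\Theta_i$ of $H^2(\tilde L)$ to $\Theta_i$. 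By naturality of the long exact sequence of a pair, $\partial\tilde\beta=F_*(\partial\beta)$, which is precisely the class denoted $\tilde\alpha$ in the definition of $\mathsf{pt}$ when $\alpha=\partial\beta$; hence $\mathsf{pt}(Y^{\partial\beta})=T^{\langle\partial\beta,\xi\rangle}Y^{\partial\tilde\beta}$. Moreover $F_*\colon H_2(X,L)\to H_2(X,\tilde L)$ is an isomorphism, so $\beta\mapsto\tilde\beta$ is a bijection, and from $F^*\m^F_{0,\tilde\beta}=\m_{0,\beta}$ it matches $\{\beta:\m_{0,\beta}\ne 0\}$ with $\{\tilde\beta:\m^F_{0,\tilde\beta}\ne 0\}$; also $\mu(\tilde\beta)=\mu(\beta)$, which is forced since $\m^F$ lies in $\UD$ and $\deg\m^F_{k,\tilde\beta}=\deg\m_{k,\beta}=2-\mu(\beta)-k$. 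Finally the energy identity $E(\tilde\beta)=E(\beta)+\langle\partial\beta,\xi\rangle$ is the one quoted from \cite[Lemma 13.5]{FuCyclic}.

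Next I would compute $\mathsf{pt}(W)$ directly. Applying $\mathsf{pt}$ to each term of $W=\sum_{\mu(\beta)=0}T^{E(\beta)}Y^{\partial\beta}\m_{0,\beta}$ gives
\[
\mathsf{pt}(W)=\sum_{\mu(\beta)=0}T^{E(\beta)+\langle\partial\beta,\xi\rangle}\,Y^{\partial\tilde\beta}\,\m_{0,\beta}
=\sum_{\mu(\beta)=0}T^{E(\tilde\beta)}\,Y^{\partial\tilde\beta}\,\m_{0,\beta}.
\]
Reindexing over $\tilde\beta$ (legitimate because $\beta\mapsto\tilde\beta$ is a $\mu$-preserving bijection) and using $F^*\m^F_{0,\tilde\beta}=\m_{0,\beta}$ together with $F^*\one=\one$, so that the scalars $\m_{0,\beta}$ and $\m^F_{0,\tilde\beta}\in H^0(\cdot)\cong\mathbb R$ coincide, yields $\mathsf{pt}(W)=\sum_{\mu(\tilde\beta)=0}T^{E(\tilde\beta)}Y^{\partial\tilde\beta}\m^F_{0,\tilde\beta}=W^F$. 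The identical computation handles $Q$: writing $\m_{0,\beta}=\sum_i q_{\beta,i}\Theta_i$ with $q_{\beta,i}\in\mathbb R$ for $\mu(\beta)=2$, one has $Q_i=\sum_{\mu(\beta)=2}T^{E(\beta)}Y^{\partial\beta}q_{\beta,i}$; applying $(F^*)^{-1}$ shows $\m^F_{0,\tilde\beta}=\sum_i q_{\beta,i}\tilde\Theta_i$, so $Q_i^F=\sum_{\mu(\tilde\beta)=2}T^{E(\tilde\beta)}Y^{\partial\tilde\beta}q_{\beta,i}$, and the same reindexing gives $\mathsf{pt}(Q_i)=Q_i^F$. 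Equivalently, one could apply $\mathsf{pt}\otimes(F^*)^{-1}$ to the vector-valued identity (\ref{W_Q_expand_eq}) and compare coefficients of $\one$ and of $\tilde\Theta_j$.

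Finally I would conclude: $\mathsf{pt}$ is a ring isomorphism $\Lambda\langle H_1(L);\Delta\rangle\xrightarrow{\ \sim\ }\Lambda\langle H_1(\tilde L);\tilde\Delta\rangle$, so it carries the ideal $\ia=(Q_1,\dots,Q_\ell)$ onto the ideal generated by $\mathsf{pt}(Q_1),\dots,\mathsf{pt}(Q_\ell)$, which by the previous paragraph is $(Q_1^F,\dots,Q_\ell^F)=\ia^F$; hence $\ia=0$ if and only if $\ia^F=0$. I do not expect a real obstacle here. The only points requiring care are that $\Delta$ be taken small enough to contain both $0$ and $[\xi]$, so that $W$ and the $Q_i$ converge over $\Delta$ \emph{and} $\mathsf{pt}$ is defined, and that the three identifications $\one\mapsto\one$, $\tilde\Theta_i\mapsto\Theta_i$, and $\partial\tilde\beta=F_*(\partial\beta)$ be used consistently, so that the scalar coefficients are transported unchanged while the monomial $Y^{\partial\beta}$ acquires exactly the factor $T^{\langle\partial\beta,\xi\rangle}$ that upgrades $E(\beta)$ to $E(\tilde\beta)$.
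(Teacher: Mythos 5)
Your proposal is correct and follows essentially the same route as the paper: a termwise computation applying $\mathsf{pt}$, cancelling the energy shift $E(\tilde\beta)=E(\beta)+\langle\partial\beta,\xi\rangle$ against the $T^{\langle\partial\beta,\xi\rangle}$ factor in $\mathsf{pt}(Y^{\partial\beta})$, and using $F^*\m^F_{0,\tilde\beta}=\m_{0,\beta}$ with the $F$-related bases. The paper records only the $Q_i$ computation and treats the rest as immediate, so your extra bookkeeping (bijectivity of $\beta\mapsto\tilde\beta$, transport of the ideal under the ring isomorphism $\mathsf{pt}$) is just a more explicit version of the same argument.
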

	\begin{proof}
		This is straightforward.
		We only check it for $Q^F_i$.
		\begin{align*}
			Q_i^F 
			&
			=
			\sum T^{E(\tilde\beta)} Y^{\partial\tilde\beta} \Big[ \tilde\Theta_i \, ,  \,   \m^F_{0,\tilde\beta} \Big]
			=
			\sum T^{E(\beta)} \cdot T^{\langle \partial\beta, \xi\rangle} Y^{\partial\tilde\beta} \cdot
			\Big[ \Theta_i \, ,  \,   F^* \m^F_{0,\tilde\beta} \Big] \\
			&
			=
			\sum T^{E(\beta)} \cdot  {pt} (Y^{\partial\beta}) \cdot 
			\Big[
			\Theta_i,  \m_{0,\beta} 
			\Big]
			=
			pt (Q_i)
		\end{align*}
	\end{proof}

	\vspace{1em}

	\begin{proof}[Proof of Theorem \ref{analytic_continuation_unobstr_thm}]
		We consider the subset
		\[
		\mathcal A=\{ s\in\mathcal S \mid L_s \ \text{is properly unobstructed} \ \}
		\]
		Since $\mathcal S$ is connected and $\mathcal A$ is non-empty, it suffices to verify that $\mathcal A$ is both open and closed in $\mathcal S$.
		
		We first show the openness. Given $s_0\in\mathcal A$, we take a small Weinstein neighborhood $U$ of $L_{s_0}$. Then, we can take a small open neighborhood $V$ of $s_0$ in $\mathcal S$ such that for any $s\in V$, $L_{s}\subseteq U$ can be realized as the graph of a closed one-form $\xi_{s}$.
		Let $c_{s_0}$ denote the estimate constant for the reverse isoperimetric inequality for $L_{s_0}$. Namely, a holomorphic curve $u$ bounded by $L_{s_0}$ satisfies that $E(u)\ge c_{s_0} \ell(\partial u)$ for the length $\ell(\partial u)$ of $\partial u$ (cf. Proposition \ref{strictly_convergent_prop}).
		Shrinking $V$ if necessary, we may assume that the norm of $\xi_{s}$ for $s\in V$ is controlled by this estimate constant $c_{s_0}$.
		Given $s\in V$, we can choose a small isotopy $F$ such that $F(L_{s_0})=L_s$.
		Since $L_{s_0}$ is properly unobstructed, using Proposition \ref{proper_unobstructedness_Fuk_trick_prop} yields that $L_s$ is also properly unobstructed. Hence, $V\subseteq \mathcal A$, and $\mathcal A$ is open.
		
		We next show the closedness. This is basically the consequence of the uniform version of reverse isoperimetric inequalities, which is proved in \cite{Yuan_I_FamilyFloer} by modifying Duval's arguments in \cite{ReverseII} (see also Remark \ref{reverse_rmk_proof} below). Namely, we can make the aforementioned estimate constant $c_s$ uniform if $s$ ranges over a compact subset in $\mathcal S$.
		Specifically, let $(s_n)_{n\ge 0}$ be a sequence in $\mathcal A$ convergent to a point $s_\infty$ in $\mathcal S$.
		Our goal is to show $s\in\mathcal A$.
		Indeed, we take open neighborhoods $V_n$ of each $s_n$ as above. But, since we can find a uniform estimate constant $c$ for the reverse isoperimetric inequalities over a fixed compact domain in $\mathcal S$ that contains the closure of this sequence, we may further assume that the diameter of $V_n$ is uniformly bounded below and thus $s_\infty \in V_n$ for sufficiently large $n$.
		Using Proposition \ref{proper_unobstructedness_Fuk_trick_prop} similarly concludes that $s_\infty\in\mathcal   A$. Hence, $\mathcal A$ is closed.
		
		To sum up, $\mathcal A$ is a non-empty subset of $\mathcal S$ that is both open and closed. Because $\mathcal S$ is connected, we must have $\mathcal A=\mathcal S$. The proof is now complete.
	\end{proof}

	\begin{rmk}\label{reverse_rmk_proof}
		The difficulty in generalizing Duval's argument happens only when trying to include Lagrangian \textit{intersections}.
		Specifically, in the work of Chass\'e-Hicks-Nho \cite{chasse2023reverse}, a Lagrangian intersection, say $L_1\cap L_2$, is required to be \emph{locally standard} in the sense of \cite[Definition II.1]{chasse2023reverse}.
		However, in our context, we focus solely on isotopies of smooth Lagrangians, not their intersections. Therefore, there is no essential difficulty in generalizing Duval's argument in our case.
	\end{rmk}

	\section{Applications}
	Finally, let's expand on the details for the applications discussed in the introduction.

	\begin{proof}[Proof of Proposition \ref{Application_4mfd_prop}]
		Fix a Lagrangian torus $L$ in $X$. By \cite[Theorem A]{dimitroglou2016lagrangian}, one can find a Lagrangian isotopy $(L_s)_{0\leqslant s\leqslant 1}$ such that $L_0$ is a standard monotone Lagrangian torus and $L_1=L$.
		Remark also that, in the case $X=\mathbb {CP}^2$ or $S^2\times S^2$, by \cite[Theorem C]{dimitroglou2016lagrangian}, we can find a Hamiltonian isotopy to make the Lagrangian isotopy $(L_s)$ placed inside the complement $X\setminus D_\infty$, where $D_\infty$ is the divisor at infinity when $X=\mathbb {CP}^2$ and is the union of two holomorphic lines when $X=S^2\times S^2$.
		Note that $L_0$ is graded in the complement of $D_\infty$. By \cite[Lemma 3.1]{AuTDual}, we can ensure the non-negativity of Maslov indices for nontrivial holomorphic disks.
		Since the monotone condition implies that $L_0$ does not bound nonconstant Maslov-0 disks, we see that those curvature terms $\m_{0,\beta}$ with $\mu(\beta)=0$ are all vanishing and that the formal power series $Q_i$'s are varnishing as well (Definition \ref{W_Q_defn}).
		Thus, $L_0$ is properly unobstructed and so is $L=L_1$ by Theorem \ref{Main_thm_this_paper} (i.e. \ref{analytic_continuation_unobstr_thm}).
		
	\end{proof}

	\begin{proof}[Proof of Proposition \ref{Application_AAK_prop_intro}]
		We need to first recall the context of \cite[3.1, 3.2]{AAK_blowup_toric} or \cite[3.3.2]{Au_Special} for a typical case.
		Let $V$ be a toric manifold over $\mathbb C$. Let $H=\{f=0\}\subset V$ be an appropriate hypersurface with some technical conditions \cite[Definition 3.3]{AAK_blowup_toric}.
		Let $X$ be the blow-up of $V\times \mathbb C$ along $H\times \{0\}$.
		Let $D$ be the proper transform of the toric anticanonical divisor of $V\times \mathbb C$, and we set $X^0=X\setminus D$.
		For example, when $V=(\mathbb C^*)^n$ and $H=\{1+x_1+\cdots +x_n=0\}$, we have $X^0=\{(x_1,\dots, x_n,y,z)\in (\mathbb C^*)^n\times \mathbb C^2 \mid yz=x_1+\cdots +x_n +1\}$ \cite[9.1]{AAK_blowup_toric}.
		
		One can find a Lagrangian fibration $\pi:X^0\to B=\mathbb R^n\times \mathbb R_+$ \cite[Definition 4.4]{AAK_blowup_toric} such that the singular locus is $B^{sing}=\Pi'\times \{\epsilon\}$ where $\Pi'$ is a perturbation of the amoeba of the hypersurface $H$ \cite[(4.4) and Fig. 2]{AAK_blowup_toric}.
		The wall region, consisting of all base points over which the fibers of $\pi: X^0 \to B$ bound Maslov-0 disks, has a non-empty complement in the base \cite[Corollary 5.2]{AAK_blowup_toric}. In other words, the fibers outside of this wall region are tautologically unobstructed.
		Although the wall region may have a non-empty interior in the base manifold \cite[Proposition 5.1]{AAK_blowup_toric}, it follows from Theorem \ref{Main_thm_this_paper} that those fibers over the wall region are still (properly) unobstructed.
	\end{proof}

	\appendix
	
	\section{Kuranishi structures and virtual fundamental chains}
	\label{s_Kuranishi}
	
	In this appendix, let's elaborate on Theorem \ref{A_infinity_algebra_up_to_pseudo_isotopy_thm} or Axiom \ref{axiom_J}.
	Assuming these results, the appendix is not logically necessary for the main result, but we hope our writings below may be a helpful exposition for the basic logic flow of the Kuranishi structure theory and its application.
	The main references are \cite{FOOODiskOne,FOOODiskTwo,FOOO_Kuranishi,FuCyclic}.
	Our primary focus is on \cite[Theorem 21.35]{FOOO_Kuranishi}, which deals with a quite broad and abstract framework. When it comes to the specific moduli spaces in practical applications \cite{FOOODiskOne,FOOODiskTwo}, we aim to present a streamlined outline of the theory of Kuranishi structure.
	We attempt to make definitions and statements concise and precise, but providing complete details of proofs is clearly beyond our ability. Where full detail is not feasible, we will at least provide accurate references to the literature.

	\subsection{Obstruction bundle data}
	The moduli space $\mathcal M:=\mathcal M_{k+1,\beta}(J,L)$ in (\ref{moduli_single}) carries a Kuranishi structure.
	It can be canonically constructed by the so-called obstruction bundle data. 
	
	Let $\mathbf p_a$ be an irreducible component of a fixed $\mathbf p\in\mathcal M$. The linearization of the Cauchy-Riemann operator defines a linear elliptic operator, denoted as $D_{\mathbf p_a} : W_{\mathbf p_a}\to L_{\mathbf p_a}$, where the source and target are appropriate Sobolev spaces of sections of the pullback bundles of $TX$ with boundary values in the pullbacks of $TL$ \cite[(5.1)]{FOOODiskOne}.
	Let $W_{\mathbf p}$ be the Hilbert subspace of the direct sum of $W_{\mathbf p_a}$ with compatibility at all nodal points. Let $L_{\mathbf p}$ be the direct sum of all $L_{\mathbf p_a}$.
	Then, the collection $D_{\mathbf p_a}$ gives rise to a Fredholm operator
	$D_{\mathbf p}: W_{\mathbf p}\to L_{\mathbf p}$ (see \cite[(5.5)]{FOOODiskOne} for the detailed descriptions).
	Furthermore, each evaluation map $\ev_i$ in (\ref{ev_eq}) induces a map $\mathrm{EV}_i: W_{\mathbf p} \to T_{\ev_i(\mathbf p)}L$ defined by taking the fiber over the $i$-th marked point \cite[7.11]{FOOODiskOne}.
	
	Define $\mathcal X := \mathcal X_{k+1,\beta}(J,L)$ to be the set of $[\Sigma,\mathbf u,\mathbf z]$ in almost the same way as $\mathcal M$ (\ref{moduli_single}) but with more flexible conditions: $\mathbf u$ is not required to be $J$-holomorphic, but $\mathbf u$ is of $C^2$ class on each irreducible component. Note that $\mathcal M\subseteq \mathcal X$ is a topological space, but $\mathcal X$ is simply a set. The pair $(\mathcal X,\mathcal M)$ admits the structure of \textit{partial topology} in the following sense:
	There is a collection of subsets $B_\epsilon(\mathbf p) \subseteq \mathcal X$ for all $\mathbf p\in\mathcal M$ and $\epsilon>0$ such that \cite[Proposition 4.3]{FOOODiskOne}:
	
	\begin{itemize}[itemsep=0pt]
		\item The collection $B_\epsilon(\mathbf p)\cap \mathcal M$ forms a basis of the standard stable map topology of $\mathcal M$.
		\item For any $\mathbf q\in B_\epsilon(\mathbf p)\cap \mathcal M$, there exists a $\delta>0$ such that $B_\delta(\mathbf q) \subset B_\epsilon (\mathbf p)$.
		\item The subset $B_\epsilon(\mathbf p)$ is monotone with $\epsilon$, and the intersection of $B_\epsilon(\mathbf p)$ for all $\epsilon$ is $\{\mathbf p\}$.
	\end{itemize}
	
	A subset $\mathscr U\subseteq \mathcal X$ is a \textit{(partial) neighborhood} of $\mathbf p\in\mathcal M$ if $\mathscr U$ is contained in $B_\epsilon(\mathbf p)$ for some $\epsilon$.
	Roughly speaking, these $B_\epsilon(\mathbf p)$ behave much like the usual open neighborhoods in a topological space but with the distinction here that they must contain points in $\mathcal M$.
	
	By \textit{obstruction bundle data} of $\mathcal M$, we mean a rule $\mathscr O$ that assigns to each $\mathbf p\in\mathcal M$ a partial neighborhood $\mathscr U_{\mathbf p}$ and assigns to each $\mathbf p\in\mathcal M$ and $\mathbf x\in\mathscr U_{\mathbf p}$ a finite-dimensional linear subspace $E_{\mathbf p}(\mathbf x)$ within $C^2( u_\mathbf x^*TX\otimes \Lambda^{0,1})$, where $u_{\mathbf x}$ is a representative of $\mathbf x$ in $\mathcal X$, such that \cite[5.1]{FOOODiskOne}
	
	\begin{itemize}[itemsep=0pt]
		\item[$\circ$]
		Each $E_{\mathbf p}(\mathbf x)$ is supported away from nodal points, 
		is invariant under the choice of representative $u_{\mathbf x}$ of $\mathbf x$, and
		is smoothly dependent on $\mathbf x$.
		\item[$\circ$] The following transversality condition holds: the sum of $E_{\mathbf p}(\mathbf p)$ and $\mathrm{Im}(D_{\mathbf p})$ is the whole $L_{\mathbf p}$.
		\item[$\circ$] $E_{\mathbf p}(\mathbf x)$ is semi-continuous on $\mathbf p$ in the sense that if $\mathbf q\in \mathscr U_{\mathbf p}$ and $\mathbf x\in\mathscr U_{\mathbf p}\cap \mathscr U_{\mathbf q}$, $E_{\mathbf q}(\mathbf x)\subseteq E_{\mathbf p}(\mathbf x)$.
	\end{itemize}
	Here the smooth dependence on $\mathbf x$ means that by joining $\mathbf p$ and $\mathbf x$ via minimal geodesics and by taking parallel transports, these $E_{\mathbf p}(\mathbf x)$ can be viewed as a collection of subspaces in the same $C^2(u^*_{\mathbf p}TX\otimes \Lambda^{0,1})$ with support away from nodal points parameterized by $\mathbf x$; then the smooth dependence on $\mathbf x$ makes sense. By shrinking $\mathscr  U_{\mathbf p}$ if necessary, we may assume that the distance between $\mathbf p$ and $\mathbf x$ is controlled by the injectivity radius of a Riemannian metric to make the above parallel transport works.
	Furthermore, we say that a choice of obstruction bundle data $\{E_{\mathbf p}(\mathbf x)\}$ of $\mathcal M=\mathcal M_{k+1,\beta}$ satisfies the \textit{mapping transversality condition for $\ev_i$} (\ref{ev_eq}) if \cite[7.11]{FOOODiskOne}
	
	\begin{itemize}
		\item[$\circ$] for any $\mathbf p=[\Sigma, \mathbf u,\mathbf z]\in\mathcal M$, the restriction of the above $\mathrm{EV}_i$ on $D_{\mathbf p}^{-1}\big( E_{\mathbf p}(\mathbf p)\big)$ is surjective.
	\end{itemize}

	Fix $\mathbf p\in\mathcal M$. By \textit{(local) obstruction bundle data at $\mathbf p$} we mean a collection $\{E_{\mathbf p}(\mathbf x): \mathbf x\in \mathscr U_{\mathbf p} \}$ for some partial neighborhood $\mathscr U_{\mathbf p}$ with all the above conditions except the semi-continuity \cite[5.1]{FOOODiskOne}.

	\begin{thm}\label{Obstruction_bundle_data_single_thm}
		\emph{\cite[11.1]{FOOODiskOne}} \ 
		There exists obstruction bundle data $\mathscr O=\{E_{\mathbf p}(\mathbf x) \}$ of the moduli space $\mathcal M=\mathcal M_{k+1,\beta}(J,L)$.
		We can also require $\mathscr O$ satisfies the mapping transversality condition for $\ev_0$.
	\end{thm}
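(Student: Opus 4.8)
The plan is to follow the standard FOOO strategy \cite[\S 11]{FOOODiskOne}: build the data locally by linear algebra, then patch finitely many local pieces together using the compactness of $\mathcal M$, arranging the semi-continuity by a ``summing'' construction. First I would produce \emph{local} obstruction bundle data at each $\mathbf p\in\mathcal M$. Since $D_{\mathbf p}:W_{\mathbf p}\to L_{\mathbf p}$ is Fredholm, its image has finite codimension, so one can choose a finite-dimensional subspace $E_{\mathbf p}(\mathbf p)\subset L_{\mathbf p}$ consisting of smooth sections supported away from the nodal points and with $E_{\mathbf p}(\mathbf p)+\mathrm{Im}(D_{\mathbf p})=L_{\mathbf p}$; enlarging it if necessary we also arrange that $\mathrm{EV}_0$ restricted to $D_{\mathbf p}^{-1}(E_{\mathbf p}(\mathbf p))$ is surjective (the mapping transversality condition for $\ev_0$). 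One then transports $E_{\mathbf p}(\mathbf p)$ to nearby $\mathbf x\in\mathscr U_{\mathbf p}$ by minimal geodesics and parallel transport in $C^2(u_{\mathbf p}^*TX\otimes\Lambda^{0,1})$, obtaining a smoothly varying family $\{E_{\mathbf p}(\mathbf x)\}$; shrinking $\mathscr U_{\mathbf p}$, both the transversality condition and the $\ev_0$-surjectivity persist on all of $\mathscr U_{\mathbf p}$ since each is an open condition on $\mathbf x$.

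Next I would invoke compactness: $\mathcal M=\mathcal M_{k+1,\beta}(J,L)$ is a compact Hausdorff space, and the sets $B_\epsilon(\mathbf p)\cap\mathcal M$ form a basis of its topology, so finitely many partial neighborhoods $\mathscr U_{\mathbf p_1},\dots,\mathscr U_{\mathbf p_N}$ (for local obstruction data constructed as above) cover $\mathcal M$. For $\mathbf p\in\mathcal M$ and $\mathbf x$ in a suitably small partial neighborhood of $\mathbf p$, define
\[
E_{\mathbf p}(\mathbf x):=\sum_{\,i\,:\,\mathbf p\in\mathscr U_{\mathbf p_i}}E_{\mathbf p_i}(\mathbf x),
\]
where the right-hand side is viewed inside the common space $C^2(u_{\mathbf x}^*TX\otimes\Lambda^{0,1})$ after parallel transport; one must check this sum is still a direct-sum-free finite-dimensional subspace supported away from nodes (automatic, since each summand is), independent of the representative $u_{\mathbf x}$, and smoothly dependent on $\mathbf x$ (the index set is locally constant in $\mathbf x$, and each summand varies smoothly). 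The semi-continuity $E_{\mathbf q}(\mathbf x)\subseteq E_{\mathbf p}(\mathbf x)$ for $\mathbf q\in\mathscr U_{\mathbf p}$, $\mathbf x\in\mathscr U_{\mathbf p}\cap\mathscr U_{\mathbf q}$ is then built in: the index set $\{i:\mathbf q\in\mathscr U_{\mathbf p_i}\}$ is contained in $\{i:\mathbf p\in\mathscr U_{\mathbf p_i}\}$ once $\mathscr U_{\mathbf p}$ is chosen small enough. Transversality at $\mathbf p$ holds because $\mathbf p$ lies in some $\mathscr U_{\mathbf p_i}$, so the single summand $E_{\mathbf p_i}(\mathbf p)$ already satisfies $E_{\mathbf p_i}(\mathbf p)+\mathrm{Im}(D_{\mathbf p})=L_{\mathbf p}$; likewise the $\ev_0$-mapping transversality at $\mathbf p$ follows from that summand alone.

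The last step is bookkeeping: one iterates the shrinking of the various $\mathscr U_{\mathbf p_i}$ and $\mathscr U_{\mathbf p}$ finitely many times so that all of the above inclusions and open conditions hold simultaneously, using that there are only finitely many $i$. The main obstacle I expect is the careful management of the ``partial topology'' bookkeeping — ensuring that the parallel-transport identifications used to add subspaces living a priori in different Sobolev/$C^2$ spaces are genuinely consistent (compatible overlaps, control by injectivity radii, invariance under change of representative $u_{\mathbf x}$), and that the index set of contributing $\mathbf p_i$ is locally constant so that smoothness in $\mathbf x$ is not destroyed at points where $\mathbf p$ enters or leaves some $\mathscr U_{\mathbf p_i}$; this is handled exactly as in \cite[Proposition 4.3, \S 5, \S 11]{FOOODiskOne}, and I would cite those verifications rather than reproduce them.
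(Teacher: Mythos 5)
Your overall strategy (local data from the Fredholm property plus parallel transport, then a finite cover by compactness and a summing construction) is the same as the paper's, but the patching step has two concrete gaps. First, your index set $\{i:\mathbf p\in\mathscr U_{\mathbf p_i}\}$ is defined by membership in the \emph{open} partial neighborhoods, and this has the wrong semi-continuity for the containment you need. For the semi-continuity axiom you must have $I(\mathbf q)\subseteq I(\mathbf p)$ for all $\mathbf q$ sufficiently close to $\mathbf p$; but if $\mathbf p$ lies in the closure of $\mathscr U_{\mathbf p_i}\cap\mathcal M$ without lying in $\mathscr U_{\mathbf p_i}$, then points $\mathbf q$ with $i\in I(\mathbf q)$ occur arbitrarily close to $\mathbf p$ while $i\notin I(\mathbf p)$, and no shrinking of the partial neighborhood $\mathscr U_{\mathbf p}$ of $\mathbf p$ removes them. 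This is exactly why the paper (following \cite[11.x]{FOOODiskOne}) covers $\mathcal M$ by \emph{compact} sets $K(\mathbf p_i)$, chosen as closures of open neighborhoods inside $\mathscr U^0_{\mathbf p_i}\cap\mathcal M$, and sets $I(\mathbf q)=\{i\mid \mathbf q\in K(\mathbf p_i)\}$: closedness of the $K(\mathbf p_i)$ gives upper semi-continuity of $I$, which yields $E_{\mathbf q}(\mathbf x)\subseteq E_{\mathbf p}(\mathbf x)$.

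Second, you define $E_{\mathbf p}(\mathbf x)$ as the plain sum $\sum_i E_{\mathbf p_i}(\mathbf x)$ and claim smooth dependence on $\mathbf x$ is automatic because each summand varies smoothly. It is not: if the summands have nontrivial and varying intersections, the dimension of the sum can jump as $\mathbf x$ moves, so the family of subspaces is not smooth (not even of locally constant rank). The paper's proof handles this by an extra step you omit: perturb the finitely many $E^0_{\mathbf p_i}(\mathbf p_i)$ so that for every $\mathbf q$ the subspaces $E_{\mathbf q;\mathbf p_i}(\mathbf q)$, $i\in I(\mathbf q)$, are in direct sum (\cite[11.7]{FOOODiskOne}); only then is $E_{\mathbf q}(\mathbf x)=\bigoplus_{i\in I(\mathbf q)}E_{\mathbf q;\mathbf p_i}(\mathbf x)$ a smoothly varying family of constant dimension. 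Your treatment of the mapping transversality for $\ev_0$ (enlarging $E_{\mathbf p}(\mathbf p)$ so that $\mathrm{EV}_0$ is surjective on $D_{\mathbf p}^{-1}(E_{\mathbf p}(\mathbf p))$ and using openness of the condition) is fine; with the two corrections above your argument becomes the paper's.
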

	
	\begin{proof}[Sketch of proof]
		First, there always exists a choice $\{E^0_{\mathbf p}(\mathbf x)\mid \mathbf x\in\mathscr U_{\mathbf p}^0 \}$ of local obstruction bundle at each $\mathbf p\in\mathcal M$ for some partial neighborhood $\mathscr U_{\mathbf p}^0$. This is basically a consequence of the Fredholm condition of $D_{\mathbf p}$. Indeed, we may first choose $E_{\mathbf p}^0(\mathbf p)$ with the conditions and take parallel transports along minimal geodesics to define $E_{\mathbf p}^0(\mathbf x)$ \cite[11.2]{FOOODiskOne} for $\mathbf x \in\mathscr U_{\mathbf p}^0$.
		Shrinking $\mathscr U_{\mathbf p}^0$ if necessary we can further require that for any $\mathbf q\in \mathscr U_{\mathbf p}^0$ there is a partial neighborhood $\mathscr U_{\mathbf q;\mathbf p}\subseteq \mathscr U_{\mathbf p}^0$ of $\mathbf q$ in which $E_{\mathbf q;\mathbf p}(\mathbf x):=E_{\mathbf p}^0(\mathbf x)$ forms a choice of local obstruction bundle data at $\mathbf q$ \cite[11.4]{FOOODiskOne}.
		Take a compact subset $K(\mathbf p)$ of $\mathscr U_{\mathbf p}^0\cap \mathcal M$ for each $\mathbf p \in \mathcal M$ such that $K(\mathbf p)$ is the closure of an open neighborhood of $\mathbf p$ for the stable map topology in $\mathcal M$.
		Since $\mathcal M$ is compact, we can take a finite subset $\{\mathbf p_1,\dots, \mathbf p_N\}$ of $\mathcal M$ such that the union of $K (\mathbf p_i)$ for $1\leqslant i\leqslant N$ is the whole $\mathcal M$. For any $\mathbf q\in\mathcal M$, we set $I(\mathbf q)=\{i \mid \mathbf q \in K (\mathbf p_i) \}$.
		We can further perturb all (finitely many) $E_{\mathbf p_i}^0(\mathbf p_i)$ such that the induced $E_{\mathbf q; \mathbf p_i}(\mathbf x)$'s satisfy the condition that the sum of vector spaces $E_{\mathbf q; \mathbf p_i}(\mathbf q)$, $i\in I(\mathbf q)$, is a direct sum \cite[11.7]{FOOODiskOne}.
		Now, $\mathscr U_{\mathbf q}:= \bigcap_{i\in I(\mathbf q)} \mathscr U_{\mathbf q; \mathbf p_i}$ is a partial neighborhood at each $\mathbf q \in \mathcal M$.
		Define $E_{\mathbf q}(\mathbf x)=\bigoplus_{i\in I(\mathbf q)} E_{\mathbf q; \mathbf p_i}(\mathbf x)$ with $\mathbf x\in \mathscr U_{\mathbf q}$. One can finally verify that $\{E_{\mathbf q}(\mathbf x) : \mathbf q, \mathbf x\}$ is obstruction bundle data of $\mathcal M$.
	\end{proof}

	\subsection{Kuranishi structure}
	\label{ss_Kuranishi}
	We begin with abstract definitions.
	Abusing the notation, $\mathcal M$ may sometimes also denote an arbitrary separable metrizable topological space.
	
	A \textit{Kuranishi chart} of $\mathcal M$ is a tuple $\mathcal U=(U,\mathcal E, \psi, s)$ such that (i) $U$ is an effective orbifold; 
	(ii) $\mathcal E$ is an orbi-bundle on $U$; (iii) $s$ is a smooth section of $\mathcal E$; (iv) $\psi:s^{-1}(0)\to \mathcal M$ is a homeomorphism onto an open subset in $\mathcal M$; see \cite[3.1]{FOOO_Kuranishi}.
	The $U, \mathcal E, s, \psi$ are respectively called a Kuranishi neighborhood, an obstruction bundle, a Kuranishi map, and a parametrization.
	The \textit{dimension} of $\mathcal U$ means $\mathrm{vdim} \  \mathcal U= \dim  U- \mathrm{rank}\mathcal E$.
	An \textit{orientation} of $\mathcal U$ is a pair of orientations of $U$ and $\mathcal E$, and $\det \mathcal E^* \otimes \det TU$ is called its orientation bundle.
	If $\tilde U$ is an open subset of $U$, then there is an obvious way to define an {open subchart} $\mathcal U|_{\tilde U}$ of $\mathcal U$ by taking the restrictions.
	In general, $\mathcal M$ can be a compact metrizable space.
	
	An \textit{embedding of Kuranishi charts} from $\mathcal U=(U,\mathcal E, \psi, s)$ to $\mathcal U'= (U',\mathcal E', \psi', s')$ is a pair $\Phi=(\varphi, \hat\varphi)$ with the following properties:
	(i) $\varphi: U\to U'$ is an embedding of orbifolds; (ii) $\hat\varphi: \mathcal E\to\mathcal E'$ is an embedding of orbi-bundles over $\varphi$; (iii) $\hat\varphi \circ s = s' \circ \varphi$; (iv) $\psi' \circ \varphi =\psi$ holds on $s^{-1}(0)$; (v) For each $x\in U$ with $s(x)=0$, the derivative $D_{\varphi(x)} s'$ induces an isomorphism between $T_{\varphi(x)} U' / D\varphi_x (T_x U)$ and $\mathcal E'_{\varphi(x)} / \hat \varphi(\mathcal E_x)$; see \cite[3.2]{FOOO_Kuranishi}.

	A \textit{coordinate change} (in the weak sense) from $\mathcal U$ to $\mathcal U'$ is a triple $\Phi=(\tilde U, \varphi, \hat\varphi)$ such that (i) $\tilde U$ is an open subset of $U$; (ii) $(\varphi,\hat\varphi)$ is an embedding of Kuranishi charts from $\mathcal U|_{\tilde U}$ to $\mathcal U'$; if further satisfying $\psi (s^{-1}(0)\cap \tilde U) = \mathrm{Im}(\psi)\cap \mathrm{Im}(\psi')$, it is called a \textit{coordinate change in the strong sense} \cite[3.6]{FOOO_Kuranishi}.

	In our context, the Kuranishi neighborhood, i.e. the orbifold $U$, may allow boundaries and corners. If so, there is the so-called corner structure stratification $U=\bigsqcup_k S_k(U)$ where $S_k(U)$ is the closure of the set of points whose neighborhoods are open neighborhoods of $0$ within $([0,1)^k\times \mathbb R^{n-k}) / \Gamma$ where $\Gamma$ is an isotropy group \cite[4.13]{FOOO_Kuranishi}.
	Note that $S_k^\circ(U):=S_k(U)\setminus S_{k+1}(U)$ is a smooth orbifold without boundaries or corners and is always assumed to be effective \cite[4.14]{FOOO_Kuranishi}. Besides, for $\Phi$, we also require $\varphi(S_k(U))\subseteq S_k(U')$ and $S_k^\circ (U)=\varphi^{-1}(S_k^\circ(U'))$ \cite[(8.2)]{FOOO_Kuranishi}.

	A \textit{Kuranishi structure} $\widehat {\mathcal U}$ of a compact subset $Z$ within $\mathcal M$ is a rule that assigns to each $  p\in Z$ a Kuranishi chart $\mathcal U_{  p}=(U_{  p}, \mathcal E_{  p}, \mathfrak s_{  p}, \psi_{  p})$ (of $\mathcal M$) and to each pair $p\in Z$ and $q\in \mathrm{Im}(\psi_{  p})\cap Z$ a coordinate change $\Phi_{pq}=(U_{  p   q} , \varphi_{  p   q}, \hat \varphi_{pq})$ from $\mathcal U_q$ to $\mathcal U_p$ such that the following compatibility hold: \cite[3.9]{FOOO_Kuranishi}
	
	\begin{itemize}[itemsep=0pt]
		\item $p\in \mathrm{Im}(\psi_{  p})$ and $  q\in \psi_{  q} (s^{-1}_{  q}(0)\cap U_{  p   q})$.
		\item for $r \in  \psi_{  q} (s^{-1}_{  q}(0)\cap U_{pq})\cap Z$, we have $\varphi_{pr}=\varphi_{pq}\circ \varphi_{qr}$ and $\hat \varphi_{pr}=\hat\varphi_{pq}\circ\hat \varphi_{qr}$ on their domains.
		\item $\Phi_{pp}=(U_p,\id,\id)$ and $\dim\widehat{\mathcal U}:=\dim\mathcal U_p=\dim U_p-\mathrm{rank} \ \mathcal E_p$ is independent of $p$.
	\end{itemize}
	Note that $U_{pq}\subseteq  U_q$ and $\varphi_{pq}: U_{pq}\to U_p$, $\widehat\varphi_{pq}: \mathcal E_q|_{U_{pq}}\to \mathcal E_p $ are embeddings of orbifolds, orbi-bundles.
	A Kuranishi structure $\widehat{\mathcal U}$ is called \textit{orientable} if we can choose an orientation of each chart $\mathcal U_p$ preserved by coordinate changes \cite[3.10]{FOOO_Kuranishi}.
	Now, we call the triple $(\mathcal M, Z,\widehat{\mathcal U})$ a \textit{relative K-space} \cite[3.11]{FOOO_Kuranishi}.
	The compact subset $Z$ is called the \textit{support set} of $\widehat{\mathcal U}$ \cite[3.9]{FOOO_Kuranishi}.
	We often consider the case $Z=\mathcal M$, and then $(\mathcal M, \widehat{\mathcal U})$ is called a \textit{K-space}.
	
	A \textit{strongly continuous} map $\widehat{f}=\{f_p\}:\widehat{\mathcal U} \to M$ from a K-space $(\mathcal M, \widehat{\mathcal U})$ to a smooth manifold $M$ is a collection that assigns to each $p\in \mathcal M$ a continuous map $f_{p} :U_{p} \to M$ such that $ f_{p} \circ \varphi_{  p  q}=f_{ q}$. It is called \textit{strongly smooth} if each $f_{ p}$ is smooth and called \textit{weakly submersive} if each $f_{p}$ is a submersion onto the manifold $M$ \cite[3.40]{FOOO_Kuranishi}.


	\begin{thm}\label{Kuranishi_single_moduli_thm}
		\emph{\cite[1.1 or 7.1]{FOOODiskOne}} \  \  
		To a choice $\mathscr O=\{E_{\mathbf p}(\mathbf x) : \mathbf p\in\mathcal M, \ \mathbf x\in\mathscr U_{\mathbf p}\}$ of obstruction bundle data of moduli space $\mathcal M=\mathcal M_{k+1,\beta}(J,L)$, we can associate a Kuranishi structure (with corners) $\widehat {\mathcal U}$ on $\mathcal M$.
		All the evaluation maps $\ev_i: (\mathcal M,\widehat{\mathcal U})\to L$ $(0\leqslant i\leqslant k)$ are strongly smooth \cite[7.10]{FOOODiskOne}.
		Moreover, if $\mathscr O$ satisfies the mapping transversality condition for $\ev_0$, then $\ev_0$ is weakly submersive \cite[7.12]{FOOODiskOne}.
	\end{thm}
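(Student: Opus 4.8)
The plan is to build the Kuranishi structure chart-by-chart out of the obstruction bundle data $\mathscr O$ furnished by Theorem \ref{Obstruction_bundle_data_single_thm}, following \cite[\S 7--11]{FOOODiskOne} (and the abstract machinery of \cite[Theorem 21.35]{FOOO_Kuranishi}). Fix $\mathbf p=[\Sigma,\mathbf u,\mathbf z]\in\mathcal M$. First I would produce a \emph{thickened} solution space near $\mathbf p$: using the transversality condition $E_{\mathbf p}(\mathbf p)+\mathrm{Im}(D_{\mathbf p})=L_{\mathbf p}$ together with the Fredholm property of $D_{\mathbf p}$, an implicit-function-theorem argument shows that the set of gauge-fixed maps $\mathbf x$ near $\mathbf p$ solving the perturbed equation $\bar\partial_J u\in E_{\mathbf p}(\mathbf x)$ carries the structure of a smooth manifold $V_{\mathbf p}$ with $T_{\mathbf p}V_{\mathbf p}=D_{\mathbf p}^{-1}(E_{\mathbf p}(\mathbf p))$. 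When $\Sigma$ is nodal or the disk component degenerates one must instead run the gluing construction: the gluing parameters, ranging over half-open intervals and their products, are precisely what endow $V_{\mathbf p}$ with its boundary-and-corner structure, the corner stratum $S_k(V_{\mathbf p})$ recording the number of nodes being smoothed. Quotienting by the finite automorphism group of $\mathbf p$ turns $V_{\mathbf p}$ into an effective orbifold-with-corners $U_{\mathbf p}$; the obstruction bundle $\mathcal E_{\mathbf p}$ is the orbi-bundle with fibre $E_{\mathbf p}(\mathbf x)$, the Kuranishi map is $s_{\mathbf p}(\mathbf x)=\bar\partial_J u_{\mathbf x}$, and $\psi_{\mathbf p}\colon s_{\mathbf p}^{-1}(0)\to\mathcal M$ is tautological. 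This produces a Kuranishi chart $\mathcal U_{\mathbf p}$ of the expected virtual dimension $\dim L+\mu(\beta)+k-2$.

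Next I would define the coordinate changes. For $\mathbf q\in\mathrm{Im}(\psi_{\mathbf p})$, the semi-continuity $E_{\mathbf q}(\mathbf x)\subseteq E_{\mathbf p}(\mathbf x)$ makes the thickened space at $\mathbf q$ a sub-orbifold of that at $\mathbf p$, giving the embedding $\varphi_{\mathbf p\mathbf q}$, while the inclusion of obstruction spaces gives $\hat\varphi_{\mathbf p\mathbf q}$; the derivative condition in the definition of an embedding of charts holds because $E_{\mathbf q}(\mathbf x)+\mathrm{Im}(D_{\mathbf x})=L_{\mathbf x}$ as well. The cocycle relations $\varphi_{\mathbf p\mathbf r}=\varphi_{\mathbf p\mathbf q}\circ\varphi_{\mathbf q\mathbf r}$ and $\hat\varphi_{\mathbf p\mathbf r}=\hat\varphi_{\mathbf p\mathbf q}\circ\hat\varphi_{\mathbf q\mathbf r}$ on overlaps hold because, in the global construction underlying Theorem \ref{Obstruction_bundle_data_single_thm}, each $E_{\mathbf q}(\mathbf x)$ is assembled as $\bigoplus_{i\in I(\mathbf q)}E_{\mathbf q;\mathbf p_i}(\mathbf x)$ over the finitely many reference points $\mathbf p_i$ covering $\mathcal M$, so all the local thickenings sit inside one common family of subspaces. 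The corner compatibilities $\varphi_{\mathbf p\mathbf q}(S_k(U_{\mathbf q}))\subseteq S_k(U_{\mathbf p})$ and $S_k^\circ(U_{\mathbf q})=\varphi_{\mathbf p\mathbf q}^{-1}(S_k^\circ(U_{\mathbf p}))$ reduce to matching gluing parameters under these inclusions.

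For the evaluation maps, on each chart $\ev_i$ is represented by $f_{i,\mathbf p}\colon U_{\mathbf p}\to L$, $f_{i,\mathbf p}(\mathbf x)=u_{\mathbf x}(z_i)$, which is smooth in the Sobolev/gluing model, and $f_{i,\mathbf p}\circ\varphi_{\mathbf p\mathbf q}=f_{i,\mathbf q}$ since the underlying maps coincide; hence $\widehat{\ev}_i=\{f_{i,\mathbf p}\}$ is strongly smooth. If moreover $\mathscr O$ satisfies the mapping transversality condition for $\ev_0$, then by hypothesis $\mathrm{EV}_0$ restricted to $D_{\mathbf p}^{-1}(E_{\mathbf p}(\mathbf p))=T_{\mathbf p}U_{\mathbf p}$ is surjective onto $T_{\ev_0(\mathbf p)}L$, i.e. $df_{0,\mathbf p}$ is onto at $\mathbf p$; after shrinking $U_{\mathbf p}$ this persists on all of $U_{\mathbf p}$, so $f_{0,\mathbf p}$ is a submersion and $\ev_0$ is weakly submersive. (Orientations are induced from the complex-linear part of $D_{\mathbf p}$ and fixed orientations of $L$ and the $E_{\mathbf p}$'s, compatibly with the coordinate changes.)

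I expect the main obstacle to be the gluing analysis: constructing the thickened solution spaces near nodal and disk-degenerate configurations with the correct smooth corner structure, obtaining the exponential-decay estimates needed to define the gluing maps, and verifying that these maps patch so that the coordinate changes genuinely satisfy the cocycle conditions on overlaps. All of this is the content of \cite{FOOODiskOne}; in our exposition it would simply be quoted.
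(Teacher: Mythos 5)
Your proposal is correct and follows essentially the same route as the paper's sketch: the Kuranishi chart is built from the obstruction bundle data with $U_{\mathbf p}$ the locus $\bar\partial u_{\mathbf x}\in E_{\mathbf p}(\mathbf x)$, coordinate changes are the inclusions supplied by semi-continuity, the evaluation maps restrict chart-wise, and the mapping transversality condition gives weak submersivity of $\ev_0$, with the hard gluing/exponential-decay analysis quoted from \cite{FOOODiskOne}. The only cosmetic difference is that you phrase the charts analytically (implicit function theorem first), whereas the paper sets up the same data set-theoretically and invokes the gluing analysis afterwards to endow it with its $C^\infty$ structure.
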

	
	\begin{proof}[Sketch of proof]
		First, the (local) obstruction bundle data $\{ E_{\mathbf p}(\mathbf x): \mathbf x\in\mathscr U_{\mathbf p}\}$ at $\mathbf p\in\mathcal M$ determines a Kuranishi chart $\mathcal U_{\mathbf p}=(U_{\mathbf p}, \mathcal E_{\mathbf p}, \mathfrak s_{\mathbf p}, \psi_{\mathbf p})$ at each $\mathbf p$ as follows \cite[Lemma 7.4]{FOOODiskOne}:
		Define the Kuranishi neighborhood $U_{\mathbf p} \subseteq \mathscr U_{\mathbf p}$ to be the subset of $\mathbf x\in\mathscr U_{\mathbf p}$ so that $\bar\partial u_{\mathbf x}$ lies in $E_{\mathbf p}(\mathbf x)$, which is independent of the choice of the representative $u_{\mathbf x}$ of $\mathbf x$.
		Define the obstruction bundle $\mathcal E_{\mathbf p}$ as $\bigsqcup_{\mathbf x\in  U_{\mathbf p}}  E_{\mathbf p}(\mathbf x)$ modulo the automorphisms of $\mathbf x$'s.
		Define the Kuranishi map $\mathfrak s_{\mathbf p} : U_{\mathbf p} \to \mathcal E_{\mathbf p}$ by sending $\mathbf x$ to  $\bar\partial u_{\mathbf x}$.
		
		Second, we construct coordinate changes. Assume $\mathbf q \in \mathscr U_{\mathbf p} \cap \mathcal M$. Let $\mathcal U_{\mathbf p}=(U_{\mathbf p}, \mathcal E_{\mathbf p}, \mathfrak s_{\mathbf p}, \psi_{\mathbf p} )$ and $\mathcal U_{\mathbf q}=(U_{\mathbf q}, \mathcal E_{\mathbf q}, \mathfrak s_{\mathbf q}, \psi_{\mathbf q})$
		be the Kuranishi charts at $\mathbf p$ and $\mathbf q$ as above.
		Consider $U_{\mathbf p \mathbf q} = \mathscr U_{\mathbf p} \cap U_{\mathbf q}   \subseteq \mathscr U_{\mathbf p}\cap \mathscr U_{\mathbf q}$. For any $\mathbf x$ in $U_{\mathbf p\mathbf q}$, the semi-continuity condition implies that $E_{\mathbf q}(\mathbf x)\subseteq E_{\mathbf p}(\mathbf x)$, 
		and the definition of $U_{\mathbf q}$ implies that $\bar\partial u_{\mathbf x}\in E_{\mathbf q}(\mathbf x) \subseteq E_{\mathbf p}(\mathbf x)$.
		By definition again, $\mathbf x \in U_{\mathbf p}$. Thus, $U_{\mathbf p\mathbf q}\subseteq U_{\mathbf p}$, and let $\varphi_{\mathbf p \mathbf q}$ denote the inclusion map.
		Similarly, one can define an inclusion map $\hat\varphi_{\mathbf p\mathbf q}: \mathcal E_{\mathbf p}|_{U_{\mathbf p \mathbf q}} \to \mathcal E_{\mathbf p}$. Then, one can verify that $\Phi_{\mathbf p\mathbf q}:= (U_{\mathbf p \mathbf q} , \varphi_{\mathbf p \mathbf q} , \hat \varphi_{\mathbf p \mathbf q})$ for all $\mathbf p, \mathbf q$ forms a coordinate change from $\mathcal U_{\mathbf p}$.
		
		The construction of Kuranishi charts and coordinate changes has been purely set-theoretic so far. However, the smoothness of obstruction bundle data enables the use of gluing analysis and exponential decay estimates \cite{FOOOTech,FOOOExponentialDecay} to further verify that the \textit{same} set-theoretic data $\{\mathcal U_{\mathbf p}, \Phi_{\mathbf p\mathbf q}\}$ is of $C^\infty$ class. Detailing this process is beyond our current scope. 
		For a comprehensive explanation, we direct readers to \cite[Sec 9, 10]{FOOODiskOne} for showing it is of the $C^n$ class and to \cite[Sec 12]{FOOODiskOne} for the $C^\infty$ class.
		Finally, the evaluation maps $\ev_i$ (\ref{ev_eq}) is naturally extended from $\mathcal M$ to $\mathcal X$. Given that the Kuranishi neighborhood $U_{\mathbf p}$ is by definition a subset of $\mathcal X$, this extension allows $\ev_i$ to naturally induce a corresponding map $\ev_{i,\mathbf p}: U_{\mathbf p} \to L$. We can also verify the desired conditions for them \cite[7.10, 7.12]{FOOODiskOne}.
	\end{proof}
	
	So far, given obstruction bundle data (from Fredholm theory), we can get a Kuranishi structure on the \textit{concrete} moduli spaces. Thereafter, what remains is mostly \textit{abstract} perturbation theories.

	\subsection{CF-perturbations on a Kuranishi chart}  
	\label{ss_CF_chart}
	Fix a Kuranishi chart $\mathcal U=(U,\mathcal E, \psi,s)$ of $\mathcal M$.
	By an \textit{orbifold chart} of the orbi-bundle $\mathcal E\to U$ we mean a quintuple $\mathfrak V=(V,E,\Gamma,\phi,\hat\phi)$ where (i) $\Gamma$ is a finite isotropy group acting smoothly and effectively on a manifold $V$ and acting linearly on a finite-dimensional vector space $E$, (ii) $\phi : V \to U$ is a $\Gamma$-equivariant map that induces a homeomorphism from $V / \Gamma$ onto an open subset $\phi(V)$ of the orbifold $U$, (iii) $\hat\phi: V\times E\to U$ is a $\Gamma$-equivariant map that covers $\phi$ and induces a homeomorphism from $(V\times E)/\Gamma$ onto an open subset of $\mathcal E$ \cite[23.18]{FOOO_Kuranishi}.

	By CF-perturbation $\mathcal S= (W, \hat W, \omega, \{\mathfrak s^\epsilon\}_{\epsilon\in (0,\epsilon_0]} )$ of $\mathcal U$ on an orbifold chart $\mathfrak V$, we mean \cite[7.4]{FOOO_Kuranishi}:
	\begin{itemize}[itemsep=0pt]
		\item 
		$\hat W$ is a finite-dimensional oriented vector space on which $\Gamma$ acts linearly, and $W$ is a $\Gamma$-invariant open neighborhood of $0$ in $\hat W$.
		
		\item  $\mathfrak s^\epsilon : V \times W \to E$ is smooth, $\Gamma$-equivariant, and smoothly dependent on $\epsilon \in (0,\epsilon_0]$.
		\item $\lim_{\epsilon \to 0} \mathfrak s^\epsilon (y, \xi) = s(y)$ in compact $C^1$-topology on $V\times W$.
		\item Let $\vol_W$ be a volume form on $W$. Let $\omega=|\omega| \vol_W$ be a $\Gamma$-invariant compactly-supported smooth differential form on $W$ such that $|\omega|$ is a non-negative function and $\int_W \omega =1$.
	\end{itemize}
	
	For two such $\mathcal S_1$ and $\mathcal S_2$ on the same orbifold chart $\mathfrak V$, we introduce a \textit{relation} by declaring $\mathcal S_1,\mathcal S_2$ are related
	if there exists ``a common enlargement'' $\mathcal S_0$ in the sense that for $i=1,2$, there exists a $\Gamma$-equivariant linear projection $\Pi_i:\hat W_0\to \hat W_i$ such that $\Pi_i(\hat W_0)=\hat W_i$, $\Pi_i !(\omega_0)=\omega_i$ (for the usual pushout or integration along fibers), and $\mathfrak s^\epsilon_i(y, \Pi_i(\xi))=\mathfrak s_0^\epsilon(y,\xi)$. We then take the smallest \textit{equivalence relation} that contains this relation, which is equivalently realized by the explicit `zig-zag' in \cite[7.6]{FOOO_Kuranishi}.
	For a fixed $\epsilon$, we set $\mathcal S^\epsilon = (W, \hat W, \omega, \mathfrak s^\epsilon)$.

	We say a CF-perturbation on $\mathfrak V$, $\mathcal S=(W,\hat W,\omega, \{\mathfrak s^\epsilon\})$, is \textit{transversal to 0} if for all $\epsilon$, the map $\mathfrak s^\epsilon:V\times W\to E$ is transversal to $0\in E$ in a neighborhood of the support of $\omega$ \cite[7.9]{FOOO_Kuranishi}. In particular, $(\mathfrak s^\epsilon)^{-1}(0)$ is a smooth submanifold of $V\times W$.
	Let $f: V \to N$ be a $\Gamma$-equivariant smooth submersion into a manifold $N$.
	We say $f$ is \textit{strongly submersive with respect to $(\mathfrak V, \mathcal S)$} if $\mathcal S$ is transversal to 0 in the above sense and the induced map $f_\epsilon: (\mathfrak s^{\epsilon})^{-1}(0) \to V \to N$ is a smooth submersion.
	For later use, given a smooth map $g:M\to N$ between manifolds, we say $f$ is \textit{strongly transversal to $g$ with respect to $(\mathfrak V, \mathcal S)$} if $\mathcal S$ is transversal to $0$ and the $f_\epsilon$ is transversal to $g$ \cite[7.9]{FOOO_Kuranishi}.
	
	In the above setting, let $h$ be a smooth compactly-supported differential form on $V/\Gamma \cong \phi(V)$ with the lift form $\tilde h$ on $V$.
	Now, the \textit{pushout} of $h$ with respect to $(f,\mathcal S^\epsilon)$ \cite[7.11]{FOOO_Kuranishi} is a smooth differential form $f!(h; \mathcal S^\epsilon)$ on $M$ for each $\epsilon$ such that for any $\rho\in \Omega^*(M)$, we have
	\begin{equation}
		\label{pushout_single_orbifold_chart_eq}
		\#\Gamma \cdot \int_M \rho \wedge f!(h;\mathcal S^\epsilon) = \int_{(\mathfrak s^\epsilon)^{-1}(0)} f^*\rho \wedge \tilde h \wedge \omega 
	\end{equation}
	This form is not affected if we change $\mathcal S$ to any other representatives in its equivalence class \cite[7.13]{FOOO_Kuranishi}.

	For two orbifold charts $\mathfrak V,\mathfrak V'$ of the orbi-bundle $\mathcal E\to U$ with $\phi(V)\subseteq \phi'(V')$ in $U$, an \textit{orbifold coordinate change} from $\mathfrak V$ to $\mathfrak V'$ is a triple $\Phi=(h,\tilde \varphi, \breve\varphi)$
	where $h:\Gamma \to \Gamma'$ is an injective group homomorphism, $\tilde\varphi:V\to V'$ is an $h$-equivariant smooth \textit{open} embedding that induces the inclusion $V/\Gamma \cong \phi(V)\subseteq \phi'(V')\cong V'/\Gamma'$, $\breve{\varphi}: V\times E\to E'$ is a smooth map that induces $(V\times E)/\Gamma \to (V'\times E')/\Gamma'$ \cite[6.5 or 23.23]{FOOO_Kuranishi}.
	Now, provided a CF-perturbation $\mathcal S'=(W,\hat W,\omega,\{\mathfrak s^{\prime\epsilon}\})$ on $\mathfrak V'$, the above $\Phi$ induces a \textit{pullback CF-perturbation} $\Phi^*\mathcal S'=(W,\hat W,\omega, \{\mathfrak s^\epsilon\})$ on $\mathfrak V$ where $W,\hat W,\omega$ are unchanged and $\mathfrak s^\epsilon$ is defined by requiring \cite[7.7]{FOOO_Kuranishi}
	\begin{equation}
		\label{orbifold_coordinate_change_eq}
		\breve\varphi\big(y, \mathfrak s^\epsilon (y,\xi)\big)=\mathfrak s^{\prime\epsilon}(\tilde\varphi(y),\xi)
	\end{equation}
	This does not affect the properties of being transversal to 0 and being strongly submersive \cite[7.14]{FOOO_Kuranishi}.

	A \textit{CF-perturbation} on the Kuranishi chart $\mathcal U$ is the set of the collections $\mathfrak S=\{(\mathfrak V_r,\mathcal S_r)\}$, where $(\mathfrak V_r,\mathcal S_r)$ is as above and $\mathfrak V_r$'s cover $U$, such that the pullback CF-perturbations induced by orbifold coordinate changes are compatible modulo the aforementioned equivalence relations
	\cite[7.16, 7.20]{FOOO_Kuranishi}.
	Similarly, we say that $\mathfrak S$ is transversal to $0$ if $\mathcal S_r$ is \textit{transversal to 0} for each $r$. We say a smooth submersion $f: U\to N$ is \textit{strongly submersive with respect to $\mathfrak S$} if the restriction of $f$ on $\phi_r(V_r)\subseteq U$ is strongly submersive with respect to $\mathcal S_r$ for each $r$.
	Given a smooth map $g:M\to N$, we say $f$ is \textit{strongly transversal to $g$ with respect to $(\mathcal U,\mathfrak S)$} if for each $r$, $f$ is strongly transversal to $g$ with respect to $(\mathfrak V_r,\mathcal S_r)$. \cite[7.26]{FOOO_Kuranishi}.

	We write $\mathscr{CF}^{\mathcal U}(U)$ for the set of CF-perturbations on $U$. By requiring $\mathfrak S$ must be transversal to 0 or by requiring $\mathfrak S$ must make $f$ strongly submersive with respect to $\mathfrak S$, we obtain subsets, denoted by $\mathscr{CF}^{\mathcal U}_{\pitchfork 0}(U)$ or $\mathscr{CF}^{\mathcal U}_{f\pitchfork }(U)$, of $\mathscr{CF}^{\mathcal U}(U)$.
	One can also replace $U$ by its open subset $\Omega\subseteq U$ here.

	\begin{prop}
		The rule $\Omega\mapsto \mathscr {CF}^{\mathcal U}(\Omega)$ defines a sheaf of sets $\mathscr{CF}^{\mathcal U}$ on $U$ \cite[7.22]{FOOO_Kuranishi}.
		Moreover, $\Omega\mapsto \mathscr {CF}^{\mathcal U}_{\pitchfork 0} (\Omega)$ and $\Omega\mapsto \mathscr {CF}^{\mathcal U}_{f\pitchfork}(\Omega)$ define subsheaves $\mathscr {CF}^{\mathcal U}_{\pitchfork 0}$ and $\mathscr {CF}^{\mathcal U}_{f\pitchfork}$ respectively \cite[7.26]{FOOO_Kuranishi}.
	\end{prop}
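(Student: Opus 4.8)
The plan is to first equip $\Omega \mapsto \mathscr{CF}^{\mathcal U}(\Omega)$ with restriction maps, then verify the gluing and separatedness axioms, and finally observe that the two subsheaf claims reduce to the fact that the relevant transversality conditions are local. For the presheaf structure, given open $\Omega' \subseteq \Omega \subseteq U$ and a representative $\mathfrak S = \{(\mathfrak V_r, \mathcal S_r)\}$ of an element of $\mathscr{CF}^{\mathcal U}(\Omega)$, I would define $\mathfrak S|_{\Omega'}$ by replacing each orbifold chart $\mathfrak V_r$ by its part lying over $\Omega'$ and pulling back $\mathcal S_r$ along the natural inclusion-type orbifold coordinate change (cf. (\ref{orbifold_coordinate_change_eq})). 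Since the $\phi_r(V_r)$ cover $\Omega \supseteq \Omega'$, the shrunk charts still cover $\Omega'$, and the compatibility of pullbacks along orbifold coordinate changes is inherited. One then checks that equivalent collections in the sense of \cite[7.6]{FOOO_Kuranishi} restrict to equivalent collections (a common enlargement restricts to a common enlargement, as the data $\hat W_i,\Pi_i$ are unchanged), so the restriction descends to a well-defined, evidently functorial map $\mathscr{CF}^{\mathcal U}(\Omega) \to \mathscr{CF}^{\mathcal U}(\Omega')$.

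For the gluing axiom, let $\{\Omega_i\}$ be an open cover of $\Omega$ and let $\mathfrak S_i \in \mathscr{CF}^{\mathcal U}(\Omega_i)$ with $\mathfrak S_i|_{\Omega_i \cap \Omega_j} = \mathfrak S_j|_{\Omega_i \cap \Omega_j}$ for all $i,j$. I would take $\mathfrak S := \bigcup_i \mathfrak S_i$, the collection of all orbifold charts occurring in any $\mathfrak S_i$ together with their CF-perturbations (whose images may be assumed to lie in the respective $\Omega_i$). These charts cover $\bigcup_i \Omega_i = \Omega$, so it remains to check the compatibility of pullback CF-perturbations along every orbifold coordinate change between two of them, modulo the equivalence relation: if both charts come from the same $\mathfrak S_i$, this is part of the statement that $\mathfrak S_i$ is a genuine CF-perturbation on $\Omega_i$; if one comes from $\mathfrak S_i$ and the other from $\mathfrak S_j$, their common image lies over $\Omega_i \cap \Omega_j$, where the required compatibility is exactly the content of $\mathfrak S_i|_{\Omega_i\cap\Omega_j} = \mathfrak S_j|_{\Omega_i\cap\Omega_j}$ unwound through a zig-zag of common enlargements. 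Finally $\mathfrak S|_{\Omega_i}$ is obtained from $\mathfrak S_i$ by adjoining further mutually compatible charts, so $\mathfrak S_i$ and $\mathfrak S|_{\Omega_i}$ admit a common refinement and hence represent the same element of $\mathscr{CF}^{\mathcal U}(\Omega_i)$, which gives the desired gluing.

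For separatedness, suppose $\mathfrak S, \mathfrak S' \in \mathscr{CF}^{\mathcal U}(\Omega)$ restrict to the same element of $\mathscr{CF}^{\mathcal U}(\Omega_i)$ for each member of an open cover $\{\Omega_i\}$. Choosing representatives $\{(\mathfrak V_r,\mathcal S_r)\}$ and $\{(\mathfrak V'_{r'},\mathcal S'_{r'})\}$, I would pass to a common shrinking of the two chart covers subordinate to $\{\Omega_i\}$, so that each small chart $\mathfrak W$ lies over some $\Omega_{i(\mathfrak W)}$; restricting the zig-zag that witnesses $\mathfrak S|_{\Omega_{i(\mathfrak W)}} = \mathfrak S'|_{\Omega_{i(\mathfrak W)}}$ to $\mathfrak W$ yields, on each $\mathfrak W$, an equivalence between the restrictions of the relevant $\mathcal S_\bullet$ and $\mathcal S'_\bullet$, and since these chart-level equivalences are compatible with the orbifold coordinate changes of the shrunk cover, they should assemble into an equivalence $\mathfrak S = \mathfrak S'$ over $\Omega$. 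I expect this last bookkeeping — confirming that the global equivalence relation of \cite[7.6]{FOOO_Kuranishi} is genuinely detected chart by chart, so that local zig-zags patch — to be the main obstacle; the rest of the argument is formal.

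It then remains to treat the subsheaves. By definition $\mathfrak S$ is transversal to $0$ (resp. makes a given submersion $f$ strongly submersive, resp. strongly transversal to a given $g$) exactly when each constituent $\mathcal S_r$ has the corresponding property; this is preserved under restriction — shrinking $\mathfrak V_r$ to its part over $\Omega'$, and if needed shrinking the $\epsilon$-interval, keeps $\mathfrak s^\epsilon$ transversal to $0$ near the support of $\omega$ — and, since the cover $\{\phi_r(V_r)\}$ of $\Omega$ may be refined to lie inside the $\Omega_i$'s, it holds on $\Omega$ as soon as it holds on every $\Omega_i$. It is moreover invariant under the equivalence relation by \cite[7.14]{FOOO_Kuranishi}. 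Hence $\Omega \mapsto \mathscr{CF}^{\mathcal U}_{\pitchfork 0}(\Omega)$ and $\Omega \mapsto \mathscr{CF}^{\mathcal U}_{f\pitchfork}(\Omega)$ would be subsheaves of $\mathscr{CF}^{\mathcal U}$, as claimed, and the same reasoning applies to strong transversality to a fixed map.
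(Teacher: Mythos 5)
Your verification is essentially correct, but note that the paper itself does not prove this proposition at all: it is quoted from Fukaya--Oh--Ohta--Ono and the ``proof'' consists of the citations \cite[7.22, 7.26]{FOOO_Kuranishi}. So your direct check is a more self-contained route than the paper's, and it is the standard one: compatible collections of chart-level data, taken modulo common refinement, form a sheaf. The one step you single out as the main obstacle --- whether the global equivalence relation is ``genuinely detected chart by chart,'' so that the local zig-zags over the $\Omega_i$ patch --- is not actually something that needs an argument; it is built into the definition. Two collections $\{(\mathfrak V_r,\mathcal S_r)\}$ and $\{(\mathfrak V'_{r'},\mathcal S'_{r'})\}$ represent the same element of $\mathscr{CF}^{\mathcal U}(\Omega)$ precisely when their union is again a compatible collection, and compatibility of any two charts is itself a pointwise condition: near each point of the overlap of their images one pulls both local CF-perturbations back to a common sub-chart and asks for zig-zag equivalence there. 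Since the global relation is a conjunction of such pointwise conditions, separatedness follows immediately (each point lies in some $\Omega_i$, where the hypothesis applies), and your gluing construction by taking the union of the $\mathfrak S_i$ works for the same reason; there is no patching of equivalences to perform. With that reading, your treatment of restriction maps, of gluing, and of the subsheaves is correct: transversality to $0$ and strong submersivity/transversality are conditions imposed chart by chart near the support of the $\omega$'s, hence local and stable under restriction, and they are invariant under the equivalence relation by \cite[7.14]{FOOO_Kuranishi}, exactly as you say. In short, your proposal supplies the routine verification that the paper delegates to the literature, and its only flagged gap dissolves once the equivalence relation on collections is taken in its local (chart-by-chart) form, which is how it is set up in \cite{FOOO_Kuranishi}.
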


	Fix a smooth submersion $f:U \to M$.
	Let $h$ be a compactly-supported differential form on $U$.
	For $\mathfrak S\in \mathscr{CF}_{f\pitchfork}(\mathrm{supp}(h))$ and a fixed $\epsilon$, we can define \textit{the pushout of $h$ by $f$ with respect to $\mathfrak S^\epsilon$} by
	\begin{equation}
		\label{pushout_single_chart_eq}
		f!(h;\mathfrak S^\epsilon) = \sum \ f!(\chi_r h ; \mathcal S_r^\epsilon)
	\end{equation}
	where $\{\chi_r\}$ is a smooth partition of unity subordinate to the covering $\{V_r/\Gamma_r\cong \phi_r(V_r)\}$ of $U$.
	Besides, it is independent of the choice of the $\{\chi_r\}$ and $\deg f!(h;\mathfrak S^\epsilon)=\deg h +\dim M -\dim \mathcal U$ \cite[7.32]{FOOO_Kuranishi}.

	\subsection{CF-perturbations on a Kuranishi structure}
	
	Let $\Phi=(\tilde U, \varphi, \hat\varphi)$ be an embedding of Kuranishi charts from $\mathcal U=(U, \mathcal E, \psi, s)$ to $\mathcal U'=(U',\mathcal E',\psi',s')$. 
	Recall that $\tilde U\subseteq U$, but we may assume $\tilde U=U$. Then, $\Phi$ can be viewed as an embedding of the orbi-bundles $(\varphi,\hat\varphi): (U,\mathcal E)\to (U',\mathcal E')$.
	
	Fix a CF-perturbation $\mathfrak S' \in \mathscr {CF}^{\mathcal U'}(U')$.
	We aim to investigate the conditions under which this $\mathfrak S'$ can be ``pulled back'' through $\Phi$ to $\mathscr{CF}^{\mathcal U}$.
	Let $\mathfrak S' \{(\mathfrak V'_r, \mathcal S'_r)\}$ as before, where $\mathfrak V'_r=(V'_r,E'_r,\Gamma'_r, \phi'_r,\hat\phi'_r)$ is an orbifold chart of $(U',\mathcal E')$ and $\mathcal S'_r=(W'_r,\hat W'_r,\omega'_r, \{\mathfrak s_r^{\prime\epsilon}\})$ is a local CF-perturbation on it.
	Without loss of generality, we may assume that for each $r$, there is an orbifold chart $\mathfrak V_r=(V_r,E_r,\Gamma_r, \phi_r,\hat\phi_r)$ such that $\phi_r(V_r) =\varphi^{-1}(\phi'_r(V'_r))$. Besides, $(\varphi,\hat\varphi)|_{V_r}$ can be equivalently viewed as a triple $(h_r,\tilde \varphi_r, \breve\varphi_r)$, similar to the orbifold coordinate change previously discussed near (\ref{orbifold_coordinate_change_eq}), yet distinct in that $\varphi$ is merely an embedding, not necessarily an open map.
	Therefore, we need the extra \textit{restrictable} condition on $\mathcal S'_r$, namely, requiring that $\mathfrak s^{\prime \epsilon}_r (\tilde\varphi_r(y),\xi)$ is always contained in the image of $v\mapsto \breve\varphi_r (\tilde\varphi_r(y),v)$ \cite[7.42]{FOOO_Kuranishi}. If this holds, then we can define the {pullback CF-perturbation} $\mathcal S_r$ on $\mathfrak V_r$ similarly as before.
	
	Put it another way, the above restrictable condition actually corresponds to a subsheaf $\mathscr {CF}^{\mathcal U\rhd \mathcal U'}$ of $\mathscr {CF}^{\mathcal U'}$, and to each $\mathfrak S'$ in it we can associate a \textit{pullback CF-perturbation} $\mathfrak S=\Phi^*\mathfrak S'$ in $\mathscr {CF}^{\mathcal U}$ \cite[7.44]{FOOO_Kuranishi}.
	After gaining the idea of how the CF-perturbations of individual Kuranishi charts are connected through embeddings of these charts, we can proceed to establish the following definition.

	A \textit{CF-perturbation} of a Kuranishi structure $\widehat{\mathcal U}=\{\mathcal U_p, \Phi_{pq}\}$ is a collection $\widehat{\mathfrak S}=\{\mathfrak S_p\}$ consisting of a CF-perturbation $\mathfrak S_p$ of each chart $\mathcal U_p$ such that they are compatible with the pullbacks in the sense that $\Phi_{pq}^*(\mathfrak S_p)=\mathfrak S_q$\footnote{The left side makes sense only if $\mathfrak S_p$ is in $\mathscr{CF}^{\mathcal U_q \rhd \mathcal U_p}$\cite[7.44(2)]{FOOO_Kuranishi}; removing \cite[9.1(2)]{FOOO_Kuranishi} should be unambiguous.} on the domains of both sides \cite[9.1]{FOOO_Kuranishi}.
	For any fixed $\epsilon$, we similarly set $\widehat{\mathfrak S}^\epsilon=\{\mathfrak S_p^\epsilon\}$.

	\subsection{Perturbed smooth correspondence}
	To utilize the CF-perturbations, we must first establish their existence; this often involves modifying a given Kuranishi structure by ``thickening'' it.
	
	A \textit{strict KK-embedding} $\widehat{\Phi}=\{\Phi_p\}$ from a Kuranishi structure $\widehat{\mathcal U}=\{ \mathcal U_p=(U_p,\mathcal E_p, \psi_p, s_p), \Phi_{pq}=(U_{pq}, \varphi_{pq}, \hat\varphi_{pq}) \}$ to another $\widehat{\mathcal U}^+$ with the same support set $Z\subseteq \mathcal M$ consists of a collection of embeddings of Kuranishi charts $\Phi_p=(\varphi_p, \widehat{\varphi}_p): \mathcal U_p \to \mathcal U_p^+$ for each $p\in Z$ such that (i) $\mathrm{Im}(\psi_p)\subseteq \mathrm{Im}(\psi_p^+)$; (ii) $U_{pq}\subseteq \varphi_q^{-1}(U^+_{pq})$ if $q\in\mathrm{Im}(\psi_p)$; (iii) $\Phi_p\circ \Phi_{pq} = \Phi^+_{pq} \circ \Phi_q$.
	Moreover, we say that $\widehat{\Phi}$ is \textit{open} if $\dim U_p=\dim U_p^+$ for each $p$; we say that $\widehat{\mathcal U}$ is an \textit{open substructure} of $\widehat{\mathcal U^+}$ if there exists an open KK-embedding $\widehat{\mathcal U}\to\widehat{\mathcal U^+}$.
	A \textit{KK-embedding} is a strict KK-embedding from an open substructure of the source to the target \cite[3.19]{FOOO_Kuranishi}.
	
	A \textit{strict thickening} of $\widehat{\mathcal U}$ is a pair $(\widehat{\mathcal U^+}, \widehat{\Phi})$ of a Kuranishi structure $\widehat{\mathcal U^+}$ and a strict KK-embedding $\widehat{\Phi}: \widehat{\mathcal U}\to\widehat{\mathcal U^+}$ with the following extra condition:
	Note that each point $p$ in the support set $Z$ must be contained in the open subset $\psi_p(s_p^{-1}(0))\cap \psi_p^+( (s_p^+)^{-1}(0))$ of $\mathcal M$. We require there is a neighborhood $O_p$ of $p$ (within the metrizable space $\mathcal M$) such that for each $q\in O_p\cap Z$, there exists a neighborhood $W_p(q)$ of the point $o_p(q):=\psi_p^{-1}(q)$ (within the orbifold $U_p$) such that (i) $\varphi_p(W_p(q))\subseteq \varphi_{pq}^+(U_{pq}^+)$ inside $U_p^+$ and (ii) for any point $z\in U_{pq}^+$, $\hat\varphi_p(E_p)|_z \subseteq \hat\varphi_{pq}^+(E_q^+)|_z$ whenever both sides are defined.
	Finally, we define a \textit{thickening} of $\widehat{\mathcal U}$ to be a strict thickening of an open substructure of it \cite[5.3]{FOOO_Kuranishi}.
	Roughly speaking, a thickening is a special sort of KK-embedding with nice technical properties.

	Let $(\mathcal M,\widehat{\mathcal U})$ be a K-space where $\widehat{\mathcal U}=\{\mathcal U_p, \Phi_{pq}\}$.
	Let $\widehat{f}=\{f_p\}: \widehat{\mathcal U}\to M$ be a strongly smooth map.
	Given a KK-embedding $\widehat{\Phi}=\{\Phi_p\}:\widehat{\mathcal U'}\to\widehat{\mathcal U}$, then the new collection $\widehat{f}\circ \widehat{\Phi}:=\{f_p\circ \Phi_p\}: \widehat{\mathcal U'}\to M$ is strongly continuous, strongly smooth, or weakly submersive whenever so is $\widehat f$ \cite[3.44]{FOOO_Kuranishi}. We call $\widehat{f}\circ \widehat{\Phi}$ the \textit{pullback} of $\widehat{f}$ via $\widehat{\Phi}$.
	Given a CF-perturbation $\widehat{\mathfrak S}=\{\mathfrak S_p\}$ of $\widehat{\mathcal U}$, a strongly smooth map $\widehat{f}=\{f_p\}$ as above is said to be \textit{strictly strongly submersive with respect to $\widehat{\mathfrak S}$} if each $f_p:U_p\to M$ is strongly submersive with respect to $\mathfrak S_p$; see (\ref{orbifold_coordinate_change_eq}).
	We call it \textit{strongly submersive with respect to $\widehat{\mathfrak S}$} if its restriction to an open substructure $\widehat{\mathcal U}_0$ of $\widehat{\mathcal U}$ is strictly so \cite[9.2]{FOOO_Kuranishi}.
	\begin{equation*}
		\Scale[0.85]{\xymatrix{
				& (\mathcal M, \widehat{\mathcal U}) \ar[dl]^{\widehat{f_s}} \ar[dr]_{\widehat{f_t}}  \\
				M_s & & M_t
		}}
	\end{equation*}
	By a \textit{smooth correspondence}, we mean a quadruple $\mathfrak X=( \mathcal M, \widehat{\mathcal U},  \widehat{f_s},\widehat{f_t})$ where $\widehat{f_s}: (\mathcal M, \widehat{\mathcal U})\to M_s$ is a strongly smooth map into a manifold $M_s$ and $\widehat{f_t}: (\mathcal M, \widehat{\mathcal U})\to M_t$ is a both strongly smooth and weakly submersive map into another manifold $M_t$ \cite[7.1]{FOOO_Kuranishi}.
	A CF-perturbation of smooth correspondence is a smooth correspondence and $\widehat{\mathfrak S}=\{\mathfrak S_p\}$ is a CF-perturbation of $\widehat{\mathcal U}$ with respect to which $\widehat{f_t}$ is strongly submersive \cite[9.25]{FOOO_Kuranishi}.
	In this setting, we call the tuple $\tilde{\mathfrak X}=(\mathfrak X,\widehat{\mathfrak S})=( \mathcal M, \widehat{\mathcal U},  \widehat{f_s},\widehat{f_t}; \widehat{\mathfrak S})$ a \textit{perturbed smooth correspondence} \cite[10.19]{FOOO_Kuranishi}.
	Roughly speaking, the following result tells that we can always perturb a smooth correspondence up to thickening of Kuranishi structures.
	
	\begin{lem}
		\label{CF_exist_lem}
		For each smooth correspondence $\mathfrak X=(\mathcal M, \widehat{\mathcal U}, \widehat{f_s},\widehat{f_t})$, there exists a perturbed smooth correspondence $\tilde{\mathfrak X}=(\mathcal M, \widehat{\mathcal U^+}, \widehat{f^+_s},\widehat{f^+_t}, \widehat{\mathfrak S})$ and a KK-embedding $\widehat \Phi: \widehat{\mathcal U}\to\widehat{\mathcal U^+}$ such that $\widehat f_s=\widehat {f_s^+}\circ \widehat \Phi$, $\widehat f_t=\widehat {f_t^+}\circ \widehat \Phi$, and $(\widehat{\mathcal U^+},\widehat{\Phi})$ is a thickening of $\widehat{\mathcal U}$ \cite[9.26]{FOOO_Kuranishi}.
	\end{lem}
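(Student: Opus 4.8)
The plan is to follow the scheme of \cite[\S 5, \S 7--\S 11]{FOOO_Kuranishi}, reducing the global statement to local model constructions on Kuranishi charts glued via a compatible good coordinate system. First I would invoke the compactness of $\mathcal M$: cover $\mathcal M$ by finitely many of the Kuranishi neighborhoods $U_{p_1},\dots,U_{p_N}$ coming from $\widehat{\mathcal U}$, together with the coordinate changes among them, and pass to a \emph{good coordinate system} --- a finite poset of charts with open embeddings $\Phi_{ij}$ compatible with the original $\Phi_{pq}$, to which $\widehat{f_s}$ and $\widehat{f_t}$ pull back. The existence of such a good coordinate system refining $\widehat{\mathcal U}$ is itself a standard but nontrivial piece of the theory; it lets one replace the uncountable indexing set $Z=\mathcal M$ by a finite one while preserving all structures, in particular the weak submersivity of $\widehat{f_t}$ and the corner structure stratification $U=\bigsqcup_k S_k(U)$ of each neighborhood.

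Next, on each chart of the good coordinate system I would build the thickening. Locally a chart is modelled by $\mathfrak s\colon V\times W\to E$; to create the room needed for a transversal perturbation I enlarge the obstruction bundle, replacing $\mathcal E$ by $\mathcal E\oplus\mathbb R^{N}$ for $N$ large, and extend the Kuranishi neighborhood accordingly, choosing the added directions so that, after composing with $f_t$, the linearized Kuranishi map becomes surjective onto a complement of the image of $Df_t$. This produces a strict thickening $\mathcal U^+$ of each chart together with the required embedding of Kuranishi charts; the essential constraint is that the added sections be chosen compatibly with the embeddings $\Phi_{ij}$, so that the thickenings of overlapping charts glue into a single Kuranishi structure $\widehat{\mathcal U^+}$ and a KK-embedding $\widehat\Phi\colon\widehat{\mathcal U}\to\widehat{\mathcal U^+}$ with $\widehat{f_s}=\widehat{f_s^+}\circ\widehat\Phi$, $\widehat{f_t}=\widehat{f_t^+}\circ\widehat\Phi$. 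One then checks the extra neighborhood conditions (the $O_p$, the $W_p(q)$, and the inclusions of obstruction fibers) that make $(\widehat{\mathcal U^+},\widehat\Phi)$ a thickening in the sense recalled above; these are local and follow from continuity once $N$ is chosen uniformly.

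Finally, on $\widehat{\mathcal U^+}$ I would construct the CF-perturbation $\widehat{\mathfrak S}$. Working chart by chart, choose CF-perturbations $\mathcal S_r$ on orbifold charts of each $\mathcal U^+$ of the form $\mathfrak s^\epsilon=s+\epsilon\,\sigma$ with $\sigma$ a multisection supported in $W$, chosen transversal to $0$ and such that the induced map $(\mathfrak s^\epsilon)^{-1}(0)\to M_t$ is a submersion --- possible precisely because the thickening was built to make the relevant linear map surjective. The compatibility of these local choices under orbifold coordinate changes and under the chart embeddings $\Phi^+_{ij}$ is handled by the sheaf-theoretic formalism: one uses that $\Omega\mapsto\mathscr{CF}^{\mathcal U^+}_{f\pitchfork}(\Omega)$ is a subsheaf, patches local sections with a partition of unity subordinate to the good coordinate system, and shrinks $\epsilon_0$ so that transversality and strong submersivity persist on the (compact) supports. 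Assembling these gives $\widehat{\mathfrak S}$ with $\widehat{f_t^+}$ strongly submersive, hence a perturbed smooth correspondence. The main obstacle is the coherence of the gluing: both the enlargement of the obstruction bundles and the transversal multisections must be chosen simultaneously for all charts in a way compatible with every coordinate change and every chart embedding, \emph{and} inductively on the corner strata $S_k^\circ(U)$ so that the perturbations restrict correctly to boundaries and corners; organizing this induction --- essentially the good coordinate system machinery plus the inductive construction of \cite[\S 9--\S 11]{FOOO_Kuranishi} --- is the technical heart, after which the single-chart existence is elementary transversality.
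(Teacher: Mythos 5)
Your outline overlaps with the paper's proof in one respect (reduce to a good coordinate system by compactness, use finiteness/partition of unity there, handle corners inductively), but the mechanism you propose for achieving strong submersivity is not correct, and it is precisely the point where the CF-perturbation formalism does something different. You enlarge the obstruction bundle to $\mathcal E\oplus\mathbb R^N$ and then take a perturbation of the form $\mathfrak s^\epsilon=s+\epsilon\sigma$, claiming that the stabilization makes $f_t$ restricted to $(\mathfrak s^\epsilon)^{-1}(0)$ a submersion. This cannot work in general: a thickening preserves the virtual dimension ($\dim U^+-\mathrm{rank}\,\mathcal E^+=\dim U-\mathrm{rank}\,\mathcal E$), so the zero set of any transversal perturbed \emph{section} has dimension equal to $\mathrm{vdim}\,\widehat{\mathcal U}$, which in the applications (e.g. $\mathrm{vdim}=\mu(\beta)+\dim L+k-2$) is frequently smaller than $\dim M_t=\dim L$; no choice of $\sigma$ can then make the evaluation a submersion. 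In the CF framework the needed extra dimensions come from the auxiliary parameter space $W$ of the CF-perturbation $\mathfrak s^\epsilon:V\times W\to E$ together with the weak submersivity of $\widehat{f_t}$ (this is exactly the content of \cite[7.51]{FOOO_Kuranishi}); no stabilization of the obstruction bundle is used or needed for this. Relatedly, mixing in ``multisection'' language is off: the pushout of differential forms requires the continuous-family scheme, since a multisection's zero set is not smooth.

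The second gap is the coherence of your ad hoc stabilizations: you assert that the added $\mathbb R^N$-directions can be chosen ``compatibly with the embeddings $\Phi_{ij}$'' and that this ``follows from continuity once $N$ is chosen uniformly.'' That compatibility (across the uncountably many charts of a Kuranishi structure, with the one-directional coordinate changes and the semicontinuity of obstruction spaces) is the actual difficulty, and it is not a continuity statement. The paper avoids having to make any such coherent choice of stabilizing sections by a structural route: a KG-embedding of $\widehat{\mathcal U}$ into a good coordinate system, then a GK-embedding of that good coordinate system into a new Kuranishi structure $\widehat{\mathcal U}^+$ whose charts are assembled canonically from the finitely many good-coordinate charts (\cite[5.26, 6.30]{FOOO_Kuranishi}), with the composed embedding shown to be a thickening (\cite[5.28]{FOOO_Kuranishi}); the CF-perturbation is constructed on the good coordinate system, where finiteness and partitions of unity are available, and only afterwards transported to $\widehat{\mathcal U}^+$ through the GK-embedding (\cite[9.9]{FOOO_Kuranishi}). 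So the thickening in the paper is not ``created to make a linearized map surjective''; it is the price of transferring a perturbation built on a good coordinate system back to a genuine Kuranishi structure. To repair your argument you would either have to carry out the compatible-stabilization induction in full (a different and harder theorem than the one cited) or switch to the family mechanism as above.
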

	
	\begin{proof}[Sketch of proof]
		A Kuranishi structure has uncountably many charts and thus are hard to work with.
		We can introduce the notion of good coordinate system that serves a role akin to a locally finite sub-atlas in the manifold theory.
		A \textit{good coordinate system} $\widetriangle {\mathcal U}$ of $\mathcal M$ consists of a collection of Kuranishi charts $\mathcal U_{\mathfrak p}= (U_{\mathfrak p}, \mathcal E_{\mathfrak p}, \psi_{\mathfrak p}, s_{\mathfrak p})$, indexed by a \textit{finite} partially ordered set $(\mathfrak P, \leqslant)$, together with a collection of coordinate changes in the strong sense $\Phi_{\mathfrak p \mathfrak q} = (U_{\mathfrak p \mathfrak q} , \varphi_{\mathfrak p \mathfrak q}, \hat \varphi_{\mathfrak p \mathfrak q} ) : \mathcal U_{\mathfrak q}\to \mathcal U_{\mathfrak p}$, with $\mathfrak q\leqslant \mathfrak p$, such that the conditions similar to the case of a Kuranishi structure hold \cite[3.15]{FOOO_Kuranishi}.
		We can similarly introduce the notions of strongly smooth or weakly submersive maps $\widetriangle{f}$ \cite[3.43]{FOOO_Kuranishi}, CF-perturbations $\widetriangle{\mathfrak S}$\cite[7.49]{FOOO_Kuranishi}, strongly submersive maps \cite[7.50]{FOOO_Kuranishi} of $\widetriangle{\mathcal U}$.
		Similar to the above KK-embeddings, we can also introduce \textit{KG-embeddings} \cite[3.30]{FOOO_Kuranishi}, \textit{GG-embeddings} \cite[3.24]{FOOO_Kuranishi}, \textit{GK-embedding} \cite[5.6]{FOOO_Kuranishi}. Here `K' and `G' stands for Kuranishi structures and good coordinate systems respectively.

		For our fixed Kuranishi structure $\widehat{\mathcal U}$ of $\mathcal M$, there exists a good coordinate system $\widetriangle{\mathcal U}$ of $\mathcal M$ and a KG-embedding $\widehat{\Phi}: \widehat{\mathcal U} \to \widetriangle{\mathcal U}$ \cite[1.1, 3.35, or 9.10]{FOOO_Kuranishi}.
		Given $\widetriangle{\mathcal U}$, there also exists a Kuranishi structure $\widehat{\mathcal U}^+$ and a GK-embedding $\widetriangle{\mathcal U} \to\widehat{\mathcal U}^+$ \cite[5.26, 6.30]{FOOO_Kuranishi}, and we can further show that the composition $\widehat{\mathcal U} \to \widetriangle{\mathcal U} \to \widehat{\mathcal U}^+$ is a thickening \cite[5.28]{FOOO_Kuranishi}.
		Only for a good coordinate system, the CF-perturbation $\widetriangle{\mathfrak S}$ always exists, and we may further achieve the desired strongly submersive condition \cite[7.51]{FOOO_Kuranishi}. Using this, the above-mentioned GK-embedding $\widetriangle{\mathcal U}\to \widehat{\mathcal U}^+$ can induce a new CF-perturbation $\widehat{\mathfrak S}^+$ of $\widehat{\mathcal U}^+$ that is compatible with $\widetriangle{\mathfrak S}$ and preserves the desired strongly submersive condition as well \cite[9.9]{FOOO_Kuranishi}.
	\end{proof}

	\subsection{Correspondence map}
	Many concepts for differential forms on manifolds can be extended to the Kuranishi structure setting as follows \cite[7.70 \& 7.71]{FOOO_Kuranishi}:
	\begin{itemize}[itemsep=0pt]
		\item A \textit{(smooth) differential $k$-form} $\widehat{h}=\{h_p\}$ on a Kuranishi structure $\widehat{\mathcal U}$ of $\mathcal M$ is a collection of smooth different $k$-forms $h_p$ on the orbifolds $U_p$ such that $\varphi_{pq}^*h_p=h_q$. For this $\widehat{h}$, the collection $d\widehat{h}=\{dh_{p}\}$ is a differential $(k+1)$-form on $\widehat{\mathcal U}$, called the \textit{exterior derivative} of $\widehat{h}$.
		\item  If $h$ is a differential $k$-form on a manifold $M$ and $\widehat{f}=\{ f_{p}\}: (X, \widehat{\mathcal U})\to M$ is a strongly smooth map, then $\widehat{f}^*h=\{f^*_{p}h\}$ defines a differential $k$-form on $\widehat{\mathcal U}$, called the \textit{pullback} of $h$ by $\widehat{f}$.
		\item  If $\widehat{h}^i=\{h^i_{ p}\}$ are differential $k_i$-forms on $\widehat{\mathcal U}$ for $i=1,2$, then $\widehat{h}^1\wedge\widehat{h}^2=\{h^1_{ p}\wedge h^2_{ p}\}$ is a differential $(k_1+k_2)$-form on $\widehat{\mathcal U}$, called the \textit{wedge product}.
	\end{itemize}

	By Lemma \ref{CF_exist_lem} above, replacing it by a thickening if necessary, we may assume that there exists a CF-perturbation $\widehat{\mathfrak S}$ on $\mathfrak X=(\mathcal M, \widehat{\mathcal U}, \widehat{f_s},\widehat{f_t})$ such that $\widehat{f_t}$ is strongly submersive with respect to $\widehat{\mathfrak S}$, i.e. $\tilde{\mathfrak X}=(\mathfrak X, \widehat{\mathfrak S})$ is a perturbed smooth correspondence.
	Then, we have: 
	\begin{thm}
		\label{corr_thm}
		There is a canonical construction that assigns to $\tilde{\mathfrak X}$ a linear map
		\begin{equation}
			\label{Corr_eq}
			\Corr^\epsilon_{\tilde{\mathfrak X}}:= \widehat f_t ! \Big( (\widehat{f_s})^*h ; \widehat{\mathfrak S^\epsilon}\Big) : \Omega^*(M_s)\to \Omega^*(M_t)
		\end{equation}
		for each sufficiently small $\epsilon$. Here `canonical' means the map (\ref{Corr_eq}) depends only on $\tilde{ \mathfrak X}$ and $\epsilon$ \cite[9.25]{FOOO_Kuranishi}.
		Moreover, replacing $\widehat{\mathcal U}$ by a thickening $\widehat{\mathcal U}^+$ of it does not affect the map (\ref{Corr_eq}).
	\end{thm}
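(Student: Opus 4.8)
The plan is to build $\Corr^\epsilon_{\tilde{\mathfrak X}}$ from the fiberwise pushout already available on single Kuranishi charts (Subsection~\ref{ss_CF_chart}), and then to check that all auxiliary choices used in assembling the local pieces cancel. First I would reduce to a good coordinate system, exactly as in the proof of Lemma~\ref{CF_exist_lem}: there is a good coordinate system $\widetriangle{\mathcal U}$ of $\mathcal M$, a KG-embedding $\widehat{\mathcal U}\to\widetriangle{\mathcal U}$, and compatible data $\widetriangle{f_s},\widetriangle{f_t},\widetriangle{\mathfrak S}$ (with $\widetriangle{f_t}$ strongly submersive with respect to $\widetriangle{\mathfrak S}$) restricting to $\widehat{f_s},\widehat{f_t},\widehat{\mathfrak S}$; this uses precisely that $\tilde{\mathfrak X}$ is a \emph{perturbed} smooth correspondence. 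Since $\widetriangle{\mathcal U}$ has only finitely many charts $\mathcal U_{\mathfrak p}$, I would fix a partition of unity $\{\chi_{\mathfrak p}\}$ subordinate to the covering of $\mathcal M$ by the $\psi_{\mathfrak p}((\mathfrak s_{\mathfrak p}^\epsilon)^{-1}(0))$ and define
\[
\Corr^\epsilon_{\tilde{\mathfrak X}}(h):=\sum_{\mathfrak p}\ (f_{t,\mathfrak p})!\big(\chi_{\mathfrak p}\cdot(f_{s,\mathfrak p})^*h;\mathfrak S_{\mathfrak p}^\epsilon\big),
\]
each summand being the single-chart pushout of (\ref{pushout_single_chart_eq}); it is well-defined because $\chi_{\mathfrak p}(f_{s,\mathfrak p})^*h$ is compactly supported and $f_{t,\mathfrak p}$ is strongly submersive with respect to $\mathfrak S_{\mathfrak p}$. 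Restricting to sufficiently small $\epsilon$ is needed so that the finitely many transversality and submersivity conditions hold simultaneously and the supports of the $\chi_{\mathfrak p}(f_{s,\mathfrak p})^*h$ behave as required.

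The bulk of the work is well-definedness, i.e. independence of the good coordinate system, the KG-embedding, the subordinate partition of unity, and the representative of $\widetriangle{\mathfrak S}$ within its equivalence class. I would derive all of this from the characterizing integral identity (\ref{pushout_single_orbifold_chart_eq}): for every test form $\rho$ on $M_t$,
\[
\#\Gamma\cdot\int_{M_t}\rho\wedge f!(h;\mathcal S^\epsilon)=\int_{(\mathfrak s^\epsilon)^{-1}(0)}f^*\rho\wedge\tilde h\wedge\omega,
\]
so two candidate pushouts coincide as soon as they pair identically with all $\rho$. Independence of $\{\chi_{\mathfrak p}\}$ and of the representative of $\widetriangle{\mathfrak S}$ follows locally from the invariances of the single-chart pushout recalled in Subsection~\ref{ss_CF_chart}, while independence of the good coordinate system follows by comparing two choices through a common GG-refinement, using the pullback formula (\ref{orbifold_coordinate_change_eq}) and change of variables for the $\Gamma$-quotient integrals; condition (v) in the definition of an embedding of Kuranishi charts keeps the relevant zero sets of the expected dimension, so no dimensional defect appears. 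Linearity of $\Corr^\epsilon_{\tilde{\mathfrak X}}$ is then immediate, since $h\mapsto (f_{s,\mathfrak p})^*h$, the product with $\chi_{\mathfrak p}$, and the pushout are all linear.

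The step I expect to be the main obstacle is the last assertion, that replacing $\widehat{\mathcal U}$ by a thickening $\widehat{\mathcal U}^+$ does not change (\ref{Corr_eq}). By definition, a thickening supplies a KK-embedding $\widehat\Phi:\widehat{\mathcal U}\to\widehat{\mathcal U}^+$ over which $\widehat{f_s}=\widehat{f_s^+}\circ\widehat\Phi$, $\widehat{f_t}=\widehat{f_t^+}\circ\widehat\Phi$, and $\widehat{\mathfrak S}=\widehat\Phi^*\widehat{\mathfrak S}^+$ in the restrictable sense, so on each chart $\breve\varphi_p(y,\mathfrak s_p^\epsilon(y,\xi))=\mathfrak s_p^{+,\epsilon}(\varphi_p(y),\xi)$ with $\breve\varphi_p$ fiberwise linear and injective, which identifies $(\mathfrak s_p^\epsilon)^{-1}(0)$ with the part of $(\mathfrak s_p^{+,\epsilon})^{-1}(0)$ lying over $\varphi_p(U_p)$. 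I would then choose the partition of unity on the good coordinate system adapted to the thickening so that, using the local data of a thickening — the neighborhoods $O_p$, $W_p(q)$ and the nested obstruction spaces $\hat\varphi_p(E_p)|_z\subseteq\hat\varphi_{pq}^+(E_q^+)|_z$ — the integrand $f^*\rho\wedge\tilde h\wedge\omega$ in (\ref{pushout_single_orbifold_chart_eq}) for the $\widehat{\mathcal U}^+$-data is supported exactly over the image of $\varphi_p$; then both integral expressions agree for every $\rho$, forcing equality of the two maps. The delicate part is performing this comparison coherently over all charts of a common good coordinate system while tracking the restrictable pullback of the CF-perturbation; I would first establish it for a single pair of Kuranishi charts and then globalize via the good-coordinate-system reduction described above.
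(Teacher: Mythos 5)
Your proposal follows essentially the same route as the paper's sketch: pull back $h$ along $\widehat{f_s}$, reduce to a good coordinate system via a KG-embedding with a compatibly chosen CF-perturbation keeping $\widetriangle{f_t}$ strongly submersive, define the pushout chartwise through a partition of unity as a finite sum of single-chart pushouts, and then establish independence of all auxiliary choices (including the thickening), which the paper simply delegates to the cited results of Fukaya--Oh--Ohta--Ono. The one caveat is that the partition of unity must be taken on a support system inside the Kuranishi neighborhoods $U_{\mathfrak p}$ rather than merely subordinate to a cover of $\mathcal M$ by the zero-set images, but this is exactly what the cited partition-of-unity result for good coordinate systems supplies, so your argument matches the paper's.
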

	
	\begin{proof}[Sketch of proof]
		Since $\widehat h:= (\widehat{f_s})^*h $ can be verified as a differential form on $\widehat{\mathcal U}$, it suffices to define the pushout $\widehat f_t ! \Big(  \widehat h ; \widehat{\mathfrak S^\epsilon}\Big)$. This will be a globalized version of the pushout of a single Kuranishi chart as established in (\ref{pushout_single_chart_eq}).
		We use the tool of good coordinate systems again.
		Recall that as in the proof of Lemma \ref{CF_exist_lem}, there is a good coordinate system $\widetriangle{\mathcal U}$ and a KG-embedding $\widehat{\Phi}:\widehat{\mathcal U} \to \widetriangle{\mathcal U}$, and there always exists a CF-perturbation $\widetriangle{\mathfrak S}$ on $\widetriangle{\mathcal U}$ \cite[3.35]{FOOO_Kuranishi}.
		Moreover, we can properly choose $\widetriangle{\mathfrak S}$ so that it is compatible with the given $\widehat{\mathfrak S}$ through the above KG-embedding $\widehat{\Phi}$ and that the induced map $\widetriangle{f_t}$ is strongly submersive with respect to $\widetriangle{\mathfrak S}$ as well \cite[9.10]{FOOO_Kuranishi}.
		The advantage of using a good coordinate system is that it only consists finitely many Kuranishi charts and always admits a version of partition of unity \cite[7.68]{FOOO_Kuranishi}; for this, we can generalize (\ref{pushout_single_chart_eq}) to first establish the pushout 
		$\widetriangle{f_t}!(\widetriangle{h}; \widetriangle{\mathfrak S}^\epsilon)$
		with respect to a good coordinate system $\widetriangle{\mathfrak S}$ \cite[7.79]{FOOO_Kuranishi}.
		Then, it remains to use \cite[5.17, 5.20]{FOOO_Kuranishi} to show that this pushout does not depend on the choices of good coordinate system $\widetriangle{\mathcal U}$ and the related data $\widetriangle{\mathfrak S}$, $\widehat{\Phi}$ \cite[9.16]{FOOO_Kuranishi}.
	\end{proof}
	\vspace{-0.3em}
	The correspondence maps are characterized by two primary properties: Stokes' formula and the correspondence formula. These will be briefly reviewed below.
	
	\subsection{Stokes' Formula}
	\label{ss_stokes}
	Recall the stratification $S_k(U)\subseteq U$ for an orbifold with corners $U$ in \ref{ss_Kuranishi}.
	We can further define the normalized (codimension-$k$) corner $\widehat{S_k}(U)$ that consists of pairs $(x,A)$ where $x\in S_k(U)$ and $A\subseteq \{1,2,\dots, m\}$ if $x\in S_m^\circ (U)$. Forgetting $A$ defines the normalization map $\widehat{S_k}(U)\to S_k(U)$.
	For $k=1$, we set $\partial U=\widehat{S_1}(U)$, called the \textit{normalized boundary} \cite[8.4]{FOOO_Kuranishi}.

	Let $(\mathcal M, \widehat{\mathcal U})$ be a K-space where $\widehat{\mathcal U}=\{\mathcal U_p,\Phi_{pq}\}$ with $\mathcal U_p=(U_p,\mathcal E_p, \psi_p,s_p)$ and $\Phi_{pq}=(U_{pq},\varphi_{pq},\hat\varphi_{pq})$ as before.
	By the \textit{corner structure stratification} of $(\mathcal M, \widehat{\mathcal U})$ we mean a collection of subsets given by
	$S_k(\mathcal M, \widehat{\mathcal U})=\{p\in \mathcal M \mid o_p \in S_k(U_p)\}$ where $k\geqslant 1$ and $o_p$ is the point determined by $s_p(o_p)=0$ and $\psi_p(o_p)=p$ \cite[4.16]{FOOO_Kuranishi}.
	Similar to the case of orbifold, we can define the \textit{normalized (codimension-$k$) corner} $\widehat{S_k}(\mathcal M, \widehat{\mathcal U})$ with \textit{normalization maps} $\pi_k: \widehat{S_k}(\mathcal M, \widehat{\mathcal U})\to {S_k}(\mathcal M, \widehat{\mathcal U})$ \cite[24.17]{FOOO_Kuranishi}.
	Each $\widehat{S_k}(\mathcal M,\widehat{\mathcal U})$ is a K-space.
	For $k=1$, we call $\widehat{S_1}(\mathcal M,\widehat{\mathcal U})$ the \textit{normalized boundary} of $(\mathcal M, \widehat{\mathcal U})$, denoted by $(\partial \mathcal M,  \widehat{\mathcal U}_\partial)$, with the normalization map $\pi_1:\partial \mathcal M \to S_1(\mathcal M,\widehat{\mathcal U})$ of the underlying topological spaces \cite[8.8]{FOOO_Kuranishi}.

	A CF-perturbation $\widehat{\mathfrak S}$ on $\widehat{\mathcal U}$ induces a CF-perturbation $\widehat{\mathfrak S_\partial}$ on $\widehat{\mathcal U}_\partial$;
	besides, given $\widehat{f}: \widehat{\mathcal U} \to M$ is strongly submersive with respect to $\widehat{\mathfrak S}$, there is an induced $\widehat{f_\partial}: \widehat{\mathcal U_\partial} \to M$ that is strongly submersive with respect to $\widehat{\mathfrak S_\partial}$ \cite[9.27]{FOOO_Kuranishi}.
	In particular, a perturbed smooth correspondence $\tilde{\mathfrak X}=(\mathfrak X,\widehat{\mathfrak S})$ induces a perturbed smooth correspondence, denoted by $\tilde{\mathfrak X}_\partial=(\mathfrak X_\partial, \widehat{\mathfrak S_\partial})$.
	Similar notions also apply to relative K-spaces.
	Now, the Stokes' formula now states that \cite[9.28 \& 9.29]{FOOO_Kuranishi}:
	
	\begin{thm}
		For small $\epsilon>0$, $d\Big( \  \widehat f ! (\widehat{h}; \widehat{\mathfrak S}^\epsilon ) \Big)=\widehat{f}! \big(d\widehat{h}; \widehat{\mathfrak S}^\epsilon \big)+(-1)^* \widehat{f_\partial}! \big(  \widehat{h_\partial} ; \widehat{\mathfrak S_\partial^\epsilon} \big)$. In terms of (\ref{Corr_eq}),
		\begin{equation}
			\label{stokes_eq}
			d\circ \Corr_{\tilde{\mathfrak X}}^\epsilon = \Corr_{\tilde{\mathfrak X}}^\epsilon  \circ \  d + (-1)^* \Corr_{\tilde{\mathfrak X}_\partial}^\epsilon
		\end{equation}
	\end{thm}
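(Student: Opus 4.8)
The plan is to follow the standard three-step reduction of \cite[\S 9]{FOOO_Kuranishi}: first pass from the Kuranishi structure to an auxiliary good coordinate system, then use a partition of unity to localize to a single orbifold chart, and there reduce the assertion to the ordinary Stokes theorem on a manifold with corners. For the first step I would, exactly as in the proof of Theorem \ref{corr_thm}, fix a good coordinate system $\widetriangle{\mathcal U}$ of $\mathcal M$, a KG-embedding $\widehat\Phi\colon\widehat{\mathcal U}\to\widetriangle{\mathcal U}$, and a CF-perturbation $\widetriangle{\mathfrak S}$ on $\widetriangle{\mathcal U}$ that is compatible with $\widehat{\mathfrak S}$ through $\widehat\Phi$ and makes the induced map $\widetriangle{f_t}$ strongly submersive \cite[9.10]{FOOO_Kuranishi}. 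Since $\Corr_{\tilde{\mathfrak X}}^\epsilon$ is by definition computed through $\widetriangle{\mathcal U}$, it suffices to prove the analogous Stokes identity for $\widetriangle{f_t}$, $\widetriangle{\mathfrak S}$ and their restrictions to the normalized boundary. Before doing so I would record the bookkeeping fact that the normalized-boundary construction commutes with KG-embeddings and with pullbacks of CF-perturbations, so that the boundary term produced on $\widetriangle{\mathcal U}$ pulls back under $\widehat\Phi$ to precisely $\Corr_{\tilde{\mathfrak X}_\partial}^\epsilon$, carrying the induced orientation on $\partial\mathcal M$; this is \cite[9.27]{FOOO_Kuranishi}.

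For the localization, a good coordinate system has only finitely many charts and admits a partition of unity $\{\chi_r\}$ subordinate to its covering \cite[7.68]{FOOO_Kuranishi}. Writing the pushout as a sum over charts via (\ref{pushout_single_chart_eq}) and using $\sum_r d\chi_r=0$, the identity reduces to the single-orbifold-chart statement: for an orbifold chart $\mathfrak V=(V,E,\Gamma,\phi,\hat\phi)$ with a local CF-perturbation $\mathcal S=(W,\hat W,\omega,\{\mathfrak s^\epsilon\})$ transversal to $0$, and a compactly supported form $h$ on $V/\Gamma$ with lift $\tilde h$ on $V$, one has $d\bigl(f!(h;\mathcal S^\epsilon)\bigr)=f!(dh;\mathcal S^\epsilon)+(-1)^*f_\partial!(h_\partial;\mathcal S_\partial^\epsilon)$.

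The heart of the argument is then this single-chart computation. Here $Z^\epsilon:=(\mathfrak s^\epsilon)^{-1}(0)\subseteq V\times W$ is a smooth manifold with corners and the induced map $f_\epsilon\colon Z^\epsilon\to M_t$ is a submersion. I would pair both sides with an arbitrary compactly supported test form $\rho$ on $M_t$ of complementary degree, invoke the characterizing identity (\ref{pushout_single_orbifold_chart_eq}), and apply the ordinary Stokes theorem on $Z^\epsilon$ to the form $f_\epsilon^*\rho\wedge\tilde h\wedge\omega$. In the interior term, $d(f_\epsilon^*\rho)=f_\epsilon^*(d\rho)$ and $d\omega=0$ since $\omega$ is top-degree in the fiber variable $W$, and re-pairing via (\ref{pushout_single_orbifold_chart_eq}) together with Stokes on $M_t$ converts these into $\pm\int_{M_t}\rho\wedge d(f!(h;\mathcal S^\epsilon))$ and $\pm\int_{M_t}\rho\wedge f!(dh;\mathcal S^\epsilon)$. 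In the boundary term, $\partial Z^\epsilon$ splits into the part lying over $\partial V\times W$ and the part over $V\times\partial W$; the latter contributes nothing because $\omega$ is compactly supported in the interior of $W$, while the former is exactly $(\mathfrak s_\partial^\epsilon)^{-1}(0)$ for the restricted CF-perturbation $\mathcal S_\partial$ on $\partial\mathfrak V$, so it re-pairs to $\pm\int_{M_t}\rho\wedge f_\partial!(h_\partial;\mathcal S_\partial^\epsilon)$. Collecting the Koszul signs---the exponent $*$ being the one produced by commuting $d$ past $\widehat f^*\rho$---yields the single-chart identity, and hence, by the two reductions, the theorem; the precise sign conventions are those of \cite[9.28 \& 9.29]{FOOO_Kuranishi}.

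I expect the main obstacle to be bookkeeping rather than a conceptual difficulty, and it lives in two places. First, one must check that the normalized-boundary operation is simultaneously compatible with KG-embeddings, with partitions of unity, and with pullbacks of CF-perturbations, so that the boundary contributions assembled chart by chart glue up to $\Corr_{\tilde{\mathfrak X}_\partial}^\epsilon$ with the correct induced orientation on $\partial\mathcal M$---including the subtlety that the normalized corners, and not the corner strata themselves, are the right objects to work with. Second, the compact-support hypothesis on $\omega$ must be used in an essential way to annihilate the $V\times\partial W$ contribution; dropping it would introduce spurious fiber-boundary terms into the formula. Both verifications are carried out in full in \cite[\S 9]{FOOO_Kuranishi}, and I would transcribe them here only at the level of a sketch, in keeping with the expository character of this appendix.
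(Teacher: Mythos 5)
Your proposal follows exactly the paper's route: reduce via a KG-embedding to a good coordinate system (just as in the proof of Theorem \ref{corr_thm}), localize with a partition of unity to a single chart, and there invoke the ordinary Stokes theorem on $(\mathfrak s^\epsilon)^{-1}(0)$, with the fiber-boundary contribution killed by the compact support of $\omega$. This matches the paper's (much briefer) sketch, and your added single-chart and bookkeeping details are consistent with the cited steps in \cite[\S 8--9]{FOOO_Kuranishi}.
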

	
	\begin{proof}[Sketch of proof]
		The strategy is initially reducing the problem to a version for good coordinate systems \cite[8.11]{FOOO_Kuranishi}. This reduction was almost the same as the process of showing Theorem \ref{corr_thm}.
		As before, a good coordinate system takes advantage of partition of unity, further reducing the problem to a version within a single Kuranishi chart \cite[8.12]{FOOO_Kuranishi}. This then becomes the usual Stokes' formula.
	\end{proof}

	\subsection{Composition formula}
	There are natural constructions of direct products of Kuranishi structures and fiber products of (strongly) smooth maps, if the involved maps are (weakly) submersive.
	For clarity, we focus only on K-spaces, although the case of relative K-spaces can be addressed similarly.

	Let $(X, \widehat{\mathcal U})$ be a K-space and $\widehat{f}=\{f_p\}: \widehat{\mathcal U} \to N$ be a strongly smooth map.
	Let $g:M\to N$ be a smooth map between smooth manifolds.
	Take a CF-perturbation $\widehat{\mathfrak S}=\{\mathfrak S_p\}$ on $\widehat{\mathcal U}$.
	
	\begin{itemize}[itemsep=0pt]
		\item The \textit{underlying continuous map} of $\widehat{f}$ refers to the map $f: \widehat{\mathcal U} \to N$ defined by $f(p)=f_p(p)$. We can show that it is continuous \cite[3.40 (2)]{FOOO_Kuranishi}.
		\item We say $\widehat{f}$ is \textit{weakly transversal} to $g$ on $X$ if for each $p\in X$ and Kuranishi chart $\mathcal U_p=(U_p,\mathcal E_p,\psi_p,s_p)$ and for each $q\in M$, we have that $df_p (T_x U_p) +dg(T_y M)=T_zN$ for any $(x,y)\in U_p\times M$ with $z=f_p(x)=g(y)$ \cite[4.2]{FOOO_Kuranishi}.
		We first have the (topological) fiber product $X\times_N M$ or $X \times_{(f,g)} M$.
		\item
		There is naturally the \textit{fiber product} of $\widehat{f}:\widehat{\mathcal U}\to N$ and $g:M\to N$ with the fiber product Kuranishi structure on $X\times_N M$.
		The Kuranishi neighborhood\footnote{There seems to be a typo error for the terminology in \cite[4.6]{FOOO_Kuranishi}: the tuple $\mathcal U_p=(U_p,\mathcal E_p, \psi_p, s_p)$ should be called a \textit{Kuranishi chart} instead of \textit{Kuranishi neighborhood}, since the latter refers to the orbifold $U_p$ in $\mathcal U_p$ \cite[3.1]{FOOO_Kuranishi}.} takes the form $U_p\times_N M=U_p\times_{(f_p,g)} M$ \cite[4.6]{FOOO_Kuranishi}.
		Finally, we denote the resulting K-space by $(X, \widehat{\mathcal U}) \times_N M$ or $(X, \widehat{\mathcal U})\times_{(f,g)} M$ \cite[4.9(1)]{FOOO_Kuranishi}.
		\item We say $\widehat{f}$ is \textit{strongly transversal to $g$ with respect to $\widehat{\mathfrak S}$} if for the CF-perturbation $\mathfrak S_p$ on each Kuranishi chart $\mathcal U_p$, the map $f_p$ is strongly transversal to $g$ with respect to $\mathfrak S_p$ \cite[7.50 or 9.2]{FOOO_Kuranishi}.
		In this case, there is naturally a \textit{fiber product CF-perturbation} $\widehat{\mathfrak S} \times_{(f,g)}   N=\{ \mathfrak S_p \times_{(f,g)} N\}$ \cite[10.12]{FOOO_Kuranishi}.
	\end{itemize}

	Let $(X_1,\widehat{\mathcal U_1})$ and $(X_2,\widehat{\mathcal U_2})$ be K-spaces. Suppose $\widehat{f_i}=\{(f_i)_p\}:(X_i,\widehat{\mathcal U_i})\to N$ are strongly smooth maps.
	Suppose $\widehat{\mathfrak S_i}$ is a CF-perturbation of $\widehat{\mathcal U_i}$ for $i=1,2$.

	\begin{itemize}[itemsep=0pt]
		\item  
		There is an obvious way to define the \textit{direct product} Kuranishi structure, and we denote the resulting K-space by $(X_1 \times X_2,  \widehat{\mathcal U_1}\times \widehat{\mathcal U_2})$ \cite[4.1]{FOOO_Kuranishi}. There is also an obvious way to define the direct product CF-perturbation $\widehat{\mathfrak S_1}\times \widehat{\mathfrak S_2}$ on $\widehat{\mathcal U_1}\times \widehat{\mathcal U_2}$ \cite[10.6]{FOOO_Kuranishi}.
		
		\item
		We say $\widehat{f_1}$ and $\widehat{f_2}$ are \textit{weakly transversal} if $\widehat{f_1}\times \widehat{f_2}: \widehat{\mathcal U_1}\times \widehat{\mathcal U_2}\to N\times N$ is weakly transversal to the diagonal map $\Delta_N:N \to N\times N$.
		The \textit{fiber product} of $\widehat{f_1}$ and $\widehat{f_2}$ is defined as the fiber product of $\widehat{f_1}\times \widehat{f_2}$ and $\Delta_N$ in the sense mentioned above.
		Let $f_1, f_2$ be the underlying continuous maps.
		Denote the consequent K-space by $(X_1, \widehat{\mathcal U_1})\times_{(f_1,f_2)} (X_2, \widehat{\mathcal U_2} )$ or $(X_1,  \widehat{\mathcal U_1})\times_N (X_2, \widehat{\mathcal U_2} )$ \cite[4.9(2)]{FOOO_Kuranishi}.
		Similarly, the Kuranishi neighborhood takes the form $(U_{p_1}\times U_{p_2}) \times_N M=U_{p_1}\times_N U_{p_2} = U_{p_1}\times_{(f_1)_{p_1}, (f_2)_{p_2}} U_{p_2}$.
		\item 
		We say $\widehat{f_1}$ and $\widehat{f_2}$ are \textit{strongly transversal with respect to $\widehat{\mathfrak S_1}$ and $\widehat{\mathfrak S_2}$} if $\widehat{f_1}\times \widehat{f_2}$ is strongly transversal to the diagonal $\Delta_N$ with respect to $\widehat{\mathfrak S_1}\times \widehat{\mathfrak S_2}$. In this situation, there is naturally the \textit{fiber product CF-perturbation} of the above-mentioned fiber product Kuranishi structure, denoted as $\widehat{\mathfrak S_1}\times_{(\widehat{f_1}, \widehat{f_2})} \widehat{\mathfrak S_2}$ \cite[10.13]{FOOO_Kuranishi}.
	\end{itemize}

	Let $\mathfrak X_{12}=(X_{12}, \widehat{\mathcal U_{12}}, \widehat{f_1}, \widehat{f_2})$ and $\mathfrak X_{23}=(X_{23}, \widehat{\mathcal U_{23}}, \widehat{g_2},\widehat{g_3})$ be smooth correspondences. In particular, $\widehat{f_2}$ and $\widehat{g_3}$ are weakly submersive.
	Assume that $\widehat{f_2}$ and $\widehat{g_2}$ send into the same manifold $M_2$. 
	Let $f_2$ and $g_2$ be the underlying continuous maps. We consider the (topological) fiber product $X_{13}:=X_{12}\times_{(f_2,g_2)} X_{23}$.
	Then, it is equipped with the fiber product Kuranishi structure, denoted as $\widehat{\mathcal U_{13}}$.
	Let $p=(p_1,p_2)$ be an arbitrary point in $X_{13}$.
	By the natural projection maps of Kuranishi neighborhoods $U_{p_1}\times_{M_2} U_{p_2}\to U_{p_i}$, we can construct strongly smooth maps $\widehat{h_1}: \widehat{\mathcal U_{13}} \to M_1$ and $\widehat{h_3}: \widehat{\mathcal U_{13}} \to M_3$, where $\widehat{h_3}$ is further weakly submersive \cite[10.18]{FOOO_Kuranishi}. By definition, this means $\mathfrak X_{13}:= ( X_{13}, \widehat{\mathcal U_{13}}, \widehat{h_1},\widehat{h_3})$ is a smooth correspondence.

	Besides, suppose $\tilde{\mathfrak X}_{12}=(\mathfrak X_{12}, \widehat{\mathfrak S_{12}})$ and $\tilde{\mathfrak X}_{23}=(\mathfrak X_{23}, \widehat{\mathfrak S_{23}})$ are perturbed smooth correspondences.
	Then, the fiber product CF-perturbation $\widehat{\mathfrak S_{13}}$ of $\widehat{\mathfrak S_{12}}$ and $\widehat{\mathfrak S_{23}}$ gives rise to a perturbed smooth correspondence $\tilde{\mathfrak X}_{13}=(\mathfrak X_{13}, \widehat{\mathfrak S_{13}})=(X_{13}, \widehat{\mathcal U_{13}}, \widehat{h_1},\widehat{h_3}, \widehat{\mathfrak S_{13}})$ \cite[10.19]{FOOO_Kuranishi}.
	See the following diagram.
	\begin{equation}
		\label{composition_formula_diagram}
		\Scale[0.95]{\xymatrix{
				& & \mathfrak X_{13} \ar@{-->}[dl] \ar@{-->}[dr] 
				\ar@/^1pc/[ddrr]^{\widehat{h_3}}
				\ar@/_1pc/[ddll]_{\widehat{h_1}} \\
				& \mathfrak X_{12} \ar[dl]^{\widehat f_1} \ar[dr]_{\widehat f_2} & & \mathfrak X_{23} \ar[dl]^{\widehat g_2} \ar[dr]_{\widehat g_3} \\
				M_1 & & M_2 & & M_3}
		}
	\end{equation}
	
	\begin{thm}[Composition Formula]
		\label{composition_formula_thm}
		$\Corr^\epsilon_{\tilde{\mathfrak X}_{13}}=\Corr^\epsilon_{\tilde{\mathfrak X}_{23}}\circ \Corr^\epsilon_{\tilde{\mathfrak X}_{12}}$ for sufficiently small $\epsilon$ \cite[10.21]{FOOO_Kuranishi}
	\end{thm}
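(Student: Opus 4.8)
The plan is to follow the two-step reduction used already for Theorem \ref{corr_thm} and the Stokes formula: first pass from Kuranishi structures to good coordinate systems, then from good coordinate systems to individual orbifold charts via partitions of unity, where the assertion collapses to a Fubini-type identity for integration along the fibers of a submersion.

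First I would fix compatible good coordinate systems. As in the proof of Theorem \ref{corr_thm}, choose good coordinate systems $\widetriangle{\mathcal U_{12}}$ and $\widetriangle{\mathcal U_{23}}$ together with KG-embeddings from $\widehat{\mathcal U_{12}}$ and $\widehat{\mathcal U_{23}}$ and with CF-perturbations $\widetriangle{\mathfrak S_{12}}$, $\widetriangle{\mathfrak S_{23}}$ that are compatible with $\widehat{\mathfrak S_{12}}$, $\widehat{\mathfrak S_{23}}$ and that make $\widetriangle{f_2}$ and $\widetriangle{g_3}$ strongly submersive. The fiber product construction applies verbatim to good coordinate systems, producing $\widetriangle{\mathcal U_{13}}$ with a fiber product CF-perturbation $\widetriangle{\mathfrak S_{13}}$; one checks, as in \cite[\S 10]{FOOO_Kuranishi}, that the induced KG-embedding $\widehat{\mathcal U_{13}}\to\widetriangle{\mathcal U_{13}}$ carries $\widehat{\mathfrak S_{13}}$ to $\widetriangle{\mathfrak S_{13}}$, so by the choice-independence built into Theorem \ref{corr_thm} it suffices to verify the identity at the level of good coordinate systems. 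Since a good coordinate system has only finitely many charts and admits a partition of unity subordinate to them, and the partition of unity on $\widetriangle{\mathcal U_{13}}$ may be taken as the product of those on $\widetriangle{\mathcal U_{12}}$ and $\widetriangle{\mathcal U_{23}}$, the pushout $\widetriangle{h_3}!(\,\cdot\,;\widetriangle{\mathfrak S_{13}}^\epsilon)$ breaks into a finite sum of single-chart pushouts, reducing the problem to a single orbifold chart.

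On a single chart the picture is explicit. If $\mathfrak s_{12}^\epsilon:V_{12}\times W_{12}\to E_{12}$ and $\mathfrak s_{23}^\epsilon:V_{23}\times W_{23}\to E_{23}$ are the local Kuranishi maps of the two perturbations, the fiber product chart has Kuranishi neighborhood $V_{12}\times_{M_2}V_{23}$, obstruction space $E_{12}\oplus E_{23}$, weight datum $W_{12}\times W_{23}$ with $\omega_{12}\wedge\omega_{23}$, and $\mathfrak s_{13}^\epsilon=\mathfrak s_{12}^\epsilon\times\mathfrak s_{23}^\epsilon$; the strong submersivity hypotheses guarantee that for small $\epsilon$ the zero set is the transverse, smooth fiber product $(\mathfrak s_{13}^\epsilon)^{-1}(0)=(\mathfrak s_{12}^\epsilon)^{-1}(0)\times_{M_2}(\mathfrak s_{23}^\epsilon)^{-1}(0)$. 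Applying the defining property (\ref{pushout_single_orbifold_chart_eq}) of the pushout, the claimed identity $\Corr^\epsilon_{\tilde{\mathfrak X}_{13}}(h)=\Corr^\epsilon_{\tilde{\mathfrak X}_{23}}\bigl(\Corr^\epsilon_{\tilde{\mathfrak X}_{12}}(h)\bigr)$, tested against an arbitrary $\rho\in\Omega^*(M_3)$, becomes the equality of $\int_{(\mathfrak s_{13}^\epsilon)^{-1}(0)}(\widehat{h_3})^*\rho\wedge(\widehat{h_1})^*h\wedge\omega_{13}$ with the corresponding iterated integral over $(\mathfrak s_{23}^\epsilon)^{-1}(0)$ of the pushout along $(\mathfrak s_{12}^\epsilon)^{-1}(0)\to M_2$. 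This is exactly Fubini for integration along the fibers of that submersion, together with the factorization of the fiber product measure as a product of fiber measures; the compact supports of $\omega_{12},\omega_{23}$ and of $h$ make all integrals finite, and tracking orientations yields the stated identity.

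The main obstacle I anticipate is not the Fubini computation but the bookkeeping that legitimizes the reductions: one must verify that the fiber product of good coordinate systems is again a good coordinate system compatible with the fiber product Kuranishi structure, that partitions of unity, CF-perturbations, and strong-submersivity data all pull back and multiply coherently through the KG-embeddings, and—most delicately—that for $\epsilon$ small the fiber product CF-perturbation genuinely realizes $\widehat{h_3}$ as strongly submersive, so that $\Corr^\epsilon_{\tilde{\mathfrak X}_{13}}$ is defined at all and agrees with the composite constructed via good coordinate systems. All of this is carried out in \cite[\S 10]{FOOO_Kuranishi}; here I would state the single-chart Fubini identity carefully and refer to that source for the compatibility of the good-coordinate-system machinery, just as was done above for Theorem \ref{corr_thm} and the Stokes formula.
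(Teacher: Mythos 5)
The paper gives no proof of this statement at all: it is quoted directly from \cite[10.21]{FOOO_Kuranishi}, in the same way the appendix treats several other ingredients of the Kuranishi package, so there is no in-paper argument to compare against. Your sketch is correct in outline and follows exactly the reduction pattern the paper itself uses for Theorem \ref{corr_thm} and the Stokes formula (Kuranishi structure to good coordinate system via a KG-embedding, then to a single orbifold chart by a partition of unity, where the identity becomes Fubini for integration along the fibers together with the factorization of the fiber product perturbation datum), which is also the argument of the cited source; the genuinely technical content you flag at the end --- compatibility of fiber products of good coordinate systems, CF-perturbations and strong submersivity through the embeddings --- is precisely what \cite[\S 10]{FOOO_Kuranishi} supplies, so deferring it there is consistent with the paper's own level of detail.
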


	\subsection{$A_\infty$ correspondence}
	At this stage, we studied individual moduli spaces $\mathcal M_{k+1,\beta}:=\mathcal M_{k+1,\beta}(J,L)$ in (\ref{moduli_single}), and we also need to study all of them simultaneously as a moduli space system $\mathbb M(J,L)=\{\mathcal M_{k+1,\beta}\}$ in (\ref{moduli_space_system_eq}).
	As topological spaces, $\mathcal M_{k,\beta}$ can be always defined, and there are obvious topological embeddings from various $\mathcal M_{k_1+1,\beta_1}\times_{(\ev_i,\ev_0)} \mathcal M_{k_2+1,\beta_2}$ into $\mathcal M_{k+1,\beta}$, where $k_1+k_2=k+1$ and $\beta_1+\beta_2=\beta$.
	The Gromov compactness implies that for any $E_0>0$, the set of $\beta\in H_2(X,L)$ such that some $\mathcal M_{k+1,\beta}$ is non-empty and $E(\beta)\leqslant E_0$ is finite.
	Moreover, we know $\mathcal M_{k+1,\beta}=\varnothing$ if $E(\beta)<0$.
	There exists a partial order ``$\prec$'' on the set of pairs $(k,\beta)$ defined by declaring $(k,\beta)\prec (k',\beta')$ when $E(\beta)<E(\beta')$ or $\omega(\beta)=\omega(\beta')$, $k<k'$. Then, we have $(k_i,\beta_i)\prec (k,\beta)$ here.

	Given $\mathbb M(J,L)=\{\mathcal M_{k+1,\beta}\}$, we say that a collection $\mathbb {MU} (J,L):=\{(\mathcal M_{k+1,\beta}, \widehat{\mathcal U}_{k,\beta})\}$ of K-spaces with corners is an \textit{$A_\infty$ correspondence} (over $L$) \cite[21.9]{FOOO_Kuranishi} if \footnote{An $A_\infty$ correspondence is also called a `tree-like K-system' in \cite{FOOO_Kuranishi,FOOODiskTwo}. We omit (I), (II), and (VIII) from \cite[21.6]{FOOO_Kuranishi} to maintain a focus, since they are not directly related to the Kuranishi structures. }
	
	\begin{itemize}[itemsep=0pt]
		\item 
		For $\beta\in H_2(X,L)$ and $k\geqslant 0$, $\widehat{\mathcal U}_{k,\beta}$ is an oriented Kuranishi structure on $\mathcal M_{k+1,\beta}$ such that the evaluation map $\ev_i: \widehat{\mathcal U}_{k,\beta}\to L$ is strongly smooth for $0\leqslant i \leqslant k$ and that $\ev_0$ is also weakly submersive. Its dimension is $\mu(\beta)+\dim L + k-2$.
		\item 
		There is a natural isomorphism of K-spaces between the normalized boundary $(\partial \mathcal M_{k+1,\beta},  (\widehat{\mathcal U}_{k,\beta} )_\partial )$ and the following disjoint union of the fiber product K-spaces: (see \cite[21.6]{FOOO_Kuranishi} for the sign $*$)
	\end{itemize}
	\begin{equation}
		\label{tree_like_moduli_eq}
		\bigsqcup_{k_1+k_2=k+1, \ \beta_1+\beta_2=\beta, \ 1\leqslant i\leqslant k_2}  (-1)^* \big( \mathcal M_{k_1+1,\beta_1}, \widehat{\mathcal U}_{k_1,\beta_1} \big) \times_{(\ev_i,\ev_0)} 
		\big(
		\mathcal M_{k_2+1,\beta_2} , \widehat{\mathcal U}_{k_2,\beta_2}
		\big)
	\end{equation}
	\begin{itemize}
		\item [] We also require the natural analogs for the normalized corners $\widehat{S}_m (\mathcal M_{k+1,\beta} , \widehat{\mathcal U}_{k,\beta})$.\footnote{Note that $\widehat{S}_1(\mathcal M_{k+1,\beta}, \widehat{\mathcal U}_{k,\beta})$ is the same as the normalized boundary $(\partial \mathcal M_{k+1,\beta},  (\widehat{\mathcal U}_{k,\beta} )_\partial )$. The (XI) in \cite[21.6]{FOOO_Kuranishi} is actually a necessary condition of (X), so we choose to omit it for simplicity.}
	\end{itemize}

	\begin{thm}
		\emph{\cite[Theorem 2.16]{FOOODiskTwo}} \
		Given the moduli system $\mathbb M(J,L)=\{\mathcal M_{k+1,\beta}\}$, there exists an $A_\infty$ correspondence $\mathbb {MU}(J,L):= \{(\mathcal M_{k+1,\beta}, \widehat{\mathcal U}_{k,\beta})\}$.
	\end{thm}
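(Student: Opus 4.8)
The plan is to construct the Kuranishi structures $\widehat{\mathcal U}_{k,\beta}$ by induction on the partial order $\prec$ on pairs $(k,\beta)$, using that by Gromov compactness only finitely many classes $\beta$ with $\mathcal M_{k+1,\beta}\neq\varnothing$ satisfy $E(\beta)\le E_0$ for any fixed $E_0$, so $\prec$ is well-founded. At each stage one is given, for every $(k_i,\beta_i)\prec(k,\beta)$, an oriented Kuranishi structure $\widehat{\mathcal U}_{k_i,\beta_i}$ on $\mathcal M_{k_i+1,\beta_i}$ whose evaluation maps are strongly smooth with $\ev_0$ weakly submersive, already compatible with the boundary/corner identifications of type (\ref{tree_like_moduli_eq}) among the smaller moduli spaces, and one must produce $\widehat{\mathcal U}_{k,\beta}$ with the same properties and with $(\widehat{\mathcal U}_{k,\beta})_\partial$ equal to the prescribed fiber-product structure. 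The base cases are the $\prec$-minimal pairs (the low-energy classes realized by a disk, and $\beta=0$), where $\partial\mathcal M_{k+1,\beta}=\varnothing$; there one simply invokes Theorem~\ref{Obstruction_bundle_data_single_thm} to get obstruction bundle data satisfying the mapping transversality condition for $\ev_0$, and Theorem~\ref{Kuranishi_single_moduli_thm} to get the associated Kuranishi structure of the correct virtual dimension $\mu(\beta)+\dim L+k-2$.

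For the inductive step one first assembles a Kuranishi structure on the normalized boundary $(\partial\mathcal M_{k+1,\beta},(\widehat{\mathcal U}_{k,\beta})_\partial)$. Since $\ev_0$ on the second factor is weakly submersive, each fiber product $\widehat{\mathcal U}_{k_1,\beta_1}\times_{(\ev_i,\ev_0)}\widehat{\mathcal U}_{k_2,\beta_2}$ is a well-defined K-space whose evaluation maps inherit strong smoothness and, for $\ev_0$, weak submersivity (the fiber-product machinery behind Theorem~\ref{composition_formula_thm}). Using the normalization maps $\pi_m:\widehat S_m(\mathcal M_{k+1,\beta},\widehat{\mathcal U}_{k,\beta})\to S_m$ one checks that these fiber products glue along the deeper normalized corners, where a point carries several boundary decompositions yielding canonically isomorphic fiber-product charts, into a single oriented Kuranishi structure on $\partial\mathcal M_{k+1,\beta}$; the orientations and the signs here are matched against the conventions recorded with (\ref{tree_like_moduli_eq}) via the FOOO orientation formalism, which must be carried along throughout the induction.

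Next one extends this boundary structure inward. Fix a collar of $\partial\mathcal M_{k+1,\beta}$, homeomorphic in the stable-map topology to $\partial\mathcal M_{k+1,\beta}\times[0,1)$, and choose obstruction bundle data on the collar restricting at $\partial\mathcal M_{k+1,\beta}$ to the direct sums of the obstruction data of the two glued components — this is the compatibility-with-boundary refinement of the choice in Theorem~\ref{Obstruction_bundle_data_single_thm}. On the complement of a slightly smaller collar, a compact subset of $\mathcal M_{k+1,\beta}$ away from the boundary, run the proof of Theorem~\ref{Obstruction_bundle_data_single_thm} verbatim: cover by finitely many partial neighborhoods with local obstruction bundle data, perturb the finitely many subspaces $E_{\mathbf p_i}(\mathbf p_i)$ so that the relevant sums are direct and the mapping transversality condition for $\ev_0$ (open and dense) holds, and patch via the semi-continuity condition. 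Feeding the resulting global obstruction bundle data into Theorem~\ref{Kuranishi_single_moduli_thm} produces $\widehat{\mathcal U}_{k,\beta}$ with $\ev_i$ strongly smooth, $\ev_0$ weakly submersive, and $(\widehat{\mathcal U}_{k,\beta})_\partial$ the fiber-product structure built above, completing the induction; the whole system is then $\mathbb{MU}(J,L)=\{(\mathcal M_{k+1,\beta},\widehat{\mathcal U}_{k,\beta})\}$.

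The step I expect to be the main obstacle is the \emph{corner consistency}: a codimension-$m$ corner stratum of $\mathcal M_{k+1,\beta}$ is reached through $m$ distinct boundary faces corresponding to $m$ nested bubbling events, and the obstruction bundle data — hence the Kuranishi charts and all coordinate changes — must be chosen symmetrically so that the various nested fiber-product descriptions all yield the \emph{same} chart. This forces the construction to proceed from the deepest corners outward: one builds the data on a neighborhood of $\widehat S_m$ for $m$ maximal, then extends successively over $\widehat S_{m-1},\dots,\widehat S_1$, and only then into the interior, checking at each stage that the extension respects all lower-codimension identifications and preserves weak submersivity of $\ev_0$. This symmetric, corner-first bookkeeping is the technical heart of \cite{FOOODiskTwo}; once it is in place the remaining verifications (orientations, the precise signs in (\ref{tree_like_moduli_eq}), and the behaviour of evaluation maps under fiber products) are routine.
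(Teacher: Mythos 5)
Your proposal is correct and follows essentially the same route as the paper: an induction on the partial order $\prec$ in which the obstruction bundle data $E^{k,\beta}_{\mathbf p}(\mathbf x)$ is chosen to be the direct sum $E^{k_1,\beta_1}_{\mathbf p_1}(\mathbf x_1)\oplus E^{k_2,\beta_2}_{\mathbf p_2}(\mathbf x_2)$ near the normalized boundary and corners, followed by applying Theorem \ref{Obstruction_bundle_data_single_thm} and Theorem \ref{Kuranishi_single_moduli_thm}. The collar extension and corner-first bookkeeping you describe are precisely the details the paper delegates to the cited parts of \cite{FOOODiskTwo}.
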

	
	\begin{proof}[Sketch of proof]
		Recall that for fixed $k$ and $\beta$, we can individually construct the obstruction bundle data $\mathscr O_{k,\beta}=\{E^{k,\beta}_{\mathbf p}(\mathbf x) \}$ of $\mathcal M_{k+1,\beta}$ by Theorem \ref{Obstruction_bundle_data_single_thm} and then use it to obtain a Kuranishi structure $\widehat{\mathcal U}_{k,\beta}$ by Theorem \ref{Kuranishi_single_moduli_thm}. We just need to include a few extra points; recall that the choices of $E_{\mathbf p}^{k,\beta}(\mathbf x)$ are quite flexible.
		Specifically, we aim to construct $\mathscr O_{k,\beta}$'s inductively on the above order $\prec$. If $\{E^{k_i,\beta_i}_{\mathbf p_i}(\mathbf x_i)\}$ for $i=1,2$ have been specified on $\mathcal M_{k_i,\beta_i}$ in (\ref{tree_like_moduli_eq}), then we recall all $(k_i,\beta_i)\prec (k,\beta)$. By constructing them inductively, we can require that $E^{k,\beta}_{\mathbf p}(\mathbf x)= E^{k_1,\beta_1}_{\mathbf p_1}(\mathbf x_1) \oplus E^{k_2,\beta_2}_{\mathbf p_2}(\mathbf x_2)$ near $\partial\mathcal M_{k+1,\beta}$ as well as similar conditions near normalized corners \cite[5.4]{FOOODiskTwo}.
		With this modification, one can finally verify that the consequent system of Kuranishi structures $\widehat{\mathcal U}_{k,\beta}$ satisfies the desired conditions \cite[5.3]{FOOODiskTwo}.
	\end{proof}

	\subsection{Perturbed $A_\infty$ correspondence}
	
	Given an $A_\infty$ correspondence $\mathbb {MU}(J,L)=\{\mathcal M_{k+1,\beta}, \widehat{\mathcal U}_{k,\beta}\}$ as above, each $\mathfrak X_{k,\beta}:= \big(\mathcal M_{k+1,\beta}, \widehat{\mathcal U}_{k,\beta}, (\ev_1,\dots, \ev_k), \ev_0\big)$ is a smooth correspondence.
	By Lemma \ref{CF_exist_lem}, there is a thickening $\widehat{\mathcal U}_{k,\beta}^+$ of $\widehat{\mathcal U}_{k,\beta}$ and a CF-perturbation $\widehat{\mathfrak S}_{k,\beta}$ of $\widehat{\mathcal U}_{k,\beta}^+$ such that $\ev_0$ is strongly submersive with respect to $\widehat{\mathfrak S}_{k,\beta}$. In other words, 
	$
	\tilde {\mathfrak X}_{k,\beta}= (\mathcal M_{k+1,\beta}, \widehat{\mathcal U}^+_{k,\beta}, (\ev_1,\dots, \ev_k),\ev_0, \widehat{\mathfrak S}_{k,\beta})
	$
	is a perturbed smooth correspondence.
	In this setting, if we further require that the restriction of $\widehat{\mathfrak S}_{k,\beta}$ to the normalized boundary is equivalent to the fiber product of $\widehat{\mathfrak S}_{k_1,\beta_1}$ and $\widehat{\mathfrak S}_{k_2,\beta_2}$ with respect to (\ref{tree_like_moduli_eq}) and natural analogous conditions hold for the normalized corners (cf. \cite[22.3 (4) \& (7)]{FOOO_Kuranishi}, then we say the collection $\widetilde{\mathbb {MU}}(J,L)=\{(\mathcal M_{k+1,\beta}, \widehat{\mathcal U}^+_{k,\beta}, \widehat{\mathfrak S}_{k,\beta}) \}$ a \textit{perturbed $A_\infty$ correspondence}.\footnote{This is an analog of perturbed smooth correspondence but is not specified in \cite{FOOO_Kuranishi}. We formulate it for clarity.}
	
	\begin{thm}
		\label{perturbed_A_infinity_correspondence_thm}
		Given an $A_\infty$ correspondence $\mathbb {MU}(J,L):= \{(\mathcal M_{k+1,\beta}, \widehat{\mathcal U}_{k,\beta})\}$, there exists a perturbed $A_\infty$ correspondence $\widetilde{\mathbb {MU}}(J,L):= \{(\mathcal M_{k+1,\beta}, \widehat{\mathcal U}^+_{k,\beta}, \widehat{\mathfrak S}_{k,\beta} )\}$.
	\end{thm}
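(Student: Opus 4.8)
The plan is to construct the perturbed $A_\infty$ correspondence by induction on the partial order $\prec$ on pairs $(k,\beta)$, exactly mirroring the inductive structure that produced the $A_\infty$ correspondence $\mathbb{MU}(J,L)$ itself. At each stage $(k,\beta)$, the boundary and corner compatibility conditions (\ref{tree_like_moduli_eq}) express $(\partial\mathcal M_{k+1,\beta},(\widehat{\mathcal U}_{k,\beta})_\partial)$ as a disjoint union of fiber product K-spaces built from strictly smaller pairs $(k_i,\beta_i)\prec(k,\beta)$. So when we reach $(k,\beta)$, the CF-perturbations $\widehat{\mathfrak S}_{k_i,\beta_i}$ on the thickenings $\widehat{\mathcal U}^+_{k_i,\beta_i}$ have already been constructed, and the fiber product CF-perturbation $\widehat{\mathfrak S}_{k_1,\beta_1}\times_{(\ev_i,\ev_0)}\widehat{\mathfrak S}_{k_2,\beta_2}$ is then a \emph{given, prescribed} CF-perturbation on (a thickening of) the normalized boundary of $\mathcal M_{k+1,\beta}$, together with its analogues on the higher normalized corners.

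First I would invoke Lemma \ref{CF_exist_lem} in its relative form: rather than perturbing the smooth correspondence $\mathfrak X_{k,\beta}=(\mathcal M_{k+1,\beta},\widehat{\mathcal U}_{k,\beta},(\ev_1,\dots,\ev_k),\ev_0)$ freely, we must extend the already-chosen boundary/corner CF-perturbation inward. This is the relative existence statement for CF-perturbations on K-spaces with corners whose restriction to $\widehat{S}_m$ is prescribed; it is the version of \cite[Lemma 9.26]{FOOO_Kuranishi} with boundary conditions, and it proceeds through the good-coordinate-system machinery (a good coordinate system $\widetriangle{\mathcal U}_{k,\beta}$, a KG-embedding, a CF-perturbation on $\widetriangle{\mathcal U}_{k,\beta}$ extending the given one on the normalized corners, then transport back to a Kuranishi structure via a GK-embedding giving the thickening $\widehat{\mathcal U}^+_{k,\beta}$). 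Concretely: take a good coordinate system, use the collar structure near the normalized corners together with a partition of unity to extend the prescribed corner CF-perturbation to a global CF-perturbation on the good coordinate system, arrange it to be transversal to $0$ and to make $\widetriangle{\ev_0}$ strongly submersive (possible because strong submersivity is an open and generic condition that we already have on the corners), then push forward along the GK-embedding to obtain $\widehat{\mathfrak S}_{k,\beta}$ on $\widehat{\mathcal U}^+_{k,\beta}$ with $\ev_0$ strongly submersive. This realizes $\tilde{\mathfrak X}_{k,\beta}$ as a perturbed smooth correspondence and, by construction, its restriction to $\widehat{S}_1$ equals the fiber product of the $\widehat{\mathfrak S}_{k_i,\beta_i}$'s, with the analogous statements on $\widehat{S}_m$.

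The one subtlety to watch, and the main obstacle, is the \emph{consistency of the corner conditions}: the prescribed data on $\widehat{S}_1(\mathcal M_{k+1,\beta})$ and on the higher $\widehat{S}_m(\mathcal M_{k+1,\beta})$ are not independent — a codimension-$m$ corner sits inside the closures of several codimension-one strata, and the fiber-product CF-perturbations coming from different factorizations must agree there. This is guaranteed inductively precisely because the $A_\infty$ correspondence axioms impose the matching conditions on the Kuranishi structures $\widehat{\mathcal U}_{k_i,\beta_i}$, and because fiber products of CF-perturbations are associative and compatible with restriction to corners (\cite[10.12, 10.13, 10.17]{FOOO_Kuranishi}); one checks that the two ways of restricting to a fixed codimension-$m$ stratum yield equivalent CF-perturbations in the sense of the zig-zag equivalence of \cite[7.6]{FOOO_Kuranishi}. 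Once this coherence is verified, the relative Lemma \ref{CF_exist_lem} applies at each inductive step, and since the order $\prec$ is such that for any $E_0$ only finitely many $\beta$ with $E(\beta)\le E_0$ occur, the induction is well-founded and the collection $\widetilde{\mathbb{MU}}(J,L)=\{(\mathcal M_{k+1,\beta},\widehat{\mathcal U}^+_{k,\beta},\widehat{\mathfrak S}_{k,\beta})\}$ satisfies all the defining properties of a perturbed $A_\infty$ correspondence. I would note that this is an immediate consequence of \cite[Theorem 22.4]{FOOO_Kuranishi} (the construction of the de Rham model $A_\infty$ algebra), where exactly this inductive extension is carried out; the statement here is essentially a repackaging of that construction stopped one step before passing to the operators $\check\m_{k,\beta}$.
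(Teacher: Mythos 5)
Your proposal is correct and follows essentially the same route as the paper: an induction over the pairs $(k,\beta)$ in which the boundary/corner CF-perturbations are prescribed by fiber products of previously constructed ones, and at each step one invokes a relative extension result (the paper cites \cite[17.78, 17.81]{FOOO_Kuranishi}, proved via collared neighborhoods of K-spaces and extension of Kuranishi structures and CF-perturbations from a neighborhood of the normalized boundary) producing the thickening $\widehat{\mathcal U}^+_{k,\beta}$ together with $\widehat{\mathfrak S}_{k,\beta}$ keeping $\ev_0$ strongly submersive. Your explicit discussion of the coherence of the corner conditions and the reference to the de Rham construction in \cite{FOOO_Kuranishi} are consistent with, and slightly more detailed than, the paper's sketch, but do not constitute a different argument.
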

	
	\begin{proof}[Sketch of proof]
		We can construct it inductively. The key is the following lemma \cite[17.78, 17.81]{FOOO_Kuranishi}:
		
		Let $(\mathcal M, \widehat{\mathcal U})$ be a K-space; let $\widehat{\mathcal U}^+_{\partial}$ be a Kuranishi structure that is a thickening of the normalized boundary $(\partial\mathcal M, \widehat{\mathcal U}_\partial)$ and has similar conditions for the normalized corners.
		Assume $\widehat{\mathfrak S}^+_\partial$ is a CF-perturbation of $\widehat{\mathcal U}^+_\partial$ with corresponding conditions.
		Then, there exists an extended Kuranishi structure $\widehat{\mathcal U}^+$ on $\mathcal M$ and a CF-perturbation $\widehat{\mathfrak S}^+$ of it. Moreover, if $\widehat{f}:\widehat{\mathcal U}\to N$ is strongly smooth and weakly submersive, then we can choose $\widehat{\mathfrak S}^+$ so that $\widehat{f}: \widehat{\mathcal U}^+\to N$ is strongly submersive with respect to $\widehat{\mathfrak S}^+$.
		
		The basic idea of this result is as follows. The boundary $\partial N$ of a manifold has a collared neighborhood in the form $\partial N\times [0,\epsilon)$.
		A similar theory holds for K-spaces \cite[17.38, 17.40]{FOOO_Kuranishi} and is functorial as adding and forgetting the collared structures induce equivalences of categories \cite[17.43, 17.46]{FOOO_Kuranishi}. Using this can extend $\widehat{\mathcal U}_\partial^+$ to an open neighborhood of $\partial \mathcal M$ in $\mathcal M$ \cite[17.58, 17.73]{FOOO_Kuranishi}. Lastly, a rather easy lemma tells that if a Kuranishi structure (resp. CF-perturbation) is defined on an open neighborhood of a compact subset, then we can extend it without changing it in a slightly smaller neighborhood of the compact subset \cite[17.75, 17.77]{FOOO_Kuranishi}.
	\end{proof}
	
	Naively speaking, the $A_\infty$ operator $\check \m_{k,\beta}$ mentioned in the introduction would be defined as
	\begin{equation}
		\label{m_k_beta_epsilon_eq}
		\check \m_{k,\beta}(h_1,\dots, h_k)=\ev_0!\big(\ev_1^*h_1\wedge \cdots \wedge \ev_k^* h_k; \widehat{\mathfrak S}_{k,\beta}^\epsilon \big)=\Corr_{\tilde{\mathfrak X}_{k,\beta}}^\epsilon (h_1, \dots,  h_k)
	\end{equation}
	for $h_1,\dots, h_k\in\Omega^*(L)$ and sufficiently small $\epsilon$.
	But, a critical problem is that for any $(k,\beta)$, there is a small number $\epsilon_{k,\beta}>0$ so that the above formula makes sense only when $0<\epsilon \leqslant \epsilon_{k,\beta}$.
	To offer some insight initially, suppose the infimum of these $\epsilon_{k,\beta}$ was positive, then we could see the $A_\infty$ formula as follows.
	Applying the Stokes' formula (\ref{stokes_eq}), we first have that for any $(k,\beta) \neq (1,0)$,
	\begin{align*}
		\textstyle
		d \circ \check \m_{k,\beta} (h_1,\dots, h_k)
		-
		\sum_{i=1}^k \check \m_{k,\beta} (h_1,\dots, h_{i-1}, dh_i, h_{i+1}, \dots, h_k) =  \Corr^\epsilon_{\partial \tilde{\mathfrak X}_{k,\beta}} (h_1,\dots, h_k)
	\end{align*}
	By the boundary description (\ref{tree_like_moduli_eq}) and the composition formula (Theorem \ref{composition_formula_thm}), the right side is given by $\Corr^\epsilon_{  \tilde {\mathfrak X}_{k_1,\beta_1}} \big(h_1,\dots, \Corr^\epsilon_{ \tilde {\mathfrak X}_{k_2,\beta_2}}(h_i,\dots, h_j),\dots, h_k \big)=\check \m_{k_1,\beta_1}(h_1,\dots, \check \m_{k_2,\beta_2} (h_i,\dots, h_j), \dots, h_k)$. Putting them together, the collection of multi-linear maps $\check \m_{k,\beta}$ forms an $A_\infty$ structure; cf. \cite[Section 22.4]{FOOO_Kuranishi}.
	
	Nevertheless, the infimum could be zero in general, and we need to impose energy cuts, say, as in \cite[22.3]{FOOO_Kuranishi}. 
	Let's finally explain the proof of \cite[21.35 (1)]{FOOO_Kuranishi}, i.e. Theorem \ref{A_infinity_algebra_up_to_pseudo_isotopy_thm}, as follows.
	
	\vspace{0.3em}
	
	\begin{proof}[Sketch of proof of Theorem \ref{A_infinity_algebra_up_to_pseudo_isotopy_thm}]
		Consider the set $\mathcal G$ of all pairs $(k,\beta)$ in $\mathbb N\times H_2(X,L)$ such that the topological space $\mathcal M_{k+1,\beta}(J,L)$ is non-empty.
		Then, $\mathcal G$ is countable by Gromov's compactness.
		Take a strictly increasing sequence of subsets $\{\mathcal G_n: n=1,2,\dots \}$ such that their union is the whole $\mathcal G$.
		May assume $\epsilon_n=\inf_{\mathcal G_n} \{\epsilon_{k,\beta} \}$ is strictly decreasing.
		For any $n$, we define operators $\check \m_{k,\beta}^{(n)}$ as (\ref{m_k_beta_epsilon_eq}) for $(k,\beta)\in\mathcal G_n$ and $\epsilon=\epsilon_n$.
		By using the Stokes formula \ref{stokes_eq} and the composition formula \ref{composition_formula_thm}, each $\check \m^{(n)}$ defines a \textit{partial} $A_\infty$ structure. However, it is not initially clear whether this can be promoted to a complete $A_\infty$ structure, as there are algebraic obstructions in general, cf \cite[Th 4.5.1]{FOOOBookOne}.
		To overcome this, we assert that $\check \m^{(n)}$ is (partially) pseudo-isotopic to $\check \m^{(n+1)}$. Indeed, for various $(k,\beta)$, we consider the trivial product spaces $\oi_s \times \mathcal M_{k+1,\beta}$; we take the scaled CF-perturbations $\widehat{\mathfrak S}_{k,\beta}^{\epsilon_n\epsilon}$ and $\widehat{\mathfrak S}_{k,\beta}^{\epsilon_{n+1}\epsilon}$ at the two ends $s=0,1$. They should be regarded as distinct data, and we can extend them to the product spaces by using the lemma mentioned in the proof of Theorem \ref{perturbed_A_infinity_correspondence_thm}. The extended CF-perturbations can be used to obtain partial pseudo-isotopies between $\check \m^{(n)}$ and $\check \m^{(n+1)}$, cf \cite[22.21]{FOOO_Kuranishi}. By the algebraic promotion lemma \cite[22.8]{FOOO_Kuranishi}, the existence of the (partial) pseudo-isotopies eliminates parts of the algebraic obstructions to promote $\check \m^{(n)}$. Finally, we can perform similar arguments between $\check \m^{(n)}$ and $\check \m^{(n+k)}$ for $k\gg 1$, which will eliminates all algebraic obstructions as $\bigcup_k\mathcal G_{n+k}=\mathcal G$.
		To sum up, for any fixed $n$, the $\check \m^{(n)}$ can be promoted to a complete $A_\infty$ structure, and each pair of them is pseudo-isotopic to each other.
	\end{proof}

	\bibliographystyle{abbrv}
	
	\bibliography{mybib_unobs}

\end{document}